%% file: paper.tex
\documentclass[a4paper,parskip,11pt,listof=totoc,bibliography=totoc, index=totoc,twoside]{scrartcl}

\newcommand{\CommonPath}{.}
\input{\CommonPath/preamble}
\usepackage[backend=biber, style=numeric, giveninits=true, sorting=nyt, url=false, doi=false, eprint=true, isbn=false]{biblatex}
\KOMAoptions{abstract=true}
\usetikzlibrary{external}
\numberwithin{equation}{section}
\title{Long Time Behaviour of the Two-Component Becker--Döring System}
\author{Jens Scholten}
\date{\today}

\begin{document}
  \maketitle
  \subfile{\CommonPath/abstract}  \tableofcontents  
  \subfile{\CommonPath/Introduction}
\subfile{\CommonPath/DissipationEstimates}
\subfile{\CommonPath/LogSobolevInequality}
\subfile{\CommonPath/mInfty}
\subfile{\CommonPath/Supercriticality}
\subfile{\CommonPath/MainTheorem}
\subfile{\CommonPath/appendix}  \subfile{\CommonPath/acknowledgements}
  \printbibliography
\end{document}

%% file: preamble.tex
\usepackage[english]{babel}
\usepackage{csquotes}

\usepackage{aligned-overset}
\usepackage[right=1.8cm,left=1.9cm]{geometry}
\usepackage{amsmath,amssymb,amsfonts,amsthm,mathtools}
\mathtoolsset{ showonlyrefs, showmanualtags }
\usepackage{dsfont}
\usepackage{multicol}
\usepackage{enumitem}
\setlist[enumerate]{label=\roman*),ref=\roman*)}
\usepackage{derivative}
\usepackage{array} 
\newcolumntype{L}{>{$}l<{$}}
\usepackage{subfiles}

\usepackage[svgnames]{xcolor}
\usepackage{subcaption}
\usepackage{graphicx}
\usepackage{tikz}
\usetikzlibrary{arrows.meta}
\usetikzlibrary{shadows}
\usepackage{intcalc}
\usepackage{float}

\usepackage{hyperref}
\hypersetup{
    colorlinks,
    citecolor=black,
    filecolor=black,
    linkcolor=black,
    urlcolor=black,
    linktoc=all,  
}

\theoremstyle{plain}
\newtheorem{theorem}{Theorem}[section]

\newtheorem{lemma}[theorem]{Lemma}

\newtheorem{proposition}[theorem]{Proposition}

\newtheorem{corollary}[theorem]{Corollary}

\newtheorem{definition}[theorem]{Definition}

\theoremstyle{definition}
\newtheorem{notation}[theorem]{Notation}

\newtheorem{example}[theorem]{Example}

\newtheorem{remark}[theorem]{Remark}

\newcommand{\eqLabeledLine}[2]{\noindent\refstepcounter{equation}#1\hfill(\theequation)  \label{#2}}
\newcommand{\Jr}{J}
\newcommand{\Js}{\dot{J}}

\newcommand{\ar}{a}
\newcommand{\as}{\dot{a}}
\newcommand{\br}{b}
\newcommand{\bs}{\dot{b}}
\newcommand{\er}{e}
\newcommand{\es}{\dot{e}}
\newcommand{\N}{\mathbb{N}}
\newcommand{\R}{\mathbb{R}}

\newcommand{\T}{\mathcal{T}}
\newcommand{\eps}{\varepsilon}

\newcommand{\indicator}[1]{\mathds{1}_{\left\{#1\right\}}}
\newcommand{\smallo}{\mathchoice
    {{\scriptstyle\mathcal{O}}}%
    {{\scriptstyle\mathcal{O}}}%
    {{\scriptscriptstyle\mathcal{O}}}%
    {\scalebox{.6}{$\scriptscriptstyle\mathcal{O}$}}%
  }
\newcommand{\bigO}{\mathcal{O}}
\newcommand{\interior}[1]{\mathring{#1}}
\DeclarePairedDelimiter{\norm}{\lVert}{\rVert}
\DeclareMathOperator*{\argmax}{arg\,max}

\newcommand{\weakstar}{\overset{\ast}{\rightharpoonup}}
\newcommand{\loc}{\text{loc}}
\newcommand{\crit}{\text{crit}}
\newcommand{\const}{\text{const}}
\newcommand{\dd}{\mathop{}\!\mathrm{d}}
\newcommand{\del}{\Delta\,}
\newcommand{\delr}{\Delta_r\,}
\newcommand{\dels}{\Delta_s\,}
\newcommand{\delk}{\Delta_k\,}
\newcommand{\dell}{\Delta_l\,}
\NewDocumentCommand{\summ}{O{1} O{\infty} }{\sum\limits_{m=#1}^{#2}}
\NewDocumentCommand{\sumrs}{O{1} O{\infty} O{r+s}}{\sum\limits_{#3=#1}^{#2}}
\NewDocumentCommand{\sumkl}{O{1} O{N} O{k+l}}{\sum\limits_{#3=#1}^{#2}}
\NewDocumentCommand{\sumRM}{O{\le} O{\le}}{\sum\limits_{r+s\ge 2}^{\substack{r#1R \\ s#2M}}}
\NewDocumentCommand{\sumgamma}{O{\Gamma}}{\sum\limits_{(r,s) \in #1}}
\NewDocumentCommand{\sumbar}{O{r} O{r+s} }{\sum\limits_{k=#1}^{#2} \cbar_{k,#2-k}}
\NewDocumentCommand{\sumhat}{O{r} O{r+s} }{\sum\limits_{k=#1}^{#2} \chat_{k,#2-k}}
\newcommand{\monr}{c_{1,0}}
\newcommand{\mons}{c_{0,1}}
\newcommand{\rs}{_{r,s}}
\newcommand{\kl}{_{k,l}}
\newcommand{\ii}{_{i,j}}
\newcommand{\rps}{_{r+1,s}}
\newcommand{\kpl}{_{k+1,l}}
\newcommand{\kml}{_{k-1,l}}
\newcommand{\klm}{_{k,l-1}}
\newcommand{\kmlp}{_{k-1,l+1}}
\newcommand{\kplm}{_{k+1,l-1}}
\newcommand{\rms}{_{r-1,s}}
\newcommand{\rsp}{_{r,s+1}}
\newcommand{\klp}{_{k,l+1}}
\newcommand{\rsm}{_{r,s-1}}

\newcommand{\rpsm}{_{r+1,s-1}}
\newcommand{\rmsp}{_{r-1,s+1}}
\newcommand{\rmmsp}{_{r-2,s+1}}
\newcommand{\rpsmm}{_{r+1,s-2}}
\newcommand{\rmspp}{_{r-1,s+2}}
\newcommand{\kn}{_{k,n-k}}

\newcommand{\kpn}{_{k+1,n-k-1}}
\newcommand{\knp}{_{k,n+1-k}}
\newcommand{\rn}{_{r,n-r}}
\newcommand{\rnp}{_{r,n+1-r}}
\newcommand{\rpn}{_{r+1,n-r-1}}

\newcommand{\rmn}{_{r-1,n-r+1}}
\newcommand{\sn}{_{n-s,s}}
\newcommand{\Vzw}{V}

\newcommand{\D}{D}
\newcommand{\Ex}{\mathcal{E}}

\newcommand{\Dnlin}{D^{N,1}}
\newcommand{\cinfzw}{\bar{c}^{z,w}}

\newcommand{\cbar}{\bar{c}}
\newcommand{\chat}{\hat{c}}

\newcommand{\ccirc}{\mathring{c}}
\newcommand{\cbargamma}{\bar{c}^\Gamma}

\newcommand{\Prs}{P^{r,s}}
\newcommand{\Pkl}{P^{k,l}}
\newcommand{\Trs}{T\rs}

\newcommand{\constructioni}{\hyperref[chatConstruction]{construction.i}}
\newcommand{\constructionii}{\hyperref[chatConstruction]{construction.ii}}
\newcommand{\constructioniii}{\hyperref[chatConstruction]{construction.iii}}

%% file: abstract.tex
\begin{abstract}
  \noindent The classical Becker--Döring equations describe the formation of clusters by aggregation and fragmentation of monomers. If the total amount of mass is supercritical, larger and larger clusters are formed, leading to an asymptotic loss of mass in the long time limit. The two-component Becker--Döring system arises when clusters are built from two types of monomers. Here, we study the natural extension of the one-component system, where no energy or entropy is leaving or entering the system---the so called detailed balance assumption. We rigorously prove that all initial conditions admit a solution minimising the relative entropy as time approaches infinity. This shows, that the long time limit in the weak* topology is selected through the initial Type I and Type II masses. Furthermore, the proportion of lost Type I mass is determined by the limit point via the mixing ratio of Type I and Type II monomers that maximises the binding energy. The main difficulty of the two-component system is that no relative entropy is weak* continuous, which is crucial for the classical argument \cite{penrose:foundations}. Instead, our proof is based on a discrete two-dimensional logarithmic Sobolev inequality, that bounds the entropy dissipation on the correct timescale. Our approach also improves current assumptions to determine the long time behaviour for the one-component system.
\end{abstract} 

%% file: Introduction.tex
\section{Introduction}
\subsection{One-Component System}
In 1935 Richard Becker and Werner Döring introduced possibly the simplest coagulation--fragmentation model capable of describing phase-transitions in supersaturated vapours \cite{beckerDoering}. They characterise particles by the number of monomers from which they are composed and assume that coagulation and fragmentation only occurs through monomer interaction. Then, the densities of particles containing exactly $i$ monomers at time $t$---denoted by $c_i(t)$---evolve according to 
\begin{equation}\label{ocBD}
    \odv{}{t} c_i(t) = - J_i(t) + J_{i-1}(t) \text{ if }i>1,
\end{equation}
where $J_i(t)$ is the flux between particles of size $i$ and those of size $i+1$. If we specify the rate at which a monomer attaches to a particle of size $i$ as $a_i$ and the rate of detachment from a particle of size $i+1$ as $b_{i+1}$, we can describe 
\[ J_i(t) = a_i c_1 (t) c_i(t) - b_{i+1} c_{i+1}(t).\]
In their original paper, Becker and Döring considered $c_1$ to be constant, making \eqref{ocBD} linear. The nonlinear version of \eqref{ocBD}, where the total mass is assumed to be constant, i.e. $\sum\limits_{n=1}^{\infty} n c_n(t) = \sum\limits_{n=1}^{\infty} n c_n(0)$, was introduced in \cite{penrose:first} and yields the additional equation 
\begin{equation}\label{ocBDMass}
  \odv{}{t} c_1(t) = -J_1 - \sum\limits_{n=1}^{\infty} J_n(t).
\end{equation}
System \eqref{ocBD}--\eqref{ocBDMass} offers a remarkably rich mathematical study \cite{canizoMomentBounds,colletLSW,laurencotLSW,penrose:metastability,schlichtingGradientFlow}. Most notably, if the initial conditions are supersaturated, the system undergoes a phase transition. In the seminal paper~\cite{penrose:foundations}, the long time behaviour entailing the phase transition was determined rigorously. They exploit that solutions to \eqref{ocBD}--\eqref{ocBDMass} dissipate an entropy $V(c)$ according to 
\begin{equation*}
  \odv{}{t} \underbrace{\sum\limits_{n=1}^{\infty} c_n\left(\ln\left(\frac{c_n}{Q_n}\right) -1\right)}_{\eqqcolon V(c)}
  =-\sum\limits_{n=1}^{\infty} (a_n c_1c_n - b_{n+1}c_{n+1})\ln\left(\frac{a_n c_1 c_n}{b_{n+1} c_{n+1}}\right) \le 0,
\end{equation*}
where the partition function $Q_i$ is defined as
\begin{equation}\label{ocDetailedBalance}
  Q_i \coloneqq \prod\limits_{k=1}^{i-1} \frac{a_k}{b_{k+1}}.
\end{equation}
The behaviour of the coefficients $Q_i$ depends on the physical system under consideration. For example, droplets in a vapour \cite{niethammer:LswLimit} or a binary alloy on a cubic lattice \cite{kalos:BinaryAlloyCoefficients} will yield
\[ Q_i \approx \frac{1}{z_s^i} e^{-i^{\frac 2 3}} \text{ for } i\to\infty,\text{ with } 0<z_s <\infty.\]
From the continuity properties of $V(c)$ (w.r.t. weak* convergence), they deduce in \cite{penrose:foundations} that $c(t)$ converges to an equilibrium. All equilibria are fully characterised by their monomer densities $z>0$ and are given by $\bar{c}^z_i \coloneqq z^i Q_i.$ Hence, there is a bijection 
\[z_\rho \colon [0,\rho_s] \to [0,z_s], \quad \rho \mapsto z_\rho \text{ with } \sum\limits_{n=1}^{\infty} n z_\rho^n Q_n = \rho,\]
where  $\rho_s \coloneqq \sum\limits_{n=1}^{\infty} n z_s^n Q_n<\infty$.
The exact equilibrium that a solution $c(t)$ converges to is determined through a maximum principle \cite{ball:refinedMaxPrinciple} and depends only on the initial mass of the system $\rho \coloneqq \sum\limits_{n=1}^{\infty} nc_n(0)$ via 
\begin{equation}\label{ocLongTime}
  c_i(t) \to 
  \begin{cases}
    z_\rho^i Q_i & \text{ if } \rho \le \rho_s\\
    z_s^i Q_i & \text{ else}.
  \end{cases}
\end{equation}
If $\rho>\rho_s$, the system is supersaturated and even though $\rho(t) = \rho>\rho_s$ for all times, in the limit $t=\infty$, some mass is lost. The excess mass is getting pushed into larger and larger clusters, building the new macroscopic phase.

The main goal of the present work is to establish an analogous result of \eqref{ocLongTime} for the two-component extension of \eqref{ocBD}--\eqref{ocBDMass}. To do so, we need to follow a novel approach, because $V(c)$ will not be weak* continuous any more \cite{paperBasics}. Furthermore, the dichotomy of either $c_1(t) < z_s$ or $c_1(t)\ge z_s$ is not present in the two-component system, which was crucially needed for the maximum principle.

For a detailed overview of the one-component Becker--Döring system we refer the reader to the well written review \cite{hingant:Overview}. Over the years, many variants and extensions of \eqref{ocBD} have been proposed and studied, e.g. \cite{laurencotDiffusionI,laurencotDiffusionII} incorporates spatial diffusion, \cite{pegoAtomization} studies a finite system with atomization of the largest cluster and \cite{niethammerInjectionDepletion} allows injection of monomers and depletion of large clusters.\\
With recent interest and advances in multidimensional coagulation equations \cite{cristianSedimentation,cristianCoagulationNonSphere,ferreiramcCoagulationSteadyStates,ferreiramcCoagulation,kharchandymcCoagFrag,}, a study of the two-component version of \eqref{ocBD} seems warranted. An early study around the turn of the century \cite{dunwell:phd,soheiliNumericalmcBD,soheiliContinuummcBD} was brief and without a satisfactory theoretical conclusion.
While \cite{doumicBiMonomeric} also considers a Becker--Döring type system with two different monomers, it describes a polymerisation--depolymerisation reaction systems, which is still characterised by a one-dimensional parameter $j\in\N$ and in particular very different form the system considered in this work.

\subsection{Two-Component System}\label{section:twoCompomentSystem}

When particles are built from two types of monomers, but still interact only through monomers, then their evolution is described by the two-component Becker--Döring equations.
These equations govern the dynamics of the concentrations $c\rs$ of particles made up of $r$ Type I and $s$ Type II monomers and read
\begin{equation}\label{mcBD}
  \begin{cases}
    \odv{}{t}c\rs  &= -\delr \Jr\rs - \dels \Js\rs\quad \text{for }r+s\ge2,\\
    \odv{}{t}c_{1,0} &= -\Jr_{1,0}-\Js_{1,0} - \sum\limits_{r+s=1}^\infty  \Jr\rs,\\
    \odv{}{t}c_{0,1} &= -\Jr_{0,1}-\Js_{0,1} - \sum\limits_{r+s=1}^\infty \Js\rs,
  \end{cases}
\end{equation}
where $\Jr\rs$ denotes the flux between particles of size $(r,s)$ and $(r+1,s)$ and $\Js\rs$ the flux between those of size $(r,s)$ and $(r,s+1).$ The two-dimensional sums have to be understood as ranging over subsets of $\Omega\coloneqq \{(r,s) \in \N^2_0 \,|\, r+s\ge 1\}$. Furthermore, the discrete backwards derivative operators $\delr$ and $\dels$ are defined for a sequence $(f\rs)_{(r,s)\in\Omega}$ via
\[ \delr f\rs \coloneqq f\rs - f\rms \text{ and }\dels f\rs \coloneqq f\rs - f\rsm, \text{ with the convention }f\rs = 0 \text{ for all }(r,s) \not\in \Omega.\]
Once we fix coagulation coefficients $\ar\rs,\as\rs\ge 0$ and fragmentation coefficients $\br\rps,\bs\rsp\ge 0$ for all $(r,s)\in\Omega$, we can describe the fluxes as
\begin{equation}\label{fluxes}
    \Jr\rs(c) \coloneqq \ar\rs c_{1,0}c\rs  - \br\rps c\rps
    \quad\text{ and }\quad
    \Js\rs(c) \coloneqq \as\rs c_{0,1}c\rs  - \bs\rsp c\rsp.
\end{equation}
If we multiply \eqref{mcBD} with a test-sequence $(g\rs)_{(r,s)\in\Omega},$ integrate it up and do a summation by parts, we formally find the weak form
\begin{equation}\label{mcBDWeak}
    \sumrs g\rs c\rs(t)  = \sumrs g\rs c\rs(0) + \int_0^t \sumrs (\delr g\rps - g_{1,0}) \Jr\rs + (\dels g\rsp - g_{0,1}) \Js\rs\dd \tau.
\end{equation}
\subsection{Basic Properties}\label{section:basicProperties}

In the applied literature \eqref{mcBD} is usually referred to as birth--death equations and has been used extensively to calculate and estimate nucleation rates for atmospheric droplets \cite{elm:applicationReview,vehkamaeki:CNT}. We initiated the general study of \eqref{mcBD} from a mathematical perspective in \cite{paperBasics}. In this section we want to summarise the main results needed for the present work. Remember that $\Omega = \{(r,s) \in \N^2_0 \,|\, r+s\ge 1\}$ and let 
\[ X^+\coloneqq \left\{(c\rs)_{(r,s)\in\Omega} \subset \R_{\ge 0} \;\middle|\; \sumrs r c\rs < \infty \text{ and } \sumrs s c\rs < \infty \right\} \text{ and } ||c|| \coloneqq \sumrs (r+s) |c\rs|.\]
Furthermore, for $c\in X^+,$ we define its Type I and Type II mass as
\[ \rho(c) \coloneqq \sumrs r c\rs \text{ and }\sigma(c) \coloneqq \sumrs s c\rs \text{ respectively}.\]
    \begin{definition}\textit{(Solutions to \eqref{mcBD})}\label{mcBDSolutions}\\
        \input{\CommonPath/definitions/mcBDSolutions}
\end{definition}
Then, we found the following existence result 
\begin{theorem}\label{mcBDCauchy}(Cauchy problem \eqref{mcBD}, \cite{paperBasics} Theorems 2.8 and 2.9)\\
  Assume that $\ar\rs + \as\rs \in \bigO(r+s)$. Then, there exists a solution to \eqref{mcBD} for all times. Furthermore, any solution will conserve Type I and Type II mass for all times.
\end{theorem}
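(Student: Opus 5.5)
We would follow the classical Ball--Carr--Penrose strategy for the one-component system and adapt it to two dimensions. The solution is built as a limit of solutions to finite truncated systems, and mass conservation is then shown using the linear growth bound $\ar\rs + \as\rs \in \bigO(r+s)$.

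\textbf{Step 1: truncated system.} For $N\in\N$ we restrict \eqref{mcBD} to the triangle $\{(r,s)\in\Omega \,|\, r+s\le N\}$. We set $\Jr\rs = \Js\rs = 0$ whenever the flux would leave the triangle. This is a finite system of ODEs with a locally Lipschitz (quadratic) right-hand side, so a local solution exists. Nonnegativity is preserved because each equation has the form $\odv{}{t}c\rs \ge -(\text{bounded})\,c\rs$. Testing the finite version of \eqref{mcBDWeak} with $g\rs = r$ and with $g\rs = s$ gives exact conservation of $\rho$ and $\sigma$: the boundary terms vanish since $\delr g\rps - g_{1,0} = 0$ for $g\rs=r$, and similarly for $s$. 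Together with nonnegativity this gives a uniform bound on $\|c^N(t)\|$, so the truncated solutions exist globally.

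\textbf{Step 2: compactness and passage to the limit.} Every $c^N_{r,s}(t)$ is bounded by $\rho+\sigma$. Each time derivative $\odv{}{t}c^N\rs$ for $r+s\ge 2$ is bounded uniformly in $N$ on $[0,T]$ by a constant depending on $(r,s)$. Arzelà--Ascoli and a diagonal argument then give a subsequence with $c^N\rs \to c\rs$ locally uniformly for every fixed $(r,s)$. The monomer equations contain the infinite sums $\sum \Jr\rs$, and these are the delicate part of the limit. To control them we need a uniform tail estimate
\[
\sum_{r+s\ge n} (r+s)\, c^N\rs(t) \to 0 \quad (n\to\infty)
\]
uniformly in $N$ and in $t\in[0,T]$. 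We obtain it from a superlinear moment: choose a convex $g$ with $g(k)/k\to\infty$ and $\sum g(r+s)c\rs(0)<\infty$, which exists by the de la Vallée-Poussin lemma. We then test \eqref{mcBDWeak} with $g\rs=g(r+s)$. The increments $\delr g\rps = g(r+s+1)-g(r+s)$ are at most $g'(r+s+1)$. Combined with $\ar\rs\le C(r+s)$, this bounds the gain term by $C c_{1,0}\sum (r+s) g'(r+s+1) c\rs$. The fragmentation terms enter with a favourable sign. If $g$ is chosen so that $k g'(k+1)\lesssim g(k)+k$, which is possible with a slowly growing convex $g$, Gronwall yields a uniform-in-$N$ bound on $\sum g(r+s)c^N\rs(t)$ over $[0,T]$.

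\textbf{Step 3: conclusion.} This uniform superlinear moment makes $\sum (r+s)c^N\rs(t)$ uniformly integrable in $(r,s)$. It gives strong convergence in $\|\cdot\|$, convergence of the sums $\sum_{r+s} \Jr\rs$ and $\sum_{r+s}\Js\rs$ because $|\Jr\rs|\lesssim (r+s)c\rs + c\rps$, and passage to the limit in the integrated form of \eqref{mcBD}. The limit $c$ is therefore a solution in the sense of Definition~\ref{mcBDSolutions} and conserves $\rho$ and $\sigma$ for all times.

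Conservation for an \emph{arbitrary} solution, not only the constructed one, needs a separate argument. We would take any solution and apply \eqref{mcBDWeak} with truncated test functions $g\rs = r\,\indicator{r+s\le n}$. The boundary flux term has the form $n\,\Jr$ on the shell $r+s=n$. Integrability of $\int_0^t\sum (r+s)c\rs\,\dd\tau$ is part of the definition of a solution, and together with $\ar\rs=\bigO(r+s)$ it shows that the boundary flux goes to zero along a subsequence $n\to\infty$. Only the coagulation part of the flux needs this bound; the fragmentation part has a sign.

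We expect the main obstacle to be this last step, together with the choice of $g$ in Step 2. Both depend on the linear bound on $\ar\rs+\as\rs$ in the two-dimensional setting, where the increments of $g$ must be controlled in both the $r$ and the $s$ direction simultaneously.
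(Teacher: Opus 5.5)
Your overall architecture (truncation, compactness, a de la Vallée-Poussin moment, truncated test sequences) is the standard one from \cite{penrose:foundations}. Step 1 and the moment estimate are fine; with $g\rs=g(r+s)$ the $r$- and $s$-increments coincide, so there is no extra two-dimensional difficulty there.

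There is, however, a genuine gap in Step 3. Theorem~\ref{mcBDCauchy} assumes nothing on $\br\rps,\bs\rsp$ beyond nonnegativity, so the bound $|\Jr\rs|\lesssim (r+s)c\rs+c\rps$ is false in general. Your superlinear moment controls only the coagulation parts of the fluxes. As a result, three things are not shown:
\begin{enumerate}
  \item equicontinuity of $c^N_{1,0},c^N_{0,1}$, whose derivatives contain the total fragmentation rate (not bounded uniformly in $N$); you only bounded $\odv{}{t}c^N\rs$ for $r+s\ge2$;
  \item convergence of the infinite sums in the monomer equations;
  \item condition ii) of Definition~\ref{mcBDSolutions} for the limit.
\end{enumerate}
The missing ingredient is a mass balance on $\{r+s\ge m\}$. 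Set $A^N\rs\coloneqq(\ar\rs c^N_{1,0}+\as\rs c^N_{0,1})c^N\rs$ and $F^N\rs\coloneqq\br\rps c^N\rps+\bs\rsp c^N\rsp$. The truncated weak form with $g\rs=(r+s)\indicator{r+s\ge m}$, $m\ge2$, gives (flux sums over $r+s\le N-1$)
\[ \int_{t_1}^{t_2}\Big(m\sum_{r+s=m-1}F^N\rs+\sum_{r+s\ge m}F^N\rs\Big)\dd\tau=\int_{t_1}^{t_2}\Big(m\sum_{r+s=m-1}A^N\rs+\sum_{r+s\ge m}A^N\rs\Big)\dd\tau+\sum_{r+s\ge m}(r+s)\big(c^N\rs(t_1)-c^N\rs(t_2)\big). \]
Taking $m=2$ and using $\ar\rs+\as\rs\in\bigO(r+s)$ bounds the total fragmentation flux uniformly in $N$. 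For the tail, note that the left-hand side dominates $\int_{t_1}^{t_2}\sum_{r+s\ge 2M}F^N\rs\,\dd\tau$ for every $m\le 2M$. Averaging over $m\in\{M,\dots,2M\}$ then bounds that tail by
\[ C(t_2-t_1)\sup_\tau\sum_{r+s\ge M-1}(r+s)c^N\rs(\tau)+\sum_{r+s\ge M}(r+s)c^N\rs(t_1), \]
which your moment bound makes small uniformly in $N$. This supplies all three missing points, the last one via Fatou.

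The conservation argument for arbitrary solutions also needs repair. With $g\rs=r\indicator{r+s\le n}$ one gets
\[ \sum_{r+s\le n} r\,c\rs(t)-\sum_{r+s\le n} r\,c\rs(0)=-\int_0^t\Big(\sum_{r+s=n}\big((r+1)\Jr\rs+r\,\Js\rs\big)+\sum_{r+s>n}\Jr\rs\Big)\dd\tau . \]
Once the coagulation part vanishes, the sign of the fragmentation part only yields $\rho(t)\ge\rho(0)$; it does not make that part vanish. Use condition ii) of Definition~\ref{mcBDSolutions} directly instead. Since $\monr,\mons$ are bounded, the quantity $x_n\coloneqq\int_0^t\sum_{r+s=n}(|\Jr\rs|+|\Js\rs|)\,\dd\tau$ is summable in $n$. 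Hence $\liminf_n (n+1)x_n=0$, which removes both the coagulation and the fragmentation boundary terms along one subsequence, while the tail $\sum_{k>n}x_k$ tends to $0$. This step does not even need the growth bound on $\ar\rs,\as\rs$.
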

Even though Theorem \ref{mcBDCauchy} allows for arbitrary fragmentation coefficients $\br\rps,\bs\rsp \ge0,$ physical considerations (see Subsection \ref{subsection:physicalConsiderations}) constrain them via the following condition. There exists a sequence $Q\rs >0$ with $Q_{1,0} = 1 = Q_{0,1}$ such that 
\begin{equation}\label{detailedBalanceBasic}
  \Jr\rs(Q) =  0 = \Js\rs(Q) \text{ for all } r+s\ge1,
\end{equation}
which is equivalent to \eqref{assumption:detailedBalance} below.
Under this assumption, we made a rigorous connection between the entropy $V(c)$ and the dissipation $D(c),$ that are given by 
\begin{equation}\label{defVD}
  \begin{split}
      V(c) &\coloneqq \sumrs c\rs \left[\ln\left(\frac{c\rs}{Q\rs}\right)-1\right],\\
        \D(c) &\coloneqq \sumrs \Jr\rs \ln\left(\frac{\ar\rs\monr c\rs}{\br\rps c\rps}\right) + \Js\rs \ln\left(\frac{\as\rs\mons c\rs}{\bs\rsp c\rsp }\right).
  \end{split}
\end{equation}
\begin{theorem}\label{entropyInequality}(Entropy dissipation, \cite{paperBasics} Theorem 2.19 and \cite{dunwell:phd} Theorem 2.5.4)\\
  Let $c^0\in X^+$ with $\rho(c^0) \neq 0 \neq \sigma(c^0)$ and assume that $\ar\rs + \as\rs \in \bigO(r+s)$ and $\ar\rs,\as\rs,\br\rps,\bs\rsp >0$ satisfy \eqref{detailedBalanceBasic} as well as 
  \begin{equation}\label{assCoeff}
     \begin{split}
         \lim\limits_{N\to\infty}\inf\limits_{r+s\ge N} Q\rs^\frac 1 {r+s} >0 \quad\text{ and }\quad
         \lim\limits_{N\to\infty}\sup\limits_{r+s\ge N} Q\rs^\frac 1 {r+s} <\infty.
     \end{split}
  \end{equation}
  Then, there exists a solution $c$ to \eqref{mcBD} with initial data $c^0$, which is strictly positive on $(0,\infty)$, satisfying 
  \begin{equation*}
    \Vzw(c(t_2)) \le \Vzw(c(t_1)) - \int_{t_1}^{t_2} D(c(\tau)) \dd \tau \quad\text{ for all }0\le t_1 < t_2 < \infty.
  \end{equation*}
\end{theorem}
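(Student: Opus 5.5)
Theorem \ref{entropyInequality} is a result from \cite{paperBasics}, so the plan is to reproduce the standard construction of entropy-dissipating solutions. We build them as limits of finite truncations and pass the entropy inequality to the limit by lower semicontinuity.

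\textbf{Step 1: truncated systems.} For $N\in\N$ we consider \eqref{mcBD} restricted to $\{(r,s)\in\Omega: r+s\le N\}$. We set the fluxes leaving the truncated domain to zero, that is $\Jr\rs=0$ for $r+s=N$ and $\Js\rs=0$ for $r+s=N$. This is a finite system of ODEs with locally Lipschitz right-hand side. It conserves $\rho$ and $\sigma$ exactly, and it preserves positivity. Since $\rho(c^0)\neq0\neq\sigma(c^0)$, both monomer densities become strictly positive immediately, and then every $c\rs$ with $r+s\le N$ becomes positive through the coagulation terms. Hence the truncated solution $c^N$ exists globally and is strictly positive for $t>0$. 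Detailed balance \eqref{detailedBalanceBasic} gives $\ar\rs Q_{1,0}Q\rs=\br\rps Q\rps$ and the analogous identity for $\as$. Differentiating $V_N(c^N)$, summing by parts and inserting these identities gives the exact identity
\[
V_N(c^N(t_2))+\int_{t_1}^{t_2} D_N(c^N)\dd\tau=V_N(c^N(t_1)),
\]
with $D_N\ge0$ a sum of terms $(x-y)(\ln x-\ln y)$.

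\textbf{Step 2: compactness.} Mass conservation gives a uniform bound $\|c^N(t)\|\le\|c^0\|$. The linear growth $\ar\rs+\as\rs\in\bigO(r+s)$ gives equicontinuity of each $c^N\rs$ in time. A diagonal argument then yields $c^N\to c$ locally uniformly in $t$ for every fixed $(r,s)$, and $c$ is a solution in the sense of Definition \ref{mcBDSolutions}, exactly as in Theorem \ref{mcBDCauchy}. To obtain strict positivity of the limit we need lower bounds on the monomers that are uniform in $N$. Mass conservation together with the equation for $c_{1,0}$ gives $\odv{}{t}c_{1,0}\ge -Cc_{1,0}(1+\|c\|)$ plus a positive gain term. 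The gain is positive because of the fragmentation $\br_{2,0}c_{2,0}$ and related terms. These bounds then propagate to all $c\rs$ by Gronwall, uniformly in $N$.

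\textbf{Step 3: passing to the limit in the entropy inequality.} This is the main obstacle. The dissipation term is harmless: each summand of $D$ is a nonnegative continuous function of finitely many components, so Fatou's lemma gives $\int D(c)\le\liminf\int D_N(c^N)$. The difficulty lies in the entropy $V$. We split
\[
V(c)=\sumrs c\rs\ln c\rs-\sumrs c\rs\ln Q\rs-\sumrs c\rs .
\]
By \eqref{assCoeff} there are constants with $|\ln Q\rs|\le C(r+s)$ for all $(r,s)$, and $c\rs\ln c\rs\ge -C(r+s)c\rs-Ce^{-(r+s)}$. Hence the tail of $V$ is bounded below by $-C\sum_{r+s>M}(r+s)c\rs$. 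That tail need not be small, since mass can escape to infinity, so $V$ is not weak* continuous.

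We resolve this as follows. At the initial time $t_1=0$ we have $c^N(0)=c^0$ truncated, so $V_N(c^N(0))\to V(c^0)$ by dominated convergence. Using Fatou on the left-hand side, we have to show $V(c(t_2))\le\liminf V_N(c^N(t_2))$. The terms $c\ln c$ are fine by Fatou after adding $C(r+s)c\rs+Ce^{-(r+s)}$. The linear term $-\sum c\rs\ln Q\rs$ is the problem, because it is only bounded by $C\|c\|$ and escaping mass could contribute either sign. I would try to choose $z,w>0$ such that $\ln Q\rs+r\ln z+s\ln w\le 0$, which is possible by the $\limsup$ in \eqref{assCoeff}. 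We then work with the relative entropy with respect to $\cinfzw$. It differs from $V$ only by $\rho\ln z+\sigma\ln w$, which is conserved exactly for every $N$ and for the limit (Theorem \ref{mcBDCauchy}). Its linear part now has a sign, so it is weak* lower semicontinuous by Fatou. This gives the inequality for $t_1=0$.

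For general $t_1>0$ the lower semicontinuity at $t_1$ goes the wrong way. I would handle this as in \cite{paperBasics}: restart the construction from $c(t_1)$, use the fact that the solution is selected by the construction, and obtain the inequality on a dense set of times. We then extend to all $0\le t_1<t_2$ using the monotonicity of $t\mapsto V(c(t))+\int_0^tD$ together with the lower semicontinuity established above.
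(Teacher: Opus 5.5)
Your Steps 1--2 and the case $t_1=0$ of Step 3 are fine, but the argument for $t_1>0$ has a genuine gap. Restarting the truncation scheme from $c(t_1)$ produces \emph{some} solution on $[t_1,\infty)$ that satisfies the inequality. Nothing identifies it with $c|_{[t_1,\infty)}$, because no uniqueness is available under these hypotheses. Saying ``the solution is selected by the construction'' does not fix this: truncations started at time $0$ and evaluated at $t_1$ form a different approximating sequence from truncations of $c(t_1)$. The extension from a dense set of times also fails. Passing to an arbitrary $t_1$ requires $\limsup_{s\to t_1}V(c(s))\le V(c(t_1))$, which is upper semicontinuity, and your lower semicontinuity gives the opposite direction. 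Invoking the monotonicity of $t\mapsto V(c(t))+\int_0^tD$ presupposes the claim.

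The missing observation is that no mass escapes at finite times along the approximation, so the failure of weak* continuity of $V$ never enters. The truncated systems conserve $\rho$ and $\sigma$, so $\|c^N(t)\|=\sum_{r+s\le N}(r+s)c^0_{r,s}\to\|c^0\|$. The limit is a solution, so $\|c(t)\|=\|c^0\|$ by Theorem~\ref{mcBDCauchy}. Combined with componentwise convergence, this gives $\sum_{r+s>M}(r+s)c^N_{r,s}(t)=\|c^N(t)\|-\sum_{r+s\le M}(r+s)c^N_{r,s}(t)\to\sum_{r+s>M}(r+s)c_{r,s}(t)$. Hence $\|c^N(t)-c(t)\|\to0$ for every $t\ge0$. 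Next, \eqref{assCoeff} gives $|\ln Q_{r,s}|\le C(r+s)$, and for $\|c\|\le R$ one has $|c_{r,s}\ln c_{r,s}|\le C_R\big((r+s)c_{r,s}+e^{-(r+s)}\big)$. So $V$ is continuous for the norm of $X$ on bounded subsets of $X^+$, and $V(c^N(t))\to V(c(t))$ for all $t$. The exact identity $V(c^N(t_2))+\int_{t_1}^{t_2}D_N(c^N)\dd\tau=V(c^N(t_1))$ then passes to the limit for all $0\le t_1<t_2$ at once, with Fatou needed only for the dissipation term. This also makes your $(z,w)$ device unnecessary. The lack of weak* continuity stressed in the paper matters for $t\to\infty$, where mass genuinely leaves every finite region, not for this finite-time approximation.
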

As mentioned above, the goal of the present work is to find and prove a generalisation of \eqref{ocLongTime} for the two-component system. For the classical Becker--Döring equations it was shown in \cite{penrose:foundations}, that \eqref{ocLongTime} is equivalent to 
\begin{equation}
  c(t) \text{ minimises }V(c) \text{ on } X^+_\rho \coloneqq \left\{(c_i)_{i\in\N} \subset \R_{\ge 0} \,\middle|\, \sum\limits_{n=1}^{\infty} n c_n = \rho\right\}.
\end{equation}
This formulation carries over nicely to the two-component case and yields a complete description of the limit in terms of the initial masses $\rho$ and $\sigma.$ To be precise, we have the following.
    \begin{definition}\textit{(Region of existence)}\label{regionOfExistence}\\
        \input{\CommonPath/definitions/regionOfExistence}
\end{definition}
If we call a sequence $\cbar\rs$ satisfying $\Jr\rs(\cbar) = 0 = \Js\rs(\cbar)$ for all $(r,s)\in\Omega$ an equilibrium of \eqref{mcBD}, then all equilibria with finite mass are exactly $(\cinfzw)_{(z,w)\in \Ex}.$ With this, we were able to map each pair of Type I and Type II masses $(\rho,\sigma)$ to a unique equilibrium $(z,w) \in \Ex$.
\begin{theorem}\label{steadyStateLimit}(\cite{paperBasics} Theorem 3.7)\\
  Assume $Q\rs$ satisfies \eqref{assCoeff}. Then, for any fixed $\rho,\sigma > 0$, there is a pair $(z,w)\in\Ex$ uniquely determined by 
  \begin{equation}\label{steadyStateLimitEq}
    \ln\left(\frac z u\right)\left(\rho - \rho(\cinfzw)\right) + \ln\left(\frac w v\right) \left(\sigma - \sigma(\cinfzw)\right) \ge 0 \text{ for all } 0< u,v \text{ with } (u,v)\in\Ex.
  \end{equation}
\end{theorem}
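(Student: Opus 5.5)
The plan is to recognise \eqref{steadyStateLimitEq} as the first-order optimality condition of a convex minimisation problem in logarithmic variables. Throughout I assume, as in Definition~\ref{regionOfExistence}, that $\cinfzw\rs = z^r w^s Q\rs$ and that $\Ex$ is the set of $(z,w)\in\R^2_{\ge0}$ with $\sumrs (r+s) z^r w^s Q\rs<\infty$; in particular $\Ex$ is a down-set, i.e.\ $(z,w)\in\Ex$ and $u\le z$, $v\le w$ imply $(u,v)\in\Ex$. Writing $z=e^x$, $w=e^y$, I define
\begin{equation*}
  G(x,y) \coloneqq \sumrs Q\rs e^{rx+sy} - \rho x - \sigma y \in (-\infty,\infty].
\end{equation*}
Formally $\partial_x G = \rho(\cinfzw)-\rho$ and $\partial_y G = \sigma(\cinfzw)-\sigma$. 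The condition \eqref{steadyStateLimitEq} with $u=e^{x'}$, $v=e^{y'}$ then reads $\nabla G(x,y)\cdot\bigl((x',y')-(x,y)\bigr)\ge0$, which is exactly the variational inequality characterising a minimiser of $G$ over the (log of the) admissible set.

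\textbf{Step 1: convexity and existence.} Each map $(x,y)\mapsto Q\rs e^{rx+sy}$ is convex, so $G$ is convex and, by Fatou, lower semicontinuous. Its effective domain $\{G<\infty\}$ is convex. The first part of \eqref{assCoeff} gives $Q\rs\ge \delta^{r+s}$ for large $r+s$, so $G=\infty$ as soon as $x$ or $y$ is large. Moreover, $G(x,y)\ge e^x+e^y-\rho x-\sigma y$ because $Q_{1,0}=Q_{0,1}=1$, and this lower bound tends to $+\infty$ as $x\to-\infty$ or $y\to-\infty$ since $\rho,\sigma>0$. Hence $G$ is coercive and attains its minimum at some finite point $p=(x_0,y_0)$. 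This gives $z=e^{x_0}>0$ and $w=e^{y_0}>0$.

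\textbf{Step 2: the minimiser lies in $\Ex$.} This is the main obstacle, since $G<\infty$ only guarantees a finite zeroth moment, not a finite first moment. I argue by contradiction: suppose $\sumrs (r+s)Q\rs z^rw^s=\infty$. Consider the inward path $t\mapsto p-t(1,1)$, which stays in the domain by the down-set property. The difference quotient $\bigl(G(p-t(1,1))-G(p)\bigr)/t$ equals $(\rho+\sigma)$ minus the sum of the terms $Q\rs e^{rx_0+sy_0}(1-e^{-(r+s)t})/t$. These terms increase as $t\downarrow0$, so by monotone convergence the sum tends to $\sumrs (r+s)Q\rs z^r w^s=\infty$. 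Hence $G(p-t(1,1))<G(p)$ for small $t$, contradicting minimality. Therefore $(z,w)\in\Ex$.

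\textbf{Step 3: the inequality and uniqueness.} Fix $(u,v)\in\Ex$ with $u,v>0$ and let $q=(\ln u,\ln v)$. The function $\phi(t)=G(p+t(q-p))$ on $[0,1]$ is convex and finite, and both endpoints have finite first moments. Dominated convergence, with domination by the first moments at the two endpoints via convexity of $t\mapsto e^{(r,s)\cdot(p+t(q-p))}$, gives $\phi'(0^+)=\nabla G(p)\cdot(q-p)$. Minimality gives $\phi'(0^+)\ge0$, which is \eqref{steadyStateLimitEq}.

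For uniqueness, suppose $(z',w')\in\Ex$ with $z',w'>0$ also satisfies \eqref{steadyStateLimitEq}, and set $p'=(\ln z',\ln w')$. By the convexity inequality $G(q)\ge G(p')+\nabla G(p')\cdot(q-p')$, valid for all $q$ in $\log\Ex$, the point $p'$ minimises $G$ over $\log\Ex$. By Step 2 every global minimiser already lies in $\log\Ex$, so $p'$ is a global minimiser. Finally, $G$ is strictly convex because the monomer part $e^x+e^y$ is strictly convex in $(x,y)$, so the global minimiser is unique and $p'=p$.
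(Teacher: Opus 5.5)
The paper does not prove this statement; it quotes it from \cite{paperBasics}. There is therefore no in-paper argument to compare against, and your proposal has to stand on its own. It does, up to one fixable point.

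Reading \eqref{steadyStateLimitEq} as the first-order condition for minimising the strictly convex function $G(x,y)=\sum_{r,s}Q_{r,s}e^{rx+sy}-\rho x-\sigma y$ is correct. The monotone-convergence argument along $p-t(1,1)$ correctly shows that the minimiser has finite first moments, i.e.\ lies in $\Ex$. The one-sided derivative computation in Step 3 is sound, and so is the uniqueness argument via strict convexity of $e^x+e^y$.

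This is also the natural route within the paper's framework. For $c\in X^+_{\rho,\sigma}$ one has $H[c|\cinfzw]=V(c)+G(\ln z,\ln w)$, so your $G$ is exactly the constant in the paper's remark ``$H=V+\const$''. Since $H\ge 0$ gives $\inf V\ge -\min G$, the minimiser of $G$ is the only pair for which Theorem~\ref{steadyStateMinimising} can assert $\inf H[\cdot|\cinfzw]=0$.

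The point to repair is in Step 1, where you use the wrong half of \eqref{assCoeff}.
\begin{itemize}
\item The half you invoke, the lower bound $Q_{r,s}\ge\delta^{r+s}$, is not needed. Already $e^x-\rho x\to\infty$ as $x\to\pm\infty$, and likewise in $y$, so $G$ is coercive on all of $\R^2$.
\item What you do need, and never state, is that $G$ is proper, i.e.\ $G\not\equiv+\infty$. Steps 2 and 3 subtract $G(p)$ and rely on $G(p)<\infty$.
\item Properness is exactly what the other half of \eqref{assCoeff} gives. The bound $Q_{r,s}\le C^{r+s}$ for large $r+s$ yields $G<\infty$, and even finite first moments, whenever $e^x,e^y<1/(2C)$.
\item Without that half, e.g.\ for $Q_{r,s}=(r+s)!$, one has $\Ex=\{(0,0)\}$ and $G\equiv+\infty$, and your Step 1 produces no usable minimiser.
\end{itemize}

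Finally, in the uniqueness part, say explicitly why competitors with $z'=0$ or $w'=0$ are excluded. In that case $\rho-\rho(\bar c^{z',w'})=\rho>0$ (respectively $\sigma-\sigma(\bar c^{z',w'})=\sigma>0$) multiplies $\ln 0=-\infty$. Hence \eqref{steadyStateLimitEq} fails as soon as some $(u,v)\in\Ex$ with $u,v>0$ exists, which again comes from the second half of \eqref{assCoeff}.
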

Before we characterise the minimising sequences of the entropy, let us first introduce the relative entropy, which is just $V + \const.$
    \begin{definition}\textit{(Relative entropy)}\label{relativeEntropy}\\
        \input{\CommonPath/definitions/relativeEntropy}
\end{definition}
Note that $\Psi \ge 0$ and hence $H \ge 0.$ Finally, we proved 
\begin{theorem}\label{steadyStateMinimising}(Minimising sequences, \cite{paperBasics} Theorem 4.3 and Corollaries 4.4 and 4.5)\\
  Assume $Q\rs$ satisfies \eqref{assCoeff},  fix $\rho,\sigma>0$ and define 
  \[X^+_{\rho,\sigma} \coloneqq \Big\{ c\in X^+ \,\Big|\, \sumrs r c\rs = \rho \text{ and } \sumrs s c\rs = \sigma\Big\}.\]
  For $\cinfzw$ determined by \eqref{steadyStateLimitEq} we have 
  \[ \inf\limits_{c\in X^+_{\rho,\sigma}} H[c|\cinfzw] = 0\]
  and any minimising sequence $c^n$ converges to $\cinfzw$ in the weak* topology, i.e. $\lim\limits_{n \to \infty} c^n\rs = \cinfzw\rs$ for all $r+s\ge 1.$ Furthermore, for any $M\in \N$ and $\delta>0$, we have as $n\to\infty$
\begin{equation}
  \sumgamma[\Gamma_{M,\delta}] (r+s) |c^n\rs - \cinfzw\rs| \to 0, \text{ where }
  \Gamma_{M,\delta} \coloneqq \left\{(r,s)\in \Omega\,\middle|\, r+s\le M \text{ or }\frac{\ln(\cinfzw\rs)}{r+s} \le - \delta\right\}.
\end{equation}
\end{theorem}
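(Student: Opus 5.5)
The plan is to reduce everything to the pointwise structure of $\Psi$ and the variational characterisation \eqref{steadyStateLimitEq}. I take Definition \ref{relativeEntropy} to mean $H[c|\cinfzw]=\sumrs \Psi(c\rs,\cinfzw\rs)$ with $\Psi(x,y)=x\ln(x/y)-x+y\ge 0$, up to the terms linear in $\rho(c),\sigma(c)$. On $X^+_{\rho,\sigma}$ one has $\ln(\cinfzw\rs)=r\ln z+s\ln w+\ln Q\rs$. Hence $H[\cdot|\cinfzw]$ equals $V(\cdot)-\rho\ln z-\sigma\ln w+\const$ there, and minimising $H$ is the same as minimising $V$ on $X^+_{\rho,\sigma}$. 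The proof then has three steps: a recovery sequence giving $\inf H=0$, weak* convergence of minimising sequences, and strong convergence on $\Gamma_{M,\delta}$.

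\emph{Step 1 (infimum is zero).} Let $(\Delta\rho,\Delta\sigma)\coloneqq(\rho-\rho(\cinfzw),\sigma-\sigma(\cinfzw))$ be the mass deficit; \eqref{steadyStateLimitEq} with $(u,v)$ ranging over $\Ex$ shows it is nonnegative. If it vanishes, take $c^n=\cinfzw$. Otherwise I read \eqref{steadyStateLimitEq} as a normal-cone condition: $(\ln z,\ln w)$ lies on the boundary of the convex set $\ln\Ex$, and $(\Delta\rho,\Delta\sigma)$ is an outer normal there. I then want to show, using \eqref{assCoeff} and the definition of $\Ex$ via the exponential growth rate of $Q\rs$, that along a sequence $(r_n,s_n)$ with $r_n+s_n\to\infty$ and $r_n/s_n\to\Delta\rho/\Delta\sigma$ we have
\[
\tfrac{1}{r_n+s_n}\ln \cinfzw_{r_n,s_n}\to 0.
\]
This is the direction of maximal binding energy. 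Set $c^n=\cinfzw+\eps_n e_{(r_n,s_n)}+\eps'_n e_{(r'_n,s'_n)}$, using two large clusters so that both masses are matched exactly, with $\eps_n\approx \Delta\rho/r_n$. The added cost is about $\eps_n\ln(\eps_n/\cinfzw_{r_n,s_n})=\bigO(\tfrac{1}{r_n}\ln r_n)+\smallo(1)\to0$, hence $\inf H=0$. The main obstacle is this step: extracting the subexponential behaviour of $\cinfzw$ along the deficit direction from \eqref{steadyStateLimitEq}. I would prove it by contradiction. If $\cinfzw\rs\le e^{-\delta(r+s)}$ in a cone around that direction, then $(z,w)$ could be pushed slightly along $(\Delta\rho,\Delta\sigma)$ while staying in $\Ex$, which violates \eqref{steadyStateLimitEq}.

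\emph{Step 2 (weak* limit).} If $H[c^n|\cinfzw]\to0$, then each term $\Psi(c^n\rs,\cinfzw\rs)\to0$ since all terms are nonnegative. Because $\Psi(\cdot,y)$ is strictly convex with unique zero at $y$, this gives $c^n\rs\to\cinfzw\rs$ for every $(r,s)$.

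\emph{Step 3 (strong convergence on $\Gamma_{M,\delta}$).} The finitely many indices with $r+s\le M$ follow from Step 2. For the remaining indices, where $\cinfzw\rs\le e^{-\delta(r+s)}$, use the elementary bound $x\le \Psi(x,y)/\ln K+Ky$ for $K>e$ and choose $K=e^{\delta(r+s)/2}$. This yields
\[
(r+s)c^n\rs\le \tfrac2\delta\Psi(c^n\rs,\cinfzw\rs)+(r+s)e^{-\delta(r+s)/2}.
\]
Summing over $r+s>L$ gives $\tfrac2\delta H[c^n|\cinfzw]$ plus a tail that is small in $L$, and the same tail bound holds for $\cinfzw$. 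The finitely many indices with $r+s\le L$ are handled by Step 2. Letting $n\to\infty$ and then $L\to\infty$ proves the claim.
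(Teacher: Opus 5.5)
Your Steps 2 and 3 are fine. In particular, $x\le \Psi(x,y)/\ln K+Ky$ does hold for $K\ge e$, since $x\ln K\le\Psi(x,y)+(K-1)y$.

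\textbf{The gap is in Step 1.} You claim that $\frac1{r+s}\ln\cinfzw\rs\to0$ along the deficit direction $\xi^*\coloneqq\frac{\Delta\rho}{\Delta\rho+\Delta\sigma}$. This is false under \eqref{assCoeff} alone. Your contradiction argument fails because moving $(\ln z,\ln w)$ along $(\Delta\rho,\Delta\sigma)$ multiplies \emph{every} $\cinfzw\rs$ by $e^{t(r\Delta\rho+s\Delta\sigma)}$, including outside your cone. If $\cinfzw$ decays subexponentially in some other direction, the new point leaves $\Ex$.

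You correctly read \eqref{steadyStateLimitEq} as saying that $(\Delta\rho,\Delta\sigma)$ is an outer normal of the convex set $\ln\Ex$ at $(\ln z,\ln w)$. Let $\bar\Phi(\xi)$ be the $\limsup$ of $\frac{\ln Q\rs}{r+s}$ as $\frac r{r+s}\to\xi$, and let $A$ be the set of maximisers of $\xi\ln z+(1-\xi)\ln w+\bar\Phi(\xi)$, i.e.\ the directions of subexponential decay. The normal cone is generated by the vectors $(\xi,1-\xi)$ with $\xi\in A$. Hence only $\xi^*\in[\min A,\max A]$ follows, not $\xi^*\in A$.

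For a concrete counterexample, take $Q\rs=\exp\bigl((r+s)\Phi(\tfrac r{r+s})-(r+s)^{2/3}\bigr)$ with $\Phi(\xi)=-|\xi-\tfrac14|\,|\xi-\tfrac34|$ and $Q_{1,0}=Q_{0,1}=1$; this satisfies \eqref{assCoeff}.
\begin{itemize}
\item Every $(u,v)\in\Ex$ obeys $\tfrac14\ln u+\tfrac34\ln v\le0$ and $\tfrac34\ln u+\tfrac14\ln v\le0$, hence $\ln u+\ln v\le0$.
\item So for $(\rho,\sigma)=(\rho(\bar c^{1,1})+1,\sigma(\bar c^{1,1})+1)$, the point $(z,w)=(1,1)$ satisfies \eqref{steadyStateLimitEq}.
\item Here $\xi^*=\tfrac12$, yet $\bar c^{1,1}_{n,n}=e^{-n/8-(2n)^{2/3}}$ decays exponentially.
\item Moreover $(e^t,e^t)\notin\Ex$ for every $t>0$.
\end{itemize}
Under \eqref{assumption:thermo:concave} the set $A$ is a singleton and your claim holds (cf.\ Proposition \ref{criticalDirection}). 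The theorem, however, assumes only \eqref{assCoeff}.

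\textbf{How to repair Step 1.} First note $A\neq\emptyset$: otherwise $(ze^t,we^t)\in\Ex$ for small $t>0$, and \eqref{steadyStateLimitEq} forces zero deficit. Then prove $\xi^*\in[\min A,\max A]$ with a \emph{tilted} perturbation. Suppose, say, $\max A<\xi_0<\xi^*$, and move $(\ln z,\ln w)$ by $t(1-\xi_0,-\xi_0)$.
\begin{itemize}
\item This shifts the exponential rate in direction $\xi$ by $t(\xi-\xi_0)$, which is negative near $A$.
\item Away from $A$ the rate is uniformly negative, by upper semicontinuity of $\bar\Phi$ and compactness.
\item So for small $t>0$ the new point lies in $\Ex$, yet the left-hand side of \eqref{steadyStateLimitEq} equals $-t(\Delta\rho+\Delta\sigma)(\xi^*-\xi_0)<0$, a contradiction.
\end{itemize}
Consequently there are $\xi_1\le\xi^*\le\xi_2$ in $A$. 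There are also sequences $(r_n,s_n)$ and $(r'_n,s'_n)$, with directions tending to $\xi_1$ and $\xi_2$, along which $\frac1{r+s}\ln\cinfzw\rs\to0$.

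Now build the recovery sequence:
\begin{itemize}
\item Put nonnegative weights of order $\frac1{r_n+s_n}$ and $\frac1{r'_n+s'_n}$ on these two clusters, so that together they carry the deficit up to $\smallo(1)$. This is possible because the two directions bracket $\xi^*$.
\item Absorb the remaining $\smallo(1)$ mass mismatch by perturbing $c_{1,0}$ and $c_{0,1}$. This costs $\smallo(1)$ in entropy since $\cinfzw_{1,0}=z>0$ and $\cinfzw_{0,1}=w>0$.
\end{itemize}
So the two clusters are needed to bracket $\xi^*$, not merely to match the masses exactly. With this replacement, Steps 1--3 go through.
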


\subsection{Coefficients}\label{section:Coefficients}

The behaviour of solutions to \eqref{mcBD} depends heavily on the parameters $\ar\rs,\as\rs,\br\rps,\bs\rsp$. The derivation of accurate coefficients for a given physical situation is still active research and believed to be one of the main difficulties, when predicting nucleation rates \cite{elm:applicationReview}. Luckily, for our study, we do not need to know these parameters precisely. However, they do need to obey certain asymptotic bounds. To make sure that we are studying a physically relevant context, let us quickly lay out a standard approach to estimate these parameters and then formulate the assumptions of the main theorem in the physical context.

\subsubsection{Physical Considerations}\label{subsection:physicalConsiderations}

 We start with the coagulation coefficients. Assuming that the monomers move through space in a diffusive manner and that larger particles are very slow, the likelihood of a monomer colliding with a particle of size $(r,s)$ should be proportional to the surface area of the $(r,s)$ particle. Since $\ar\rs$ and $\as\rs$ should be given as collision rate multiplied with an accommodation coefficient, we expect for $r+s$ large $\ar\rs \approx (r+s)^{\frac 2 3} \approx \as\rs$. A sharper analysis, as in the appendix of \cite{niethammer:LswLimit} reveals the general form
\begin{equation}\label{coagulationPhysical}
  \ar\rs \approx (r+s)^\alpha \approx \as\rs \text{ with } \alpha \in [0,1].
\end{equation}
The surface attachment $\alpha = \frac 2 3$ corresponds to interface-reaction-controlled systems, but the growth may also be controlled by diffusion which results in $\alpha = \frac 1 3.$ In the latter we assume that the attachment of monomers to a cluster depletes the monomers locally in space, leading to a spatial gradient in the monomer concentrations and hence to the lower value of $\alpha$.
With the coagulation parameters given, we can calculate the condensation rates from the Gibbs free energy. To be precise, unless there is energy or entropy going into or out of the system, it should satisfy the detailed balance condition, i.e. there exists a sequence $Q\rs>0$ (given by the Gibbs free energy), such that 
\begin{equation}
  \br\rps = \ar\rs \frac{Q\rs}{Q\rps} \text{ and }\bs\rsp = \as\rs \frac{Q\rs}{Q\rsp} \text{ and } Q_{1,0} = Q_{0,1}.
\end{equation}
We can assume without loss of generality $Q_{1,0} = 1 =Q_{0,1},$ which we will do throughout this entire work. If we denote the Gibbs free energy of a $(r,s)$ cluster as $F(r,s)$, we have up to some physical constants $Q\rs = e^{F(r,s)}$. In particular, the coefficient $Q\rs$ can in theory be calculated from the quantum mechanic states of the $(r,s)$ cluster, which is unfortunately not feasible for large clusters \cite{jensen:computationalChemistry}. Instead some approximations (e.g. liquid droplet approximation and capillarity approximation) have to be made, to estimate $Q\rs.$ The idealised Kelvin model is probably the simplest (non-trivial) example, which yields \cite{wilemski:Coefficients}
\begin{equation}\label{idealizedKelvin}
  Q\rs \approx \lambda^r \mu^s \binom{r+s}{r} e^{- \kappa \sum\limits_{n=1}^{r+s} n^{-\frac 1 3}},
\end{equation}
where $0<\lambda,\mu,\kappa<\infty$ are constants (given by the equilibrium monomer concentrations of a gas over a pure liquid and the surface tension).\\
Classically, one considers the Gibbs free energy of a droplet of size $(r,s)$ to be given by the surface area $A\rs$, the surface tension $\kappa(\cdot)$, and two functions $\phi_1(\cdot)$ and $\phi_2(\cdot)$, that are determined by the liquid phase activity and gas phase activity \cite{vehkamaeki:CNT}. The Gibbs free energy $F(r,s)$ then takes the form 
\[ F(r,s) = -k T \Big[ r \ln(\phi_1(\xi)) + s \ln(\phi_2(1-\xi))\Big] + \kappa(\xi) A\rs,\]
where $k$ is the Boltzmann constant, $T$ the temperature and $\xi$ is the fraction of Type I monomers in the bulk of the droplet. If we assume perfect mixture of the Type I and Type II monomers in the droplet, so that the fraction of Type I monomers in the bulk is equal to the fraction on the surface, we have $\xi = \frac{r}{r+s}.$ This leads us to the general form, we will consider in this work 
\begin{equation}\label{QThermoDom}
  Q\rs = e^{(r+s) \Phi\left(\frac{r}{r+s}\right) + \smallo(r+s)}.
\end{equation}
This structure immediately carries over to all equilibria $\cinfzw\rs = \exp\left((r+s) G_{z,w}\big(\frac r {r+s}\big) +\smallo(r+s)\right),$ where $G_{z,w}$ is the binding energy for given monomer concentrations 
\begin{equation}\label{defBindingEnergy}
  G_{z,w}(\xi) \coloneqq \xi \ln(z) + (1-\xi)\ln(w) + \Phi(\xi).
\end{equation}
A nice discussion regarding the applicability of the equations \eqref{mcBD} to physical systems can be found in \cite{elm:applicationReview}.

Finally, we want to mention, that the appearance of the binomial coefficient in \eqref{idealizedKelvin} is very natural, which was already pointed out in \cite{dunwell:phd}.
To see why, consider a one-component system with coagulation and fragmentation parameters $A_i,B_i$. If we track a portion of the monomers, e.g. by colouring them, we obtain an artificial two-component system. The coagulation parameters of that two-component system are then exactly $\ar\rs = \as\rs = A_{r+s}$, because there is no physical difference between Type I and Type II monomers. On the other hand, the probability that the evaporation of a monomer from an $(r,s)$ cluster is of Type I is $\frac r {r+s}$ and hence 
\begin{equation}\label{combinatoricB}
  \br\rs = \frac{r}{r+s} B_{r+s} \text{ and } \bs\rs = \frac s {r+s} B_{r+s}.
\end{equation}
Now, this is exactly equivalent to \eqref{assumption:detailedBalance} with $Q\rs = \binom{r+s}{r} Q_{r+s}$, where $Q_{r+s}$ are the one-dimensional $Q_i$ from \eqref{ocDetailedBalance}. In this case, one can even see, that $c_i(t)$ is a solution to \eqref{ocBD}, if and only if $c\rs = q^r (1-q)^s \binom{r+s}{r} c_{r+s}$ is a solution to \eqref{mcBD} for $q\in(0,1).$

\subsubsection{Assumptions Needed for the Main Theorem}

In agreement with the physical considerations, we split our assumptions on the coefficients into a kinetic part on $\ar\rs,\as\rs>0$ and a thermodynamic part on $\br\rps,\bs\rsp>0$. The assumptions on the coagulation coefficients are straight from the previous subsection.
\begin{enumerate}
  \item (Isotropic kinetics for Type I and Type II) There is an $\alpha \in[0,1]$ and two constants $0<c_\alpha,C_\alpha<\infty$, such that
\begin{equation}\label{assumption:coagulationGrowth}
  c_\alpha (r+s)^\alpha \le \ar\rs,\as\rs \le C_\alpha (r+s)^\alpha \text{ for all }r+s\ge 1
\end{equation}
\end{enumerate}
As discussed above, the fragmentation parameters should be given by the thermodynamics.
\begin{enumerate}[resume*]
  \item (Detailed Balance) There is a sequence $Q\rs >0$ with $Q_{1,0} = 1 =Q_{0,1}$, such that
\begin{equation}\label{assumption:detailedBalance}
  \br\rps = \ar\rs \frac{Q\rs}{Q\rps} \text{ and }\bs\rsp = \as\rs \frac{Q\rs}{Q\rsp}.
\end{equation}
\end{enumerate}
Throughout this work, we will mainly be working with assumptions on the level of $Q\rs$.
However, in Section \ref{section:reductionToPhi} we will derive these assumptions from the Gibbs free energy through the connection \eqref{QThermoDom} and express our results in terms of $\Phi$.
We are asking for the following.
\begin{enumerate}[resume*]
\item(Thermodynamically dominating part) There is a $\Phi \in C^2((0,1))\cap C^0([0,1])$, such that $\lim\limits_{\xi \to 1} (1-\xi)\Phi^\prime(\xi)$ and $\lim\limits_{\xi \to 0} \xi \Phi^\prime(\xi)$ exist in $\R$ and 
  \begin{equation}
    \frac{Q\rps}{Q\rs} = \varphi\left(\frac{r+1}{r+s+1}\right)f\rs \quad\text{ and }\quad
    \frac{Q\rsp}{Q\rs} = \psi\left(\frac{r}{r+s+1}\right)g\rs, \label{assumption:thermo}
  \end{equation}
  where $\varphi(\xi)\coloneqq e^{\Phi(\xi) + (1-\xi)\Phi^\prime(\xi)}$ and $\psi(\xi)\coloneqq e^{\Phi(\xi) - \xi \Phi^\prime(\xi)}$. Furthermore, for any $\eps>0$, there exists an $N\in\N$, such that for all $r+s\ge N$ we have $|1-f\rs|<\eps$ and $|1-g\rs|<\eps.$
\item \eqLabeledLine{(Concavity) There is a $\gamma>0$, such that $(1-\xi)\Phi^{\prime\prime}(\xi) \le - \gamma <0$ and $\xi \Phi^{\prime\prime}(\xi) \le -\gamma < 0.$}{assumption:thermo:concave}
\item\eqLabeledLine{(Non-degeneracy at the boundary) The limits $\displaystyle \lim\limits_{\xi \to 0} \xi \varphi(\xi)$ and $\displaystyle \lim\limits_{\xi \to 1}  (1-\xi) \psi(\xi)$ exist in $\R$.}{assumption:thermo:boundary}
\item(Continuity of higher order terms) The bounds 
  \begin{equation} \label{assumption:thermo:higherOrder} 
    \limsup\limits_{n\to \infty}\max\limits_{0<r\le n}\Big(\frac{f\rn}{f\rmn}\Big)^{n+1} \le 1\text{ and }\liminf\limits_{n\to \infty}\min\limits_{0<r\le n}\Big(\frac{g\rpn}{g\rn}\Big)^{n+1} \ge 1\text{ hold true.}
  \end{equation}
  \end{enumerate}
To see how \eqref{QThermoDom} connects to \eqref{assumption:thermo}, let us ignore for the moment the higher order terms in \eqref{QThermoDom}. Then we find from the mean value theorem a $\xi \in \big(\frac{r+1}{r+s+1},\frac r {r+s}\big)$, such that
  \begin{equation}
    \begin{split}
      \frac{Q\rps}{Q\rs} = e^{\delr (r+s+1)\Phi\left(\frac {r+1}{r+s+1}\right)}
    = e^{\Phi\left(\frac {r+1}{r+s+1}\right) + (r+s) \delr \Phi\left(\frac {r+1}{r+s+1}\right)}
    = e^{\Phi\left(\frac {r+1}{r+s+1}\right) + \frac{s}{r+s+1} \Phi^\prime(\xi)}.
    \end{split}
  \end{equation}
  However, the argument \eqref{combinatoricB} shows that the correct way to resolve $\Phi(\xi) = -\xi \ln(\xi) - (1-\xi)\ln(1-\xi)$ is the binomial $Q\rs = \binom{r+s}{r}$, for which \eqref{assumption:thermo} holds without higher order terms, i.e. $f\rs = 1 =g\rs.$ This suggests that \eqref{assumption:thermo} is the cleaner way to separate the higher order terms from the Gibbs free energy. Finally, we want to give an example of a class of coefficients, that satisfy \eqref{assumption:thermo}--\eqref{assumption:thermo:higherOrder}.
    \begin{example}\textit{(Standard coefficients are applicable)}\label{standardCoeffApplicable}\\
        \input{\CommonPath/examples/standardCoeffApplicable}
    \end{example}

\subsection{Main Theorem}\label{subsection:mainTheorem}

From Section \ref{section:basicProperties}, one can easily deduce that for any initial datum, there exists a solution $c(t)$ to \eqref{mcBD}, for which the weak* distance to the set of equilibria vanishes as $t\to \infty.$ The difficulty arises, when we try to single out a unique equilibrium $(z,w)\in\Ex,$ to which $c(t)$ is supposed to converge to. The main body of the present work is dedicated to prove a stronger result, i.e. $c(t)$ minimises the entropy as $t\to\infty$. It then immediately follows from Theorems \ref{mcBDCauchy}, \ref{steadyStateLimit} and \ref{steadyStateMinimising}, that $c\rs(t)\to\cinfzw\rs$, where $(z,w)\in\Ex$ is uniquely determined by \eqref{steadyStateLimitEq}. Therefore, our main theorem reads
\begin{theorem}\label{mainTheorem}(Main theorem)\\
  Assume \eqref{assumption:coagulationGrowth}--\eqref{assumption:thermo:higherOrder}.
  Then, for any initial data with masses $\rho,\sigma>0,$ there is a solution $c$ to \eqref{mcBD} satisfying 
  $ H[c(t)|\cinfzw] \to 0 \text{ as }t\to\infty,$
  where $\cinfzw$ is the steady state satisfying \eqref{steadyStateLimitEq}.
  Furthermore, for the binding energy of the limit point $G_{z,w}(\xi) = \xi \ln z + (1-\xi)\ln w + \Phi(\xi)$, we have
\[\text{either  }\rho(\cinfzw) = \rho \text{ and }\sigma(\cinfzw) = \sigma 
\text{ or } G_{z,w}\left(\frac{\rho-\rho(\cinfzw)}{\rho+\sigma - \rho(\cinfzw) - \sigma(\cinfzw)}\right) = 0.\]
\end{theorem}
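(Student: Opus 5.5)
The strategy is to show that the entropy $V(c(t))$ decreases to $V_\ast := \inf_{X^+_{\rho,\sigma}} V$. Since $H[c|\cinfzw]$ differs from $V$ only by a constant on $X^+_{\rho,\sigma}$ (Definition \ref{relativeEntropy}), and Theorem \ref{steadyStateMinimising} gives $\inf H = 0$, this yields $H[c(t)|\cinfzw]\to0$. Weak* convergence and the mass characterisation then follow from Theorems \ref{steadyStateLimit} and \ref{steadyStateMinimising}. We take the solution from Theorem \ref{entropyInequality}; conditions \eqref{assCoeff} hold by \eqref{assumption:thermo}. This solution conserves mass by Theorem \ref{mcBDCauchy}, so $H[c(t)|\cinfzw]$ is nonincreasing and has a limit $H_\infty\ge0$. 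Suppose, for contradiction, that $H_\infty>0$. Then $\int_0^\infty D(c(\tau))\dd\tau<\infty$, so along a sequence of times $D\to0$. The classical argument would now use weak* continuity of $V$, which fails here. Instead we aim for a quantitative dissipation bound on the correct timescale, of the form
\begin{equation*}
  D(c) \ge \Lambda(t)\, \big(H[c|\cinfzw] - \text{(error)}\big),
\end{equation*}
with $\int^\infty \Lambda =\infty$.

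The key steps, in order, are as follows. First, we derive dissipation estimates. The inequality $(x-y)\ln(x/y)\ge(\sqrt x-\sqrt y)^2$ bounds $D$ from below by a discrete two-dimensional Dirichlet form in the variables $\sqrt{c\rs/\cbar\rs}$. Here $\cbar$ is the equilibrium with monomer densities $(c_{1,0}(t),c_{0,1}(t))$, reweighted by the coagulation rates $(r+s)^\alpha$ from \eqref{assumption:coagulationGrowth}. Second, we prove a discrete log-Sobolev inequality for this Dirichlet form, with respect to a reference measure built from $\cbar$. This is restricted to clusters of size $r+s\le N(t)$, where the growth of $N(t)$ matches the timescale of the $\alpha$-dependent coagulation. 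It bounds the relative entropy on that truncated region by $N(t)^{2-\alpha}$ (or similar) times $D$. The concavity \eqref{assumption:thermo:concave} gives $\Phi$ strict concavity in the mixing direction, which makes the reference measure log-concave transversally and supplies the needed spectral gap. Assumptions \eqref{assumption:thermo:boundary} and \eqref{assumption:thermo:higherOrder} control the boundary layers $\xi\to0,1$ and the correction factors $f\rs,g\rs$. Third, we control the tail mass. Mass escaping to $r+s>N(t)$ can only sit where the binding energy $G_{c_{1,0},c_{0,1}}(\xi)\ge -o(1)$; otherwise $\cbar\rs$ decays exponentially and Theorem \ref{steadyStateMinimising}-type estimates on $\Gamma_{M,\delta}$ show that region contributes negligible entropy. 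Combining these three steps yields a Grönwall-type differential inequality that forces $H_\infty=0$.

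For the second statement, we use $H\to0$ and the minimising property to compute the lost masses $\rho-\rho(\cinfzw)$ and $\sigma-\sigma(\cinfzw)$. If either is nonzero, this mass sits in arbitrarily large clusters with composition $\xi_\ast$ equal to the lost-mass ratio. Only mass at compositions with $G_{z,w}(\xi)\ge0$ can escape with vanishing entropy cost, while \eqref{steadyStateLimitEq} with $(u,v)\in\Ex$ forces $\max G_{z,w}\le0$. Hence $G_{z,w}(\xi_\ast)=0$. Concretely, we test \eqref{steadyStateLimitEq} with $(u,v)$ on the boundary curve $\{\max_\xi G_{u,v}=0\}$ near $(z,w)$. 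This gives the lost-mass vector as a normal to that curve, and differentiating $\max_\xi G_{u,v}=0$ identifies the normal with $(\xi_\ast,1-\xi_\ast)$ at the maximiser, by the strict concavity of $\Phi$.

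The main obstacle will be the log-Sobolev inequality with constants uniform in $t$. The monomer densities $(c_{1,0}(t),c_{0,1}(t))$ move, so the reference measure drifts. Near supercriticality $\cbar$ is not decaying along the ridge $\argmax G$, so the truncation $N(t)$ and the tail estimate must be balanced delicately. My first attempt would be to split into the ridge direction, treated as a one-dimensional Becker--Döring log-Sobolev inequality, and the transversal mixing direction, treated via concavity as a Gaussian-like Poincaré inequality on each diagonal $r+s=n$. I would glue these two with a two-scale (martingale or tensorisation) argument.
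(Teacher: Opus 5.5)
Your overall architecture (monotone entropy, contradiction, a truncation size $N$ tied to the escape timescale, a lower bound on $D$ from a discrete functional inequality) is the paper's, and your argument for the second assertion is sound. Reading \eqref{steadyStateLimitEq} as saying that $(\ln z,\ln w)$ maximises the linear functional with coefficients $(\rho-\rho(\cinfzw),\sigma-\sigma(\cinfzw))$ over the convex set $\{\max_\xi G_{u,v}\le 0\}$ in logarithmic coordinates is a differential version of the competitor argument in Proposition \ref{criticalDirection}; by strict concavity that set has normal $(\xi_\crit,1-\xi_\crit)$. Test only with interior points and pass to the limit, since boundary points need not lie in $\Ex$. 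The gap is in the first assertion, exactly where you are vague.

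\textbf{The exponent is not negotiable.} Your step 3 is a static statement about minimising sequences, and using it along the trajectory is circular. The only dynamic control is Lemma \ref{timescale}: a truncation at $N$ stays valid only on a time window of length of order $N^{1-\alpha}$. If the truncated entropy is $\ge\delta/2$ and $\lesssim K_N D$, the entropy dissipated in such a window is $\gtrsim N^{1-\alpha}\delta/K_N$. Because $N$ must be chosen after the starting time $T$ (so that the tail at time $T$ is small), this must be bounded below uniformly in $N$. So you need $K_N=O(N^{1-\alpha})$, i.e.\ $\Dnlin\ge 1/K$ with $K$ independent of $N$ and of the monomer densities. With your $N^{2-\alpha}$ the dissipated amount is $O(1/N)$ and no contradiction follows. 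In your Grönwall formulation with $\alpha<1$, $N(t)\sim t^{1/(1-\alpha)}$ makes $\Lambda(t)\sim t^{-(2-\alpha)/(1-\alpha)}$ integrable.

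The extra power is built into your plan. On a diagonal $r+s=n$, $\cbar$ is Gaussian-like with variance of order $n$, so a transversal spectral-gap or Poincaré inequality costs exactly one factor $N$. The paper avoids this with a Hardy-type inequality anchored at $u_{1,0}=u_{0,1}=1$ (Theorem \ref{hardyInequality}):
\begin{itemize}
\item $\sqrt{\Psi(u\rs)}$ is written as a sum of increments along random monotone paths from the monomers, with edge weights from Lemma \ref{eres};
\item concavity is used only to make these weights lie in $(0,1)$; straight radial paths fail (Remark \ref{straightLineIsNotGood});
\item the constant is a product of $m_n$'s, hence $N$-independent when $\cbar$ decays exponentially;
\item the factor $r+s$ enters only through Lemmas \ref{derivativeSqrtPsi} and \ref{growthL}, which require $\min(\monr,\mons)\ge\eta$, so small monomer densities are handled separately (Theorem \ref{dissipationSmallMon}).
\end{itemize}

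\textbf{The near-critical and supercritical regime is not resolved.} Whenever mass is lost, $(\monr(t),\mons(t))$ approaches $\partial\Ex$. Then $\cbar$ does not decay along the ridge, and no bound on $H^N[c|\cbar]$ with an $N$-independent constant can hold. Splitting off the ridge as a one-dimensional Becker--Döring problem does not help, because that problem is itself critical or supercritical. Two ideas are missing.
\begin{itemize}
\item The semicontinuity Lemma \ref{relativeEntropySemicontinuity}. It lets you discard the ridge $\{\cbar\rs\ge e^{-\delta_1(r+s)},\ r+s>M\}$ while keeping $H^\Gamma[c|\cbar]\ge H[c|\cinfzw]-\eps$, provided the tail beyond $N$ is small.
\item The inductive construction \eqref{chatConstruction} and its mirror image. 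These give concave $\chat\le\cbar$ that coincide with $\cbar$ on the part of $\Gamma$ on one side of the ridge, satisfy $m_n(\chat)\le e^{-\eps/2}$ eventually, and satisfy $\er\rs\chat\rs\lesssim\monr\cbar\rms$. The subcritical Hardy inequality then applies to them with an $N$-independent constant.
\end{itemize}
Since $\Gamma$ cuts each diagonal into two pieces on either side of the ridge, a product-type ridge/transversal decomposition cannot replace this step.
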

The last part of the theorem determines the ratio of the lost Type I mass, providing a full description of the long time behaviour with respect to the initial masses $\rho,\sigma$ and the free energy $\Phi$, which we can express in a picture. We have done so for the idealised Kelvin model in Figure \ref{longtimePicture}. Note the astonishing fact, that the selection of the limit point is purely through thermodynamics, i.e. $Q\rs$ and completely independent from the exact kinetics $\ar\rs,\as\rs.$
\input{\CommonPath/pictures/longTime.tex}
Next, we want to sketch the proof of Theorem \ref{mainTheorem}, which also outlines the rest of this paper. To show $H[c(t)|\cinfzw] \to 0$ as $t\to\infty$, we will argue by contradiction. By Theorem \ref{entropyInequality}, the function $H[c(t)|\cinfzw]$ is monotonically decreasing in $t$, so let as assume that $H[c(t)|\cinfzw] \ge \delta >0$ for all times.
Now, a major part of the proof is the following dissipation estimate.
\begin{theorem}\label{dissipationEstimateIntro}(Dissipation estimate)\\
  Assume \eqref{assumption:coagulationGrowth}--\eqref{assumption:thermo:higherOrder} and fix $\rho,\sigma>0$ and $\delta>0.$
Then, there exist constants $K<\infty$ and $\delta_2>0$ as well as $M<\infty$, such that for all $c\in X^+_{\rho,\sigma}$ and $M\le N\in \N$ with  $\displaystyle \sumrs[N+1] (r+s) c\rs \le \delta_2 \text{ and } H[c|\cinfzw] > \delta,$
where $\cinfzw$ satisfies \eqref{steadyStateLimitEq}, we have \[\displaystyle \D(c) \ge \frac {N^{\alpha-1}} K.\]
\end{theorem}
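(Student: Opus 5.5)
The plan is a contradiction-and-compactness argument that reduces everything to a lower bound on the dissipation restricted to the finite box $\{r+s\le N\}$. There the mass of $c$ is almost all of $(\rho,\sigma)$, yet $c$ is forced to stay a fixed distance away from the equilibrium $\cinfzw$.

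\textbf{Step 1: reduction to the truncated system.} Since $\sum_{r+s\ge N+1}(r+s)c\rs\le\delta_2$, the truncated sequence $c^N\coloneqq c\,\mathds{1}_{\{r+s\le N\}}$ carries masses $(\rho',\sigma')$ with $|\rho-\rho'|+|\sigma-\sigma'|\le\delta_2$. Because all terms of $D$ are nonnegative, $D(c)\ge D^N(c)$, where $D^N$ keeps only the fluxes inside the box. The detailed balance assumption \eqref{assumption:detailedBalance} lets us rewrite $D^N$ as a discrete Dirichlet form: with $u\rs\coloneqq c\rs/(\monr^r\mons^s Q\rs)$, each term has the form $\ar\rs\monr Q\rs\monr^r\mons^s(u\rps-u\rs)\ln(u\rps/u\rs)$. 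The natural reference measure is therefore $\cbar^{\monr,\mons}\rs$, the equilibrium with the current monomer densities, restricted to the box.

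\textbf{Step 2: logarithmic Sobolev inequality on the box.} I would prove a discrete two-dimensional log-Sobolev inequality (as announced in the abstract and in Section \ref{section:basicProperties}) of the form
\[
  H\big[c^N\big|\lambda\,\cbar^{\monr,\mons}\big]\le C N^{1-\alpha}\,D^N(c),
\]
where $\lambda$ is the normalising constant. The factor $N^{1-\alpha}$ arises because the rates satisfy $\ar\rs\sim(r+s)^\alpha$ by \eqref{assumption:coagulationGrowth}, and the box has diameter of order $N$, so paths have length about $N$. I would get this via a Hardy-type or path (canonical path/Bakry–Émery-free) argument. Concavity \eqref{assumption:thermo:concave} makes $\cbar^{\monr,\mons}\rs$ log-concave in the mixing ratio $\xi=r/(r+s)$ along rays, so the measure concentrates near the maximiser of $G_{\monr,\mons}$. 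The boundary and higher-order conditions \eqref{assumption:thermo}, \eqref{assumption:thermo:boundary} and \eqref{assumption:thermo:higherOrder} control the $\smallo(r+s)$ corrections and the edges $r=0$, $s=0$. This is the step I expect to be the main obstacle. The measure may be growing, decaying or flat in $r+s$ depending on $(\monr,\mons)$, which calls for a case split according to whether $\max_\xi G_{\monr,\mons}(\xi)$ is positive, near zero, or negative. Uniformity in $N$ is essential, and in the growing case the bound cannot be obtained from a Hardy inequality directly. I would first try a one-dimensional Hardy inequality in $n=r+s$, then a binomial-type log-Sobolev inequality in $r$ on each diagonal $r+s=n$, and combine the two via a tensorisation/decomposition of the entropy.

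\textbf{Step 3: contradiction via compactness.} Suppose $D(c)< N^{\alpha-1}/K$. Then Step 2 gives $H[c^N|\lambda\cbar^{\monr,\mons}]\le C/K$, which is small for $K$ large. So $c$ on the box is close to an equilibrium $\lambda\cbar^{\monr,\mons}$, up to the tail mass $\delta_2$. Two consequences follow. If the monomer densities lie outside or near the boundary of $\Ex$, the growing part of $\cbar$ would carry a large mass near $r+s=N$; with $M$ large this contradicts the tail bound and the finite masses. Otherwise $\lambda$ is close to $1$ and $c$ is close, in the weighted norm, to an equilibrium of masses near $(\rho,\sigma)$ minus the lost part. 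The characterisation in Theorem \ref{steadyStateLimit} and the minimisation in Theorem \ref{steadyStateMinimising} then show that $H[c|\cinfzw]$ is at most a quantity that tends to $0$ as $\delta_2\to0$ and $K\to\infty$. Concretely, take a sequence $c^n$ with $K_n\to\infty$ and $\delta_{2,n}\to0$. It is minimising in $X^+_{\rho,\sigma}$, because the variational inequality \eqref{steadyStateLimitEq} forces any such near-equilibrium configuration with vanishing tail to have relative entropy tending to $\inf H=0$. This contradicts $H>\delta$. Finally, choose $M$, $\delta_2$ and $K$ in that order.
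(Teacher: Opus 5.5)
\textbf{The gap is in Step 2 and in how Step 3 relies on it.} Your inequality uses the current quasi-steady state $\cbar\rs=\monr^r\mons^sQ\rs$ as reference measure and a constant independent of $N$ and of $(\monr,\mons)$. Such an inequality only holds when $(\monr,\mons)$ is subcritical with a margin. The best constant (cf.\ \eqref{hardyAss}) blows up as $\max_\xi G_{\monr,\mons}(\xi)\uparrow 0$, and beyond $\partial\Ex$ the inequality is false, already in one dimension. Take $c_1>z_s$ and $c_i=\cbar_i v^{i-1}$ with $c_1v<z_s$. Then the dissipation is $\lesssim c_1\rho$ uniformly in $N$, while $H^N[c|\cbar]\gtrsim\sum_{i\le N}\cbar_i$ grows exponentially.

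Your Step 3 disposes of critical and supercritical monomers by feeding in the smallness $H[c^N|\lambda\cbar]\le C/K$ from Step 2. That smallness is exactly what is unavailable there, so the argument is circular. Monomers that are supercritical with a fixed margin could be excluded by a fixed finite box: small dissipation forces $c\approx\cbar$ on $\{r+s\le M'\}$, which then carries too much mass. The window $-\delta_3<\max_\xi G_{\monr,\mons}(\xi)<\kappa$, however, admits neither argument. This window is where the solution lives at late times whenever $(\rho,\sigma)$ is not the mass of an equilibrium, since then $(z,w)\in\partial\Ex$. Compactness cannot close it either: $H[\cdot|\cinfzw]$ is not weak* continuous for two components, so knowing that a limit point is an equilibrium tells you nothing about $H[c^n|\cinfzw]$.

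\textbf{What is missing are the two ingredients of Theorem \ref{dissipationSupercritical}.}

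First, a quantitative localisation. Lemma \ref{relativeEntropySemicontinuity} gives $H^\Gamma[c|\cbar]\ge H[c|\cinfzw]-\eps$ for
$\Gamma=\{r+s\le M\}\cup\{r+s\le N:\ \cbar\rs<e^{-\delta_1(r+s)}\}$.
So one only has to control the entropy where $\cbar$ is exponentially small, on a region that still grows with $N$.

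Second, an $N$-uniform log-Sobolev inequality on that region. The cut-off \eqref{chatConstruction}, together with its mirror $\ccirc$, produces a concave $\chat\le\cbar$ with three properties: $\chat=\cbar$ on $\Gamma$, $m_n(\chat)\le e^{-\eps/2}$ for large $n$, and $\er\rs\chat\rs\lesssim\monr\cbar\rms$. Hence Theorem \ref{hardyInequality} applies to $\chat$ with a uniform constant, and the resulting Dirichlet form is dominated by $\Dnlin$. The sub/supercritical threshold must then be tied to $\delta_1$, namely $\delta_3=\delta_1/2$.

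\textbf{Secondary points.}
\begin{itemize}
\item Even the subcritical inequality is not obtained in the paper by a Hardy inequality in $n$ plus diagonal-wise tensorisation. A diagonal log-Sobolev term would first have to be compared with $\Dnlin$, which has no edges within a diagonal. The paper instead uses random spanning-tree paths with the curved weights of Lemma \ref{eres}, which require concavity, and the appendix shows that the straight choice $\er\rs=r/(r+s)$ fails.
\item Small monomer densities need the separate direct argument of Theorem \ref{dissipationSmallMon}, because there the factor $l(\max u)$ is not uniformly controlled.
\item In the regime $m_\infty\le e^{-\delta_3}$, $\min(\monr,\mons)\ge\eta$, and granting the uniform inequality there, your compactness argument is a legitimate substitute for the semicontinuity lemma.
\end{itemize}
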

Miraculously, the timescale at which mass can escape to infinity is also of the order $N^{1-\alpha}$. In Lemma \ref{timescale}, we show that there are constants $C_1,C_2<\infty$, such that if $\sumrs[\frac{N+1}3] (r+s)c\rs(T) \le \frac{\delta_2}{C_1}$ at some time $T$, then $\sumrs[N+1](r+s)c\rs(t) < \delta_2$ for all $T\le t\le T+\frac{N^{1-\alpha}}{C_2}.$ In particular $c(t)$ satisfies the assumptions of Theorem \ref{dissipationEstimateIntro} on this time interval and we find 
\[ H[c(T)|\cinfzw] - H[c(T+\frac{N^{1-\alpha}}{C_2})|\cinfzw] \overset{\text{Theorem \ref{entropyInequality}}} \ge \int_T^{T+\frac{N^{1-\alpha}}{C_2}} D(c(\tau))\dd\tau \overset{\text{Theorem \ref{dissipationEstimateIntro}}} \ge \frac{1}{C_2 K},\]
in contradiction to the convergence of $H[c(t)|\cinfzw].$

The proof of Theorem \ref{dissipationEstimateIntro} is contained in Chapter \ref{chapter:dissipationEstimate} in combination with Proposition \ref{mainAssumptions}.
If we denote 
\begin{equation}\label{defCbarU}
  \cbar\rs \coloneqq \monr^r\mons^s Q\rs \text{ and } u\rs \coloneqq \frac{c\rs}{\cbar\rs},
\end{equation}
we can write as in Section 3 of \cite{paperBasics}
\[ \D(c) =\sumrs \ar\rs \monr \cbar\rs \Big(\delr u\rps \delr \ln(u\rps)\Big)
+ \as\rs \mons \cbar\rs \Big(\dels u\rsp \dels \ln(u\rsp)\Big).\]
Hence, we might as well bound 
\begin{equation}\label{defDnlin}
  \begin{split}
    \Dnlin \coloneqq \sumrs[1][N-1] &(r+s+1) \monr \cbar\rs \Big(\delr u\rps \delr \ln(u\rps)\Big)\\
    &+ (r+s+1) \mons \cbar\rs \Big(\dels u\rsp \dels \ln(u\rsp)\Big)
  \end{split}
\end{equation}
from below, because \eqref{assumption:coagulationGrowth} implies $D\gtrsim N^{\alpha -1} \Dnlin$.
Now, the bound $\Dnlin \ge \frac 1 K$ is obtained through three separate cases---small monomer densities, subcritical monomer densities and supercritical monomer densities.
The first case is treated in Theorem \ref{dissipationSmallMon} by a direct calculation related to \eqref{assumption:thermo:boundary}, which shows $\Dnlin \ge \frac 1 K$ whenever $\min(\monr,\mons) \le \eta$. Next, we study in Subsection \ref{subsection:abstractResult} a discrete two-dimensional logarithmic Sobolev inequality, found in Theorem \ref{hardyInequality}, which roughly reads
\[ H^N[c|\cbar] \coloneqq \sumrs[2][N] \cbar\rs \Psi(u\rs) \le K_N \Dnlin,\]
with an explicit constant $K_N$ and holds as an abstract result independent of any assumptions on $\cbar\rs,u\rs$. In Subsection \ref{subsection:application}, we show, that under the concavity assumption \eqref{assumption:thermo:concave}, we can express $\sup\limits_{N\in\N} K_N$ in terms of $\max\limits G_{\monr,\mons}(\xi)$. From this, we deduce in Section \ref{section:subcritical} the estimate $\Dnlin \ge \frac 1 K$, whenever $\max\limits_{\xi\in[0,1]} G_{\monr,\mons}(\xi)\le -\eps$, which you may take as a definition for subcriticality.\\
Finally, we reduce the case $\max\limits_{\xi\in[0,1]} G_{\monr,\mons}(\xi)> -\eps$ to the subcritical one. The semicontinuity of $H[c|\cbar]$ (Lemma \ref{relativeEntropySemicontinuity}), allows us to replace the $\cbar$ in the logarithmic Sobolev inequality by a subcritical $\chat.$ The construction of such a $\chat$ is a delicate matter, because it is tightly constrained by Lemma \ref{relativeEntropySemicontinuity} and the conditions from Subsection \ref{subsection:application}. We present an inductive construction in Subsection \ref{subsection:construction} and then analyse it in Subsections \ref{subsection:chatResults} and \ref{subsection:constructionApplication} to be good enough to prove Theorem \ref{dissipationSupercritical}, i.e. $\Dnlin \ge \frac 1 K$ in the supercritical case. This finishes the proof that $c(t)$ minimises the entropy.

Note that we are using sub/supercriticality to discern $\max\limits_{\xi\in[0,1]} G_{\monr,\mons}(\xi) \substack{\le \\ >} -\eps$, which somewhat abuses the usual naming convention that defines
\begin{equation}
  \begin{split}
    \monr,\mons \text{ subcritical} \quad &\overset{\text{def.}}\iff (\monr,\mons)\in \interior{\Ex} \overset{\text{in our case}}\iff\max\limits_{\xi\in[0,1]} G_{\monr,\mons}(\xi)< 0 \\
    \monr,\mons \text{ critical} \quad &\overset{\text{def.}}\iff (\monr,\mons) \in \partial{\Ex} \overset{\text{in our case}}\iff\max\limits_{\xi\in[0,1]} G_{\monr,\mons}(\xi)= 0 \\
    \monr,\mons \text{ supercritical} \quad &\overset{\text{def.}}\iff (\monr,\mons) \not\in{\Ex} \overset{\text{in our case}}\iff\max\limits_{\xi\in[0,1]} G_{\monr,\mons}(\xi)> 0.
  \end{split}
\end{equation}
However, since $\eps$ is basically arbitrary and we are not trying to push the estimate to the critical case (which is only possible from below anyway), our naming seems justified.

We end our discussion of the two-component system, by characterising the escaping mass and the concentration on the critical line, which are general statements for all minimising sequences of $H[\cdot|\cinfzw]$ on $X^+_{\rho,\sigma}$ and depend only on the binding energy $G_{z,w}.$

Lastly, we discuss in Section \ref{section:improvedOC}, how our result (or our method of proof) can be used to improve the assumptions to determine the long time behaviour for the one-component system.

Before we start with Chapter \ref{chapter:dissipationEstimate}, we want to mention, that putting $D$ into relation with $H[c|\cbar],$ where $\cbar\rs \coloneqq \monr(t)^r \mons(t)^s Q\rs$, was first done in \cite{jabin:convergenceRate} to obtain a stretched exponential convergence to the equilibrium for subcritical initial data. Our estimate is inspired by \cite{canizo:EDEstimate}.

\subsection{List of Symbols}

\begin{center}
  \begin{tabular}{L|l|l}
    \text{Symbol} & Meaning & Introduced in \\ 
    \hline
    \delr, \dels & discrete backwards derivative operator & Section \ref{section:twoCompomentSystem}\\
    \Omega & phase space & Section \ref{section:twoCompomentSystem}\\
    \ar\rs,\as\rs & coagulation parameters & Section \ref{section:twoCompomentSystem}\\
    \br\rps,\bs\rsp & fragmentation parameters & Section \ref{section:twoCompomentSystem}\\
    \Jr\rs,\Js\rs & fluxes & Section \ref{section:twoCompomentSystem}\\
    \rho(c),\sigma(c) & Type I and II masses & Section \ref{section:basicProperties}\\
    X^+,X^+_{\rho,\sigma} & solution space & Section \ref{section:basicProperties} and Theorem \ref{steadyStateMinimising}\\
    V(c) & entropy & \eqref{defVD}\\
    D(c) & entropy dissipation & \eqref{defVD}\\
    Q\rs & partition function & \eqref{assumption:detailedBalance}\\
    \Ex & region of existence & Definition \ref{regionOfExistence}\\
    H[c|\cinfzw] & relative entropy & Definition \ref{relativeEntropy}\\
    \Psi(x) &  $\Psi(x) = x \ln (x) + 1 -x$ & Definition \ref{relativeEntropy}\\
    \Phi(\xi) & first order term of the Gibbs free energy & \eqref{QThermoDom} or more specifically \eqref{assumption:thermo}\\
    G_{z,w} & binding energy & \eqref{defBindingEnergy}\\
    \varphi(\xi) & $\varphi(\xi) = \exp(\Phi(\xi) + (1-\xi)\Phi^\prime(\xi))$ & \eqref{assumption:thermo}\\
    \psi(\xi) & $\psi(\xi) = \exp(\Phi(\xi) - \xi\Phi^\prime(\xi))$ & \eqref{assumption:thermo}\\
    \cbar\rs & quasi steady state & \eqref{defCbarU}\\
    u\rs & $u\rs = \frac{c\rs}{\cbar\rs}$ & \eqref{defCbarU}\\
    \Dnlin & $D$ cut off at $N$ with $\ar\rs = (r+s-1) = \as\rs$ & \eqref{defDnlin}\\
    G = (V,E) & directed Graph with nodes $V$ and edges $E$ & Notation \ref{treeStructure}\\
    \er\rs,\es\rs & edge weights & Notation \ref{treeStructure}\\
    \T & set of spanning trees with $(1,0),(0,1)$ fused & Notation \ref{treeStructure}\\
    \nu & probability distribution on $\T$ & Notation \ref{treeStructure}\\
    \mathbb E[\cdot] & expected value & Notation \ref{treeStructure}\\
    \Trs(T) & maximal connected subgraph with root $(r,s)$ & Notation \ref{treeStructure}\\
    \Prs(T) & unique path from root to $(r,s)$ in $T$ & Notation \ref{treeStructure}\\
    \Prs h, \Trs g & specific random variables& Notation \ref{treeStructure}\\
    l(x) & $l(x) = \frac{x\ln^2(x)}{\Psi(x)}$& Lemma \ref{derivativeSqrtPsi}\\
  m_n & $m_n = \frac{\sumrs[n][] \cbar\rs }{\sumrs[n-1][]\cbar\rs}$ & Lemma \ref{eres}\\
    r_*(n) & $r_*(n) = \min\big\{r \,\big|\, \cbar\rn\ge e^{-\delta_1 n}\big\}$ & Subsection \ref{subsection:construction}\\
    r^*(n) & $r_*(n) = \max\big\{r \,\big|\, \cbar\rn\ge e^{-\delta_1 n}\big\}$ & Subsection \ref{subsection:construction}\\
    \chat\rs & cut off construction & \eqref{chatConstruction}\\
    \ccirc\rs & cut off construction & \eqref{ccircConstruction}\\
    r^m(n) & $r^m(n) = \max\big\{r \,\big|\, \cbar\rn \ge \cbar\kn\big\}$ & Subsection \ref{subsection:construction}\\
    r_m(n,\chat) & $r_m(n) = \min\big\{r \,\big|\, \chat\rn \ge \chat\kn\big\}$ & Subsection \ref{subsection:chatResults}\\
    q_n & $q_n=\frac{\max\limits_{r+s=n}\cbar\rs}{\max\limits_{r+s=n-1}\cbar\rs}$ & \eqref{assumptionQn}
  \end{tabular}
\end{center}

%% file: definitions/mcBDSolutions.tex
Fix $0<T\le\infty$. We call a function $c \colon [0,T) \to X^+$ a solution to \eqref{mcBD} with initial data $c^0 \in X^+$, if 
\begin{enumerate}
    \item for all $(r,s)\in\Omega$ the function $c\rs \colon [0,T)\to\R_{\ge0}$ is continuous and  $\sup\limits_{[0,T)}\norm{c(t)}<\infty$,
    \item the transitions are in $L^1_\loc$, i.e.
        \begin{equation*}
            \int_0^t \sumrs (\ar\rs+\as\rs + \br\rps + \bs\rsp) c\rs(\tau) \dd\tau < \infty \text{ for all } t\in[0,T)\text{ and }
        \end{equation*}
    \item the equations are satisfied in a mild sense, i.e. for any $t\in[0,T)$
        \begin{equation}
            \begin{split}
                c_{1,0}(t) &= c^0_{1,0} - \int_0^t \Jr_{1,0}(c(\tau)) + \Js_{1,0}(c(\tau)) + \sumrs \Jr\rs(c(\tau)) \dd\tau,\\
                c_{0,1}(t) &= c^0_{0,1} - \int_0^t \Jr_{0,1}(c(\tau)) + \Js_{0,1}(c(\tau)) + \sumrs \Js\rs(c(\tau)) \dd \tau \text{ and}\\
                c\rs(t) &= c^0\rs - \int_0^t \delr \Jr\rs(c(\tau)) + \dels \Js\rs(c(\tau))\dd\tau \text{ for all }r+s\ge2.
            \end{split}
        \end{equation}
\end{enumerate}

%% file: definitions/regionOfExistence.tex
Let us denote $\cinfzw\rs \coloneqq z^r w^s Q\rs$. Then, we define the region of existence as 
\[\Ex \coloneqq\left\{(z,w) \mid z\ge 0, w\ge 0 \text{ and }\rho(\cinfzw) + \sigma(\cinfzw)<\infty\right\} .\]

%% file: definitions/relativeEntropy.tex
Let $0<z,w$ with $(z,w)\in\Ex$. We denote the relative entropy of $c\rs \in X^+$ with respect to $\cinfzw$ as
\begin{equation*}
    H[c|\cinfzw] \coloneqq \sumrs \cinfzw\rs \Psi\left(\frac{c\rs}{\cinfzw\rs}\right),
    \text{ where }\Psi(x) \coloneqq x\ln(x) + 1 -x.
\end{equation*}

%% file: examples/standardCoeffApplicable.tex
For a fixed $\beta \in (0,1]$ and $\lambda,\mu, v_1,v_2 \in (0,\infty)$, let 
\[ Q\rs \coloneqq \lambda^r \mu^s \binom{r+s}{r}^\beta e^{-(v_1 r+v_2 s)^{\frac 2 3}} \text{ for all }r+s > M,\]
with $M\ge 1$ and $Q_{1,0} = Q_{0,1} =1.$ Then $Q\rs$ satisfies \eqref{assumption:thermo}--\eqref{assumption:thermo:higherOrder} with 
\begin{equation*}
  \begin{split}
    \Phi(\xi) &= \xi \ln \lambda + (1-\xi)\ln\mu + \beta\Big( -\xi\ln\xi - (1-\xi) \ln(1-\xi)\Big),\\
    \varphi(\xi) &= \frac{\lambda}{\xi^\beta}, \qquad\psi(\xi) = \frac{\mu}{(1-\xi)^\beta} \\
    f\rs &= e^{(v_1 r+v_2 s)^{\frac 2 3} - (v_1(r+1)r+v_2 s)^{\frac 2  3}}\quad\text{ and }\quad
    g\rs = e^{(v_1r+v_2s)^{\frac 2 3} - (v_1r+v_2(s+1))^{\frac 2  3}} \quad\text{ for r+s >M.}
  \end{split}
\end{equation*}
The case $\beta = 1$, is the idealised Kelvin model \eqref{idealizedKelvin} with a slightly more general surface energy.

%% file: pictures/longTime.tex
\newcommand{\oneDMass}{1.964838} 
\newcommand{\oneDMassGreen}{1.1219} 
\newcommand{\RRR}{5}
\newcommand{\SSS}{8}
\newcommand{\ra}{3}
\newcommand{\sa}{5}
\newcommand{\greenDot}{1/3} 
\newcommand{\blueDot}{0.25} 
\newcommand{\redDot}{0.65}

\begin{figure}[!htb]
  \centering 
  \begin{tikzpicture}[
          scale=1,
          declare function={
              xln(\x) = ifthenelse(\x == 0, 0, \x*ln(\x));
              rho(\x) = \oneDMass*\x + \RRR*(\ra -1 )*exp(-(\RRR-1)^(2/3))*\x^\RRR;
              sigma(\x) = \oneDMass*(1-\x) + \SSS*(\sa-1)*exp(-(\SSS-1)^(2/3))*(1-\x)^\SSS;
              rhoGreen(\x) = \oneDMassGreen*\x + \RRR*(\ra -1 )*exp(-(\RRR-1)^(2/3))*\x^\RRR;
              sigmaGreen(\x) = \oneDMassGreen*(0.75-\x) + \SSS* (\sa -1 )*exp(-(\SSS-1)^(2/3))*(0.75-\x)^\SSS;
          }
      ]
      \begin{scope}[shift={(6,0)}]
        \draw[->] (0, 0) -- (3.5, 0) node[below] {$\rho$};
        \draw[->] (0, 0) -- (0, 3.5) node[left] {$\sigma$};
        \draw[scale=1, domain=0:1, smooth, variable=\x, black] plot ( {rho(\x)}, {sigma(\x)});
        \draw[densely dashed, LightSlateBlue] ({rho(\blueDot)},{sigma(\blueDot)}) -- ({rho(\blueDot)+2.5*\blueDot},{sigma(\blueDot)+2.5*(1-\blueDot)});
        \draw[-{Latex}] ({rho(\blueDot)},{sigma(\blueDot)}) -- ({rho(\blueDot)+0.5*\blueDot},{sigma(\blueDot)+0.5*(1-\blueDot)}) node[below right= -2pt] {$v_{\text{\color{LightSlateBlue}crit}}$};
        \filldraw[LightSlateBlue] ({rho(\blueDot)},{sigma(\blueDot)}) circle (1pt);
        \draw[densely dashed, LightCoral] ({rho(\redDot)},{sigma(\redDot)}) -- ({rho(\redDot)+2.5*\redDot},{sigma(\redDot)+2.5*(1-\redDot)});
        \draw[-{Latex}] ({rho(\redDot)},{sigma(\redDot)}) -- ({rho(\redDot)+0.5*\redDot},{sigma(\redDot)+0.5*(1-\redDot)}) node[below right = -2pt] {$v_{\text{\color{LightCoral}crit}}$};
        \filldraw[LightCoral] ({rho(\redDot)},{sigma(\redDot)}) circle (1pt);
        \filldraw[DarkKhaki] ({rhoGreen(\greenDot)},{sigmaGreen(\greenDot)}) circle (1pt);
      \end{scope}
      \draw[<->] (3.5,2) to[bend left=30] (5,2);
      \node at (4.25,2.5){$\rho(\cinfzw),\sigma(\cinfzw)$};
      \begin{scope}[shift={(0,0)},scale=3.5/1.3]
          \node at (0.2,0.2) (a) {$\Ex$}; 
          \draw[->] (0, 0) -- (1.3, 0) node[below] {$z$};
          \draw[->] (0, 0) -- (0, 1.3) node[left] {$w$};
          \draw[scale=1, domain=0:1, smooth, variable=\x, black] plot ({\x}, {1-\x});
          \filldraw[DarkKhaki] (\greenDot,0.75-\greenDot) circle (1.3/3.5pt);
          \filldraw[LightSlateBlue] (\blueDot,1-\blueDot) circle (1.3/3.5pt);
          \filldraw[LightCoral] (\redDot,1-\redDot) circle (1.3/3.5pt);
      \end{scope}
      \draw[<->] (9.5,2) to[bend left=30] (11,2);
      \node at (10.25,2.75){\footnotesize$v_\crit \coloneqq \begin{pmatrix}
        \xi_\crit \\ 1-\xi_\crit
      \end{pmatrix}$};
      \begin{scope}[shift={(12,3)}]
          \node at (-0.2,-0.3) (a) {$0$}; 
          \draw[->] (-0.2, 0) -- (3.5, 0) node[below] {$\xi$};
          \draw[->] (0, -3) -- (0, 0.5) node[left] {$G_{z,w}(\xi)$};
          \draw[dotted] (3, -3) -- (3, 0.5);
          \draw[scale=3, domain=0.0001:1, smooth, variable=\x, LightCoral] plot ({\x}, {(-xln(\x) - xln(1-\x) + \x*ln(\redDot) + (1-\x)*ln(1-\redDot))});
          \draw[scale=3, domain=0.0001:1, smooth, variable=\x, LightSlateBlue] plot ({\x}, {(-xln(\x) - xln(1-\x) + \x*ln(\blueDot) + (1-\x)*ln(1-\blueDot))});
          \draw[scale=3, domain=0.0001:1, smooth, variable=\x, DarkKhaki] plot ({\x}, {(-xln(\x) - xln(1-\x) + \x*ln(\greenDot) + (1-\x)*ln(0.75-\greenDot))});
          \draw[-] ({3*\blueDot},0) -- ({3*\blueDot},-0.1) node[below] {$\xi_\text{\color{LightSlateBlue}crit}$};
          \draw[-] ({3*\redDot},0) -- ({3*\redDot},-0.1) node[below] {$\xi_\text{\color{LightCoral}crit}$};
      \end{scope}
  \end{tikzpicture}
  \pgfmathtruncatemacro{\Rminus}{\RRR-1}
  \pgfmathtruncatemacro{\Sminus}{\SSS-1}
  \caption{All initial conditions with masses on the dashed lines will converge to the anchor of their line. The slope of the dashed line is exactly given by $\frac{1-\xi_\crit}{\xi_\crit}$, where $\xi_\crit$ is the maximum point of $G_{z,w}$. We have plotted the situation for $Q\rs$ defined via $Q_{\RRR,0} \coloneqq \ra e^{-\Rminus^{\frac{2}{3}}}$, $Q_{0,\SSS} \coloneqq \sa e^{-\Sminus^{\frac{2}{3}}}$ and $Q\rs = \binom{r+s}{r} e^{-(r+s-1)^{\frac 2 3}}$ otherwise, i.e. $\beta = 1$ in Example \ref{standardCoeffApplicable}.}
  \label{longtimePicture}
\end{figure}

%% file: DissipationEstimates.tex
\section{Dissipation Estimates}\label{chapter:dissipationEstimate}
The goal of this chapter is to prove Theorem \ref{dissipationEstimateIntro}. However, even though we outlined its proof in the introduction at the level of $\Phi$ and $\xi$, we have to work with more general $Q\rs$ for the rigorous part throughout this chapter. Therefore, at the end of this chapter, we will have proved a slightly stronger statement (Theorem \ref{dissipationEstimate}) at the cost of having less readable assumptions. The reason for this is two fold. Firstly, there are coefficients of interest (see Example \ref{multiplicativeCoeff}), which are not covered by \eqref{assumption:thermo}--\eqref{assumption:thermo:higherOrder}. The second and arguably more important reason is that the construction of $\chat\rs$ in Subsection \ref{subsection:construction} does not fall into the assumptions \eqref{assumption:thermo}--\eqref{assumption:thermo:higherOrder}, because it is not regular enough. That is why, we have to build our analysis in detail at the level of $(r,s).$ In Proposition \ref{mainAssumptions}, we will then deduce all the assumptions introduced in this chapter from \eqref{assumption:thermo}--\eqref{assumption:thermo:higherOrder}.

As mentioned in the introduction, we want to establish---under the assumption $H[c|\cinfzw] \ge \delta > 0$---the estimate $\Dnlin \ge \frac 1 K$. Our strategy to prove that bound is roughly as follows.
\begin{enumerate}[label=\arabic*)]
  \item If $\min(\monr,\mons)$ is small, a direct calculation yields a universal $K$ independent of $\delta.$
  \item If $\cbar\rs$ is subcritical, then we estimate $\Dnlin \overset{\text{Section \ref{section:logSobolevInequality}}}\gtrsim H^N[c|\cbar] \overset{\text{continuity}}\gtrsim H[c|\cinfzw]$.
  \item If $\cbar\rs$ is supercritical, we reduce the estimate to 2) via the semicontinuity of $H^\Gamma[c|\cbar],$ by constructing a suitable subcritical $\chat\rs.$
\end{enumerate}
Here $\Ex$ is the region of existence from Definition \ref{regionOfExistence} and $H^N$ and $H^\Gamma$ are the relative entropy, where the sum is only taken over $\{r+s\le N\}$ and $\Gamma\subset \Omega$ respectively.

\subsection{Timescale and Small Monomer Densities}

Since we want to have $H^N[c|\cinfzw] \approx H[c|\cinfzw]$ and $H[\cdot|\cinfzw]$ is continuous only with respect to the strong and not the weak* topology on $X^+$, the correct timescale to consider is the speed at which mass gets transferred to larger and larger regions. Luckily, if $\ar\rs \approx (r+s)^\alpha \approx \as\rs$ this happens to be $N^{1-\alpha}$, as the following Lemma shows.
    \begin{lemma}\textit{(Timescale)}\label{timescale}\\
        \input{\CommonPath/lemmas/timescale}

    \end{lemma}
\begin{proof}
    \input{\CommonPath/proofs/timescale.tex}
\end{proof}
Next, we want to show that the dissipation must be large whenever $\monr$ or $\mons$ is small. This can be done, if we assume that $\frac{Q\rs}{Q\rps}$ and $\frac{Q\rs}{Q\rsp}$ do not vanish any faster than is given by the combinatorial argument \eqref{combinatoricB}. To be precise, we will assume that there exist constants $\lambda,\mu < \infty$, such that 
\begin{equation}\label{assumption:boundaryComb}
  \frac{Q\rs}{Q\rps} \ge \frac 1 \lambda \frac {r+1}{r+s+1} \quad\text{ and }\quad
  \frac{Q\rs}{Q\rsp} \ge \frac 1 \mu \frac {s+1}{r+s+1} \quad\text{ for all }r+s\ge 1.
\end{equation}
Note that \eqref{assumption:boundaryComb} is not optimal. Even in the one-dimensional case, one can recover a dissipation estimate for small monomers, for some $Q_i$ that satisfy $\frac{Q_i}{Q_{i+1}} \to 0$ along some subsequence (if this happens slow enough). However, with the considerations of Subsection \ref{subsection:physicalConsiderations} in mind, we are happy to accept \eqref{assumption:boundaryComb}.
    \begin{theorem}\textit{(Large dissipation for small monomers)}\label{dissipationSmallMon}\\
        \input{\CommonPath/theorems/dissipationSmallMon}
    \end{theorem}
\begin{proof}
    \input{\CommonPath/proofs/dissipationSmallMon.tex}
\end{proof}

%% file: lemmas/timescale.tex
Assume that $\ar\rs+\as\rs\in \mathcal{O}(r+s)$. Then, for any solution $c \colon [0,T)\to X^+$ to \eqref{mcBD}, with non zero Type I and Type II mass, we have for $0<t_1<t_2<T$ and all $N\ge 1$
\begin{equation*}
  \sumrs[3N] (r+s) c\rs(t_2) \le \sumrs[N] (r+s) c\rs(t_1) + 2(\rho + \sigma)\sqrt{\int_{t_1}^{t_2} D(c(\tau))\dd \tau} \sqrt{(t_2-t_1)\sup\limits_{r+s+1\ge N}\left\{\frac {\ar\rs}{r+s},\frac{\as\rs}{r+s}\right\}}.
\end{equation*}

%% file: proofs/timescale.tex
Let $N_2 \coloneqq \max\left\{n \ge N \colon \sum\limits_{k=N}^n \frac 1 k \le 1\right\}$. Then we define the sequence 
\[ g\rs \coloneqq
  \begin{cases}
    0 &\text{ for } r+s < N\\
    \sum\limits_{k=N}^{r+s} \frac 1 k &\text{ for } N \le r+s \le N_2\\
    1 &\text{ for }N_2<r+s.
  \end{cases}\]
  By definition $(r+s)g\rs = (r+s)$ for $r+s > N_2$, so it is a valid test-sequence for \eqref{mcBDWeak}. Therefore, we find 
  \begin{equation*}
    \begin{split}
      \sumrs &(r+s)g\rs c\rs(t_2) - \sumrs (r+s)g\rs c\rs(t_1) \\
      \overset{\text{equation}} &= 
      \int_{t_1}^{t_2} \sumrs \Jr\rs \underbrace{\delr\left((r+s+1)g\rps\right)}_{\ge 0}+ \Js\rs \underbrace{\dels \left((r+s+1)g\rsp\right)}_{\ge 0} \dd \tau \\
       &\le \int_{t_1}^{t_2} \sum\limits_{\substack{r+s\ge 1 \\ \Jr\rs > 0}} \Jr\rs \underbrace{\delr\left((r+s+1)g\rps\right)}_{\le 2\indicator{r+s+1\ge N}}
      + \sum\limits_{\substack{r+s\ge 1 \\ \Js\rs > 0}} \Js\rs \underbrace{\dels \left((r+s+1)g\rsp\right)}_{\le 2 \indicator{r+s+1\ge N}} \dd \tau\\
       &\le  2 \int_{t_1}^{t_2} \sumrs[N-1] \Jr\rs \indicator{\Jr\rs>0} + \Js\rs \indicator{\Js\rs>0} \dd \tau \eqqcolon (*).
    \end{split}
  \end{equation*}
  Now we use the Cauchy--Schwarz inequality twice, first in the sum and then in the integral to obtain
  \begin{equation*}
    \begin{split}
      (*)\overset{\text{C.S. in the sum}}&\le 2 \int_{t_1}^{t_2} \Bigg(\sumrs[N-1] \Jr\rs \ln\left(\frac{\ar\rs\monr c\rs}{\br\rps c\rps}\right) \indicator{\Jr\rs>0} + \Js\rs  \ln\left(\frac{\as\rs\mons c\rs}{\bs\rsp c\rsp}\right)\indicator{\Js\rs>0}\Bigg)^{\frac 1 2}\\
       &\qquad\qquad\qquad \cdot\Bigg(\sumrs[N-1] \frac{\Jr\rs} {\ln\left(\frac{\ar\rs\monr c\rs}{\br\rps c\rps}\right)} \indicator{\Jr\rs>0} + \frac{\Js\rs} {\ln\left(\frac{\as\rs\mons c\rs}{\bs\rsp c\rsp}\right)}\indicator{\Js\rs>0}\Bigg)^{\frac 1 2}\dd \tau\\
       \overset{\frac{x-y}{\ln x - \ln y} \le \max(x,y)}&\le 2 \int_{t_1}^{t_2} \Bigg(\sumrs[N-1] \Jr\rs \ln\left(\frac{\ar\rs\monr c\rs}{\br\rps c\rps}\right) \indicator{\Jr\rs>0} + \Js\rs  \ln\left(\frac{\as\rs\mons c\rs}{\bs\rsp c\rsp}\right)\indicator{\Js\rs>0}\Bigg)^{\frac 1 2}\\
       &\qquad\qquad\qquad \cdot\Bigg(\sumrs[N-1] \ar\rs\monr c\rs +\as\rs\mons c\rs\Bigg)^{\frac 1 2}\dd \tau\\
       \overset{\text{C.S. in the integral}}&\le 2 \Bigg(\int_{t_1}^{t_2} \sumrs[N-1] \Jr\rs \ln\left(\frac{\ar\rs\monr c\rs}{\br\rps c\rps}\right) + \Js\rs  \ln\left(\frac{\as\rs\mons c\rs}{\bs\rsp c\rsp}\right)\dd \tau\Bigg)^{\frac 1 2}\\
       &\qquad\qquad\qquad \cdot\Bigg(\int_{t_1}^{t_2} \sumrs[N-1] \ar\rs\monr c\rs +\as\rs\mons c\rs\dd\tau\Bigg)^{\frac 1 2}\eqqcolon (**).
    \end{split}
  \end{equation*}
  Putting in the definition of the dissipation \eqref{defVD}, we finally arrive at
  \begin{equation*}
    \begin{split}
      (**)
       &\le 2 \Bigg(\int_{t_1}^{t_2} D(c(\tau)) \dd \tau\Bigg)^{\frac 1 2}
       \cdot\Bigg(\sup\limits_{r+s \ge N-1}\left\{\frac {\ar\rs}{r+s}, \frac{\as\rs}{r+s}\right\} \int_{t_1}^{t_2} \sumrs[N-1] (\monr+\mons) (r+s) c\rs \dd\tau\Bigg)^{\frac 1 2}\\
       \overset{\mons+\monr \le \rho + \sigma}&\le 2(\rho+\sigma) \Bigg(\int_{t_1}^{t_2} D(c(\tau)) \dd \tau\Bigg)^{\frac 1 2}
       \cdot\Bigg(\sup\limits_{r+s \ge N-1}\left\{\frac {\ar\rs}{r+s}, \frac{\as\rs}{r+s}\right\} (t_2-t_1)\Bigg)^{\frac 1 2}.
    \end{split}
  \end{equation*}
  Because $(r+s) \indicator{r+s>N_2}\le (r+s)g\rs \le (r+s) \indicator{r+s\ge N}$, it remains to show that $N_2 < 3N$. However, this follows from 
  \begin{equation*}
      \sum\limits_{k=N}^{3N}  \frac 1 k
      \ge \int_N^{3N} \frac 1 x \dd x = \ln(3)>1. \qedhere
  \end{equation*}

%% file: theorems/dissipationSmallMon.tex
Fix $\rho,\sigma > 0$ and assume $Q\rs$ satisfy \eqref{assumption:boundaryComb}
Then, there exist two constant $\eta > 0$ and $K<\infty$, such that for all $c\in X^+_{\rho,\sigma}$ with $\rho^N(c) \ge \frac \rho 2$, $\sigma^N(c)\ge \frac \sigma 2$ and $\min(\monr,\mons) \le \eta$, we have 
\[ \Dnlin \ge \frac 1 K.\]

%% file: proofs/dissipationSmallMon.tex
\newcommand{\sumxi}{\sum\limits_{\substack{r+s = 1 \\ \frac{r+1}{r+s+1}\ge \bar\xi}}^{N-1} }
Let $N\in\N$ be arbitrary and assume $c\in X^+_{\rho,\sigma}$ satisfies $\rho^N(c) \ge \frac \rho 2$, $\sigma^N(c)\ge \frac \sigma 2$ and $\monr \le \eta$ with $\eta$ to be determined.
Fix any $\theta > 1$ and let $\bar\xi \coloneq \frac{\rho}{4(\rho + \sigma)}$. Then, we have 
\begin{equation}
  \begin{split}
    \Dnlin 
    &\ge \sumrs[1][N-1] (r+s+1) \monr \cbar\rs (\delr u\rps \delr \ln u\rps) \\
    &= \sumrs[1][N-1] (r+s+1) \frac{Q\rs}{Q\rps} c\rps \left(\Big(1-\frac{u\rs}{u\rps}\Big) \delr \ln u\rps\right)\\
    \overset{\frac{Q\rs}{Q\rps} \ge \frac 1 \lambda \frac{r+1}{r+s+1}}&\ge \frac 1 \lambda \sumxi  \indicator{\frac{u\rps}{u\rs} \ge \theta} (r+1)  c\rps \left(\Big(1-\frac{u\rs}{u\rps}\Big) \delr \ln u\rps\right).
  \end{split}
\end{equation}
Because $(1-\frac 1 x)\ln x$ is monotone on $x\ge 1$, this yields
\begin{equation}\label{dissipationSmallMon:eq1}
    \Dnlin \ge \frac{(1-\frac 1 \theta)\ln \theta} \lambda \sumxi  \indicator{\frac{u\rps}{u\rs} \ge \theta} (r+1)  c\rps.
\end{equation}
In order to show that $\sumxi  \indicator{\frac{u\rps}{u\rs} \ge \theta} (r+1)  c\rps$ is large, we first note
\begin{equation*}
  {\sum\limits_{\substack{r+s = 1 \\ \frac{r+1}{r+s+1}< \bar\xi}}^{N-1} } (r+1) c\rps 
  = {\sum\limits_{\substack{r+s = 1 \\ \frac{r+1}{r+s+1}< \bar\xi}}^{N-1} } \frac{r+1}{r+s+1}(r+s+1) c\rps 
  \le \bar\xi (\rho + \sigma) \overset{\text{choice of }\bar\xi} = \frac \rho 4.
\end{equation*}
Combining this with $\rho^N(c) \ge \frac \rho 2$, we find $\displaystyle \sumxi (r+1)c\rps \ge \frac \rho 4$.
On the other hand, we also have 
\begin{equation*}
  \begin{split}
    \sumxi  \indicator{\frac{u\rps}{u\rs} < \theta} (r+1)  c\rps
    &= \sumxi  \indicator{\frac{u\rps}{u\rs} < \theta} (r+s) c\rs \underbrace{\frac { (r+1) \cbar\rps u\rps}{(r+s) \cbar\rs u\rs}}_{\le 2 \monr \lambda \theta}\\
    & \le (\rho+\sigma) \cdot {2 \monr \lambda \theta}.
  \end{split}
\end{equation*}
If $\eta$ is small enough to ensure $2\monr \lambda \theta \le \frac {\rho}{8 (\rho+\sigma)}$ for all $\monr \le \eta$, we finally arrive at 
\[ \sumxi  \indicator{\frac{u\rps}{u\rs} \ge \theta} (r+1)  c\rps
=  \sumxi  (r+1)  c\rps
-  \sumxi  \indicator{\frac{u\rps}{u\rs} < \theta} (r+1)  c\rps \ge \frac \rho 8 .\]
Plugging this into \eqref{dissipationSmallMon:eq1}, we obtain $\displaystyle \Dnlin \ge \frac {\rho (1-\frac 1 \theta)\ln \theta}{8 \lambda}.$
The case of small $\mons$ is proved analogously, after setting $\hat\xi = \frac{\sigma}{4(\rho+\sigma)}$ and summing over $\frac{s+1}{r+s+1} \ge \hat\xi.$

%% file: LogSobolevInequality.tex
\subsection{Logarithmic Sobolev Inequality}\label{section:logSobolevInequality}
In this section, we want to prove and understand the inequality $\Dnlin \gtrsim H^N[c|\cbar]$. It turns out that the independence of this inequality on $N$ can be taken as a definition of subcriticality in accordance with the naming convention introduced in Section \ref{subsection:mainTheorem}.
Let us first sketch the idea behind said inequality.  Given that $(x-y)(\ln x -\ln y) \ge (\sqrt x - \sqrt y)^2$, we roughly find 
\begin{equation*}
  \Dnlin \gtrsim \sumrs[1][N] (r+s)\cbar\rs \left((\delr \sqrt{u\rs})^2 + (\dels \sqrt{u\rs})^2\right).
\end{equation*}
On the other hand we also see $\Psi(x) = x\ln x + 1 -x \approx \ln(x+2)(1-\sqrt x)^2$ and therefore
\begin{equation*}
  H^N[c|\cbar] = \sumrs[1][N] \cbar\rs \Psi(u\rs) \approx \sumrs[1][N] \cbar\rs \ln(2+u\rs) (1-\sqrt{u\rs})^2 \approx \sumrs[1][N] (r+s)\cbar\rs (1-\sqrt{u\rs})^2.
\end{equation*}
So an estimate of the form $H^N[c|\cbar] \le \frac 1 C \Dnlin$ is roughly a Poincaré inequality on $l^2$ with measure $(r+s)\cbar\rs$. In the continuous setting we can see, that such an inequality is expected if $(r+s)\cbar\rs$ decays exponentially.
    \begin{remark}\textit{(Continuous Poincaré inequality)}\label{hardy}\\
        \input{\CommonPath/remarks/hardy}
    \end{remark}
There are two obvious problems, when applying this reasoning to our situation. Firstly, we can not do the change of variables $(x+y) \mapsto (\frac {x}{x+y}, x+y)$ in the discrete setting and secondly we can not expect $\cbar$ to decay exponentially during the evolution of our solution (especially if we start with large initial masses).

\subsubsection{Abstract Result}\label{subsection:abstractResult}
This subsection is devoted to pull the calculation from Remark \ref{hardy} into the discrete setting. This yields an abstract inequality for general sequences $\cbar\rs,u\rs$, i.e. they do not have to be given by \eqref{defCbarU}.
The main hurdle is to mimic the transformation $(x,y) \mapsto (\frac{x}{x+y},x+y)$ and integrate along lines in the discrete setting. Interestingly, we will see that we need to (approximately) integrate along curved lines, with the curvature depending on the Gibbs free energy. We start with some Notation.
    \begin{notation}\textit{}\label{treeStructure}\\
        \input{\CommonPath/notations/treeStructure}
    \end{notation}
We want to leverage this graph structure to make the calculation from Remark \ref{hardy} possible in the discrete setting. As we will see below, this framework is very well adapted to handle the operations used in Remark \ref{hardy}, in particular integrating up a derivative, using the Cauchy--Schwarz inequality and changing the order of integration. In the appendix, we show how $E[\Prs]$ corresponds to integrating from $0$ to $(r,s)$, $E[\Trs]$ can be interpreted as integrating from $(r,s)$ to infinity and $\er,\es$ yields a coordinate transformation. In particular $(x,y)\mapsto (\frac x {x+y},x+y),$ equates to $\er\rs = \frac{r}{r+s}$ and $\es\rs = \frac{s}{r+s}$, which is surprisingly not the appropriate choice in general. We begin with a helpful Lemma.
    \begin{lemma}\textit{}\label{treeExpressions}\\
        \input{\CommonPath/lemmas/treeExpressions}
    \end{lemma}
\begin{proof}
    \input{\CommonPath/proofs/treeExpressions.tex}
\end{proof}
Next, we study the analogue of $\int_0^\sigma \frac{1}{\tau^2 \cbar(\xi,\tau) N(\xi,\tau)}\dd \tau = 2 N(\xi,\sigma)$ and $\int_\tau^\infty \frac{\sigma^2 \cbar(\xi,\sigma)}{M(\xi,\sigma)} \dd \sigma = 2 M(\xi,\tau).$ Note, that we have not considered the local volume deformation of the coordinate transformation yet. This is given by the $f\rs$ in the next Lemma.
    \begin{lemma}\textit{}\label{treeEstimates}\\
        \input{\CommonPath/lemmas/treeEstimates}
    \end{lemma}
\begin{proof}
    \input{\CommonPath/proofs/treeEstimates.tex}
\end{proof}
Changing the order of integration is no problem.
    \begin{lemma}\textit{(Fubini)}\label{fubini}\\
        \input{\CommonPath/lemmas/fubini}
    \end{lemma}
\begin{proof}
    \input{\CommonPath/proofs/fubini.tex}
\end{proof}
With this we have everything to prove the Poincaré inequality in the discrete setting.
    \begin{theorem}\textit{(Poincaré inequality)}\label{hardyInequality}\\
        \input{\CommonPath/theorems/hardyInequality}
    \end{theorem}
\begin{proof}
    \input{\CommonPath/proofs/hardyInequality.tex}
\end{proof}
The attentive reader will have noticed, that we have proved a Poincaré inequality with measure $\cbar\rs$ instead of the earlier proclaimed $(r+s)\cbar\rs$. Our result is just a little cleaner and more in the spirit of a logarithmic Sobolev inequality, as it is less dependent on the space dimension. If we were to go to three components and hence three dimensions, we do not want to have $(r+s)^2\cbar\rs$ as a measure. However, we now need to check that the right hand side of Theorem \ref{hardyInequality} is actually close to $\Dnlin.$
    \begin{lemma}\textit{}\label{derivativeSqrtPsi}\\
        \input{\CommonPath/lemmas/derivativeSqrtPsi}

    \end{lemma}
\begin{proof}
    \input{\CommonPath/proofs/derivativeSqrtPsi.tex}

\end{proof}
Finally, the factor $l(\max(u\rs,u\rms))$ still needs to be treated. Clearly, $l$ is a monotone function, that behaves roughly like $\ln(x)$ for large $x$. So our concern lies with large values of $u\rs$. In our situation, we have $u\rs = \frac{c\rs}{\cbar\rs} \le \frac{\rho+\sigma}{\min(\mons,\monr)^{r+s} Q\rs}$. The next Lemma tells us, that we can pull out the exponent to find a growth like $r+s$. In fact, the growth like $r+s$ is optimal, as is known in the one-dimensional case (see \cite{canizo:EDEstimate} for an example with infinite mass, which can be modified to have finite mass).
    \begin{lemma}\textit{}\label{growthL}\\
        \input{\CommonPath/lemmas/growthL}
    \end{lemma}
\begin{proof}
    \input{\CommonPath/proofs/grwothL.tex}
\end{proof}

\subsubsection{Application to \texorpdfstring{$\cbar\rs$}{cbar}}\label{subsection:application}
The previous subsection proves the desired $H^N[c|\cbar] \lesssim \Dnlin$ if we can apply Theorem \ref{hardyInequality} to $H[c|\cbar]$, i.e. if we can bound
\begin{equation}\tag*{\eqref{hardyAss}}
  \sup\limits_{r,s \in V} \mathbb{E}\left[\Prs\frac 1 {f\kl \cbar\kl}\right] f\rs \mathbb{E}[\Trs\cbar\kl ], \text{ where } \cbar\rs = \monr^r \mons^s Q\rs.
\end{equation}
To do so, we need to find appropriate values for $\er\rs,\es\rs$ and $f\rs.$ Regardless of that choice, we always have $\mathbb{E}\left[\Prs\frac 1 {f\kl \cbar\kl}\right]\ge \frac {1}{f\rs \cbar\rs}$ and $\mathbb{E}[\Trs\cbar\kl ]\ge \cbar\rs.$ So the best we can hope for is a constant $K$, such that 
\begin{equation}\label{exponentialBehaviourOfPrsTrs}
  \mathbb{E}\left[\Prs\frac 1 {f\kl \cbar\kl}\right]\overset{!}\le K \frac {1}{f\rs \cbar\rs} \text{ and } \mathbb{E}[\Trs\cbar\kl ]\overset{!}\le K \cbar\rs,
\end{equation}
which is exactly the exponential behaviour needed in Remark \ref{hardy}. By Lemma \ref{treeExpressions}.\ref{treeExpressions:derivativeTrs} we have 
\begin{equation}\label{formulaTrs}
  \begin{split}
    \mathbb{E}[\Trs \cbar\kl] &= \cbar\rs + \frac{\er\rps}{\er\rps+\es\rps} \underbrace{\mathbb{E}[T\rps \cbar\kl]}_{\eqqcolon K\rps \cbar\rps}  + \frac{\es\rsp}{\er\rsp+\es\rsp} \underbrace{\mathbb{E}[T\rsp \cbar\kl]}_{\eqqcolon K\rsp \cbar\rsp}\\
    & = \cbar\rs \left(1 + \frac{\er\rps}{\er\rps+\es\rps} K\rps \frac{\cbar\rps}{\cbar\rs}  + \frac{\es\rsp}{\er\rsp+\es\rsp} K\rsp \frac{\cbar\rsp}{\cbar\rs}\right).
  \end{split}
\end{equation}
In order to globally bound $K\rs$, it is therefore reasonable to try and bound 
\[\frac{\er\rps}{\er\rps+\es\rps} \frac{\cbar\rps}{\cbar\rs}  + \frac{\es\rsp}{\er\rsp+\es\rsp} \frac{\cbar\rsp}{\cbar\rs} \overset{!}<  1, \text{ whenever } \monr,\mons \in \interior{\Ex},\]
where $\Ex$ is the region of existence from Definition \ref{regionOfExistence}. Note, that there is no hope to bound \eqref{hardyAss} if $(\monr,\mons)\not \in \Ex$, even in the one-dimensional case.
The main inside of this subsection, is that we can essentially make $K\rs$ one-dimensional, i.e. $K\rs = K_{r+s}$ under the following concavity condition 
\begin{equation}\label{concavityAss:old}
    \frac{Q\rnp Q\kn}{Q\rn Q\knp} \ge 1 \quad\text{ and }\quad \frac{Q\rpn Q\kn}{Q\rn Q\kpn} \le 1 \quad\text{ for all } 0\le k< r < n+1.
\end{equation} 
This is a weaker version of \eqref{assumption:thermo:concave} formulated at the level of $(r,s).$ To see this, set in \eqref{assumption:thermo} the higher order terms $f\rs = g\rs =1$ which yields 
\[ \frac{Q\rnp Q\kn}{Q\rn Q\knp} = \frac{\psi\left(\frac r {n+1}\right)}{\psi\left(\frac k {n+1}\right)}\quad\text{ and }\quad
\frac{Q\rpn Q\kn}{Q\rn Q\kpn} =  \frac{\varphi\left(\frac r {n+1}\right)}{\varphi\left(\frac k {n+1}\right)}.\]
But $\psi$/$\varphi$ are monotonically increasing/decreasing if and only if $\Phi^{\prime\prime}\le 0.$
    \begin{lemma}\textit{($\er\rs,\es\rs$)}\label{eres}\\
        \input{\CommonPath/lemmas/eres}
    \end{lemma}
\begin{proof}
    \input{\CommonPath/proofs/eres.tex}
\end{proof}
Some further remarks on the assumption \eqref{concavityAss:old} and the necessity to allow such general $\er\rs,\es\rs$ can be found in the appendix.
Coming back to the bound \eqref{exponentialBehaviourOfPrsTrs}; Lemma \ref{eres} together with \eqref{formulaTrs} yields a bound on $\frac{\mathbb{E}[\Trs\cbar\kl]}{\cbar\rs}$ in terms of $m_n.$ With the right choice of $f\rs$, we can also bound $f\rs \cbar\rs \mathbb{E}[\Prs \frac{1}{f\kl \cbar\kl}]$ in terms of $m_n.$ For a given edge weighing $\er\rs,\es\rs$ we can inductively set 
\begin{equation}\label{frs}
  \frac 1 {f\rs} = 
  \begin{cases}
    \cbar\rs &\text{ if }r+s = N\\
    \er\rps \frac 1 {f\rps} + \es\rsp \frac 1 {f\rsp} &\text{ else.}
  \end{cases}
\end{equation}
If $\er\rs,\es\rs$ satisfies \eqref{eres:equation}, then $\frac{1}{f\rs\cbar\rs} = \prod\limits_{n=r+s+1}^N m_n$, which is easily seen inductively. From this, we can deduce the desired bound on $\mathbb{E}[\Prs \frac{1}{f\kl\cbar\kl}].$ In the next Lemma we combine these observations with an estimate for small values of $n$, where we can not expect \eqref{concavityAss:old} to hold, because for small values of $r+s$ the $\smallo(r+s)$ terms from \eqref{QThermoDom} are not negligible.
    \begin{lemma}\textit{(Estimates on $N\rs,M\rs$)}\label{MNEstimates}\\
        \input{\CommonPath/lemmas/MNEstimates}
    \end{lemma}
\begin{proof}
    \input{\CommonPath/proofs/MNEstimates.tex}
\end{proof}

%% file: remarks/hardy.tex
\newcommand{\intR}{\int_0^\infty}
The idea of the proof of the Poincaré inequality is best seen in the continues setup. Define 
\begin{equation*}
    N(\xi,\omega) = \sqrt{\int_0^\omega \frac {1} {\tau^2 \cbar(\xi,\tau)}\dd\tau} \text{ and } 
    M(\xi,\omega) = \sqrt{\int_\omega^\infty \tau^2 \cbar(\xi,\tau)\dd \tau}
\end{equation*}
and assume $MN\le \sqrt K$. Then we can find
\begin{equation*}
    \begin{split}
        \intR& \intR (x+y)\cbar(x,y) (\sqrt{u(x,y)}-1)^2 \dd x \dd y
         \overset{(\frac{x}{x+y},x+y)=(\xi,\omega)}{=} \int_0^1 \intR \omega\cbar(\xi,\omega) (\sqrt{u(\xi,\omega)}-1)^2 \omega\dd\omega \dd \xi  \\
         &= \int_0^1 \intR \omega^2 \cbar(\xi,\omega) \left(\int_0^\omega \partial_\tau \sqrt{u(\xi,\tau)}\dd\tau\right)^2 \dd\omega \dd \xi  \\
        \overset{\text{C.S.}}&{\le} \int_0^1 \intR \omega^2 \cbar(\xi,\omega) \left(\int_0^\omega (\partial_\tau \sqrt{u(\xi,\tau)})^2 \tau^2 \cbar(\xi,\tau) N(\xi,\tau)\dd\tau\right) 
        \int_0^\omega \underbrace{\frac{1}{\tau^2 \cbar(\xi,\tau) N(\xi,\tau)}}_{= 2 \partial_\tau N(\xi,\tau)}\dd \tau \omega\dd \dd \xi  \\
        &= 2 \int_0^1 \intR \omega^2 \cbar(\xi,\omega)  N(\xi,\omega)\left(\int_0^\omega (\partial_\tau \sqrt{u(\xi,\tau)})^2 \tau^2 \cbar(\xi,\tau) N(\xi,\tau)\dd\tau\right) \dd\omega \dd\xi\\
        \overset{\text{Fubini}}&{=} 2\int_0^1 \intR (\partial_\tau \sqrt{u(\xi,\tau)})^2 \tau^2 \cbar(\xi,\tau) N(\xi,\tau) \int_\tau^\infty \omega^2 \cbar(\xi,\omega)  \underbrace{N(\xi,\omega)}_{\le \frac{\sqrt K} M} \dd\omega \dd\tau\dd\xi \\
        &\le 2\sqrt{K}\int_0^1 \intR (\partial_\tau \sqrt{u(\xi,\tau)})^2 \tau^2 \cbar(\xi,\tau) N(\xi,\tau) \int_\tau^\infty \underbrace{\frac{\omega^2 \cbar(\xi,\omega) } {M(\xi,\omega)}}_{=-2\partial_\omega M(\xi,\omega)}\dd\omega \dd\tau\dd\xi \\
        & = 4\sqrt{K}\int_0^1 \intR (\partial_\tau \sqrt{u(\xi,\tau)})^2 \tau^2 \cbar(\xi,\tau) N(\xi,\tau) M(\xi,\tau)\dd\tau\dd\xi \\
        &\le 4K \intR\intR (\partial_\tau \sqrt{u(x,y)})^2 (x+y) \cbar(x,y) \dd x\dd y.
    \end{split}
\end{equation*}

%% file: notations/treeStructure.tex
Let $G = (V,E)$ be the directed Graph with nodes $V$ and edges $E$ given by
\begin{equation*}
  \begin{split}
    V &\coloneqq \left\{(r,s) \,\mid\, r\in\N_0,s\in \N_0, 1\le r+s\le N\right\} \text{ and}\\
    E &\coloneqq \left\{((r,s),(r+1,s)) \,\mid\, (r,s) \in V \text{ and } r+s<N\right\} \cup \left\{((r,s),(r,s+1)) \,\mid\, (r,s) \in V \text{ and } r+s<N\right\}.
  \end{split}
\end{equation*}
For some weighing of the edges $e : E \to \R_{>0}$ we write
\begin{equation*}
  \er\rs \coloneqq e((r-1,s),(r,s)) \text{ and }\es\rs \coloneqq e((r,s-1),(r,s)),
\end{equation*}
with the convention that $\er_{0,s} = 0 = \es_{r,0}$.
Furthermore, let the set of spanning trees of $G$ with $(1,0)$ and $(0,1)$ fused be denoted by $\T$.

\input{\CommonPath/pictures/GraphWithSpanningTree.tex}

Then we obtain a probability distribution on $\T$ via
\begin{equation*}
    \nu : \T \to \R_{>0}, \quad T \mapsto \prod\limits_{f\in E(T)} e(f)
\end{equation*}
and appropriate normalisation. For any random variable $X \colon \T \to \R$ we denote its expected value as $\mathbb E[X]$.
For $(r,s)\in V$ and any $T\in \T$ we denote $\Trs(T)$ as the maximal connected subgraph in $T$ with root $(r,s)$ and $\Prs(T)$ as the subgraph of $T$ containing only the unique path from the root of $T$ to (r,s).

For a function $h : E \to \R$, we then define the random variables
\begin{equation*}
        \Prs h: \T \to \R, \quad T\mapsto \sum\limits_{ f \in \Prs(T)} h(f),
\end{equation*}
where $P^{1,0}h (T) = 0 = P^{0,1}h(T)$. For a function $g : V \to \R$ we define 
\begin{equation*}
    \begin{split}
        &\Trs g: \T \to \R, \quad T\mapsto \sum\limits_{(k,l) \in \Trs(T)} g\kl \quad\text{ and denote }\\
        &\Prs g\kl: \T \to \R, \quad T\mapsto \sum\limits_{f = (v,w) \in \Prs(T)} g_w.
    \end{split}
\end{equation*}
For any function $h : E \to \R$ we denote 
\begin{equation*}
    h\rs \coloneqq h((r-1,s),(r,s)) \quad\text{ and }\quad \dot{h}\rs \coloneqq h((r,s-1),(r,s)).
\end{equation*}
Lastly, for a sequence $g\rs$, we define 
\begin{equation*}
    \del g : E(G) \to \R, \quad f=(v,w)\mapsto g_w - g_v. 
\end{equation*}

%% file: pictures/GraphWithSpanningTree.tex
\begin{figure}[ht]
  \centering
  \begin{tikzpicture}[scale=0.7,>={Stealth[scale=1.2]}]
    \draw[black, ->] (-0.5,-0.5) -- (-0.5,6) node[above]{s};
    \draw[black, ->] (-0.5,-0.5) -- (6,-0.5) node[right]{r};
    \draw[black] (-0.4,5) -- (-0.6,5) node[left]{N};
    \draw[black] (-0.4,1) -- (-0.6,1) node[left]{1};
    \draw[black] (5,-0.4) -- (5,-0.6) node[below]{N};
    \draw[black] (1,-0.4) -- (1,-0.6) node[below]{1};
    \foreach \r in {1,...,4}{
      \pgfmathsetmacro{\z}{5-\r}
      \foreach \s in {1,...,\z}{
        \ifthenelse{\r < 3}{
          \filldraw[black] (\r,\s) circle (1pt) node {};}{
          \filldraw[black] (\r,\s) circle (1pt) node {};}}
    }
    \foreach \r in {1,...,5}{
      \filldraw[black] (\r,0) circle (1pt) node {};
      \filldraw[black] (0,\r) circle (1pt) node {};
    }
    \foreach \r in {1,...,3}{
      \pgfmathsetmacro{\z}{4-\r}
      \foreach \s in {1,...,\z}{
        \ifthenelse{\r < 3}{
          \ifthenelse{\s <3}{
            \ifthenelse{\r =2 }{
              \ifthenelse{\s>1}{
                \draw[SeaGreen,->] (\r+0.1,\s) -- (\r+0.9,\s);
                \draw[SeaGreen,->] (\r,\s+0.1) -- (\r,\s+0.9);
              }{
                \draw[black,->] (\r+0.1,\s) -- (\r+0.9,\s);
                \draw[SeaGreen,->] (\r,\s+0.1) -- (\r,\s+0.9);
              }
            }{
              \draw[black,->] (\r+0.1,\s) -- (\r+0.9,\s);
              \draw[SeaGreen,->] (\r,\s+0.1) -- (\r,\s+0.9);
            }
          }{
            \ifthenelse{\s = 3}{
              \draw[black,->] (\r+0.1,\s) -- (\r+0.9,\s);
              \draw[black,->] (\r,\s+0.1) -- (\r,\s+0.9);
            }{
              \draw[SeaGreen,->] (\r+0.1,\s) -- (\r+0.9,\s);
              \draw[black,->] (\r,\s+0.1) -- (\r,\s+0.9);
            }
          }
        }
        {
          \ifthenelse{\s>1}{
            \ifthenelse{\r = 3}{
              \ifthenelse{\s = 2}{
                \draw[SeaGreen,->] (\r+0.1,\s) -- (\r+0.9,\s);
                \draw[SeaGreen,->] (\r,\s+0.1) -- (\r,\s+0.9);
              }{
                \draw[SeaGreen,->] (\r+0.1,\s) -- (\r+0.9,\s);
                \draw[black,->] (\r,\s+0.1) -- (\r,\s+0.9);
              }
            }{
              \draw[SeaGreen,->] (\r+0.1,\s) -- (\r+0.9,\s);
              \draw[black,->] (\r,\s+0.1) -- (\r,\s+0.9);
            }
          }{
            \draw[black,->] (\r+0.1,\s) -- (\r+0.9,\s);
            \draw[black,->] (\r,\s+0.1) -- (\r,\s+0.9);
          }
        }
      }
    }
    \foreach \r in {1,...,4}{
      \draw[SeaGreen,->] (\r+0.1,0) -- (\r+0.9,0);
      \draw[SeaGreen,->] (\r,0.1) -- (\r,0.9);
      \draw[SeaGreen,->] (0,\r+0.1) -- (0,\r+0.9);
      \ifthenelse{\r < 4}{
        \draw[black,->] (0.1,\r) -- (0.9,\r);
      }{
        \draw[SeaGreen,->] (0.1,\r) -- (0.9,\r);
      }

    }
  \end{tikzpicture}
\caption{A picture of $G$ with a spanning tree $T\in\T$ coloured in green.}
\end{figure}

%% file: lemmas/treeExpressions.tex
\newcommand{\eer}{\frac{\er\rs}{\er\rs+\es\rs}}
\newcommand{\eerp}{\frac{\er\rps}{\er\rps+\es\rps}}
\newcommand{\ees}{\frac{\es\rs}{\er\rs+\es\rs}}
\newcommand{\eesp}{\frac{\es\rsp}{\er\rsp+\es\rsp}}
\newcommand{\eek}{\frac{\er\kl}{\er\kl+\es\kl}}
\newcommand{\eel}{\frac{\es\kl}{\er\kl+\es\kl}}
For $h\colon E(G) \to \R$, we define 
\[ h_{\delta^-(r,s)} \colon \T \to \R, \quad T\mapsto h(f), \text{ where }f\in T \text{ has the end node }(r,s).\]
For $g : V \to \R$  and $h : E(G) \to \R$ the following equations hold for $(r,s)\in V$ with the convention $0\cdot \text{undefined} = 0$
\begin{enumerate}
  \item \label{treeExpressions:derivativePrs}
    \begin{equation*}
      \mathbb{E}[h_{\delta^-(r,s)}] = \eer \delr \mathbb{E}[\Prs h] +\ees \dels \mathbb{E}[\Prs h],
    \end{equation*}
  \item \label{treeExpressions:hdelta}
    \begin{equation*}
      \mathbb{E}[h_{\delta^-(r,s)}] = \eer h\rs +\ees \dot h\rs,
    \end{equation*}
  \item \label{treeExpressions:PrsExpandh}
    \begin{equation*}
        \mathbb{E}[\Prs h] = \mathbb{E}\left[\Prs \left(\eek h\kl + \eel \dot{h}\kl\right)\right],
    \end{equation*}
  \item \label{treeExpressions:derivativeTrs}
    \begin{equation*}
        g\rs = \mathbb{E}[\Trs g\kl] - \eerp \mathbb{E}[T\rps g\kl] - \eesp \mathbb{E}[T\rsp g\kl]
    \end{equation*}
    and
  \item  \label{treeExpressions:TrsExpandg}
    \begin{equation*}
        g\rs = \mathbb{E}[\Trs (g\kl - \eerp g\kpl - \eesp  g\klp)].
    \end{equation*}
\end{enumerate}

%% file: proofs/treeExpressions.tex
\newcommand{\eer}{\frac{\er\rs}{\er\rs+\es\rs}}
\newcommand{\eerp}{\frac{\er\rps}{\er\rps+\es\rps}}
\newcommand{\ees}{\frac{\es\rs}{\er\rs+\es\rs}}
\newcommand{\eesp}{\frac{\es\rsp}{\er\rsp+\es\rsp}}
\newcommand{\eek}{\frac{\er\kl}{\er\kl+\es\kl}}
\newcommand{\eel}{\frac{\es\kl}{\er\kl+\es\kl}}
In this proof we will use the notation $\er\rs \in T$ to denote $((r-1,s),(r,s))\in T$ and $\es\rs\in T$ to denote $((r,s-1),(r,s))\in T.$\\
We see, that \ref{treeExpressions:derivativePrs} is true if either $\er\rs=0$ or $\es\rs=0$, because then $(r,s)$ has only one incoming edge. So any path to $(r,s)$ needs to have that edge. If $\er\rs\neq0\neq \es\rs$ we calculate
\begin{equation*}
  \begin{split}
    &\mathbb{E}[\Prs h] = \sum\limits_{T\in\T} \sum\limits_{f\in \Prs} h(f) \frac{\nu(T)}{\sum \nu(T)}\\
    &= \sum\limits_{\substack{T\in \T \\ ((r-1,s),(r,s))\in T}} \left(h\rs + \sum\limits_{f\in P^{r-1,s}} h(f)\right) \frac{\nu(T)}{\sum \nu(T)}
     + \sum\limits_{\substack{T\in \T \\ ((r,s-1),(r,s))\in T}} \left(\dot{h}\rs + \sum\limits_{f\in P^{r,s-1}} h(f)\right) \frac{\nu(T)}{\sum \nu(T)}\\
     & = \mathbb{E}[h_{\delta^-(r,s)}] + \eer \sum\limits_{\substack{T\in \T \\ \er\rs\in T}} \sum\limits_{f\in P^{r-1,s}} h(f) \frac{\nu(T)}{\sum \nu(T)}  + \ees \sum\limits_{\substack{T\in \T \\ \er\rs \in T}} \sum\limits_{f\in P^{r-1,s}} h(f) \frac{\nu(T)}{\sum \nu(T)}\\
     & \qquad\qquad+ \eer \underbrace{\sum\limits_{\substack{T\in \T \\ \es\rs\in T}} \sum\limits_{f\in P^{r,s-1}} h(f) \frac{\nu(T)}{\sum \nu(T)}}_{ = (*)}
     + \ees \sum\limits_{\substack{T\in \T \\ \es\rs \in T}} \sum\limits_{f\in P^{r,s-1}} h(f) \frac{\nu(T)}{\sum \nu(T)}.
  \end{split}
\end{equation*}
If we denote for a given $T\in\T$ with $\delta^-(r,s)$ the unique edge ending in $(r,s)$ and $\nu(T\setminus \delta^-(r,s)) \coloneqq \prod_{\substack{f\in E(T)\\f\neq \delta^-(r,s)}} e(f),$ we can determine $(*)$ as
\begin{equation}
  \begin{split}
    (*)&= \es\rs \sum\limits_{\substack{T\in \T \\ \es\rs\in T}} \underbrace{\sum\limits_{f\in P^{r,s-1}} h(f) \frac{\nu(T \setminus \delta^-(r,s))}{\sum \nu(T)}}_{\text{indep. of $\delta^-(r,s)$}} \overset{\text{swap }\es\rs \to\er\rs\text{ in }T}{=}  \es\rs \sum\limits_{\substack{T\in \T \\ \er\rs\in T}} \sum\limits_{f\in P^{r,s-1}} h(f) \frac{\nu(T \setminus \delta^-(r,s))}{\sum \nu(T)}\\
      &=\frac{\es\rs}{\er\rs} \sum\limits_{\substack{T\in \T \\ \er\rs\in T}} \sum\limits_{f\in P^{r,s-1}} h(f) \frac{\nu(T)}{\sum \nu(T)}
  \end{split}
\end{equation}
and hence with the analogous statement for the terms involving $P^{r-1,s}$
\begin{equation*}
    \mathbb{E}[\Prs h] = \mathbb{E}[h_{\delta^-(r,s)}] + \eer \sum\limits_{T\in \T} \sum\limits_{f\in P^{r-1,s}} h(f) \frac{\nu(T)}{\sum \nu(T)}  + \ees \sum\limits_{T \in \T } \sum\limits_{f\in P^{r,s-1}} h(f) \frac{\nu(T)}{\sum \nu(T)},
\end{equation*}
which is exactly \ref{treeExpressions:derivativePrs}. Now \ref{treeExpressions:hdelta} follows from 
\begin{equation*}
    \begin{split}
      &\mathbb{E}[h_{\delta^-(r,s)}] = \sum\limits_{\substack{ T \in \T \\\er\rs\in T}}h\rs \frac{\nu(T)}{\sum\nu(T)}
        +  \sum\limits_{\substack{T \in \T \\\es\rs\in T}}\dot{h}\rs \frac{\nu(T)}{\sum\nu(T)}\\
        =& \eer h\rs (\er\rs + \es\rs) \sum\limits_{\substack{ T \in \T \\\er\rs\in T}} \frac{\nu(T \setminus \delta^-(r,s))}{\sum \nu(T)}
        + \ees \dot{h}\rs (\er\rs + \es\rs) \sum\limits_{\substack{ T \in \T \\\es\rs\in T}} \frac{\nu(T \setminus \delta^-(r,s))}{\sum \nu(T)}\\
    \end{split}
\end{equation*}
once we realise 
\begin{equation*}
    (\er\rs + \es\rs) \sum\limits_{\substack{ T \in \T \\\er\rs\in T}} \frac{\nu(T \setminus \delta^-(r,s))}{\sum \nu(T)}= 
     \sum\limits_{\substack{ T \in \T \\\er\rs\in T}} \frac{\nu(T)}{\sum \nu(T)} +\sum\limits_{\substack{ T \in \T \\\es\rs\in T}} \frac{\nu(T)}{\sum \nu(T)}= 1
\end{equation*}
and the analogous statement with $\es\rs\in T$.
Next, we can prove \ref{treeExpressions:PrsExpandh} inductively from $r+s-1$ to $r+s$. The induction start is true by our definition $P^{1,0}h = 0 =P^{0,1}h$ for any $h$ and the induction step is
\begin{equation*}
    \begin{split}
      \mathbb{E}[\Prs h] \overset{\ref{treeExpressions:derivativePrs} \text{ and }\ref{treeExpressions:hdelta}}&{=} \eer h\rs + \ees \dot{h}\rs + \eer \mathbb{E}[P^{r-1,s} h] + \ees \mathbb{E}[P^{r,s-1} h ] \\
        \overset{\text{ind.}}&{=}\eer h\rs + \ees \dot{h}\rs + \eer \mathbb{E}\left[P^{r-1,s} \left(\eek h\kl + \eel \dot{h}\kl\right)\right] \\
        &\qquad\qquad\qquad\qquad\qquad+ \ees \mathbb{E}\left[P^{r,s-1} \left(\eek h\kl + \eel \dot{h}\kl\right)\right] \\
        \overset{\ref{treeExpressions:derivativePrs}}&{=}\mathbb{E}\left[\Prs \left(\eek h\kl + \eel \dot{h}\kl\right)\right].
    \end{split}
\end{equation*}
For \ref{treeExpressions:derivativeTrs} we abbreviate $T\rps g = T\rps g(T)$ and $T\rsp g = T\rsp g(T)$ and see 
\begin{equation*}
    \begin{split}
        \mathbb{E}[\Trs g\kl] &= \sum\limits_{T\in\T} \sum\limits_{k,l \in \Trs} g\kl \frac{\nu(T)}{\sum\nu(T)}\\
        &= g\rs + \sum\limits_{\substack{T \in \T \\\er\rps\in T}} T\rps g  \frac{\nu(T)}{\sum \nu(T)} + \sum\limits_{\substack{T \in \T \\\es\rsp\in T}} T\rsp g  \frac{\nu(T)}{\sum \nu(T)}\\
        &= g\rs + \sum\limits_{\substack{T \in \T \\\er\rps\in T}} \er\rps T\rps g  \frac{\nu(T\setminus \delta^-(r+1,s))}{\sum \nu(T)} + \sum\limits_{\substack{T \in \T \\\es\rsp\in T}}\es\rsp T\rsp g  \frac{\nu(T\setminus \delta^-(r,s+1))}{\sum \nu(T)}\\
        \overset{\text{as above}}&{=}g\rs + \eerp \sum\limits_{T \in \T } T\rps g  \frac{\nu(T)}{\sum \nu(T)} + \eesp \sum\limits_{T \in \T} T\rsp g  \frac{\nu(T)}{\sum \nu(T)}.
    \end{split}
\end{equation*}
Lastly, we can show \ref{treeExpressions:TrsExpandg} again inductively over the distance to the boundary (i.e. from $r+s$ to $r+s-1$). Let us abbreviate $\mathbb{E}[T\rs (g\kl - \eerp g\kpl - \eesp  g\klp)] \eqqcolon G\rs$ and calculate
\begin{equation*}
    \begin{split}
      g\rs &-\eerp g\rps - \eesp g\rsp\\
        \overset{\text{\ref{treeExpressions:derivativeTrs}}}&{=} G\rs
        - \eerp G\rps - \eesp G\rsp\\
        \overset{\text{ind.}}&{=} G\rs -\eerp g\rps - \eesp g\rsp,
    \end{split}
\end{equation*}
which shows the desired result.

%% file: lemmas/treeEstimates.tex
\newcommand{\eer}{\frac{\er\rs}{\er\rs+\es\rs}}
\newcommand{\eerp}{\frac{\er\rps}{\er\rps+\es\rps}}
\newcommand{\ees}{\frac{\es\rs}{\er\rs+\es\rs}}
\newcommand{\eesp}{\frac{\es\rsp}{\er\rsp+\es\rsp}}
\newcommand{\eek}{\frac{\er\kl}{\er\kl+\es\kl}}
\newcommand{\eel}{\frac{\es\kl}{\er\kl+\es\kl}}
Let $0<(f\rs)_{(r,s)\in V}$ solve 
\begin{equation*}
    \delr\left(\eerp \frac 1 {f\rps}\right) + \dels\left(\eesp \frac 1 {f\rsp}\right) = 0
    \text{ for all } 1 < r+s < N.
\end{equation*}
and define for some sequence $g\rs : V \to \R_{>0}$
\begin{equation*}
    \begin{split}
        N\rs \coloneqq \sqrt{\mathbb{E}\left[\Prs g\kl\right]} \quad\text{ and}\quad
        M\rs \coloneqq \sqrt{f\rs \mathbb{E}\left[\Trs g\kl\right]}.
    \end{split}
\end{equation*}
Then, the estimates 
\begin{equation*}
    \begin{split}
        \mathbb{E}\left[\Prs\frac{g\kl}{N\kl}\right] \le 2 N\rs  \quad\text{ and }\quad
        \mathbb{E}\left[\Trs \frac{g\kl}{M\kl}\right] \le 2 \frac{M\rs}{f\rs} \quad\text{ hold true.}
    \end{split}
\end{equation*}

%% file: proofs/treeEstimates.tex
\newcommand{\eer}{\frac{\er\rs}{\er\rs+\es\rs}}
\newcommand{\eerp}{\frac{\er\rps}{\er\rps+\es\rps}}
\newcommand{\ees}{\frac{\es\rs}{\er\rs+\es\rs}}
\newcommand{\eesp}{\frac{\es\rsp}{\er\rsp+\es\rsp}}
\newcommand{\eek}{\frac{\er\kl}{\er\kl+\es\kl}}
\newcommand{\eekp}{\frac{\er\kpl}{\er\kpl+\es\kpl}}
\newcommand{\eel}{\frac{\es\kl}{\er\kl+\es\kl}}
\newcommand{\eelp}{\frac{\es\klp}{\er\klp+\es\klp}}
We have
\begin{equation*}
    \begin{split}
        \mathbb{E}\left[\Prs\frac{g\kl}{N\kl}\right] 
        \overset{\text{Lemma \ref{treeExpressions}.\ref{treeExpressions:derivativePrs}}}&{=} \mathbb{E}\left[\Prs\frac{\eek \delk \mathbb{E}[\Pkl g\ii] +\eel \dell \mathbb{E}[\Pkl g\ii]}{N\kl}\right] \\
        \overset{\frac{x-y}{\sqrt x} \le 2(\sqrt x - \sqrt y)}&{\le} 2 \mathbb{E}\left[\Prs \frac{\er\kl}{\er\kl+\es\kl}\delk N\kl + \frac{\es\kl}{\er\kl+\es\kl}\dell N\kl \right] \\
        \overset{\text{Lemma \ref{treeExpressions}.\ref{treeExpressions:PrsExpandh}}}&{=} 2 \mathbb{E}[\Prs \del N\kl] \overset{N_{1,0} = N_{0,1} = 0}{=} 2 N\kl.
    \end{split}
\end{equation*}
Similarly, we obtain for $k+l < N$
\begin{equation*}
    \begin{split}
        \frac{g\kl}{M\kl} 
        \overset{\text{Lemma \ref{treeExpressions}.\ref{treeExpressions:derivativeTrs}}}&{=} \frac{ \mathbb{E}[T\kl g\ii] - \eekp \mathbb{E}[T\kpl g\ii] - \eelp \mathbb{E}[T\klp g\ii]}{M\kl}\\
        \overset{\text{def }f}&{=} \frac{\eekp \frac 1 {f\kpl} (-\delk (f\kpl \mathbb{E}[T\kpl g\ii ]) + \eelp \frac 1 {f\klp} (-\dell f\klp \mathbb{E}[T\klp g\ii])}{M\kl}\\
        \overset{\frac{x-y}{\sqrt x} \le 2(\sqrt x - \sqrt y)}&{\le} 2 \left(\eekp \frac 1 {f\kpl} (-\delk M\kpl) + \eelp \frac 1 {f\klp} (-\dell M\klp)\right)\\
        \overset{\text{def }f}&{=} 2\left(\frac{M\kl}{f\kl} - \eekp \frac{M\kpl}{f\kpl} - \eelp \frac{M\klp}{f\klp}\right).
    \end{split}
\end{equation*}
And for $k+l = N$, we have 
\[\frac {g\kl}{M\kl} = \frac {g\kl}{\sqrt{f\kl g\kl}}= \frac{M\kl}{f\kl} \le 2 \frac{M\kl}{f\kl} = 2\left(\frac{M\kl}{f\kl} - \eekp \frac{M\kpl}{f\kpl} - \eelp \frac{M\klp}{f\klp}\right).\]
Now we take $\mathbb{E}[\Trs]$ to obtain
\begin{equation*}
    \begin{split}
        \mathbb{E}\left[\Trs \frac{g\kl}{M\kl}\right] &\le 2 \mathbb{E}\left[\Trs \left(\frac{M\kl}{f\kl} - \eekp \frac{M\kpl}{f\kpl} - \eelp \frac{M\klp}{f\klp}\right)\right]\\
        \overset{\text{Lemma \ref{treeExpressions}.\ref{treeExpressions:TrsExpandg}}}&{=}2 \frac{M\rs}{f\rs}. \qedhere
    \end{split}
\end{equation*}

%% file: lemmas/fubini.tex
\newcommand{\eer}{\frac{\er\rs}{\er\rs+\es\rs}}
\newcommand{\eerp}{\frac{\er\rps}{\er\rps+\es\rps}}
\newcommand{\ees}{\frac{\es\rs}{\er\rs+\es\rs}}
\newcommand{\eesp}{\frac{\es\rsp}{\er\rsp+\es\rsp}}
\newcommand{\eek}{\frac{\er\kl}{\er\kl+\es\kl}}
\newcommand{\eel}{\frac{\es\kl}{\er\kl+\es\kl}}
For $g : V \to \R$  and $h : E(G) \to \R$ we have
\begin{equation*}
    \begin{split}
      \sumrs[2][N] \mathbb{E}[\Prs h] g\rs 
      = \sumrs[2][N] \left(\eer h\rs + \ees \dot{h}\rs\right) \mathbb{E}[\Trs g\kl].
    \end{split}
\end{equation*}

%% file: proofs/fubini.tex
\newcommand{\eer}{\frac{\er\rs}{\er\rs+\es\rs}}
\newcommand{\eerp}{\frac{\er\rps}{\er\rps+\es\rps}}
\newcommand{\ees}{\frac{\es\rs}{\er\rs+\es\rs}}
\newcommand{\eesp}{\frac{\es\rsp}{\er\rsp+\es\rsp}}
\newcommand{\eek}{\frac{\er\kl}{\er\kl+\es\kl}}
\newcommand{\eel}{\frac{\es\kl}{\er\kl+\es\kl}}
\newcommand{\nuprob}{\frac{\nu(T)}{\sum \nu(T)}}
We denote $\indicator{\er\kl} \coloneqq \indicator{((k-1,l),(k,l)) \in T}$, $\indicator{\es\kl} \coloneqq \indicator{((k,l-1),(k,l)) \in T}$ and calculate
\begin{equation}
  \begin{split}
    \sumrs[2][N]&\mathbb{E}[\Prs h] g\rs 
    \overset{\text{Def $\Prs$}}{=} \sumrs[2][N] \sum\limits_{T\in \T}\sum\limits_{f\in \Prs(T)} h(f) g\rs \nuprob\\
    &= \sum\limits_{T\in\T} \sumrs[2][N] \sum\limits_{f\in\Prs(T)} h(f)g\rs \nuprob \eqqcolon (*)
\end{split}
\end{equation}
For a fixed spanning tree, we can change the order of summation, because a fixed edge $(u,(k,l))$ appears exactly in all paths $\Prs$ for all $(r,s) \in T\kl.$ Therefore, we obtain
\begin{equation}
  \begin{split}
    (*)&= \sum\limits_{T\in\T} \sum\limits_{k+l=2}^{N} \left(\indicator{\er\kl} h\kl \sum\limits_{r,s\in T\kl(T)}g\rs\right)\nuprob + \left(\indicator{\es\kl}\dot{h}\kl \sum\limits_{r,s\in T\kl(T)}g\rs\right) \nuprob\\
    &= \sum\limits_{k+l=2}^{N} h\kl \sum\limits_{T\in\T} \left(\indicator{\er\kl} \sum\limits_{r,s\in T\kl(T)}g\rs\right)\nuprob + \dot{h}\kl \sum\limits_{T\in\T} \left(\indicator{\es\kl} \sum\limits_{r,s\in T\kl(T)}g\rs\right)\nuprob\\
    &= \sum\limits_{k+l=2}^{N} h\kl \underbrace{\mathbb{E}[\indicator{\er\kl}T\kl g\rs]}_{\overset{\text{indep.}}{=} \mathbb{E}[\indicator{\er\kl}]\mathbb{E}[T\kl g\rs]} + \dot{h}\kl \mathbb{E}[\indicator{\es\kl}T\kl g\rs] \\
    &= \sumrs[2][N] \left(\eer h\rs + \ees \dot h\rs\right) \mathbb{E}[\Trs g\kl],
  \end{split}
\end{equation}
where we used $\mathbb{E}[\indicator{\er\kl}] = \eek$ and $\mathbb{E}[\indicator{\es\kl}] = \eel$ in the last line, which follows from Lemma \ref{treeExpressions}.\ref{treeExpressions:hdelta}.

%% file: theorems/hardyInequality.tex
For any positive sequences $\cbar\rs$  we define 
\begin{equation}\label{hardyAss}
    \sup\limits_{r,s \in V} \underbrace{\mathbb{E}\left[\Prs\frac 1 {f\kl \cbar\kl}\right]}_{\eqqcolon N\rs^2} \underbrace{f\rs \mathbb{E}[\Trs\cbar\kl ]}_{\eqqcolon M\rs^2} \eqqcolon K.
\end{equation}
Then, for any $u\rs$ with $u_{1,0} = 1 = u_{0,1}$ the following estimate holds true
\begin{equation*}
    \sumrs[2][N] \cbar\rs \Psi(u\rs) \le 4K \sumrs[2][N] \cbar\rs \left(\frac{\er\rs}{\er\rs+\es\rs} \Bigl(\delr \sqrt{\Psi(u\rs)}\Bigr)^2 + \frac{\es\rs}{\er\rs+\es\rs} \Bigl(\dels \sqrt{\Psi(u\rs)}\Bigr)^2\right).
\end{equation*}

%% file: proofs/hardyInequality.tex
Since $\Psi$ is non negative and $u_{1,0} = 1 = u_{0,1},$ we find 
\begin{equation}
  \begin{split}
    \sumrs[2][N] \cbar\rs \Psi(u\rs) = \sumrs[2][N] \cbar\rs \left(\sqrt{\Psi(u\rs)}\right)^2
    \overset{u_{1,0} = u_{0,1} = 1}{=} \sumrs[2][N] \cbar\rs \mathbb{E}[\underbrace{\Prs\del \sqrt{\Psi(u)}}_{= \sqrt{\Psi(u\rs)}}]^2\eqqcolon (*).
  \end{split}
\end{equation}
Now, $\mathbb{E}[\Prs]$ is just the double sum $\sum\limits_{T\in\T}\sum\limits_{f\in\Prs(T)}$, so we can use Cauchy--Schwarz to bound
\begin{equation}
  \begin{split}
    (*)\overset{\text{C.S.}}&{\le} \sumrs[2][N] \cbar\rs  \mathbb{E}[\Prs(\del \sqrt{\Psi(u)})^2 \cbar\kl f\kl N\kl]\mathbb{E}\left[\Prs\frac{1}{f\kl\cbar\kl N\kl}\right]\\
    \overset{\text{Lemma \ref{treeEstimates}}}&{\le} 2 \sumrs[2][N] \cbar\rs  \mathbb{E}[\Prs(\del \sqrt{\Psi(u)})^2 \cbar\kl f\kl N\kl] N\rs\\
    \overset{\text{Lemma \ref{fubini}}}&{=} 2\sumrs[2][N] \cbar\rs \left(\frac{\er\rs}{\er\rs+\es\rs} \Bigl(\delr \sqrt{\Psi(u\rs)}\Bigr)^2 + \frac{\es\rs}{\er\rs+\es\rs} \Bigl(\dels \sqrt{\Psi(u\rs)}\Bigr)^2\right) f\rs N\rs \mathbb{E}[\Trs \cbar\kl  N\kl]\\
    \overset{\text{Ass.}}&{\le} 2\sqrt K \sumrs[2][N] \cbar\rs \left(\frac{\er\rs}{\er\rs+\es\rs} \Bigl(\delr \sqrt{\Psi(u\rs)}\Bigr)^2 + \frac{\es\rs}{\er\rs+\es\rs} \Bigl(\dels \sqrt{\Psi(u\rs)}\Bigr)^2\right) f\rs N\rs \mathbb{E}[\Trs \frac{\cbar\kl }{M\kl}].\\
    \overset{\text{Lemma \ref{treeEstimates}}}&{\le} 4\sqrt K \sumrs[2][N] \cbar\rs \left(\frac{\er\rs}{\er\rs+\es\rs} \Bigl(\delr \sqrt{\Psi(u\rs)}\Bigr)^2 + \frac{\es\rs}{\er\rs+\es\rs} \Bigl(\dels \sqrt{\Psi(u\rs)}\Bigr)^2\right) f\rs N\rs \frac{M\rs}{f\rs}\\
    \overset{\text{Ass.}}&{\le} 4K \sumrs[2][N] \cbar\rs \left(\frac{\er\rs}{\er\rs+\es\rs} \Bigl(\delr \sqrt{\Psi(u\rs)}\Bigr)^2 + \frac{\es\rs}{\er\rs+\es\rs} \Bigl(\dels \sqrt{\Psi(u\rs)}\Bigr)^2\right).\qedhere
  \end{split} 
\end{equation}

%% file: lemmas/derivativeSqrtPsi.tex
Define the function 
\[ l \colon [0,\infty) \to [0,\infty), \quad l(x) \coloneqq \frac {x \ln^2(x)}{4\Psi(x)}.\]
and let $0<x,y<\infty$ be arbitrary. Then, we have
\begin{equation*}
  \left(\sqrt{\Psi(x)} - \sqrt{\Psi(y)}\right)^2 \le (x-y) \ln\left(\frac x y \right) l(\max(x,y)).
\end{equation*}

%% file: proofs/derivativeSqrtPsi.tex
Let us start by showing that $l(x)$ is strictly increasing. We have 
\begin{equation*}
  l^\prime(x) = \frac{\ln(x)}{4 \Psi^2(x)} \Bigl((\ln(x) +2)\Psi(x) - x\ln^2(x)\Bigr)
\end{equation*}
and 
\[{\odv{}{x} \Bigl((\ln(x) +2)\Psi(x) - x\ln^2(x)\Bigr)= \frac{\Psi(x)}{x}}\ge 0 \text{ and } = 0 \iff x=1.\]
So $\Bigl((\ln(x) +2)\Psi(x) - x\ln^2(x)\Bigr)$ is strictly monotone and has a zero at $x=1$. Together with $l^\prime(1) = \frac 1 6$ we get $l^\prime(x) > 0$.

Now because both sides of the desired inequality are symmetric in $x,y$, we assume without loss of generality $y\le x$. If $x=y$ then both sides are zero. Otherwise, since $\sqrt{\Psi(x)}$ is $C^1$ in $(0,1)\cup (1,\infty)$, we get
\begin{equation*}
  \begin{split}
     \left(\sqrt{\Psi(x)} - \sqrt{\Psi(y)}\right)^2  &=
   { \left(\int\limits_y^x \frac {\ln(s)}{2\sqrt{\Psi(s)}}\dd s\right)^2 }  \overset{\text{C.S}} \le (x-y) \int\limits_y^x \frac {\ln^2(s)}{4{\Psi(s)}}\dd s \\
                                                                                                &= (x-y) \int\limits_y^x l(s) \frac 1 s \dd s 
    \overset{l\text{ is monotone}}\le  (x-y)\left(\int\limits_y^x  \frac 1 s \dd s\right) l(\max(x,y))\\
                                                                                                &= (x-y)\ln\left(\frac x y \right) l(\max(x,y)).\qedhere
  \end{split}
\end{equation*}

%% file: lemmas/growthL.tex
For all $x\in[0,\infty)$ and all $y\ge 1$ 
\[ l(x) \le y l\left(x^{\frac 1 y}\right)\]
holds true.

%% file: proofs/grwothL.tex
It suffices to show $\odv{}y y l\left(x^{\frac 1 y}\right)\ge 0$. To this end, we calculate
\begin{equation*}
  \begin{split}
    \odv{}y y l\left(x^{\frac 1 y}\right)
    &= l\left(x^{\frac 1 y}\right) + y l^\prime\left(x^{\frac 1 y}\right)x^{\frac 1 y} \ln(x)\frac{-1}{y^2}\\
    \overset{z \coloneqq x^{\frac 1 y}} &= l(z) - z \ln(z) l^\prime(z)
  = \frac{l(z)}{\Psi(z)}z \Psi\left(\frac 1 z\right) \ge 0.\qedhere
  \end{split}
\end{equation*}

%% file: lemmas/eres.tex
Fix $\monr,\mons>0$ and $n\in \N_{\ge 2}$ and assume \eqref{concavityAss:old}.
Then, there exists a unique edge weighing $\er\rs$ with $\es\rs = 1 -\er\rs$ for $r+s=n+1$ satisfying 
\begin{equation}\label{eres:equation}
  \er\rps \frac{\cbar\rps}{\cbar\rs} + \es\rsp \frac{\cbar\rsp}{\cbar\rs} = m_{n+1} \text{ for all }r+s =n.
\end{equation}
Furthermore, we have $\er\rps,\es\rsp >0$ for all $r+s=n$, as well as 
$\displaystyle m_{n+1} = \frac{\sumrs[n+1][]\cbar\rs}{\sumrs[n][]\cbar\rs}.$

%% file: proofs/eres.tex
By definition we have $\er_{0,n+1} = 0$, so we can solve \eqref{eres:equation} inductively from $r$ to $r+1$ and obtain for $r+s = n+1$
\begin{equation}\label{proof:eres}
  \er\rs \coloneqq \frac{ \sumbar - m_{r+s} \sumbar[r][r+s-1]}{\cbar\rs}.
\end{equation}
Next, we can determine $m_n,$ by summing \eqref{eres:equation} over all $r+s = n$ to find
\[ m_{n+1} \sumrs[n][]\cbar\rs = \sumrs[n][] \er\rps \cbar\rps +\es\rsp \cbar\rsp \overset{\er_{0,n+1} = 0 = \es_{n+1,0}}= \sumrs[n+1][] \underbrace{(\er\rs+\es\rs)}_{=1}\cbar\rs.\]
Putting $(r,s) = (n+1,0)$ in \eqref{proof:eres} yields $\er_{n+1,0} = 1$ and hence $\es_{n+1,0} = 0$, so it remains to show $\er\rps,\es\rsp>0$ for $r+s = n$. We use the form of $m_n$ to write
\begin{equation*}
    \begin{split}
      \er\rps 
        &= \frac{1}{\cbar\rps \sumbar[0][r+s]} \Bigg(\sum\limits_{k=r+1}^{r+s+1} \sum\limits_{l=0}^{r}\underbrace{\cbar_{k,r+s+1-k}\cbar_{l,r+s-l}}_{\substack{\ge \cbar_{k,r+s-k} \cbar_{l,r+s+1-l} \\\text{by assumption.}}} - \sum\limits_{k=r+1}^{r+s}\sum\limits_{l=0}^{r} \cbar_{k,r+s-k} \cbar_{l,r+s+1-l} \Bigg)\\
        &\ge \frac{\cbar_{r+s+1,0} \sum\limits_{l=0}^{r}\cbar_{l,r+s-l}}{\cbar\rps \sumbar[0][r+s]}
        > 0.
    \end{split}
\end{equation*}
Similarly, we find 
\begin{equation*}
    \begin{split}
      \er\rsp 
        &= 1 + \frac{1}{\cbar\rsp \sumbar[0][r+s]} \Bigg(\sum\limits_{k=r}^{r+s} \sum\limits_{l=0}^{r-1}\underbrace{\cbar_{k+1,r+s-k}\cbar_{l,r+s-l}}_{\substack{\le \cbar_{k,r+s-k} \cbar_{l+1,r+s-l} \\\text{by assumption.}}} - \sum\limits_{k=r}^{r+s}\sum\limits_{l=-1}^{r-1} \cbar_{k,r+s-k} \cbar_{l+1,r+s-l} \Bigg)\\
        &\le 1 -  \frac{\cbar_{0,r+s+1} \sum\limits_{k=r}^{r+s}\cbar_{k,r+s-k}}{\cbar\rsp \sumbar[0][r+s]}
        <1.\qedhere
    \end{split}
\end{equation*}

%% file: lemmas/MNEstimates.tex
Assume that $Q\rs$ satisfies \eqref{concavityAss:old} for all $n\ge L$. Let $\er\rs,\es\rs$ be defined by \eqref{eres:equation} for all $r+s>L$ and arbitrarily set for $r+s\le L$. Furthermore, let $f\rs$ be defined by \eqref{frs}.
Then, there exists a constant $C<\infty$ only dependent on the values  $\er\rs,\es\rs$ and $Q\rs$ for $r+s\le L$, such that 
\begin{equation*}
    \begin{split}
      \mathbb{E}[\Trs \cbar\kl] &\le C \cbar\rs \summ[r+s][N] \prod\limits_{n=r+s+1}^m m_n \quad \text{ and }\\
      f\rs \cbar\rs \mathbb{E}\left[\Prs \frac 1 {f\kl \cbar\kl }\right] &\le C \sum\limits_{m=2}^{r+s} \prod\limits_{n=m+1}^{r+s} m_n,
    \end{split}
\end{equation*}
for any $\monr,\mons>0.$

%% file: proofs/MNEstimates.tex
Because we do not have \eqref{concavityAss:old} for $n<L$, we do not put the expected values into relation with $m_n$ directly. We will first go through 
\[\overline m_n \coloneqq\max\limits_{k+l = n-1}\left(\er\kpl \frac{\cbar\kpl}{\cbar\kl} + \es\klp \frac{\cbar\klp}{\cbar\kl}\right) \text{ and }\underline m_n \coloneqq \min\limits_{k+l = n-1}\left(\er\kpl \frac{\cbar\kpl}{\cbar\kl} + \es\klp \frac{\cbar\klp}{\cbar\kl}\right).\]
We start with an induction from $r+s+1$ to $r+s$ to bound
\begin{equation}\label{prooMNEstimates:fcbound}
    \begin{split}
        \frac 1 {f\rs \cbar\rs}  
        = \er\rps \frac{\cbar\rps}{\cbar\rs} \frac{1}{f\rps \cbar\rps} + \es\rsp \frac{\cbar\rsp}{\cbar\rs} \frac{1}{f\rsp \cbar\rsp}
        =
        \begin{cases}
            \le \prod\limits_{n=r+s+1}^{N}\max\big(\overline m_n,m_n\big) \\
            \ge \prod\limits_{n=r+s+1}^{N}\min\big(\underline m_n,m_n\big)
        \end{cases}
    \end{split}
\end{equation}
and show $\mathbb{E}[\Trs \cbar\kl]\le \cbar\rs \summ[r+s][N] \prod\limits_{n=r+s+1}^{m}\max\big(\overline m_n,m_n\big)$  through
\begin{equation}\label{MNEstimate:inductionTrs}
    \begin{split}
      &\mathbb{E}[\Trs \cbar\kl]\overset{\text{Lem \ref{treeExpressions}.\ref{treeExpressions:derivativeTrs}}}{=} 
      \cbar\rs + \er\rps \mathbb{E}[T\rps \cbar\kl] + \es\rsp \mathbb{E}[T\rsp \cbar\kl]\\
        \overset{\text{ind.}}&{\le} \cbar\rs + \er\rps \cbar\rps \summ[r+s+1][N] \prod\limits_{n=r+s+2}^{m}\max\big(\overline m_n,m_n\big)
        + \es\rsp \cbar\rsp \summ[r+s+1][N] \prod\limits_{n=r+s+2}^{m}\max\big(\overline m_n,m_n\big)\\
        &\le  \cbar\rs \summ[r+s][N] \prod\limits_{n=r+s+1}^{m}\max\big(\overline m_n,m_n\big).
    \end{split}
\end{equation}
Next, we can eliminate the $\overline m_n$ and $\underline m_n$ by noticing 
\begin{equation}
    \begin{split}
        m_n &=  \frac {\sumbar[0][n]} {\sumbar[0][n-1]} 
        = \frac{\cbar_{0,n} + \frac 1 2 \sum\limits_{k=1}^{n-1}\cbar_{k,n-k} + \cbar_{n,0} +\frac 1 2 \sum\limits_{k=1}^{n-1}\cbar_{k,n-k}}{\sumbar[0][n-1]}\\
        &=
        \begin{cases}
            \le \mons \max\limits_{r+s=n-1} \frac{Q\rsp}{Q\rs} + \monr \max\limits_{r+s=n-1}\frac{Q\rps}{Q\rs} \le (\monr+\mons) \max\limits_{r+s = n-1}\left\{\frac{Q\rsp}{Q\rs},\frac{Q\rps}{Q\rs}\right\}\\
            \ge \frac \mons 2 \min\limits_{r+s=n-1} \frac{Q\rsp}{Q\rs} + \frac \monr 2 \min\limits_{r+s=n-1}\frac{Q\rps}{Q\rs} \ge \frac{\monr+\mons} 2 \min\limits_{r+s = n-1}\left\{\frac{Q\rsp}{Q\rs},\frac{Q\rps}{Q\rs}\right\}
        \end{cases}
    \end{split}
\end{equation}
and 
\begin{equation}
    \er\kpl \frac{\cbar\kpl}{\cbar\kl} + \es\klp \frac{\cbar\klp}{\cbar\kl}=
    \begin{cases}
        \le (\monr+\mons) \max\limits_{r+s = k+l}\left\{\er\rps\frac{Q\rps}{Q\rs},\es\rsp\frac{Q\rsp}{Q\rs}\right\}\\
        \ge (\monr+\mons) \min\limits_{r+s = k+l}\left\{\er\rps\frac{Q\rps}{Q\rs},\es\rsp\frac{Q\rsp}{Q\rs}\right\}.
    \end{cases}
\end{equation}
Hence, due to our choice of $\er,\es$ there is a constant $C$ depending only on $\er\kl,\es\kl,Q\kl$ for $k+l\le L$, such that for any $r,s$ and $m$ we have
\begin{equation}\label{MNEstimate:reductionTom}
    \begin{split}
        \prod\limits_{n=r+s+1}^{m}\max\big(\overline m_n,m_n\big) \le C\prod\limits_{n=r+s+1}^{m}m_n \text{ and}
        \prod\limits_{n=r+s+1}^{m}\min\big(\underline m_n,m_n) \ge \frac 1 C \prod\limits_{n=r+s+1}^{m}m_n.
    \end{split}
\end{equation}
Putting \eqref{MNEstimate:reductionTom} together with \eqref{MNEstimate:inductionTrs} is already the first estimate we wanted to show. In order to bound $\mathbb{E}[\Prs\frac 1 {f\kl \cbar\kl}],$ we use that for any $T\in\T$, we have
\begin{equation*}
    \begin{split}
      \sum\limits_{f=(v,(k,l))\in \Prs(T)} \frac 1 {f\kl \cbar\kl}
      \overset{\text{\eqref{prooMNEstimates:fcbound} and \eqref{MNEstimate:reductionTom}}} \le C \sum\limits_{f=(v,(k,l))\in \Prs(T)} \prod\limits_{n=k+l+1}^N m_n 
    = C \sum\limits_{m=2}^{r+s} \prod\limits_{n=m+1}^N m_n.
    \end{split}
\end{equation*}
Since $\Prs\frac 1 {f\kl \cbar\kl} \overset{\text{def.}} = \sum\limits_{f=(v,(k,l))\in \Prs(T)} \frac 1 {f\kl \cbar\kl}$, we can take the expected value and use that \eqref{prooMNEstimates:fcbound} and \eqref{MNEstimate:reductionTom} implies $\cbar\rs f\rs \le C \frac 1 {\prod\limits_{n=r+s+1}^N m_n},$ to obtain the second estimate.

%% file: mInfty.tex
\subsection{Subcritical \texorpdfstring{$\cbar\rs$}{cbar}}\label{section:subcritical}
Section \ref{section:logSobolevInequality} essentially reduces the estimate $H^N[c|\cbar] \lesssim \Dnlin$ to an analysis of $m_n \coloneqq \frac{\sumrs[n] \cbar\rs}{\sumrs[n-1]\cbar\rs}$.
If we want to bound the dissipation through this estimate, then $H^N$ needs to be bounded from below (under the assumption that $H[c|\cinfzw]\ge \delta >0$), which is continuity question. The continuity properties of $H[c|\cbar]$ were analysed in \cite{paperBasics} and depend on $\monr,\mons.$ So we are left in a situation, where control on $m_n$ and $H^N[c|\cbar]$ both constrain $\monr,\mons.$ Fortunately, both place the restriction that $\cbar\rs$ needs to be subcritical, though through different statements. Roughly speaking the continuity of $H^N[c|\cbar]$ requires a bound on $m_n$, whenever $\limsup\limits_{r+s \to \infty} \sqrt[r+s]{\cbar\rs} \le e^{-\delta}.$
We can make a connection to $m_n$ under the following generalisation of $\lim\limits_{i \to \infty} \frac{Q_i}{Q_{i+1}} = z_s$:\\
For all $0\le\monr,\mons < \infty$ with $\max(\monr,\mons) >0$ we have 
\begin{equation}\label{assumptionMn}
  \frac{\sumrs[n][] \cbar\rs}{\sumrs[n-1][]\cbar\rs} = m_n = m_\infty(\monr,\mons)(1+\smallo(1))
\end{equation}
  and for all $\eta >0$ and any $\eps>0$, there is an $M<\infty$, such that $|\smallo(1)| \le \eps$ for all $r+s\ge M$ and $\eta\le \max(\monr,\mons) \le \rho+\sigma$.\\
  Note, that for the idealised Kelvin model $Q\rs = \lambda^r \mu^s \binom{r+s}{r} e^{- \kappa \sum\limits_{n=1}^{r+s} n^{-\frac 1 3}}$ one directly computes $m_n = (\lambda \monr + \mu \mons) e^{-\kappa n^{-\frac 1 3}},$ satisfying \eqref{assumptionMn}. In fact, if we replace the sums by integrals $\frac 1 n \sumrs[n][] \cbar\rs \approx \int_0^1 e^{n\Phi(\xi)}\dd \xi$, then \eqref{assumptionMn} follows from Laplace's method.
    \begin{lemma}\textit{({$m_n$} is small for subcritical $\monr,\mons$)}\label{mnLimit}\\
        \input{\CommonPath/lemmas/mnLimit}
    \end{lemma}
\begin{proof}
    \input{\CommonPath/proofs/mnLimit.tex}
\end{proof}
If we also assume \eqref{assCoeff}, we can show that the bound from Lemma \ref{MNEstimates} depends only on $m_\infty.$
    \begin{lemma}\textit{(Bound of Lemma \ref{MNEstimates} depends only on $m_\infty$)}\label{boundLemmaMNEstimate}\\
        \input{\CommonPath/lemmas/boundLemmaMNEstimate}
    \end{lemma}
\begin{proof}
    \input{\CommonPath/proofs/boundLemmaMNEstimate.tex}
\end{proof}
In order to prove the dissipation estimate for subcritical $\cbar\rs$, it remains to go from $H^N[c|\cbar]$ to $H[c|\cinfzw]$.
The precise statement was found in \cite{paperBasics}, which we restate below for the readers convenience, where for $\Gamma \subset \Omega$ finite we denote $H^\Gamma[c|\cbar] \coloneqq \sumgamma c\rs \Psi\left(\frac {c\rs}{\cbar\rs}\right)$ and for $\rho,\sigma\ge0$ we denote $\cbargamma(\rho,\sigma)\coloneqq z^rw^sQ\rs$, where $z,w\ge0$ are uniquely determined by $\sumgamma r \cbargamma\rs = \rho$ and $\sumgamma s \cbargamma\rs = \sigma.$
    \begin{lemma}\textit{(Semicontinuity)}\label{relativeEntropySemicontinuity}\\
        \input{\CommonPath/lemmas/relativeEntropySemicontinuity}
    \end{lemma}
With this, we have everything we need to bound the dissipation for subcritical values of $\monr,\mons.$
    \begin{theorem}\textit{({$\Dnlin \ge \frac 1 K$} for subcritical $\monr,\mons$)}\label{dissipationSubcritical}\\
        \input{\CommonPath/theorems/dissipationSubcritical}
    \end{theorem}
\begin{proof}
    \input{\CommonPath/proofs/dissipationSubcritical.tex}

\end{proof}

%% file: lemmas/mnLimit.tex
Assume $Q\rs$ satisfies \eqref{assumptionMn}. Then, we already have 
\[\lim\limits_{n\to\infty} m_n = \limsup\limits_{r+s \to \infty} \sqrt[r+s]{\cbar\rs}.\]

%% file: proofs/mnLimit.tex
Let $m_\infty \coloneqq \lim\limits_{n\to\infty} m_n$ and $\eps>0$ be arbitrary. Then, we find an $M<\infty$, such that $m_\infty e^{-\eps} \le m_n \le m_\infty e^\eps$ for all $n\ge M$ and therefore 
\begin{equation*}
  \max\limits_{r+s=n} \cbar\rs \le \sumrs[n][]\cbar\rs
  = \prod\limits_{k=M+1}^n m_k \sumrs[M][]\cbar\rs
  \le m_\infty^{n-M} e^{\eps(n-M)} \sumrs[M][]\cbar\rs.
\end{equation*}
Now taking $\sqrt[n]{\cdot}$ on both sides, we find $\limsup\limits_{n\to \infty} \max\limits_{r+s = n} \sqrt[r+s]{\cbar\rs} \le m_\infty e^\eps.$ Similarly, we obtain 
\begin{equation*}
  \max\limits_{r+s=n} \cbar\rs \ge \frac{1}{n} \sumrs[n][]\cbar\rs
  = \frac 1 n \prod\limits_{k=M+1}^n m_k \sumrs[M][]\cbar\rs
  \ge m_\infty^{n-M} e^{-\eps(n-M)} \frac{1}{n} \sumrs[M][]\cbar\rs.
\end{equation*}
Again we can take $n-$th root and use $\sqrt[n] n  \to 1$, to conclude $\liminf\limits_{n\to \infty}\max\limits_{r+s = n} \sqrt[n]{\cbar\rs} \ge m_\infty e^{-\eps}$.
Since $\eps$ was arbitrary, we can conclude $\lim\limits_{n\to\infty} \max\limits_{r+s = n} \sqrt[r+s]{\cbar\rs} = m_\infty.$

%% file: lemmas/boundLemmaMNEstimate.tex
Assume $Q\rs$ satisfies \eqref{assumptionMn} and \eqref{assCoeff}. Furthermore, assume that $m_\infty(\monr,\mons) \le e^{-\delta}$ and $\eta\le \max(\monr,\mons)\le \rho+\sigma.$ Then, there is a constant $C_{\delta,\eta}$, depending only on $\delta,\eta$ and $Q\rs,\rho,\sigma$, such that 
\[ \summ[r+s][N] \prod\limits_{n=r+s+1}^m m_n \le C_{\delta,\eta}\quad \text{ and }\quad \sum\limits_{m=2}^{r+s} \prod\limits_{n=m+1}^{r+s} m_n \le C_{\delta,\eta}.\]

%% file: proofs/boundLemmaMNEstimate.tex
First, we will show that there is a constant $C_{\delta,\eta,Q\rs}$, such that for any $k_1\le k_2$, we have
\begin{equation}\label{boundLemmaMNEstimate:prod}
  \prod\limits_{n= k_1}^{k_2} m_n \le C_{\delta,\eta,Q\rs} e^{-(k_2-k_1+1) \frac \delta 2}.
\end{equation}
Due to \eqref{assumptionMn} and the assumption on $m_\infty$, we can fix an $M>0$, such that $m_n \le e^{-\frac \delta 2}$ for all $n\ge M$. Now, we need to bound $m_n \le C e^{- \frac \delta 2}$ for all $n < M$. We can do this as follows
\begin{equation*}
  m_n \overset{\text{def.}} = \frac{\sumrs[n][] \cbar\rs}{\sumrs[n-1][] \cbar\rs}
  \le \frac{\max(\monr,\mons)^n \sumrs[n][] Q\rs}{\max(\monr,\mons)^{n-1} \min(Q_{n-1,0},Q_{0,n-1})}
  \le \max(\monr,\mons) \frac{\sumrs[n][] Q\rs}{\min(Q_{n-1,0},Q_{0,n-1})}.
\end{equation*}
And $\max(\monr,\mons)$ can be bounded via
\begin{equation*}
  \begin{split}
    \monr
    &= \frac 1 {\liminf\limits_{r+s \to \infty}\sqrt[r+s]{Q\rs}}{\liminf\limits_{r+s \to \infty}\sqrt[r+s]{Q\rs}}\monr 
    \le  \frac 1 {\liminf\limits_{r+s \to \infty}\sqrt[r+s]{Q\rs}} \liminf\limits_{n \to \infty}\sqrt[n]{Q_{n,0}}\monr\\
    & = \frac 1 {\liminf\limits_{r+s \to \infty}\sqrt[r+s]{Q\rs}} \liminf\limits_{n \to \infty}\sqrt[n]{\cbar_{n,0}}
    \le \frac 1 {\liminf\limits_{r+s \to \infty}\sqrt[r+s]{Q\rs}} \limsup\limits_{n \to \infty}\sqrt[n]{\cbar_{n,0}}\\
    & \le \frac 1 {\liminf\limits_{r+s \to \infty}\sqrt[r+s]{Q\rs}} \limsup\limits_{r+s \to \infty}\sqrt[r+s]{\cbar\rs}
    \overset{\text{Lemma \ref{mnLimit}}}=  \frac {m_\infty} {\liminf\limits_{r+s \to \infty}\sqrt[r+s]{Q\rs}}\\
    \overset{\text{\eqref{assCoeff} and ass. on }m_\infty}&\le C e^{-\delta}.
  \end{split}
\end{equation*}
The same argument for $\mons$ yields the desired $m_n \le C e^{-\frac \delta 2}$. Now we can plug \eqref{boundLemmaMNEstimate:prod} into the sums we want to estimate, to obtain for a new constant $C_{\delta,\eta,Q\rs}$
\begin{equation*}
  \begin{split}
    &\summ[r+s][N] \prod\limits_{n=r+s+1}^m m_n \le C \summ[r+s][N] e^{-(m-r-s)\frac \delta 2} \le C \summ[0] e^{-\frac \delta 2 m} < C_{\delta,\eta,Q\rs} \text{ and }\\
    &\sum\limits_{m=2}^{r+s} \prod\limits_{n=m+1}^{r+s} m_n \le C \summ[2][r+s] e^{- \frac \delta 2 (r+s-m)} \le C \summ[0] e^{-\frac \delta 2 m} \le  C_{\delta,\eta,Q\rs}.
  \end{split}
\end{equation*}

%% file: lemmas/relativeEntropySemicontinuity.tex
Fix $\rho,\sigma >0$ and assume \eqref{assCoeff}. Then, for any $\eps>0$, we can find $M<\infty$ and $\delta_1,\delta_2>0$, such that for any $N$ and 
\[ \Gamma \coloneqq \left\{ (r,s) \in \Omega \, \middle| \, r+s \le M \text{ or }  r+s \le N \text{ with } \ln\Big(\cbar\rs^{\frac 1 {r+s}}\Big) < -\delta_1\right\}\]
we have for any $c \in X^+_{\rho,\sigma}$ with $c_{1,0},c_{0,1} > 0$ and $\sumrs[N+1] (r+s) c\rs \le \delta_2$
\begin{equation*}
  H^\Gamma[c|\cbar] \ge H[c|\cinfzw] + H^\Gamma[\cbargamma(\rho^N,\sigma^N)|\cbar] -\eps,
\end{equation*}
where $\rho^N = \sumrs[1][N] r c\rs$ and $\sigma^N = \sumrs[1][N] s c\rs$ and $\cinfzw$ is the steady state satisfying \eqref{steadyStateLimitEq}.

%% file: theorems/dissipationSubcritical.tex
Assume $Q\rs$ satisfies  \eqref{assCoeff}, \eqref{assumptionMn} and \eqref{concavityAss:old} for all $n\ge L$ and fix $\rho,\sigma>0$ as well as $\delta,\delta_3,\eta>0.$ Then, there exist constants $K<\infty$ and $\delta_2>0$ as well as $M<\infty$, such that for all $c\in X^+_{\rho,\sigma}$ and $M\le N \in\N $ with 
\begin{equation}\label{dissipationSubcritical:Ass}
  m_\infty(\monr,\mons)\le e^{-\delta_3}, \,\min(\monr,\mons)\ge \eta,\, \sumrs[N+1] (r+s)c\rs \le \delta_2 \text{ and } H[c|\cinfzw]>\delta,
\end{equation}
where $\cinfzw$ satisfies \eqref{steadyStateLimitEq}, we have $\displaystyle\Dnlin(c) \ge \frac 1 K.$

%% file: proofs/dissipationSubcritical.tex
By Lemma \ref{relativeEntropySemicontinuity}, we can find an $M<\infty$ and $\delta_2>0$, such that 
\[ H^N[c|\cbar] \overset{u\rs = \frac{c\rs}{\cbar\rs}}= \sumrs[1][N] \cbar\rs \Psi(u\rs) \ge \frac \delta 2 \text{ for all }N\ge M\text{ and } c\rs \text{ satisfying \eqref{dissipationSubcritical:Ass},}\]
because $\Gamma \subset \{r+s\le N\}.$ If we combine Lemmas \ref{boundLemmaMNEstimate} and \ref{MNEstimates} with Theorem \ref{hardyInequality}, we find a constant $C_{\delta_1,\eta},$ such that
\begin{equation}
    \sumrs[2][N] \cbar\rs \Psi(u\rs)
    \le C_{\delta_1,\eta} \sumrs[2][N] \cbar\rs \left(\er\rs \Bigl(\delr \sqrt{\Psi(u\rs)}\Bigr)^2 +  \es\rs \Bigl(\dels \sqrt{\Psi(u\rs)}\Bigr)^2\right),
\end{equation}
where $\er\rs,\es\rs$ is given as in Lemma \ref{MNEstimates}. From \eqref{assumptionMn}, there exits an $M_1$, such that $m_n \le 2 m_\infty$ for all $n\ge M_1$. So for $r+s > \max(L,M_1)$ we have 
\[\er\rs \cbar\rs \le \cbar\rms \left(\er\rs \frac{\cbar\rs}{\cbar\rms} + \es\rmsp \frac{\cbar\rmsp}{\cbar\rms}\right) \overset{\text{\eqref{eres:equation}}}= \cbar\rms m_{r+s} \le 2 m_\infty \cbar\rms \overset{\monr \ge \eta}\le \frac{2 m_\infty}{\eta}\monr \cbar\rms \]
and for $r+s\le \max(L,M_1)$, we find $\displaystyle \er\rs \cbar\rs \overset{\er\rs \le 1} \le \monr \cbar\rms \sup_{r+s<\max(L,M_1)} \left\{\frac {Q\rps}{Q\rs}\right\}.$ The same argument for $\es\rs$ yields a constant $C_{\delta_1,\eta,Q\rs}$, such that for all $r,s$ we have 
\begin{equation}
  \er\rs \cbar\rs \le C_{\delta_1,\eta,Q\rs} \monr \cbar\rms \text{ and }
  \es\rs \cbar\rs \le C_{\delta_1,\eta,Q\rs} \mons \cbar\rsm.
\end{equation}
Together with Lemmas \ref{derivativeSqrtPsi} and \ref{growthL}, we arrive at 
\begin{equation*}
  \begin{split}
  H^N[c|\cbar] \le C_{\delta_1,\eta,Q\rs} 
  \sumrs[1][N-1]  &l\left(\max(u\rps,u\rs)^{\frac 1 {r+s+1}}\right)(r+s+1) \monr \cbar\rs \Big(\delr u\rps \delr \ln(u\rps)\Big)\\
  + &l\left(\max(u\rsp,u\rs)^{\frac 1 {r+s+1}}\right)(r+s+1) \mons \cbar\rs \Big(\dels u\rsp \dels \ln(u\rsp)\Big).
  \end{split}
\end{equation*}
Remembering, that $l$ is a monotone function and bounding $u\rs = \frac{c\rs}{\cbar\rs} \le \frac{\rho+\sigma}{\eta^{r+s} Q\rs}$, we obtain with \eqref{assCoeff}
\[ \frac \delta  2\le H^N \le C_{\delta_1,\eta,Q\rs,\rho,\sigma} \Dnlin.\]

%% file: Supercriticality.tex
\subsection{Supercritical \texorpdfstring{$\cbar\rs$}{cbar}}\label{section:supercriticality}
Theorem \ref{dissipationSmallMon} allows us to eliminate $\min(\monr,\mons)\ge \eta$ from \eqref{dissipationSubcritical:Ass} in Theorem \ref{dissipationSubcritical}.
In this section, we will also get rid of $m_\infty(\monr,\mons) \le e^{-\delta_3}.$

We see two possibilities to attack the problem. First, we could just take $\Gamma$ as in Lemma \ref{relativeEntropySemicontinuity} and apply the calculations above. That is, we would restrict the graph from Notation \ref{treeStructure} to the nodes $(r,s)$ with $\frac r {r+s} \le \xi_1$ (and in a second step to nodes with $\frac r {r+s} \ge \xi_2$). 
However, this approach does not reduce $m_n$ for all large $n$.
The problem stems from the fact that $\frac{Q\rps}{Q\rs}$ may explode for $r = 0$. For the standard coefficients \eqref{idealizedKelvin} we find $\frac{Q\rps}{Q\rs} \approx \lambda \frac{r+s+1}{r+1}$. Now, if we have $(r,s),(r+1,s) \in \Gamma$ and $(r+1,s-1)\not\in \Gamma$, then $\er\rps = 1$. And therefore, $\er\rps \frac{Q\rps}{Q\rs}$ may become large. In particular, in that situation, we have $m_{r+s+1} = \er\rps \frac{\cbar\rps}{\cbar\rs} + \es\rsp \frac{Q\rsp}{Q\rs} \ge \monr \er\rsp \frac{Q\rps}{Q\rs} > 1.$ While these hurdles may be overcome (e.g. $m_{r+s}$ is only going to be occasionally large and small most of the time), we will follow a more flexible approach.

\subsubsection{Cut-off Construction \texorpdfstring{$\chat\rs$}{chat}}\label{subsection:construction}

Our strategy is to exploit Lemma \ref{relativeEntropySemicontinuity} by constructing a $\chat\rs$ for $r+s\le N$ satisfying 
\begin{equation}\label{constructionBound}
  \chat\rs = 
  \begin{cases}
    =\cbar\rs \quad & \text{ if }\cbar\rs < e^{-\delta_1(r+s)},\\
    m_n(\chat) \le e^{-\eps} & \text{ for large }n\\ 
    \er\rps \frac {\chat\rps}{\chat\rs} + \es\rsp\frac{\chat\rsp}{\chat\rs} = m_{r+s+1} & \text{ for some } 0\le \er\rs \le 1 \text{ and } \es = 1 - \er.\\
    \er\rs \chat\rs \le C \monr\cbar\rms \text{ and }\es\rs\chat\rs \le \dot C \mons\cbar\rsm & \text{ for some } C,\dot C <\infty.
  \end{cases}
\end{equation}
Then, we can argue 
\begin{equation*}
  H^\Gamma[c|\cbar] \le \sumrs[1][N] \chat\rs \Psi(u\rs) \overset{\text{Thm. \ref{hardyInequality}}} \le K \sumrs \underbrace{\chat\rs \er\rs}_{\le C \monr \cbar\rms} \Bigl(\delr \sqrt{\Psi(u\rs)}\Bigr)^2 + \underbrace{\chat\rs \es\rs}_{\le \dot C\mons \cbar\rsm} \Bigl(\dels \sqrt{\Psi(u\rs)}\Bigr)^2.
\end{equation*}
Constructing such a $\chat\rs$ is a delicate problem. To make our lives easier, we treat the ``left'' and ``right'' case of the maximum separately. That is, we only require
\[ \chat\rs = \cbar\rs \text{ for all }r< r_*(n) \coloneqq \min\big\{r \,\big|\,\cbar\rn\ge e^{-\delta_1 n}\big\}.\]
And then we do another construction, calling it $\ccirc\rs$ with 
\[ \ccirc\rs = \cbar\rs \text{ for all }r> r^*(n) \coloneqq \max\big\{r \,\big|\, \cbar\rn \ge  e^{-\delta_1 n}\big\}.\]
In order to find the $\er\rs,\es\rs$, we want to apply Lemma \ref{eres}, which we can only do, if $\chat\rs$ satisfies the concavity condition \eqref{concavityAss:old}, which is equivalent to 
\begin{equation}\label{concavityAss}
  \frac{Q\rpsm}{Q\rs} \le \frac{Q\rps}{Q\rsp} \le \frac{Q\rs}{Q\rmsp} \text{ for all }r+s = n,
\end{equation}
with the usual convention $Q\rs =0$ if $r<0$ or $s<0$ or alternatively
\begin{equation}\label{constructionConcave}
  \chat\rs \in \left[\chat\rmsp \frac{\chat\rsm}{\chat\rms}, \chat\rmsp \frac{\chat\rms}{\chat_{r-2,s+1}}\right] \text{ for all }r+s = n+1.
\end{equation}%
So for $r\ge r^*$, we may start to construct something in accordance with (\ref{constructionConcave}). Lemma \ref{mnLimit} suggests, that $\chat\rs$ needs to be small. Now, the smallest possible choice according to \eqref{constructionConcave} is to inductively set $\chat\rs \coloneqq \chat\rmsp \frac{\chat\rsm}{\chat\rms}$. Unfortunately, that makes $\chat\rs$ so small, that we encounter the same problem as we do with cutting (i.e. setting $\chat = 0$), where $m_n$ becomes large repeatedly. In order to have both $\chat\rs$ small, but not too small, we propose to make $\chat\rs$ small in front of the right maximum point 
\[ r^m(n)\coloneqq \max\big\{r \,\big|\, \cbar\rn \ge \cbar\kn\big\},\]
but keep $r^m(n)$ also as the right maximum point of $\chat$. This leads us to the following construction, which (as we will see below) essentially makes $\chat$ constant between $r_*$ and $r^m.$\\
Given $\chat\rs$ for $r+s\le n$ and $r_*(n+1)$ well defined, we construct $\chat\rs$ for $r+s = n+1$ via
\begin{equation}\label{chatConstruction}
  \chat\rs \coloneqq
  \begin{cases}
    \cbar\rs  &\text{ for all }r<r_*(n+1)\\
    \max\left(\chat\rmsp \frac{\chat\rsm}{\chat\rms},e^{-\delta_1(r+s)}\right) &\text{ for all } r_*(n+1)\le r \le \max(r^m(n),r_*(n+1))\\
    \chat\rmsp \min\left(\frac{\cbar\rs}{\cbar\rmsp},1\right) & \text{ for all } r>\max(r^m(n),r_*(n+1)),
  \end{cases}
\end{equation}
where $\max\left(\chat\rmsp \frac{\chat\rsm}{\chat\rms},e^{-\delta_1(r+s)}\right) \coloneqq e^{-\delta_1(r+s)} $ if $r=0$ or $s=0$.

Note, that $r_*(n+1) > r^m(n)$ can only happen, if $r_*(n+1) = r^m(n+1)$, which is rather extreme and not possible in the cases, where we will apply the construction later. However, we do include it in the construction for completion sake. Also, when we refer to the construction \eqref{chatConstruction}, we may specify the three cases as ``construction.i'', ``construction.ii'' and ``construction.iii''.
    \begin{lemma}\textit{}\label{constructionIsConcave}\\
        \input{\CommonPath/lemmas/constructionIsConcave}
    \end{lemma}
\begin{proof}
    \input{\CommonPath/proofs/constructionIsConcave.tex}
\end{proof}

\subsubsection{Results for \texorpdfstring{$\chat\rs$}{chat}}\label{subsection:chatResults}

Lemma \ref{constructionConcave} tells us, that we can use Lemmas \ref{eres} and \ref{MNEstimates} for $\chat\rs$. Therefore, our next task is to bound $m_n(\chat).$ To do so, we will first show that $\chat$ becomes almost constant between $r_*$ and $r^m$, which then allows us to bound $m_n(\chat).$
We can understand how $\chat\rs$ becomes constant if we consider the left maximum point
\begin{equation*}
  r_m(n,\chat) \coloneqq \min\big\{r \,\big|\, \chat\rn \ge \chat\kn \big\}.
\end{equation*}
    \begin{lemma}\textit{}\label{constructionLeftMaximum}\\
        \input{\CommonPath/lemmas/constructionLeftMaximum}
    \end{lemma}
\begin{proof}
    \input{\CommonPath/proofs/constructionLeftMaximum.tex}
\end{proof}
\input{\CommonPath/pictures/constructionChat.tex}
From here, we can estimate $m_n(\chat)$ nicely. As we can see in Figure \ref{figure:constructionChat}, $m_n(\chat)$ may jump down every now and then. This is not a problem for us, because it even improves the estimates from Lemma \ref{MNEstimates}. Hence, we only give an estimate from above.
    \begin{lemma}\textit{(Bound on $m_n(\chat)$)}\label{constructionmnBound}\\
        \input{\CommonPath/lemmas/constructionmnBound}
    \end{lemma}
\begin{proof}
    \input{\CommonPath/proofs/constructionmnBound.tex}

\end{proof}
Lemmas \ref{constructionLeftMaximum} and \ref{constructionmnBound} yield an analogue of Lemma \ref{boundLemmaMNEstimate} provided that $r^m(n)-r_*(n) \to \infty$ as $n\to \infty.$
    \begin{lemma}\textit{(Bound of Lemma \ref{MNEstimates} for $\chat$)}\label{constructionMNEstimate}\\
        \input{\CommonPath/lemmas/constructionMNEstimate}
    \end{lemma}
\begin{proof}
    \input{\CommonPath/proofs/constructionMNEstimate.tex}
\end{proof}
As mentioned above, we also have to do a construction (calling it $\ccirc\rs$ instead of $\chat\rs$) with $\ccirc\rs = \cbar\rs$ for $r>r^*.$ This is just going to be the same construction as for $\chat\rs$, exploiting the symmetry $r \leftrightsquigarrow s.$ To be precise, given $\ccirc\rs$ for $r+s\le n$ and $r^*(n+1)$ well defined, we construct $\ccirc\rs$ for $r+s = n+1$ via
\begin{equation}\label{ccircConstruction}
  \ccirc\rs \coloneqq
  \begin{cases}
    \cbar\rs  &\text{ for all }s<s^*(n+1)\\
    \max\left(\ccirc\rpsm \frac{\ccirc\rms}{\ccirc\rsm},e^{-\delta_1(r+s)}\right) &\text{ for all } s^*(n+1)\le s \le \max(s_m(n),s^*(n+1))\\
    \ccirc\rpsm \min\left(\frac{\cbar\rs}{\cbar\rpsm},1\right) & \text{ for all } s>\max(s_m(n),s^*(n+1)),
  \end{cases}
\end{equation}
where $\max\left(\ccirc\rpsm \frac{\ccirc\rms}{\ccirc\rsm},e^{-\delta_1(r+s)}\right) \coloneqq e^{-\delta_1(r+s)} $ if $r=0$ or $s=0$ and 
\[ s^*(n) =\min\big\{s \, \big|\, \cbar\sn\ge e^{-\delta_1 n}\big\} \text{ and }
 s_m(n)\coloneqq \max\big\{s \,\big|\, \cbar\sn \ge \cbar\kn\big\}.\]
Following the above, we obtain the corresponding Lemmas.
    \begin{lemma}\textit{}\label{constructionIsConcaveCirc}\\
        \input{\CommonPath/lemmas/constructionIsConcaveCirc}
    \end{lemma}
    \begin{lemma}\textit{}\label{constructionLeftMaximumCirc}\\
        \input{\CommonPath/lemmas/constructionLeftMaximumCirc}
    \end{lemma}
    \begin{lemma}\textit{}\label{constructionmnBoundCirc}\\
        \input{\CommonPath/lemmas/constructionmnBoundCirc}
    \end{lemma}
    \begin{lemma}\textit{}\label{constructionMNEstimateCirc}\\
        \input{\CommonPath/lemmas/constructionMNEstimateCirc}
    \end{lemma}
\begin{proof}
  (Of Lemmas \ref{constructionIsConcaveCirc}--\ref{constructionMNEstimateCirc})\\
  If we define $D\rs \coloneqq Q_{s,r}$, then $\ccirc\rs(\monr,\mons,Q\rs) = \chat_{s,r}(\mons,\monr,D\rs)$ follows from the construction (by induction if you want). Furthermore, $D\rs$ satisfies the concavity condition, if and only if $Q\rs$ does. Hence, Lemmas \ref{constructionIsConcaveCirc}--\ref{constructionMNEstimateCirc} reduce to Lemmas \ref{constructionIsConcave}--\ref{constructionMNEstimate}.
\end{proof}

\subsubsection{Application to \texorpdfstring{$\cbar\rs$}{cbar}}\label{subsection:constructionApplication}

In order to turn Subsections \ref{subsection:construction} and \ref{subsection:chatResults} into a useful estimate on the dissipation, we need to assure that whenever the constructions $\chat$ and $\ccirc$ are necessary (according to Lemma \ref{relativeEntropySemicontinuity}), then the assumptions \eqref{conditionGrowthOnLeft}, \eqref{conditionSmallOnLeft} and \eqref{constructionMNEstimate:ass:width}, respectively \eqref{conditionGrowthOnLeftCirc}, \eqref{conditionSmallOnRight} and \eqref{constructionMNEstimateCirc:ass:width} are true. To do so, we need a second generalisation of $\lim\limits_{i \to \infty} \frac{Q_i}{Q_{i+1}} = z_s$, that is slightly different from \eqref{assumptionMn}:\\
For all $0\le\monr,\mons < \infty$ with $\max(\monr,\mons) >0$ we have 
\begin{equation}\label{assumptionQn}
  \frac{\max\limits_{r+s=n}\cbar\rs}{\max\limits_{r+s=n-1}\cbar\rs} \eqqcolon q_n = q_\infty(\monr,\mons)(1+\smallo(1))
\end{equation}
  and for all $\eta >0$ and any $\eps>0$, there is an $M<\infty$, such that $|\smallo(1)| \le \eps$ for all $r+s\ge M$ and $\eta\le \max(\monr,\mons) \le \rho+\sigma$.\\
  Because of the concavity of $\cbar\rs$, we can compare $\frac{\cbar_{r_*,s_*}}{\cbar_{r_*,s_*-1}}$ and $\frac{\cbar_{r_*+1,s_*}}{\cbar_{r_*,s_*}}$ with $q_n.$ This way \eqref{assumptionQn} ties in nicely with the other assumptions of this section.
  We start by showing that the start of the hole coming from Lemma \ref{relativeEntropySemicontinuity} can be bound independent of $\monr,\mons$ as long as they are ``large enough''.
    \begin{lemma}\textit{}\label{maxQuotientLimit}\\
        \input{\CommonPath/lemmas/maxQuotientLimit}
    \end{lemma}
\begin{proof}
    \input{\CommonPath/proofs/maxQuotientLimi.tex}
\end{proof}
    \begin{proposition}\textit{(Convergence of {$\limsup\limits_{r+s\to \infty} \sqrt[r+s}\label{\cbar\rs}\\
        \input{\CommonPath/propositions/\cbar\rs}
\end{proposition}$})]{maxcbarLimit}
\begin{proof}
    \input{\CommonPath/proofs/maxcbarLimit.tex}
\end{proof}
    \begin{proposition}\textit{(Checking the assumptions)}\label{supercritIsApplicable}\\
        \input{\CommonPath/propositions/supercritIsApplicable}
\end{proposition}
\begin{proof}
    \input{\CommonPath/proofs/supercritIsApplicable.tex}
\end{proof}
With this, we finally arrive at the dissipation estimate for supercritical monomers.
    \begin{theorem}\textit{({$\Dnlin \ge \frac 1 K$ for supercritical $\monr,\mons$})}\label{dissipationSupercritical}\\
        \input{\CommonPath/theorems/dissipationSupercritical}
    \end{theorem}
\begin{proof}
    \input{\CommonPath/proofs/dissipationSupercritical.tex}
\end{proof}

%% file: lemmas/constructionIsConcave.tex
Fix $\monr,\mons>0.$ Assume that $Q\rs$ satisfies the concavity condition \eqref{concavityAss} for $r+s \ge M$ and that for $r_* = r_*(n)$ and $s_* = n-r_*$
\begin{equation}\label{conditionGrowthOnLeft}
  \frac{\cbar_{r_*,s_*}}{\cbar_{r_*-1,s_*}} \ge e^{-\delta_1} \text{ holds for all } r_*+s_* > M.
\end{equation}
Then, $\chat\rs$ constructed via \eqref{chatConstruction} starting with $\chat\rs = \cbar\rs$ for $r+s \le M$ satisfies \eqref{constructionConcave} for all $n \ge M$.
Furthermore, the following holds true for all $n\ge M$
\begin{multicols}{2}
  \begin{enumerate}
    \item\label{constructionIsConcave:rstar} $r_*(n+1)\le r_*(n)+1$
    \item\label{constructionIsConcave:le} $\chat\rs \le \cbar\rs$
    \item $r^m(n,\cbar) = r^m(n,\chat)$
    \item\label{constructionIsConcave:large} $\chat\rn \ge e^{-\delta_1n}$ for $r_*(n) \le r\le r^m(n)$.
  \end{enumerate}
\end{multicols}

Note, that \eqref{conditionGrowthOnLeft} in particular asks for $r_*(n)$ to be well defined, i.e. $\{r\,|\, \cbar\rn \ge e^{-\delta_1 n}\} \neq \emptyset$ and \eqref{conditionGrowthOnLeft} is an empty statement if $r_*=0.$

%% file: proofs/constructionIsConcave.tex
Firstly, we see that $r_*(n+1)-1\le r_*(n) $, because for any $r\le r_*(n+1)-1$ and $r+s = n+1$ we have 
\[ \cbar\rms = \frac{\cbar\rms}{\cbar\rs} \cbar\rs \overset{\text{\eqref{concavityAss:old}}} \le \frac{\cbar_{r_*(n+1),s_*(n+1)}}{\cbar_{r_*(n+1)-1,s_*(n+1)}}\cbar\rs \overset{r<r_*}< \frac{\cbar_{r_*(n+1),s_*(n+1)}}{\cbar_{r_*(n+1)-1,s_*(n+1)}} e^{-\delta_1(n+1)}\overset{\text{\eqref{conditionGrowthOnLeft}}}\le e^{-\delta_1n}.\]
Now we will inductively ($n \to n+1$) show the following
\begin{enumerate}[label=\alph*),ref=\alph*)]
  \item \label{constructionConcave:growing} $\frac{\chat\rpsm}{\chat\rs} \ge 1$ for $r+1\le r^m(n,\cbar)$ and $\frac{\chat\rpsm}{\chat\rs} < 1$ for $r+1> r^m(n,\cbar),$ where $r+s = n$ 
  \item \label{constructionConcave:rm}$r^m(n,\cbar) = r^m(n,\chat),$ which we can therefore denote as $r^m(n)$
  \item\label{constructionConcave:bound} $\chat\rn \ge e^{-\delta_1 n}$ for $r_*(n)\le r \le r^m(n)$
  \item\label{constructionConcave:flat} $\frac{\chat\rpsm}{\chat\rs}\le \frac{\cbar\rpsm}{\cbar\rs}$ for all $r+1\le n$ and $r+s = n$
  \item\label{constructionConcave:smaller} $\chat\rs\le \cbar\rs$.
\end{enumerate}

This proof can get a little confusing. We are basically just checking the statements for all possible cases. In order to follow along, it helps to keep the following picture of the construction in mind.
\input{\CommonPath/pictures/constructionFull.tex}

The induction start at $n=M$ is true, because then $\chat=\cbar$.

\underline{Proof of \ref{constructionConcave:growing}}: We will show $\frac{\chat\rpsm}{\chat\rs} \ge 1$ for $r+s = n+1$ in the following three steps
\begin{enumerate}[label=a.\roman*),ref=a.\roman*)]
  \item \label{constructionConcave:growing:i} for all $r+1<r_*(n+1)$
  \item \label{constructionConcave:growing:ii} for all $r_*(n+1)\le r+1 \le r^m(n,\cbar)$
  \item \label{constructionConcave:growing:iii} for $r+1=r^m(n+1,\cbar)$
\end{enumerate}
and then $\frac{\chat\rpsm}{\chat\rs} < 1$ for all $r+1>r^m(n+1,\cbar).$\\
Since $Q\rs$ is concave, so is $\cbar\rs$ and hence \ref{constructionConcave:growing:i} is just $\frac{\chat\rpsm}{\chat\rs} \overset{\text{\constructioni}} = \frac{\cbar\rpsm}{\cbar\rs} \ge 1$ for all $r+1<r_*(n+1).$\\
For $r_*(n+1)\le r+1 \le r^m(n,\cbar)\overset{\text{induction}} = r^m(n,\chat)$, we get 
\[\chat\rpsm = \max\Big(\chat\rs \frac{\chat_{r+1,s-2}}{\chat\rsm},e^{-\delta_1(r+s)}\Big)\overset{r+1\le r^m(n)\le n \implies s-2 \ge 0}\ge \chat\rs \frac{\chat_{r+1,s-2}}{\chat\rsm} \overset{r+1\le r^m(n,\chat)}\ge \chat\rs.\]
If $r^m(c+1,\cbar) = r^m(n)$, then \ref{constructionConcave:growing:iii} is just \ref{constructionConcave:growing:ii}. 
Otherwise, by concavity we already know $r^m(n+1,\cbar) = r^m(n) + 1$. Here we distinguish another two cases, $r^m(n+1,\cbar) = r_*(n+1)$ and $r^m(n+1,\cbar) > r_*(n+1).$
Now, if $r_*(n+1) = r^m(n+1,\cbar)$, we obtain for $r+1 = r^m(n+1)$
\[ \chat_{\rpsm} \overset{\text{\constructionii}}\ge e^{-\delta_1(n+1)} \overset{\text{def. }r_*(n+1)}\ge \cbar\rs \overset{\text{\constructioni}} = \chat\rs.\]
And if $r_*(n+1) < r^m(n+1)$ we get for $r+1 = r^m(n+1,\cbar) > \max(r^m(n),r_*(n+1))$
\[ \chat\rpsm \overset{\text{\constructioniii}}= \chat\rs \min\Big(\underbrace{\frac{\cbar\rpsm}{\cbar\rs}}_{\mathclap{\ge 1 \text{ by def. of }r^m(n+1,\cbar)}},1\Big) = \chat\rs,\]
which shows \ref{constructionConcave:growing:iii}.
Finally, for all $r+1>r^m(n+1,\cbar)\ge\max(r^m(n),r_*(n+1))$ and $r+s = n+1$ we have
\[ \chat\rpsm \overset{\text{\constructioniii}}= \chat\rs \min\Big(\underbrace{\frac{\cbar\rpsm}{\cbar\rs}}_{\mathclap{< 1 \text{ by def. of }r^m(n+1,\cbar)}},1\Big) < \chat\rs.\]
\underline{Proof of \ref{constructionConcave:rm}}: Follows immediately from \ref{constructionConcave:growing}.

\underline{Proof of \ref{constructionConcave:bound}}: By construction we have $\chat\rs \ge e^{-\delta_1(r+s)}$ for $r=r_*$. And then by \ref{constructionConcave:growing} $\chat\rs \ge \chat_{r_*,s_*}$ for $r_*\le r \le r^m$.

\underline{Proof of \ref{constructionConcave:flat}}:
There is nothing to do for $r+1< r_*(n+1)$ and $r+s = n+1$, because it holds with equality. For $r+1 = r_*(n+1)$ and $r+s =n+1$, we have either $\chat\rpsm = e^{-\delta_1(r+s)}$ and then $\frac{\chat\rpsm}{\chat\rs} = \frac{e^{-\delta_1(r+s)}}{\cbar\rs} \le \frac{\cbar\rpsm}{\cbar\rs}$ or 
\begin{equation*}
  \frac{\chat\rpsm}{\chat\rs} = \frac{\chat_{r+1,s-2}}{\chat\rsm} \overset{\text{induction}} \le \frac{\cbar_{r+1,s-2}}{\cbar\rsm} \overset{\cbar\text{ concave}}\le \frac{\cbar\rpsm}{\cbar\rs}.
\end{equation*}
The same calculation holds for $r_*(n+1)<r+1\le r^m(n)$, because by \ref{constructionConcave:growing} and \ref{constructionConcave:bound} we have (note again, that $r+1\le n$ implies $s\ge 2$) 
\[\chat\rpsm \overset{\text{\constructionii}} = \max\left(\chat\rs \frac{\chat_{r+1,s-2}}{\chat\rsm},e^{-\delta_1(r+s)}\right) \overset{\text{\ref{constructionConcave:growing} and \ref{constructionConcave:bound}}} = \chat\rs \frac{\chat_{r+1,s-2}}{\chat\rsm}.\]
Now for $r+1 \ge \max(r^m(n),r_*(n+1)) + 1$ and $r+s=n+1$ we have 
\[ \frac{\chat\rpsm}{\chat\rs}  \overset{\text{\constructioniii}} = \min\Big(\frac{\cbar\rpsm}{\cbar\rs},1\Big) \le \frac{\cbar\rpsm}{\cbar\rs}.\]

\underline{Proof of \ref{constructionConcave:smaller}}:
Because $\chat_{0,n}\le \cbar_{0,n}$, it follows directly from \ref{constructionConcave:flat}.

It remains to show that $\chat\rs$ satisfies the concavity condition \eqref{constructionConcave} for $n\ge M$. We will do so with a two-step induction, where we first show that 
\begin{equation}\label{constructionConcave:monotone}
  \frac{\chat\rsm}{\chat\rms} \le \frac{\chat\rms}{\chat\rmmsp}
\end{equation}
for all $r+s = n$ and then show the concavity condition \eqref{constructionConcave} for $r+s = n+1$, which in particular implies \eqref{constructionConcave:monotone} for all $r+s = n+1$. Hence, the induction start is just \eqref{constructionConcave:monotone} for $r+s=M$, which is true by the concavity of $\cbar$ and it remains to do the induction step, i.e. show \eqref{constructionConcave} for $r+s=n+1$ assuming \eqref{constructionConcave:monotone} for $r+s=n.$ We will do so in the following steps:
\begin{multicols}{2}
\begin{enumerate}[label=\arabic*),ref=\arabic*)]
  \item\label{constructionConcave:concavity:one} $\frac{\chat\rsm}{\chat\rms} \le \frac{\chat\rs}{\chat\rmsp} \le \frac{\chat\rms}{\chat\rmmsp}$ for $r<r_*$
  \item\label{constructionConcave:concavity:two} $\frac{\chat\rsm}{\chat\rms} \le \frac{\chat\rs}{\chat\rmsp}$ for $r_*\le r \le \max(r^m(n),r_*)$
  \item\label{constructionConcave:concavity:three} $\frac{\chat\rs}{\chat\rmsp} \le \frac{\chat\rms}{\chat\rmmsp}$ for $r=r_*$
  \item\label{constructionConcave:concavity:four} \makebox[0.3\textwidth][l]{$\frac{\chat\rs}{\chat\rmsp} \le \frac{\chat\rms}{\chat\rmmsp}$ for $r_*< r \le \max(r^m(n),r_*)$}
  \item\label{constructionConcave:concavity:five} $\frac{\chat\rsm}{\chat\rms} \le \frac{\chat\rs}{\chat\rmsp} \le \frac{\chat\rms}{\chat\rmmsp}$ for $r=r^m$
  \item\label{constructionConcave:concavity:six} $\frac{\chat\rs}{\chat\rmsp} \le \frac{\chat\rms}{\chat\rmmsp}$ for $r^m< r$
  \item\label{constructionConcave:concavity:seven} $\frac{\chat\rsm}{\chat\rms} \le \frac{\chat\rs}{\chat\rmsp}$ for $r^m< r$
\end{enumerate}
\end{multicols}
where $r_* = r_*(n+1)$, $r^m = r^m(n+1)$ and $r+s = n+1 >M.$

For \ref{constructionConcave:concavity:one} we know that $r<r_*$ implies $r\le r_*(n)$ (due to \ref{constructionIsConcave:rstar}) and hence all the $\chat$ appearing in \ref{constructionConcave:concavity:one} are equal to $\cbar,$ apart from $\chat\rsm$, which we can estimate with $\chat\rsm\le \cbar\rsm.$ But $\cbar\rs$ is concave by assumption.\\
From the construction, \ref{constructionConcave:concavity:two} follows immediately.\\
To prove \ref{constructionConcave:concavity:three} let $r=r_*(n+1)$. If $\chat\rmsp \frac{\chat\rsm}{\chat\rms} \ge e^{-\delta_1 n},$  we have $\frac{\chat\rs}{\chat\rmsp} \overset{\text{\constructionii}}= \frac{\chat\rsm}{\chat\rms}\overset{\text{\eqref{constructionConcave:monotone}}} \le \frac{\chat\rms}{\chat\rmmsp}.$ Otherwise we have to show $e^{-\delta_1 n} \le \cbar\rmsp \frac{\chat\rms}{\chat_{r-2,s+1}}$, which follows with $r-1 = r_*(n+1) -1 \overset{\text{\ref{constructionIsConcave:rstar}}} \le r_*(n)$ via
\begin{equation}
  \begin{split}
    \frac{\cbar\rmsp}{\cbar\rmmsp} \chat\rms \overset{\cbar\text{ concave}}\ge \frac{\cbar\rs}{\cbar\rms} \chat\rms=
    \begin{cases}
      \overset{\text{\constructioni}}= \cbar\rs \overset{\text{def. }r_*} \ge e^{-\delta_1 (n+1)} &\text{ if } r-1<r_*(n)\\
      \overset{\text{\ref{constructionConcave:bound}}}\ge \frac{\cbar\rs}{\cbar\rms}e^{-\delta_1 n} \overset{\text{\eqref{conditionGrowthOnLeft}}}\ge e^{-\delta_1(n+1)} &\text{ if }r-1 = r_*(n).
    \end{cases}
  \end{split}
\end{equation}
To show \ref{constructionConcave:concavity:four}, we know that for $r_*< r \le \max(r^m(n),r_*)$ we have $\chat\rmsp \frac{\chat\rsm}{\chat\rms} \overset{\text{\ref{constructionConcave:growing} and \ref{constructionConcave:bound}}}  \ge e^{-\delta_1 n}$ and hence $\frac{\chat\rs}{\chat\rmsp} \overset{\text{\constructionii}}= \frac{\chat\rsm}{\chat\rms}\overset{\text{\eqref{constructionConcave:monotone}}} \le \frac{\chat\rms}{\chat\rmmsp}.$\\
Next we consider \ref{constructionConcave:concavity:five}, i.e. $r=r^m(n+1)$. If $r^m(n+1) = r^m(n)$, \ref{constructionConcave:concavity:five} follows from \ref{constructionConcave:concavity:two} and \ref{constructionConcave:concavity:four}. So assume $r^m(n+1) = r^m(n)+1$. Again if $r^m(n+1) = r_*(n+1)$, it follows from \ref{constructionConcave:concavity:two} and \ref{constructionConcave:concavity:three}. Otherwise we have $r>\max(r^m(n),r_*)$ and then 
\[ \frac{\chat\rsm}{\chat\rms} \overset{r-1 = r^m(n)} < 1\overset{r=r^m(n+1)} \le \frac{\chat\rs}{\chat\rmsp} \overset{\text{\constructioniii}} = \min\Big(\frac{\cbar\rs}{\cbar\rmsp},1\Big) \le 1 \overset{r-1 = r^m(n)}\le \frac{\chat\rms}{\chat\rmmsp}.\]
Because $r>r^m(n+1) \ge \max(r^m(n),r_*)$, we can show \ref{constructionConcave:concavity:six} via 
\[\frac{\chat\rs}{\chat\rmsp} \overset{\text{\constructioniii}} = \frac{\cbar\rs}{\cbar\rmsp} \overset{\cbar\text{ concave}}\le \frac{\cbar\rms}{\cbar\rmmsp} \overset{\text{\ref{constructionConcave:flat}}}\le \frac{\chat\rms}{\chat\rmmsp}.\]
Finally, for \ref{constructionConcave:concavity:seven} we distinguish $n = M$ and $n>M$
\[\frac{\chat\rs}{\chat\rmsp} = \frac{\cbar\rs}{\cbar\rmsp} \overset{\cbar\text{ concave}}\ge \frac{\cbar\rsm}{\cbar\rms}=
\begin{cases}
  \overset{\text{construction start}} = \frac{\chat\rsm}{\chat\rms} &\text{ if } n = M\\
  \ge \min(\frac{\cbar\rsm}{\cbar\rms},1) \overset{\text{\constructioniii}}= \frac{\chat\rsm}{\chat\rms} &\text{ if }n>M,
\end{cases}\]
where we have used $r>r^m(n) \ge \max(r^m(n-1),r_*(n))$ to see that we are in construction.iii in the last equality.

%% file: pictures/constructionFull.tex
\begin{figure}[!htb]
  \centering
  \begin{tikzpicture}[scale=1.0,>={Stealth[scale=1.2]}]
    \draw[black,thick] (5.5,1.0) -- (5.5,-2.5);
    \draw[black,thick] (-3,-3) -- (14,-3);
    \draw[black,thick] (5.5,-5) -- (5.5,-9);
    \begin{scope}[shift={(0,0)}]
        \draw[black, loosely dotted] (-2,2.5) -- (-2,-2.5);
        \draw[black, loosely dotted] (-1,2.5) -- (-1,-2.5);
        \draw[black, loosely dotted] (0,2.5) -- (0,-2.5)node[below] {\scriptsize{$r_*$}};
        \draw[black, loosely dotted] (1,2.5) -- (1,-2.5);
        \draw[black, loosely dotted] (2,2.5) -- (2,-2.5) node[below] {\scriptsize{$r^m$}};
        \draw[black, loosely dotted] (2.5,-2) -- (-2.5,-2);
        \draw[black, loosely dotted] (2.5,-1) -- (-2.5,-1);
        \draw[black, loosely dotted] (2.5,0) -- (-2.5,0);
        \draw[black, loosely dotted] (2.5,1) -- (-2.5,1)node[left] {\scriptsize{$s_*$}};
        \draw[black, loosely dotted] (2.5,2) -- (-2.5,2);
        \filldraw[black] (0,0) circle (2pt) node {};
        \filldraw[black] (1,0) circle (2pt) node {};
        \filldraw[black] (2,-1) circle (2pt) node {};
        \filldraw[black] (0,1) circle (2pt) node {};
        \filldraw[black] (-1,2) circle (2pt) node {};
        \filldraw[black] (1,-1) circle (2pt) node {};
        \filldraw[black] (2,-2) circle (2pt) node {};
        \filldraw[black] (-1,1) circle (2pt) node {};
        \filldraw[black] (-2,2) circle (2pt) node {};
        \node[left,rotate=-45] (n) at (0,0) {$[$};
        \node[left,rotate=-45] (n+1) at (0,1) {$[$};
        \node (first) at (-1,1.5) {\scriptsize{$\overset{?}\ge$}};
        \node[rotate=270, yscale = 1] (second) at (-0.5,1) {\scriptsize{$\ge$}};
        \node[rotate=270, yscale = 1] (second) at (0.5,0) {\scriptsize{$=$}};
        \draw[cyan, -] (-2,2) -- (-1,1);
        \node (left) at (-2.10,1.35) {\tiny{\color{cyan}$\frac{\cbar\rms}{\cbar_{r-2,s+1}}$}};
        \draw[magenta, -] (-1,1) -- (0,0);
        \node (left) at (-1.0,0.3) {\tiny{\color{magenta}$\frac{\chat\rsm}{\cbar\rms}$}};
        \draw[teal, -] (-1,2) -- (0,1);
        \node (left) at (0.1,1.75) {\tiny{\color{teal}$\frac{\chat\rs}{\cbar\rmsp}$}};
        \draw[black,-] (0,1) -- (1,0);
        \node (left) at (1.1,0.7) {\tiny{\color{black}$\frac{\chat\rpsm}{\chat\rs}$}};
        \draw[black,-] (0,0) -- (1,-1);
        \node (left) at (-0.2,-0.7) {\tiny{\color{black}$\frac{\chat_{r+1,s-2}}{\chat\rsm}$}};
        \draw[black,dotted, thick] (1.2,-1.2) -- (1.8,-1.8);
        \draw[black,dotted, thick] (1.2,-0.2) -- (1.8,-0.8);
        \node[text width=5cm,below right,] at (2.75,2.75) {\small The possibilities for the starting point $r^*$, where $\cbar \neq \chat.$ Denoting $r=r_*(n+1)$ and $s=n+1-r.$};
        \node[text width=2.7cm,below right,] at (2.55,1.0) {\small a) The cases $r_*(n+1) \le r_*(n)$.};
    \end{scope}
    \begin{scope}[shift={(11,0)}]
        \draw[black, loosely dotted] (-2,2.5) -- (-2,-2.5);
        \draw[black, loosely dotted] (-1,2.5) -- (-1,-2.5);
        \draw[black, loosely dotted] (0,2.5) -- (0,-2.5)node[below] {\scriptsize{$r$}};
        \draw[black, loosely dotted] (1,2.5) -- (1,-2.5);
        \draw[black, loosely dotted] (2,2.5) -- (2,-2.5) node[below] {\scriptsize{$r^m$}};
        \draw[black, loosely dotted] (2.5,-2) -- (-2.5,-2);
        \draw[black, loosely dotted] (2.5,-1) -- (-2.5,-1);
        \draw[black, loosely dotted] (2.5,0) -- (-2.5,0);
        \draw[black, loosely dotted] (2.5,1) -- (-2.5,1)node[left] {\scriptsize{$s$}};
        \draw[black, loosely dotted] (2.5,2) -- (-2.5,2);
        \filldraw[black] (0,0) circle (2pt) node {};
        \filldraw[black] (1,0) circle (2pt) node {};
        \filldraw[black] (2,-1) circle (2pt) node {};
        \filldraw[black] (0,1) circle (2pt) node {};
        \filldraw[black] (-1,2) circle (2pt) node {};
        \filldraw[black] (1,-1) circle (2pt) node {};
        \filldraw[black] (2,-2) circle (2pt) node {};
        \filldraw[black] (-1,1) circle (2pt) node {};
        \filldraw[black] (-2,2) circle (2pt) node {};
        \node[left,rotate=-45] (n) at (-1,1) {$[$};
        \node[left,rotate=-45] (n+1) at (0,1) {$[$};
        \node (first) at (-1.1,1.6) {\scriptsize{$\overset ? \ge$}};
        \node[rotate=270, yscale = 1] (second) at (-0.5,1) {\scriptsize{$\ge$}};
        \node[rotate=270, yscale = 1] (second) at (0.5,0) {\scriptsize{$=$}};
        \draw[cyan, -] (-2,2) -- (-1,1);
        \node (left) at (-2.2,1.4) {\tiny{\color{cyan}$\frac{\chat\rms}{\cbar_{r-2,s+1}}$}};
        \draw[magenta, -] (-1,1) -- (0,0);
        \node (left) at (-1.1,0.3) {\tiny{\color{magenta}$\frac{\chat\rsm}{\chat\rms}$}};
        \draw[teal, -] (-1,2) -- (0,1);
        \node (left) at (0.05,1.75) {\tiny{\color{teal}$\frac{\chat\rs}{\cbar\rmsp}$}};
        \draw[black,-] (0,1) -- (1,0);
        \node (left) at (1.1,0.75) {\tiny{\color{black}$\frac{\chat\rpsm}{\chat\rs}$}};
        \draw[black,-] (0,0) -- (1,-1);
        \node (left) at (-0.2,-0.7) {\tiny{\color{black}$\frac{\chat_{r+1,s-2}}{\chat\rsm}$}};
        \draw[black,dotted, thick] (1.2,-1.2) -- (1.8,-1.8);
        \draw[black,dotted, thick] (1.2,-0.2) -- (1.8,-0.8);
        \node[text width=2.7cm,below right,] at (-5.45,1.0) {\small b) The case\\ $r = r_*(n)+1$.};
    \end{scope}
    \begin{scope}[shift={(0,-6)}]
        \draw[black, loosely dotted] (-2,2.5) -- (-2,-2.5)node[below] {\scriptsize{$r_*$}};
        \draw[black, loosely dotted] (-1,2.5) -- (-1,-2.5);
        \draw[black, loosely dotted] (0,2.5) -- (0,-2.5) node[below] {\scriptsize{$r^m$}};
        \draw[black, loosely dotted] (1,2.5) -- (1,-2.5);
        \draw[black, loosely dotted] (2,2.5) -- (2,-2.5);
        \draw[black, loosely dotted] (2.5,-2) -- (-2.5,-2);
        \draw[black, loosely dotted] (2.5,-1) -- (-2.5,-1);
        \draw[black, loosely dotted] (2.5,0) -- (-2.5,0)node[left] {\scriptsize{$s^m$}};
        \draw[black, loosely dotted] (2.5,1) -- (-2.5,1);
        \draw[black, loosely dotted] (2.5,2) -- (-2.5,2);
        \filldraw[black] (0,0) circle (2pt) node {};
        \filldraw[black] (0,-1) circle (2pt) node {};
        \filldraw[black] (1,-2) circle (2pt) node {};
        \filldraw[black] (-1,0) circle (2pt) node {};
        \filldraw[black] (-2,1) circle (2pt) node {};
        \filldraw[black] (1,-1) circle (2pt) node {};
        \filldraw[black] (2,-2) circle (2pt) node {};
        \filldraw[black] (-1,1) circle (2pt) node {};
        \filldraw[black] (-2,2) circle (2pt) node {};
        \node[right,rotate=-45] (n+1) at (0,0) {$]$};
        \node[right,rotate=-45] (n) at (0,-1) {$]$};
        \node[rotate=270] (first) at (0.6,-1) {\scriptsize{$\overset{?}\ge$}};
        \node (second) at (1,-1.5) {\scriptsize{$\ge$}};
        \node[rotate=270, yscale = 1] (equal) at (-0.5,0) {\scriptsize{$=$}};
        \draw[cyan, -] (0,0) -- (1,-1);
        \node (left) at (2,-0.5) {\tiny{\color{cyan}$\frac{\chat\rpsm}{\chat\rs} = \frac{\cbar\rpsm}{\cbar\rs}$}};
        \draw[magenta, -] (1,-1) -- (2,-2);
        \node (left) at (2.2,-1.4) {\tiny{\color{magenta}$\frac{\cbar_{r+2,s-2}}{\cbar\rpsm}$}};
        \draw[teal, -] (0,-1) -- (1,-2);
        \node (left) at (-0.1,-1.7) {\tiny{\color{teal}$\frac{\chat_{r+1,s-2}}{\chat\rsm}$}};
        \draw[black,-] (-1,0) -- (0,-1);
        \node (left) at (0.1,0.75) {\tiny{\color{black}$\frac{\chat\rs}{\chat\rmsp}$}};
        \draw[black,-] (-1,1) -- (0,0);
        \node (left) at (-1.0,-0.7) {\tiny{\color{black}$\frac{\chat\rsm}{\chat\rms}$}};
        \draw[black,dotted, thick] (-1.8,1.8) -- (-1.2,1.2);
        \draw[black,dotted, thick] (-1.8,0.8) -- (-1.2,0.2);
        \node[text width=5cm,below right,] at (2.75,2.75) {\small The possibilities for the right maximum point $r^m$. Denoting $r=r^m(n+1)$ and $s=n+1-r.$};
        \node[text width=2.7cm,below right,] at (2.55,1.0) {\small c) The case \\$r = r^m(n)$.};
    \end{scope}
    \begin{scope}[shift={(11,-6)}]
        \draw[black, loosely dotted] (-2,2.5) -- (-2,-2.5)node[below] {\scriptsize{$r_*$}};
        \draw[black, loosely dotted] (-1,2.5) -- (-1,-2.5);
        \draw[black, loosely dotted] (0,2.5) -- (0,-2.5);
        \draw[black, loosely dotted] (1,2.5) -- (1,-2.5) node[below] {\scriptsize{$r$}};
        \draw[black, loosely dotted] (2,2.5) -- (2,-2.5);
        \draw[black, loosely dotted] (2.5,-2) -- (-2.5,-2);
        \draw[black, loosely dotted] (2.5,-1) -- (-2.5,-1)node[left] {\scriptsize{$s$}};
        \draw[black, loosely dotted] (2.5,0) -- (-2.5,0);
        \draw[black, loosely dotted] (2.5,1) -- (-2.5,1);
        \draw[black, loosely dotted] (2.5,2) -- (-2.5,2);
        \filldraw[black] (0,0) circle (2pt) node {};
        \filldraw[black] (0,-1) circle (2pt) node {};
        \filldraw[black] (1,-2) circle (2pt) node {};
        \filldraw[black] (-1,0) circle (2pt) node {};
        \filldraw[black] (-2,1) circle (2pt) node {};
        \filldraw[black] (1,-1) circle (2pt) node {};
        \filldraw[black] (2,-2) circle (2pt) node {};
        \filldraw[black] (-1,1) circle (2pt) node {};
        \filldraw[black] (-2,2) circle (2pt) node {};
        \node[right,rotate=-45] (n+1) at (1,-1) {$]$};
        \node[right,rotate=-45] (n) at (0,-1) {$]$};
        \node[rotate=270] (first) at (0.5,-1) {\scriptsize{$\ge$}};
        \node (second) at (1,-1.5) {\scriptsize{$\ge$}};
        \node[rotate=270, yscale = 1] (equal) at (-0.5,0) {\scriptsize{$=$}};
        \draw[cyan, -] (0,0) -- (1,-1);
        \node (left) at (1.5,-0.5) {\tiny{\color{cyan}$\frac{\chat\rs}{\chat\rmsp} = 1$}};
        \draw[magenta, -] (1,-1) -- (2,-2);
        \node (left) at (2.2,-1.4) {\tiny{\color{magenta}$\frac{\cbar\rpsm}{\cbar\rs}$}};
        \draw[teal, -] (0,-1) -- (1,-2);
        \node (left) at (0.0,-1.7) {\tiny{\color{teal}$\frac{\chat\rsm}{\chat\rms}$}};
        \draw[black,-] (-1,0) -- (0,-1);
        \node (left) at (0.1,0.75) {\tiny{\color{black}$\frac{\chat\rmsp}{\chat_{r-2,s+2}}$}};
        \draw[black,-] (-1,1) -- (0,0);
        \node (left) at (-1.0,-0.7) {\tiny{\color{black}$\frac{\chat\rms}{\chat_{r-2,s+1}}$}};
        \draw[black,dotted, thick] (-1.8,1.8) -- (-1.2,1.2);
        \draw[black,dotted, thick] (-1.8,0.8) -- (-1.2,0.2);
        \node[text width=2.7cm,below right,] at (-5.45,1.0) {\small d) The case\\ $r = r^m(n)+1$.};
    \end{scope}
  \end{tikzpicture}
  \caption{Pictographic description of the construction and the inequalities we have to proof. The more involved arguments have been marked with a ``$?$'' and will differ for the cases a)--d).}
\end{figure}

%% file: lemmas/constructionLeftMaximum.tex
Given the assumptions and $\chat$ as in Lemma \ref{constructionIsConcave}. Then, we have 
\[ r_m(n+1,\chat) \le \max\big(r_*(n+1), r_m(n,\chat)\big) \text{ for all }n\ge M.\]

%% file: proofs/constructionLeftMaximum.tex
Let $n\ge M$ and $r = \max\big(r_*(n+1), r_m(n,\chat)\big)$, as well as $s=n+1-r$. If $r=n+1$, there is nothing to show. Otherwise, by the concavity of $\chat\rs$, it suffices to show $\frac{\chat\rpsm}{\chat\rs}\le 1.$ We distinguish two cases. \\
If $r+1 > \max(r_*(n+1),r^m(n))$ we obtain 
\[\frac{\chat\rpsm}{\chat\rs}\overset{\text{construction.iii}} = \min\left(1,\frac{\cbar\rpsm}{\cbar\rs}\right) \le 1.\]
On the other hand, if $r+1 \le \max(r_*(n+1),r^m(n))$ we use $r\ge r_*(n+1)$, so that Lemma \ref{constructionIsConcave}.\ref{constructionIsConcave:large} implies $\chat\rs \ge e^{-\delta_1(r+s)}$. Furthermore, $r\ge r_*(n+1)$ together with $r+1 \le \max(r_*(n+1),r^m(n))$ yields $\max(r_*(n+1),r^m(n)) = r^m(n).$ Together with $r+1\le n \implies s\ge2$ we find
\[\chat\rpsm \overset{\text{construction.ii}} = \max\Big(\chat\rs \underbrace{\frac{\chat\rpsmm}{\chat\rsm}}_{\mathclap{\ge 1, \text{ since }r+1\le r^m(n)}},e^{-\delta_1(r+s)}\Big) = \chat\rs {\frac{\chat\rpsmm}{\chat\rsm}},\]
which proves $\displaystyle\frac{\chat\rpsm}{\chat\rs}= \frac{\chat\rpsmm}{\chat\rsm} \overset{r\ge r_m(n)} \le 1.$

%% file: pictures/constructioncHat.tex
\begin{figure}[H]
  \centering
  \begin{subfigure}{0.55\textwidth}
    \centering
    \input{\CommonPath/pictures/chatConstruction.pgf}
    \caption{Behaviour of $\chat\rs$ and $\cbar\rs$ at $r+s = 100;$ $\chat$ appears to be constant between $r_*$ and $r^m$. }
  \end{subfigure}
  \hfill
  \begin{subfigure}{0.4\textwidth}
    \centering
    \input{\CommonPath/pictures/ehatConstruction.pgf}
    \caption{Lemma \ref{eres} applied to $\cbar$ and $\chat$ yields $e(\cbar)$ and $e(\chat).$ Here shown at $r+s = 99.$}
  \end{subfigure}
  \begin{subfigure}{0.95\textwidth}
    \centering
    \input{\CommonPath/pictures/mhatConstruction.pgf}
    \caption{We have plotted $n\mapsto m_n$ for $\cbar$ and $\chat.$}
  \end{subfigure}
  \caption{The construction from Lemma \ref{constructionIsConcave} for $Q\rs = \binom{r+s}{r}^{0.3} e^{-\frac{5}{100}(r+s-1)^{\frac 2 3}}$, $\monr = 0.8$, $\mons = 0.9,$ $M=15$ and $\delta_1 = 0.04$ up to $r+s = 100.$}
  \label{figure:constructionChat}
\end{figure} 

%% file: pictures/chatConstruction.pgf
\begingroup%
\makeatletter%
\begin{pgfpicture}%
\pgfpathrectangle{\pgfpointorigin}{\pgfqpoint{3.500000in}{2.100000in}}%
\pgfusepath{use as bounding box, clip}%
\begin{pgfscope}%
\pgfsetbuttcap%
\pgfsetmiterjoin%
\definecolor{currentfill}{rgb}{1.000000,1.000000,1.000000}%
\pgfsetfillcolor{currentfill}%
\pgfsetlinewidth{0.000000pt}%
\definecolor{currentstroke}{rgb}{1.000000,1.000000,1.000000}%
\pgfsetstrokecolor{currentstroke}%
\pgfsetdash{}{0pt}%
\pgfpathmoveto{\pgfqpoint{0.000000in}{0.000000in}}%
\pgfpathlineto{\pgfqpoint{3.500000in}{0.000000in}}%
\pgfpathlineto{\pgfqpoint{3.500000in}{2.100000in}}%
\pgfpathlineto{\pgfqpoint{0.000000in}{2.100000in}}%
\pgfpathlineto{\pgfqpoint{0.000000in}{0.000000in}}%
\pgfpathclose%
\pgfusepath{fill}%
\end{pgfscope}%
\begin{pgfscope}%
\pgfsetbuttcap%
\pgfsetmiterjoin%
\definecolor{currentfill}{rgb}{1.000000,1.000000,1.000000}%
\pgfsetfillcolor{currentfill}%
\pgfsetlinewidth{0.000000pt}%
\definecolor{currentstroke}{rgb}{0.000000,0.000000,0.000000}%
\pgfsetstrokecolor{currentstroke}%
\pgfsetstrokeopacity{0.000000}%
\pgfsetdash{}{0pt}%
\pgfpathmoveto{\pgfqpoint{0.437500in}{0.231000in}}%
\pgfpathlineto{\pgfqpoint{3.150000in}{0.231000in}}%
\pgfpathlineto{\pgfqpoint{3.150000in}{1.848000in}}%
\pgfpathlineto{\pgfqpoint{0.437500in}{1.848000in}}%
\pgfpathlineto{\pgfqpoint{0.437500in}{0.231000in}}%
\pgfpathclose%
\pgfusepath{fill}%
\end{pgfscope}%
\begin{pgfscope}%
\pgfsetbuttcap%
\pgfsetroundjoin%
\definecolor{currentfill}{rgb}{0.000000,0.000000,0.000000}%
\pgfsetfillcolor{currentfill}%
\pgfsetlinewidth{0.803000pt}%
\definecolor{currentstroke}{rgb}{0.000000,0.000000,0.000000}%
\pgfsetstrokecolor{currentstroke}%
\pgfsetdash{}{0pt}%
\pgfsys@defobject{currentmarker}{\pgfqpoint{0.000000in}{-0.048611in}}{\pgfqpoint{0.000000in}{0.000000in}}{%
\pgfpathmoveto{\pgfqpoint{0.000000in}{0.000000in}}%
\pgfpathlineto{\pgfqpoint{0.000000in}{-0.048611in}}%
\pgfusepath{stroke,fill}%
}%
\begin{pgfscope}%
\pgfsys@transformshift{0.560795in}{0.231000in}%
\pgfsys@useobject{currentmarker}{}%
\end{pgfscope}%
\end{pgfscope}%
\begin{pgfscope}%
\definecolor{textcolor}{rgb}{0.000000,0.000000,0.000000}%
\pgfsetstrokecolor{textcolor}%
\pgfsetfillcolor{textcolor}%
\pgftext[x=0.560795in,y=0.133778in,,top]{\color{textcolor}{\sffamily\fontsize{10.000000}{12.000000}\selectfont\catcode`\^=\active\def^{\ifmmode\sp\else\^{}\fi}\catcode`\%=\active\def
\end{pgfscope}%
\begin{pgfscope}%
\pgfsetbuttcap%
\pgfsetroundjoin%
\definecolor{currentfill}{rgb}{0.000000,0.000000,0.000000}%
\pgfsetfillcolor{currentfill}%
\pgfsetlinewidth{0.803000pt}%
\definecolor{currentstroke}{rgb}{0.000000,0.000000,0.000000}%
\pgfsetstrokecolor{currentstroke}%
\pgfsetdash{}{0pt}%
\pgfsys@defobject{currentmarker}{\pgfqpoint{0.000000in}{-0.048611in}}{\pgfqpoint{0.000000in}{0.000000in}}{%
\pgfpathmoveto{\pgfqpoint{0.000000in}{0.000000in}}%
\pgfpathlineto{\pgfqpoint{0.000000in}{-0.048611in}}%
\pgfusepath{stroke,fill}%
}%
\begin{pgfscope}%
\pgfsys@transformshift{1.547159in}{0.231000in}%
\pgfsys@useobject{currentmarker}{}%
\end{pgfscope}%
\end{pgfscope}%
\begin{pgfscope}%
\definecolor{textcolor}{rgb}{0.000000,0.000000,0.000000}%
\pgfsetstrokecolor{textcolor}%
\pgfsetfillcolor{textcolor}%
\pgftext[x=1.547159in,y=0.133778in,,top]{\color{textcolor}{\sffamily\fontsize{10.000000}{12.000000}\selectfont\catcode`\^=\active\def^{\ifmmode\sp\else\^{}\fi}\catcode`\%=\active\def
\end{pgfscope}%
\begin{pgfscope}%
\pgfsetbuttcap%
\pgfsetroundjoin%
\definecolor{currentfill}{rgb}{0.000000,0.000000,0.000000}%
\pgfsetfillcolor{currentfill}%
\pgfsetlinewidth{0.803000pt}%
\definecolor{currentstroke}{rgb}{0.000000,0.000000,0.000000}%
\pgfsetstrokecolor{currentstroke}%
\pgfsetdash{}{0pt}%
\pgfsys@defobject{currentmarker}{\pgfqpoint{0.000000in}{-0.048611in}}{\pgfqpoint{0.000000in}{0.000000in}}{%
\pgfpathmoveto{\pgfqpoint{0.000000in}{0.000000in}}%
\pgfpathlineto{\pgfqpoint{0.000000in}{-0.048611in}}%
\pgfusepath{stroke,fill}%
}%
\begin{pgfscope}%
\pgfsys@transformshift{3.026705in}{0.231000in}%
\pgfsys@useobject{currentmarker}{}%
\end{pgfscope}%
\end{pgfscope}%
\begin{pgfscope}%
\definecolor{textcolor}{rgb}{0.000000,0.000000,0.000000}%
\pgfsetstrokecolor{textcolor}%
\pgfsetfillcolor{textcolor}%
\pgftext[x=3.026705in,y=0.133778in,,top]{\color{textcolor}{\sffamily\fontsize{10.000000}{12.000000}\selectfont\catcode`\^=\active\def^{\ifmmode\sp\else\^{}\fi}\catcode`\%=\active\def
\end{pgfscope}%
\begin{pgfscope}%
\pgfsetbuttcap%
\pgfsetroundjoin%
\definecolor{currentfill}{rgb}{0.000000,0.000000,0.000000}%
\pgfsetfillcolor{currentfill}%
\pgfsetlinewidth{0.803000pt}%
\definecolor{currentstroke}{rgb}{0.000000,0.000000,0.000000}%
\pgfsetstrokecolor{currentstroke}%
\pgfsetdash{}{0pt}%
\pgfsys@defobject{currentmarker}{\pgfqpoint{-0.048611in}{0.000000in}}{\pgfqpoint{-0.000000in}{0.000000in}}{%
\pgfpathmoveto{\pgfqpoint{-0.000000in}{0.000000in}}%
\pgfpathlineto{\pgfqpoint{-0.048611in}{0.000000in}}%
\pgfusepath{stroke,fill}%
}%
\begin{pgfscope}%
\pgfsys@transformshift{0.437500in}{1.463232in}%
\pgfsys@useobject{currentmarker}{}%
\end{pgfscope}%
\end{pgfscope}%
\begin{pgfscope}%
\definecolor{textcolor}{rgb}{0.000000,0.000000,0.000000}%
\pgfsetstrokecolor{textcolor}%
\pgfsetfillcolor{textcolor}%
\pgftext[x=0.108217in, y=1.410470in, left, base]{\color{textcolor}{\sffamily\fontsize{10.000000}{12.000000}\selectfont\catcode`\^=\active\def^{\ifmmode\sp\else\^{}\fi}\catcode`\%=\active\def
\end{pgfscope}%
\begin{pgfscope}%
\pgfpathrectangle{\pgfqpoint{0.437500in}{0.231000in}}{\pgfqpoint{2.712500in}{1.617000in}}%
\pgfusepath{clip}%
\pgfsetrectcap%
\pgfsetroundjoin%
\pgfsetlinewidth{0.803000pt}%
\definecolor{currentstroke}{rgb}{0.721569,0.525490,0.043137}%
\pgfsetstrokecolor{currentstroke}%
\pgfsetdash{}{0pt}%
\pgfpathmoveto{\pgfqpoint{0.560795in}{1.130703in}}%
\pgfpathlineto{\pgfqpoint{0.585455in}{1.185953in}}%
\pgfpathlineto{\pgfqpoint{0.610114in}{1.231981in}}%
\pgfpathlineto{\pgfqpoint{0.634773in}{1.272557in}}%
\pgfpathlineto{\pgfqpoint{0.659432in}{1.309226in}}%
\pgfpathlineto{\pgfqpoint{0.684091in}{1.342832in}}%
\pgfpathlineto{\pgfqpoint{0.708750in}{1.373910in}}%
\pgfpathlineto{\pgfqpoint{0.733409in}{1.402827in}}%
\pgfpathlineto{\pgfqpoint{0.758068in}{1.429852in}}%
\pgfpathlineto{\pgfqpoint{0.782727in}{1.455191in}}%
\pgfpathlineto{\pgfqpoint{0.807386in}{1.479005in}}%
\pgfpathlineto{\pgfqpoint{0.832045in}{1.501424in}}%
\pgfpathlineto{\pgfqpoint{0.856705in}{1.522555in}}%
\pgfpathlineto{\pgfqpoint{0.881364in}{1.542487in}}%
\pgfpathlineto{\pgfqpoint{0.906023in}{1.561298in}}%
\pgfpathlineto{\pgfqpoint{0.930682in}{1.579053in}}%
\pgfpathlineto{\pgfqpoint{0.955341in}{1.595808in}}%
\pgfpathlineto{\pgfqpoint{0.980000in}{1.611612in}}%
\pgfpathlineto{\pgfqpoint{1.004659in}{1.626509in}}%
\pgfpathlineto{\pgfqpoint{1.029318in}{1.640539in}}%
\pgfpathlineto{\pgfqpoint{1.053977in}{1.653735in}}%
\pgfpathlineto{\pgfqpoint{1.078636in}{1.666128in}}%
\pgfpathlineto{\pgfqpoint{1.103295in}{1.677745in}}%
\pgfpathlineto{\pgfqpoint{1.127955in}{1.688613in}}%
\pgfpathlineto{\pgfqpoint{1.152614in}{1.698753in}}%
\pgfpathlineto{\pgfqpoint{1.177273in}{1.708187in}}%
\pgfpathlineto{\pgfqpoint{1.201932in}{1.716932in}}%
\pgfpathlineto{\pgfqpoint{1.226591in}{1.725006in}}%
\pgfpathlineto{\pgfqpoint{1.251250in}{1.732425in}}%
\pgfpathlineto{\pgfqpoint{1.275909in}{1.739203in}}%
\pgfpathlineto{\pgfqpoint{1.300568in}{1.745352in}}%
\pgfpathlineto{\pgfqpoint{1.325227in}{1.750886in}}%
\pgfpathlineto{\pgfqpoint{1.349886in}{1.755814in}}%
\pgfpathlineto{\pgfqpoint{1.374545in}{1.760147in}}%
\pgfpathlineto{\pgfqpoint{1.399205in}{1.763895in}}%
\pgfpathlineto{\pgfqpoint{1.423864in}{1.767065in}}%
\pgfpathlineto{\pgfqpoint{1.448523in}{1.769665in}}%
\pgfpathlineto{\pgfqpoint{1.473182in}{1.771703in}}%
\pgfpathlineto{\pgfqpoint{1.497841in}{1.773184in}}%
\pgfpathlineto{\pgfqpoint{1.522500in}{1.774115in}}%
\pgfpathlineto{\pgfqpoint{1.547159in}{1.774500in}}%
\pgfpathlineto{\pgfqpoint{1.571818in}{1.774345in}}%
\pgfpathlineto{\pgfqpoint{1.596477in}{1.773653in}}%
\pgfpathlineto{\pgfqpoint{1.621136in}{1.772428in}}%
\pgfpathlineto{\pgfqpoint{1.645795in}{1.770674in}}%
\pgfpathlineto{\pgfqpoint{1.670455in}{1.768393in}}%
\pgfpathlineto{\pgfqpoint{1.695114in}{1.765587in}}%
\pgfpathlineto{\pgfqpoint{1.719773in}{1.762259in}}%
\pgfpathlineto{\pgfqpoint{1.744432in}{1.758409in}}%
\pgfpathlineto{\pgfqpoint{1.769091in}{1.754039in}}%
\pgfpathlineto{\pgfqpoint{1.793750in}{1.749150in}}%
\pgfpathlineto{\pgfqpoint{1.818409in}{1.743741in}}%
\pgfpathlineto{\pgfqpoint{1.843068in}{1.737812in}}%
\pgfpathlineto{\pgfqpoint{1.867727in}{1.731363in}}%
\pgfpathlineto{\pgfqpoint{1.892386in}{1.724393in}}%
\pgfpathlineto{\pgfqpoint{1.917045in}{1.716900in}}%
\pgfpathlineto{\pgfqpoint{1.941705in}{1.708882in}}%
\pgfpathlineto{\pgfqpoint{1.966364in}{1.700338in}}%
\pgfpathlineto{\pgfqpoint{1.991023in}{1.691264in}}%
\pgfpathlineto{\pgfqpoint{2.015682in}{1.681657in}}%
\pgfpathlineto{\pgfqpoint{2.040341in}{1.671513in}}%
\pgfpathlineto{\pgfqpoint{2.065000in}{1.660829in}}%
\pgfpathlineto{\pgfqpoint{2.089659in}{1.649600in}}%
\pgfpathlineto{\pgfqpoint{2.114318in}{1.637820in}}%
\pgfpathlineto{\pgfqpoint{2.138977in}{1.625483in}}%
\pgfpathlineto{\pgfqpoint{2.163636in}{1.612585in}}%
\pgfpathlineto{\pgfqpoint{2.188295in}{1.599116in}}%
\pgfpathlineto{\pgfqpoint{2.212955in}{1.585070in}}%
\pgfpathlineto{\pgfqpoint{2.237614in}{1.570438in}}%
\pgfpathlineto{\pgfqpoint{2.262273in}{1.555211in}}%
\pgfpathlineto{\pgfqpoint{2.286932in}{1.539379in}}%
\pgfpathlineto{\pgfqpoint{2.311591in}{1.522930in}}%
\pgfpathlineto{\pgfqpoint{2.336250in}{1.505854in}}%
\pgfpathlineto{\pgfqpoint{2.360909in}{1.488136in}}%
\pgfpathlineto{\pgfqpoint{2.385568in}{1.469764in}}%
\pgfpathlineto{\pgfqpoint{2.410227in}{1.450720in}}%
\pgfpathlineto{\pgfqpoint{2.434886in}{1.430987in}}%
\pgfpathlineto{\pgfqpoint{2.459545in}{1.410549in}}%
\pgfpathlineto{\pgfqpoint{2.484205in}{1.389382in}}%
\pgfpathlineto{\pgfqpoint{2.508864in}{1.367466in}}%
\pgfpathlineto{\pgfqpoint{2.533523in}{1.344774in}}%
\pgfpathlineto{\pgfqpoint{2.558182in}{1.321280in}}%
\pgfpathlineto{\pgfqpoint{2.582841in}{1.296952in}}%
\pgfpathlineto{\pgfqpoint{2.607500in}{1.271755in}}%
\pgfpathlineto{\pgfqpoint{2.632159in}{1.245652in}}%
\pgfpathlineto{\pgfqpoint{2.656818in}{1.218599in}}%
\pgfpathlineto{\pgfqpoint{2.681477in}{1.190546in}}%
\pgfpathlineto{\pgfqpoint{2.706136in}{1.161436in}}%
\pgfpathlineto{\pgfqpoint{2.730795in}{1.131205in}}%
\pgfpathlineto{\pgfqpoint{2.755455in}{1.099775in}}%
\pgfpathlineto{\pgfqpoint{2.780114in}{1.067058in}}%
\pgfpathlineto{\pgfqpoint{2.804773in}{1.032945in}}%
\pgfpathlineto{\pgfqpoint{2.829432in}{0.997308in}}%
\pgfpathlineto{\pgfqpoint{2.854091in}{0.959984in}}%
\pgfpathlineto{\pgfqpoint{2.878750in}{0.920768in}}%
\pgfpathlineto{\pgfqpoint{2.903409in}{0.879392in}}%
\pgfpathlineto{\pgfqpoint{2.928068in}{0.835487in}}%
\pgfpathlineto{\pgfqpoint{2.952727in}{0.788519in}}%
\pgfpathlineto{\pgfqpoint{2.977386in}{0.737644in}}%
\pgfpathlineto{\pgfqpoint{3.002045in}{0.681318in}}%
\pgfpathlineto{\pgfqpoint{3.026705in}{0.615768in}}%
\pgfusepath{stroke}%
\end{pgfscope}%
\begin{pgfscope}%
\pgfpathrectangle{\pgfqpoint{0.437500in}{0.231000in}}{\pgfqpoint{2.712500in}{1.617000in}}%
\pgfusepath{clip}%
\pgfsetrectcap%
\pgfsetroundjoin%
\pgfsetlinewidth{0.803000pt}%
\definecolor{currentstroke}{rgb}{0.000000,0.545098,0.545098}%
\pgfsetstrokecolor{currentstroke}%
\pgfsetdash{}{0pt}%
\pgfpathmoveto{\pgfqpoint{0.560795in}{1.130703in}}%
\pgfpathlineto{\pgfqpoint{0.585455in}{1.185953in}}%
\pgfpathlineto{\pgfqpoint{0.610114in}{1.231981in}}%
\pgfpathlineto{\pgfqpoint{0.634773in}{1.272557in}}%
\pgfpathlineto{\pgfqpoint{0.659432in}{1.309226in}}%
\pgfpathlineto{\pgfqpoint{0.684091in}{1.342832in}}%
\pgfpathlineto{\pgfqpoint{0.708750in}{1.373910in}}%
\pgfpathlineto{\pgfqpoint{0.733409in}{1.402827in}}%
\pgfpathlineto{\pgfqpoint{0.758068in}{1.429852in}}%
\pgfpathlineto{\pgfqpoint{0.782727in}{1.455191in}}%
\pgfpathlineto{\pgfqpoint{0.807386in}{1.463232in}}%
\pgfpathlineto{\pgfqpoint{0.832045in}{1.463232in}}%
\pgfpathlineto{\pgfqpoint{0.856705in}{1.463232in}}%
\pgfpathlineto{\pgfqpoint{0.881364in}{1.463232in}}%
\pgfpathlineto{\pgfqpoint{0.906023in}{1.463232in}}%
\pgfpathlineto{\pgfqpoint{0.930682in}{1.463232in}}%
\pgfpathlineto{\pgfqpoint{0.955341in}{1.463232in}}%
\pgfpathlineto{\pgfqpoint{0.980000in}{1.463232in}}%
\pgfpathlineto{\pgfqpoint{1.004659in}{1.463232in}}%
\pgfpathlineto{\pgfqpoint{1.029318in}{1.463232in}}%
\pgfpathlineto{\pgfqpoint{1.053977in}{1.463232in}}%
\pgfpathlineto{\pgfqpoint{1.078636in}{1.463232in}}%
\pgfpathlineto{\pgfqpoint{1.103295in}{1.463232in}}%
\pgfpathlineto{\pgfqpoint{1.127955in}{1.463232in}}%
\pgfpathlineto{\pgfqpoint{1.152614in}{1.463232in}}%
\pgfpathlineto{\pgfqpoint{1.177273in}{1.463232in}}%
\pgfpathlineto{\pgfqpoint{1.201932in}{1.463232in}}%
\pgfpathlineto{\pgfqpoint{1.226591in}{1.463232in}}%
\pgfpathlineto{\pgfqpoint{1.251250in}{1.463232in}}%
\pgfpathlineto{\pgfqpoint{1.275909in}{1.463232in}}%
\pgfpathlineto{\pgfqpoint{1.300568in}{1.463232in}}%
\pgfpathlineto{\pgfqpoint{1.325227in}{1.463232in}}%
\pgfpathlineto{\pgfqpoint{1.349886in}{1.463232in}}%
\pgfpathlineto{\pgfqpoint{1.374545in}{1.463232in}}%
\pgfpathlineto{\pgfqpoint{1.399205in}{1.463232in}}%
\pgfpathlineto{\pgfqpoint{1.423864in}{1.463232in}}%
\pgfpathlineto{\pgfqpoint{1.448523in}{1.463232in}}%
\pgfpathlineto{\pgfqpoint{1.473182in}{1.463232in}}%
\pgfpathlineto{\pgfqpoint{1.497841in}{1.463232in}}%
\pgfpathlineto{\pgfqpoint{1.522500in}{1.463232in}}%
\pgfpathlineto{\pgfqpoint{1.547159in}{1.463232in}}%
\pgfpathlineto{\pgfqpoint{1.571818in}{1.463076in}}%
\pgfpathlineto{\pgfqpoint{1.596477in}{1.462385in}}%
\pgfpathlineto{\pgfqpoint{1.621136in}{1.461160in}}%
\pgfpathlineto{\pgfqpoint{1.645795in}{1.459406in}}%
\pgfpathlineto{\pgfqpoint{1.670455in}{1.457125in}}%
\pgfpathlineto{\pgfqpoint{1.695114in}{1.454319in}}%
\pgfpathlineto{\pgfqpoint{1.719773in}{1.450991in}}%
\pgfpathlineto{\pgfqpoint{1.744432in}{1.447141in}}%
\pgfpathlineto{\pgfqpoint{1.769091in}{1.442771in}}%
\pgfpathlineto{\pgfqpoint{1.793750in}{1.437881in}}%
\pgfpathlineto{\pgfqpoint{1.818409in}{1.432472in}}%
\pgfpathlineto{\pgfqpoint{1.843068in}{1.426544in}}%
\pgfpathlineto{\pgfqpoint{1.867727in}{1.420095in}}%
\pgfpathlineto{\pgfqpoint{1.892386in}{1.413124in}}%
\pgfpathlineto{\pgfqpoint{1.917045in}{1.405631in}}%
\pgfpathlineto{\pgfqpoint{1.941705in}{1.397614in}}%
\pgfpathlineto{\pgfqpoint{1.966364in}{1.389069in}}%
\pgfpathlineto{\pgfqpoint{1.991023in}{1.379995in}}%
\pgfpathlineto{\pgfqpoint{2.015682in}{1.370388in}}%
\pgfpathlineto{\pgfqpoint{2.040341in}{1.360245in}}%
\pgfpathlineto{\pgfqpoint{2.065000in}{1.349561in}}%
\pgfpathlineto{\pgfqpoint{2.089659in}{1.338331in}}%
\pgfpathlineto{\pgfqpoint{2.114318in}{1.326551in}}%
\pgfpathlineto{\pgfqpoint{2.138977in}{1.314215in}}%
\pgfpathlineto{\pgfqpoint{2.163636in}{1.301316in}}%
\pgfpathlineto{\pgfqpoint{2.188295in}{1.287847in}}%
\pgfpathlineto{\pgfqpoint{2.212955in}{1.273801in}}%
\pgfpathlineto{\pgfqpoint{2.237614in}{1.259169in}}%
\pgfpathlineto{\pgfqpoint{2.262273in}{1.243942in}}%
\pgfpathlineto{\pgfqpoint{2.286932in}{1.228110in}}%
\pgfpathlineto{\pgfqpoint{2.311591in}{1.211662in}}%
\pgfpathlineto{\pgfqpoint{2.336250in}{1.194586in}}%
\pgfpathlineto{\pgfqpoint{2.360909in}{1.176868in}}%
\pgfpathlineto{\pgfqpoint{2.385568in}{1.158495in}}%
\pgfpathlineto{\pgfqpoint{2.410227in}{1.139451in}}%
\pgfpathlineto{\pgfqpoint{2.434886in}{1.119719in}}%
\pgfpathlineto{\pgfqpoint{2.459545in}{1.099280in}}%
\pgfpathlineto{\pgfqpoint{2.484205in}{1.078114in}}%
\pgfpathlineto{\pgfqpoint{2.508864in}{1.056197in}}%
\pgfpathlineto{\pgfqpoint{2.533523in}{1.033506in}}%
\pgfpathlineto{\pgfqpoint{2.558182in}{1.010011in}}%
\pgfpathlineto{\pgfqpoint{2.582841in}{0.985683in}}%
\pgfpathlineto{\pgfqpoint{2.607500in}{0.960487in}}%
\pgfpathlineto{\pgfqpoint{2.632159in}{0.934384in}}%
\pgfpathlineto{\pgfqpoint{2.656818in}{0.907331in}}%
\pgfpathlineto{\pgfqpoint{2.681477in}{0.879277in}}%
\pgfpathlineto{\pgfqpoint{2.706136in}{0.850168in}}%
\pgfpathlineto{\pgfqpoint{2.730795in}{0.819936in}}%
\pgfpathlineto{\pgfqpoint{2.755455in}{0.788507in}}%
\pgfpathlineto{\pgfqpoint{2.780114in}{0.755789in}}%
\pgfpathlineto{\pgfqpoint{2.804773in}{0.721677in}}%
\pgfpathlineto{\pgfqpoint{2.829432in}{0.686039in}}%
\pgfpathlineto{\pgfqpoint{2.854091in}{0.648715in}}%
\pgfpathlineto{\pgfqpoint{2.878750in}{0.609500in}}%
\pgfpathlineto{\pgfqpoint{2.903409in}{0.568123in}}%
\pgfpathlineto{\pgfqpoint{2.928068in}{0.524218in}}%
\pgfpathlineto{\pgfqpoint{2.952727in}{0.477251in}}%
\pgfpathlineto{\pgfqpoint{2.977386in}{0.426375in}}%
\pgfpathlineto{\pgfqpoint{3.002045in}{0.370049in}}%
\pgfpathlineto{\pgfqpoint{3.026705in}{0.304500in}}%
\pgfusepath{stroke}%
\end{pgfscope}%
\begin{pgfscope}%
\pgfpathrectangle{\pgfqpoint{0.437500in}{0.231000in}}{\pgfqpoint{2.712500in}{1.617000in}}%
\pgfusepath{clip}%
\pgfsetbuttcap%
\pgfsetroundjoin%
\pgfsetlinewidth{0.602250pt}%
\definecolor{currentstroke}{rgb}{0.000000,0.000000,0.000000}%
\pgfsetstrokecolor{currentstroke}%
\pgfsetdash{{2.220000pt}{0.960000pt}}{0.000000pt}%
\pgfpathmoveto{\pgfqpoint{0.437500in}{1.463232in}}%
\pgfpathlineto{\pgfqpoint{3.150000in}{1.463232in}}%
\pgfusepath{stroke}%
\end{pgfscope}%
\begin{pgfscope}%
\pgfsetrectcap%
\pgfsetmiterjoin%
\pgfsetlinewidth{0.803000pt}%
\definecolor{currentstroke}{rgb}{0.000000,0.000000,0.000000}%
\pgfsetstrokecolor{currentstroke}%
\pgfsetdash{}{0pt}%
\pgfpathmoveto{\pgfqpoint{0.437500in}{0.231000in}}%
\pgfpathlineto{\pgfqpoint{0.437500in}{1.848000in}}%
\pgfusepath{stroke}%
\end{pgfscope}%
\begin{pgfscope}%
\pgfsetrectcap%
\pgfsetmiterjoin%
\pgfsetlinewidth{0.803000pt}%
\definecolor{currentstroke}{rgb}{0.000000,0.000000,0.000000}%
\pgfsetstrokecolor{currentstroke}%
\pgfsetdash{}{0pt}%
\pgfpathmoveto{\pgfqpoint{3.150000in}{0.231000in}}%
\pgfpathlineto{\pgfqpoint{3.150000in}{1.848000in}}%
\pgfusepath{stroke}%
\end{pgfscope}%
\begin{pgfscope}%
\pgfsetrectcap%
\pgfsetmiterjoin%
\pgfsetlinewidth{0.803000pt}%
\definecolor{currentstroke}{rgb}{0.000000,0.000000,0.000000}%
\pgfsetstrokecolor{currentstroke}%
\pgfsetdash{}{0pt}%
\pgfpathmoveto{\pgfqpoint{0.437500in}{0.231000in}}%
\pgfpathlineto{\pgfqpoint{3.150000in}{0.231000in}}%
\pgfusepath{stroke}%
\end{pgfscope}%
\begin{pgfscope}%
\pgfsetrectcap%
\pgfsetmiterjoin%
\pgfsetlinewidth{0.803000pt}%
\definecolor{currentstroke}{rgb}{0.000000,0.000000,0.000000}%
\pgfsetstrokecolor{currentstroke}%
\pgfsetdash{}{0pt}%
\pgfpathmoveto{\pgfqpoint{0.437500in}{1.848000in}}%
\pgfpathlineto{\pgfqpoint{3.150000in}{1.848000in}}%
\pgfusepath{stroke}%
\end{pgfscope}%
\begin{pgfscope}%
\pgfsetbuttcap%
\pgfsetmiterjoin%
\definecolor{currentfill}{rgb}{1.000000,1.000000,1.000000}%
\pgfsetfillcolor{currentfill}%
\pgfsetfillopacity{0.800000}%
\pgfsetlinewidth{1.003750pt}%
\definecolor{currentstroke}{rgb}{0.800000,0.800000,0.800000}%
\pgfsetstrokecolor{currentstroke}%
\pgfsetstrokeopacity{0.800000}%
\pgfsetdash{}{0pt}%
\pgfpathmoveto{\pgfqpoint{0.534722in}{0.300444in}}%
\pgfpathlineto{\pgfqpoint{1.668843in}{0.300444in}}%
\pgfpathquadraticcurveto{\pgfqpoint{1.696621in}{0.300444in}}{\pgfqpoint{1.696621in}{0.328222in}}%
\pgfpathlineto{\pgfqpoint{1.696621in}{1.033843in}}%
\pgfpathquadraticcurveto{\pgfqpoint{1.696621in}{1.061621in}}{\pgfqpoint{1.668843in}{1.061621in}}%
\pgfpathlineto{\pgfqpoint{0.534722in}{1.061621in}}%
\pgfpathquadraticcurveto{\pgfqpoint{0.506944in}{1.061621in}}{\pgfqpoint{0.506944in}{1.033843in}}%
\pgfpathlineto{\pgfqpoint{0.506944in}{0.328222in}}%
\pgfpathquadraticcurveto{\pgfqpoint{0.506944in}{0.300444in}}{\pgfqpoint{0.534722in}{0.300444in}}%
\pgfpathlineto{\pgfqpoint{0.534722in}{0.300444in}}%
\pgfpathclose%
\pgfusepath{stroke,fill}%
\end{pgfscope}%
\begin{pgfscope}%
\pgfsetrectcap%
\pgfsetroundjoin%
\pgfsetlinewidth{0.803000pt}%
\definecolor{currentstroke}{rgb}{0.721569,0.525490,0.043137}%
\pgfsetstrokecolor{currentstroke}%
\pgfsetdash{}{0pt}%
\pgfpathmoveto{\pgfqpoint{0.562500in}{0.871211in}}%
\pgfpathlineto{\pgfqpoint{0.645833in}{0.871211in}}%
\pgfpathlineto{\pgfqpoint{0.729167in}{0.871211in}}%
\pgfusepath{stroke}%
\end{pgfscope}%
\begin{pgfscope}%
\definecolor{textcolor}{rgb}{0.000000,0.000000,0.000000}%
\pgfsetstrokecolor{textcolor}%
\pgfsetfillcolor{textcolor}%
\pgftext[x=0.840278in,y=0.822600in,left,base]{\color{textcolor}{\sffamily\fontsize{10.000000}{12.000000}\selectfont\catcode`\^=\active\def^{\ifmmode\sp\else\^{}\fi}\catcode`\%=\active\def
\end{pgfscope}%
\begin{pgfscope}%
\pgfsetrectcap%
\pgfsetroundjoin%
\pgfsetlinewidth{0.803000pt}%
\definecolor{currentstroke}{rgb}{0.000000,0.545098,0.545098}%
\pgfsetstrokecolor{currentstroke}%
\pgfsetdash{}{0pt}%
\pgfpathmoveto{\pgfqpoint{0.562500in}{0.511456in}}%
\pgfpathlineto{\pgfqpoint{0.645833in}{0.511456in}}%
\pgfpathlineto{\pgfqpoint{0.729167in}{0.511456in}}%
\pgfusepath{stroke}%
\end{pgfscope}%
\begin{pgfscope}%
\definecolor{textcolor}{rgb}{0.000000,0.000000,0.000000}%
\pgfsetstrokecolor{textcolor}%
\pgfsetfillcolor{textcolor}%
\pgftext[x=0.840278in,y=0.462845in,left,base]{\color{textcolor}{\sffamily\fontsize{10.000000}{12.000000}\selectfont\catcode`\^=\active\def^{\ifmmode\sp\else\^{}\fi}\catcode`\%=\active\def
\end{pgfscope}%
\end{pgfpicture}%
\makeatother%
\endgroup%

%% file: pictures/ehatConstruction.pgf
\begingroup%
\makeatletter%
\begin{pgfpicture}%
\pgfpathrectangle{\pgfpointorigin}{\pgfqpoint{2.500000in}{2.100000in}}%
\pgfusepath{use as bounding box, clip}%
\begin{pgfscope}%
\pgfsetbuttcap%
\pgfsetmiterjoin%
\definecolor{currentfill}{rgb}{1.000000,1.000000,1.000000}%
\pgfsetfillcolor{currentfill}%
\pgfsetlinewidth{0.000000pt}%
\definecolor{currentstroke}{rgb}{1.000000,1.000000,1.000000}%
\pgfsetstrokecolor{currentstroke}%
\pgfsetdash{}{0pt}%
\pgfpathmoveto{\pgfqpoint{0.000000in}{0.000000in}}%
\pgfpathlineto{\pgfqpoint{2.500000in}{0.000000in}}%
\pgfpathlineto{\pgfqpoint{2.500000in}{2.100000in}}%
\pgfpathlineto{\pgfqpoint{0.000000in}{2.100000in}}%
\pgfpathlineto{\pgfqpoint{0.000000in}{0.000000in}}%
\pgfpathclose%
\pgfusepath{fill}%
\end{pgfscope}%
\begin{pgfscope}%
\pgfsetbuttcap%
\pgfsetmiterjoin%
\definecolor{currentfill}{rgb}{1.000000,1.000000,1.000000}%
\pgfsetfillcolor{currentfill}%
\pgfsetlinewidth{0.000000pt}%
\definecolor{currentstroke}{rgb}{0.000000,0.000000,0.000000}%
\pgfsetstrokecolor{currentstroke}%
\pgfsetstrokeopacity{0.000000}%
\pgfsetdash{}{0pt}%
\pgfpathmoveto{\pgfqpoint{0.312500in}{0.231000in}}%
\pgfpathlineto{\pgfqpoint{2.250000in}{0.231000in}}%
\pgfpathlineto{\pgfqpoint{2.250000in}{1.848000in}}%
\pgfpathlineto{\pgfqpoint{0.312500in}{1.848000in}}%
\pgfpathlineto{\pgfqpoint{0.312500in}{0.231000in}}%
\pgfpathclose%
\pgfusepath{fill}%
\end{pgfscope}%
\begin{pgfscope}%
\pgfsetbuttcap%
\pgfsetroundjoin%
\definecolor{currentfill}{rgb}{0.000000,0.000000,0.000000}%
\pgfsetfillcolor{currentfill}%
\pgfsetlinewidth{0.803000pt}%
\definecolor{currentstroke}{rgb}{0.000000,0.000000,0.000000}%
\pgfsetstrokecolor{currentstroke}%
\pgfsetdash{}{0pt}%
\pgfsys@defobject{currentmarker}{\pgfqpoint{0.000000in}{-0.048611in}}{\pgfqpoint{0.000000in}{0.000000in}}{%
\pgfpathmoveto{\pgfqpoint{0.000000in}{0.000000in}}%
\pgfpathlineto{\pgfqpoint{0.000000in}{-0.048611in}}%
\pgfusepath{stroke,fill}%
}%
\begin{pgfscope}%
\pgfsys@transformshift{0.400568in}{0.231000in}%
\pgfsys@useobject{currentmarker}{}%
\end{pgfscope}%
\end{pgfscope}%
\begin{pgfscope}%
\definecolor{textcolor}{rgb}{0.000000,0.000000,0.000000}%
\pgfsetstrokecolor{textcolor}%
\pgfsetfillcolor{textcolor}%
\pgftext[x=0.400568in,y=0.133778in,,top]{\color{textcolor}{\sffamily\fontsize{10.000000}{12.000000}\selectfont\catcode`\^=\active\def^{\ifmmode\sp\else\^{}\fi}\catcode`\%=\active\def
\end{pgfscope}%
\begin{pgfscope}%
\pgfsetbuttcap%
\pgfsetroundjoin%
\definecolor{currentfill}{rgb}{0.000000,0.000000,0.000000}%
\pgfsetfillcolor{currentfill}%
\pgfsetlinewidth{0.803000pt}%
\definecolor{currentstroke}{rgb}{0.000000,0.000000,0.000000}%
\pgfsetstrokecolor{currentstroke}%
\pgfsetdash{}{0pt}%
\pgfsys@defobject{currentmarker}{\pgfqpoint{0.000000in}{-0.048611in}}{\pgfqpoint{0.000000in}{0.000000in}}{%
\pgfpathmoveto{\pgfqpoint{0.000000in}{0.000000in}}%
\pgfpathlineto{\pgfqpoint{0.000000in}{-0.048611in}}%
\pgfusepath{stroke,fill}%
}%
\begin{pgfscope}%
\pgfsys@transformshift{2.161932in}{0.231000in}%
\pgfsys@useobject{currentmarker}{}%
\end{pgfscope}%
\end{pgfscope}%
\begin{pgfscope}%
\definecolor{textcolor}{rgb}{0.000000,0.000000,0.000000}%
\pgfsetstrokecolor{textcolor}%
\pgfsetfillcolor{textcolor}%
\pgftext[x=2.161932in,y=0.133778in,,top]{\color{textcolor}{\sffamily\fontsize{10.000000}{12.000000}\selectfont\catcode`\^=\active\def^{\ifmmode\sp\else\^{}\fi}\catcode`\%=\active\def
\end{pgfscope}%
\begin{pgfscope}%
\pgfsetbuttcap%
\pgfsetroundjoin%
\definecolor{currentfill}{rgb}{0.000000,0.000000,0.000000}%
\pgfsetfillcolor{currentfill}%
\pgfsetlinewidth{0.803000pt}%
\definecolor{currentstroke}{rgb}{0.000000,0.000000,0.000000}%
\pgfsetstrokecolor{currentstroke}%
\pgfsetdash{}{0pt}%
\pgfsys@defobject{currentmarker}{\pgfqpoint{-0.048611in}{0.000000in}}{\pgfqpoint{-0.000000in}{0.000000in}}{%
\pgfpathmoveto{\pgfqpoint{-0.000000in}{0.000000in}}%
\pgfpathlineto{\pgfqpoint{-0.048611in}{0.000000in}}%
\pgfusepath{stroke,fill}%
}%
\begin{pgfscope}%
\pgfsys@transformshift{0.312500in}{0.304500in}%
\pgfsys@useobject{currentmarker}{}%
\end{pgfscope}%
\end{pgfscope}%
\begin{pgfscope}%
\definecolor{textcolor}{rgb}{0.000000,0.000000,0.000000}%
\pgfsetstrokecolor{textcolor}%
\pgfsetfillcolor{textcolor}%
\pgftext[x=0.145833in, y=0.251738in, left, base]{\color{textcolor}{\sffamily\fontsize{10.000000}{12.000000}\selectfont\catcode`\^=\active\def^{\ifmmode\sp\else\^{}\fi}\catcode`\%=\active\def
\end{pgfscope}%
\begin{pgfscope}%
\pgfsetbuttcap%
\pgfsetroundjoin%
\definecolor{currentfill}{rgb}{0.000000,0.000000,0.000000}%
\pgfsetfillcolor{currentfill}%
\pgfsetlinewidth{0.803000pt}%
\definecolor{currentstroke}{rgb}{0.000000,0.000000,0.000000}%
\pgfsetstrokecolor{currentstroke}%
\pgfsetdash{}{0pt}%
\pgfsys@defobject{currentmarker}{\pgfqpoint{-0.048611in}{0.000000in}}{\pgfqpoint{-0.000000in}{0.000000in}}{%
\pgfpathmoveto{\pgfqpoint{-0.000000in}{0.000000in}}%
\pgfpathlineto{\pgfqpoint{-0.048611in}{0.000000in}}%
\pgfusepath{stroke,fill}%
}%
\begin{pgfscope}%
\pgfsys@transformshift{0.312500in}{1.774500in}%
\pgfsys@useobject{currentmarker}{}%
\end{pgfscope}%
\end{pgfscope}%
\begin{pgfscope}%
\definecolor{textcolor}{rgb}{0.000000,0.000000,0.000000}%
\pgfsetstrokecolor{textcolor}%
\pgfsetfillcolor{textcolor}%
\pgftext[x=0.145833in, y=1.721738in, left, base]{\color{textcolor}{\sffamily\fontsize{10.000000}{12.000000}\selectfont\catcode`\^=\active\def^{\ifmmode\sp\else\^{}\fi}\catcode`\%=\active\def
\end{pgfscope}%
\begin{pgfscope}%
\pgfpathrectangle{\pgfqpoint{0.312500in}{0.231000in}}{\pgfqpoint{1.937500in}{1.617000in}}%
\pgfusepath{clip}%
\pgfsetrectcap%
\pgfsetroundjoin%
\pgfsetlinewidth{0.803000pt}%
\definecolor{currentstroke}{rgb}{0.721569,0.525490,0.043137}%
\pgfsetstrokecolor{currentstroke}%
\pgfsetdash{}{0pt}%
\pgfpathmoveto{\pgfqpoint{0.400568in}{0.304500in}}%
\pgfpathlineto{\pgfqpoint{0.418182in}{0.375778in}}%
\pgfpathlineto{\pgfqpoint{0.435795in}{0.415582in}}%
\pgfpathlineto{\pgfqpoint{0.453409in}{0.444004in}}%
\pgfpathlineto{\pgfqpoint{0.471023in}{0.467059in}}%
\pgfpathlineto{\pgfqpoint{0.488636in}{0.487088in}}%
\pgfpathlineto{\pgfqpoint{0.506250in}{0.505177in}}%
\pgfpathlineto{\pgfqpoint{0.523864in}{0.521905in}}%
\pgfpathlineto{\pgfqpoint{0.541477in}{0.537615in}}%
\pgfpathlineto{\pgfqpoint{0.559091in}{0.552527in}}%
\pgfpathlineto{\pgfqpoint{0.576705in}{0.566793in}}%
\pgfpathlineto{\pgfqpoint{0.594318in}{0.580522in}}%
\pgfpathlineto{\pgfqpoint{0.611932in}{0.593797in}}%
\pgfpathlineto{\pgfqpoint{0.629545in}{0.606681in}}%
\pgfpathlineto{\pgfqpoint{0.647159in}{0.619225in}}%
\pgfpathlineto{\pgfqpoint{0.664773in}{0.631470in}}%
\pgfpathlineto{\pgfqpoint{0.682386in}{0.643452in}}%
\pgfpathlineto{\pgfqpoint{0.700000in}{0.655197in}}%
\pgfpathlineto{\pgfqpoint{0.717614in}{0.666731in}}%
\pgfpathlineto{\pgfqpoint{0.735227in}{0.678075in}}%
\pgfpathlineto{\pgfqpoint{0.752841in}{0.689247in}}%
\pgfpathlineto{\pgfqpoint{0.770455in}{0.700262in}}%
\pgfpathlineto{\pgfqpoint{0.788068in}{0.711135in}}%
\pgfpathlineto{\pgfqpoint{0.805682in}{0.721878in}}%
\pgfpathlineto{\pgfqpoint{0.823295in}{0.732503in}}%
\pgfpathlineto{\pgfqpoint{0.840909in}{0.743019in}}%
\pgfpathlineto{\pgfqpoint{0.858523in}{0.753435in}}%
\pgfpathlineto{\pgfqpoint{0.876136in}{0.763761in}}%
\pgfpathlineto{\pgfqpoint{0.893750in}{0.774003in}}%
\pgfpathlineto{\pgfqpoint{0.911364in}{0.784168in}}%
\pgfpathlineto{\pgfqpoint{0.928977in}{0.794264in}}%
\pgfpathlineto{\pgfqpoint{0.946591in}{0.804295in}}%
\pgfpathlineto{\pgfqpoint{0.964205in}{0.814268in}}%
\pgfpathlineto{\pgfqpoint{0.981818in}{0.824188in}}%
\pgfpathlineto{\pgfqpoint{0.999432in}{0.834060in}}%
\pgfpathlineto{\pgfqpoint{1.017045in}{0.843888in}}%
\pgfpathlineto{\pgfqpoint{1.034659in}{0.853677in}}%
\pgfpathlineto{\pgfqpoint{1.052273in}{0.863431in}}%
\pgfpathlineto{\pgfqpoint{1.069886in}{0.873154in}}%
\pgfpathlineto{\pgfqpoint{1.087500in}{0.882851in}}%
\pgfpathlineto{\pgfqpoint{1.105114in}{0.892524in}}%
\pgfpathlineto{\pgfqpoint{1.122727in}{0.902177in}}%
\pgfpathlineto{\pgfqpoint{1.140341in}{0.911814in}}%
\pgfpathlineto{\pgfqpoint{1.157955in}{0.921438in}}%
\pgfpathlineto{\pgfqpoint{1.175568in}{0.931053in}}%
\pgfpathlineto{\pgfqpoint{1.193182in}{0.940661in}}%
\pgfpathlineto{\pgfqpoint{1.210795in}{0.950266in}}%
\pgfpathlineto{\pgfqpoint{1.228409in}{0.959872in}}%
\pgfpathlineto{\pgfqpoint{1.246023in}{0.969481in}}%
\pgfpathlineto{\pgfqpoint{1.263636in}{0.979096in}}%
\pgfpathlineto{\pgfqpoint{1.281250in}{0.988721in}}%
\pgfpathlineto{\pgfqpoint{1.298864in}{0.998358in}}%
\pgfpathlineto{\pgfqpoint{1.316477in}{1.008012in}}%
\pgfpathlineto{\pgfqpoint{1.334091in}{1.017684in}}%
\pgfpathlineto{\pgfqpoint{1.351705in}{1.027379in}}%
\pgfpathlineto{\pgfqpoint{1.369318in}{1.037099in}}%
\pgfpathlineto{\pgfqpoint{1.386932in}{1.046849in}}%
\pgfpathlineto{\pgfqpoint{1.404545in}{1.056631in}}%
\pgfpathlineto{\pgfqpoint{1.422159in}{1.066449in}}%
\pgfpathlineto{\pgfqpoint{1.439773in}{1.076306in}}%
\pgfpathlineto{\pgfqpoint{1.457386in}{1.086207in}}%
\pgfpathlineto{\pgfqpoint{1.475000in}{1.096156in}}%
\pgfpathlineto{\pgfqpoint{1.492614in}{1.106156in}}%
\pgfpathlineto{\pgfqpoint{1.510227in}{1.116212in}}%
\pgfpathlineto{\pgfqpoint{1.527841in}{1.126328in}}%
\pgfpathlineto{\pgfqpoint{1.545455in}{1.136509in}}%
\pgfpathlineto{\pgfqpoint{1.563068in}{1.146761in}}%
\pgfpathlineto{\pgfqpoint{1.580682in}{1.157088in}}%
\pgfpathlineto{\pgfqpoint{1.598295in}{1.167495in}}%
\pgfpathlineto{\pgfqpoint{1.615909in}{1.177990in}}%
\pgfpathlineto{\pgfqpoint{1.633523in}{1.188579in}}%
\pgfpathlineto{\pgfqpoint{1.651136in}{1.199268in}}%
\pgfpathlineto{\pgfqpoint{1.668750in}{1.210065in}}%
\pgfpathlineto{\pgfqpoint{1.686364in}{1.220978in}}%
\pgfpathlineto{\pgfqpoint{1.703977in}{1.232016in}}%
\pgfpathlineto{\pgfqpoint{1.721591in}{1.243188in}}%
\pgfpathlineto{\pgfqpoint{1.739205in}{1.254506in}}%
\pgfpathlineto{\pgfqpoint{1.756818in}{1.265980in}}%
\pgfpathlineto{\pgfqpoint{1.774432in}{1.277625in}}%
\pgfpathlineto{\pgfqpoint{1.792045in}{1.289453in}}%
\pgfpathlineto{\pgfqpoint{1.809659in}{1.301482in}}%
\pgfpathlineto{\pgfqpoint{1.827273in}{1.313729in}}%
\pgfpathlineto{\pgfqpoint{1.844886in}{1.326215in}}%
\pgfpathlineto{\pgfqpoint{1.862500in}{1.338964in}}%
\pgfpathlineto{\pgfqpoint{1.880114in}{1.352004in}}%
\pgfpathlineto{\pgfqpoint{1.897727in}{1.365365in}}%
\pgfpathlineto{\pgfqpoint{1.915341in}{1.379085in}}%
\pgfpathlineto{\pgfqpoint{1.932955in}{1.393210in}}%
\pgfpathlineto{\pgfqpoint{1.950568in}{1.407792in}}%
\pgfpathlineto{\pgfqpoint{1.968182in}{1.422898in}}%
\pgfpathlineto{\pgfqpoint{1.985795in}{1.438611in}}%
\pgfpathlineto{\pgfqpoint{2.003409in}{1.455036in}}%
\pgfpathlineto{\pgfqpoint{2.021023in}{1.472315in}}%
\pgfpathlineto{\pgfqpoint{2.038636in}{1.490638in}}%
\pgfpathlineto{\pgfqpoint{2.056250in}{1.510285in}}%
\pgfpathlineto{\pgfqpoint{2.073864in}{1.531685in}}%
\pgfpathlineto{\pgfqpoint{2.091477in}{1.555577in}}%
\pgfpathlineto{\pgfqpoint{2.109091in}{1.583390in}}%
\pgfpathlineto{\pgfqpoint{2.126705in}{1.618425in}}%
\pgfpathlineto{\pgfqpoint{2.144318in}{1.670114in}}%
\pgfpathlineto{\pgfqpoint{2.161932in}{1.774500in}}%
\pgfusepath{stroke}%
\end{pgfscope}%
\begin{pgfscope}%
\pgfpathrectangle{\pgfqpoint{0.312500in}{0.231000in}}{\pgfqpoint{1.937500in}{1.617000in}}%
\pgfusepath{clip}%
\pgfsetrectcap%
\pgfsetroundjoin%
\pgfsetlinewidth{0.803000pt}%
\definecolor{currentstroke}{rgb}{0.000000,0.545098,0.545098}%
\pgfsetstrokecolor{currentstroke}%
\pgfsetdash{}{0pt}%
\pgfpathmoveto{\pgfqpoint{0.400568in}{0.304500in}}%
\pgfpathlineto{\pgfqpoint{0.418182in}{0.339589in}}%
\pgfpathlineto{\pgfqpoint{0.435795in}{0.358399in}}%
\pgfpathlineto{\pgfqpoint{0.453409in}{0.371082in}}%
\pgfpathlineto{\pgfqpoint{0.471023in}{0.380685in}}%
\pgfpathlineto{\pgfqpoint{0.488636in}{0.388392in}}%
\pgfpathlineto{\pgfqpoint{0.506250in}{0.394747in}}%
\pgfpathlineto{\pgfqpoint{0.523864in}{0.400031in}}%
\pgfpathlineto{\pgfqpoint{0.541477in}{0.404401in}}%
\pgfpathlineto{\pgfqpoint{0.559091in}{0.407949in}}%
\pgfpathlineto{\pgfqpoint{0.576705in}{0.456880in}}%
\pgfpathlineto{\pgfqpoint{0.594318in}{0.469447in}}%
\pgfpathlineto{\pgfqpoint{0.611932in}{0.482013in}}%
\pgfpathlineto{\pgfqpoint{0.629545in}{0.494579in}}%
\pgfpathlineto{\pgfqpoint{0.647159in}{0.507146in}}%
\pgfpathlineto{\pgfqpoint{0.664773in}{0.519712in}}%
\pgfpathlineto{\pgfqpoint{0.682386in}{0.532278in}}%
\pgfpathlineto{\pgfqpoint{0.700000in}{0.544845in}}%
\pgfpathlineto{\pgfqpoint{0.717614in}{0.557411in}}%
\pgfpathlineto{\pgfqpoint{0.735227in}{0.569977in}}%
\pgfpathlineto{\pgfqpoint{0.752841in}{0.582544in}}%
\pgfpathlineto{\pgfqpoint{0.770455in}{0.595110in}}%
\pgfpathlineto{\pgfqpoint{0.788068in}{0.607676in}}%
\pgfpathlineto{\pgfqpoint{0.805682in}{0.620243in}}%
\pgfpathlineto{\pgfqpoint{0.823295in}{0.632809in}}%
\pgfpathlineto{\pgfqpoint{0.840909in}{0.645375in}}%
\pgfpathlineto{\pgfqpoint{0.858523in}{0.657942in}}%
\pgfpathlineto{\pgfqpoint{0.876136in}{0.670508in}}%
\pgfpathlineto{\pgfqpoint{0.893750in}{0.683074in}}%
\pgfpathlineto{\pgfqpoint{0.911364in}{0.695641in}}%
\pgfpathlineto{\pgfqpoint{0.928977in}{0.708207in}}%
\pgfpathlineto{\pgfqpoint{0.946591in}{0.720773in}}%
\pgfpathlineto{\pgfqpoint{0.964205in}{0.733340in}}%
\pgfpathlineto{\pgfqpoint{0.981818in}{0.745906in}}%
\pgfpathlineto{\pgfqpoint{0.999432in}{0.758472in}}%
\pgfpathlineto{\pgfqpoint{1.017045in}{0.771039in}}%
\pgfpathlineto{\pgfqpoint{1.034659in}{0.783605in}}%
\pgfpathlineto{\pgfqpoint{1.052273in}{0.796171in}}%
\pgfpathlineto{\pgfqpoint{1.069886in}{0.808738in}}%
\pgfpathlineto{\pgfqpoint{1.087500in}{0.821304in}}%
\pgfpathlineto{\pgfqpoint{1.105114in}{0.833870in}}%
\pgfpathlineto{\pgfqpoint{1.122727in}{0.848365in}}%
\pgfpathlineto{\pgfqpoint{1.140341in}{0.862230in}}%
\pgfpathlineto{\pgfqpoint{1.157955in}{0.875568in}}%
\pgfpathlineto{\pgfqpoint{1.175568in}{0.888463in}}%
\pgfpathlineto{\pgfqpoint{1.193182in}{0.900983in}}%
\pgfpathlineto{\pgfqpoint{1.210795in}{0.913185in}}%
\pgfpathlineto{\pgfqpoint{1.228409in}{0.925119in}}%
\pgfpathlineto{\pgfqpoint{1.246023in}{0.936823in}}%
\pgfpathlineto{\pgfqpoint{1.263636in}{0.948332in}}%
\pgfpathlineto{\pgfqpoint{1.281250in}{0.959676in}}%
\pgfpathlineto{\pgfqpoint{1.298864in}{0.970880in}}%
\pgfpathlineto{\pgfqpoint{1.316477in}{0.981965in}}%
\pgfpathlineto{\pgfqpoint{1.334091in}{0.992952in}}%
\pgfpathlineto{\pgfqpoint{1.351705in}{1.003857in}}%
\pgfpathlineto{\pgfqpoint{1.369318in}{1.014695in}}%
\pgfpathlineto{\pgfqpoint{1.386932in}{1.025479in}}%
\pgfpathlineto{\pgfqpoint{1.404545in}{1.036223in}}%
\pgfpathlineto{\pgfqpoint{1.422159in}{1.046937in}}%
\pgfpathlineto{\pgfqpoint{1.439773in}{1.057632in}}%
\pgfpathlineto{\pgfqpoint{1.457386in}{1.068317in}}%
\pgfpathlineto{\pgfqpoint{1.475000in}{1.079002in}}%
\pgfpathlineto{\pgfqpoint{1.492614in}{1.089696in}}%
\pgfpathlineto{\pgfqpoint{1.510227in}{1.100406in}}%
\pgfpathlineto{\pgfqpoint{1.527841in}{1.111140in}}%
\pgfpathlineto{\pgfqpoint{1.545455in}{1.121908in}}%
\pgfpathlineto{\pgfqpoint{1.563068in}{1.132716in}}%
\pgfpathlineto{\pgfqpoint{1.580682in}{1.143573in}}%
\pgfpathlineto{\pgfqpoint{1.598295in}{1.154486in}}%
\pgfpathlineto{\pgfqpoint{1.615909in}{1.165464in}}%
\pgfpathlineto{\pgfqpoint{1.633523in}{1.176515in}}%
\pgfpathlineto{\pgfqpoint{1.651136in}{1.187648in}}%
\pgfpathlineto{\pgfqpoint{1.668750in}{1.198872in}}%
\pgfpathlineto{\pgfqpoint{1.686364in}{1.210195in}}%
\pgfpathlineto{\pgfqpoint{1.703977in}{1.221630in}}%
\pgfpathlineto{\pgfqpoint{1.721591in}{1.233185in}}%
\pgfpathlineto{\pgfqpoint{1.739205in}{1.244874in}}%
\pgfpathlineto{\pgfqpoint{1.756818in}{1.256709in}}%
\pgfpathlineto{\pgfqpoint{1.774432in}{1.268703in}}%
\pgfpathlineto{\pgfqpoint{1.792045in}{1.280873in}}%
\pgfpathlineto{\pgfqpoint{1.809659in}{1.293234in}}%
\pgfpathlineto{\pgfqpoint{1.827273in}{1.305807in}}%
\pgfpathlineto{\pgfqpoint{1.844886in}{1.318613in}}%
\pgfpathlineto{\pgfqpoint{1.862500in}{1.331676in}}%
\pgfpathlineto{\pgfqpoint{1.880114in}{1.345025in}}%
\pgfpathlineto{\pgfqpoint{1.897727in}{1.358691in}}%
\pgfpathlineto{\pgfqpoint{1.915341in}{1.372715in}}%
\pgfpathlineto{\pgfqpoint{1.932955in}{1.387140in}}%
\pgfpathlineto{\pgfqpoint{1.950568in}{1.402022in}}%
\pgfpathlineto{\pgfqpoint{1.968182in}{1.417428in}}%
\pgfpathlineto{\pgfqpoint{1.985795in}{1.433443in}}%
\pgfpathlineto{\pgfqpoint{2.003409in}{1.450174in}}%
\pgfpathlineto{\pgfqpoint{2.021023in}{1.467764in}}%
\pgfpathlineto{\pgfqpoint{2.038636in}{1.486407in}}%
\pgfpathlineto{\pgfqpoint{2.056250in}{1.506385in}}%
\pgfpathlineto{\pgfqpoint{2.073864in}{1.528137in}}%
\pgfpathlineto{\pgfqpoint{2.091477in}{1.552407in}}%
\pgfpathlineto{\pgfqpoint{2.109091in}{1.580648in}}%
\pgfpathlineto{\pgfqpoint{2.126705in}{1.616205in}}%
\pgfpathlineto{\pgfqpoint{2.144318in}{1.668641in}}%
\pgfpathlineto{\pgfqpoint{2.161932in}{1.774500in}}%
\pgfusepath{stroke}%
\end{pgfscope}%
\begin{pgfscope}%
\pgfsetrectcap%
\pgfsetmiterjoin%
\pgfsetlinewidth{0.803000pt}%
\definecolor{currentstroke}{rgb}{0.000000,0.000000,0.000000}%
\pgfsetstrokecolor{currentstroke}%
\pgfsetdash{}{0pt}%
\pgfpathmoveto{\pgfqpoint{0.312500in}{0.231000in}}%
\pgfpathlineto{\pgfqpoint{0.312500in}{1.848000in}}%
\pgfusepath{stroke}%
\end{pgfscope}%
\begin{pgfscope}%
\pgfsetrectcap%
\pgfsetmiterjoin%
\pgfsetlinewidth{0.803000pt}%
\definecolor{currentstroke}{rgb}{0.000000,0.000000,0.000000}%
\pgfsetstrokecolor{currentstroke}%
\pgfsetdash{}{0pt}%
\pgfpathmoveto{\pgfqpoint{2.250000in}{0.231000in}}%
\pgfpathlineto{\pgfqpoint{2.250000in}{1.848000in}}%
\pgfusepath{stroke}%
\end{pgfscope}%
\begin{pgfscope}%
\pgfsetrectcap%
\pgfsetmiterjoin%
\pgfsetlinewidth{0.803000pt}%
\definecolor{currentstroke}{rgb}{0.000000,0.000000,0.000000}%
\pgfsetstrokecolor{currentstroke}%
\pgfsetdash{}{0pt}%
\pgfpathmoveto{\pgfqpoint{0.312500in}{0.231000in}}%
\pgfpathlineto{\pgfqpoint{2.250000in}{0.231000in}}%
\pgfusepath{stroke}%
\end{pgfscope}%
\begin{pgfscope}%
\pgfsetrectcap%
\pgfsetmiterjoin%
\pgfsetlinewidth{0.803000pt}%
\definecolor{currentstroke}{rgb}{0.000000,0.000000,0.000000}%
\pgfsetstrokecolor{currentstroke}%
\pgfsetdash{}{0pt}%
\pgfpathmoveto{\pgfqpoint{0.312500in}{1.848000in}}%
\pgfpathlineto{\pgfqpoint{2.250000in}{1.848000in}}%
\pgfusepath{stroke}%
\end{pgfscope}%
\begin{pgfscope}%
\pgfsetbuttcap%
\pgfsetmiterjoin%
\definecolor{currentfill}{rgb}{1.000000,1.000000,1.000000}%
\pgfsetfillcolor{currentfill}%
\pgfsetfillopacity{0.800000}%
\pgfsetlinewidth{1.003750pt}%
\definecolor{currentstroke}{rgb}{0.800000,0.800000,0.800000}%
\pgfsetstrokecolor{currentstroke}%
\pgfsetstrokeopacity{0.800000}%
\pgfsetdash{}{0pt}%
\pgfpathmoveto{\pgfqpoint{0.409722in}{1.317509in}}%
\pgfpathlineto{\pgfqpoint{0.975856in}{1.317509in}}%
\pgfpathquadraticcurveto{\pgfqpoint{1.003634in}{1.317509in}}{\pgfqpoint{1.003634in}{1.345287in}}%
\pgfpathlineto{\pgfqpoint{1.003634in}{1.750778in}}%
\pgfpathquadraticcurveto{\pgfqpoint{1.003634in}{1.778556in}}{\pgfqpoint{0.975856in}{1.778556in}}%
\pgfpathlineto{\pgfqpoint{0.409722in}{1.778556in}}%
\pgfpathquadraticcurveto{\pgfqpoint{0.381944in}{1.778556in}}{\pgfqpoint{0.381944in}{1.750778in}}%
\pgfpathlineto{\pgfqpoint{0.381944in}{1.345287in}}%
\pgfpathquadraticcurveto{\pgfqpoint{0.381944in}{1.317509in}}{\pgfqpoint{0.409722in}{1.317509in}}%
\pgfpathlineto{\pgfqpoint{0.409722in}{1.317509in}}%
\pgfpathclose%
\pgfusepath{stroke,fill}%
\end{pgfscope}%
\begin{pgfscope}%
\pgfsetrectcap%
\pgfsetroundjoin%
\pgfsetlinewidth{0.803000pt}%
\definecolor{currentstroke}{rgb}{0.721569,0.525490,0.043137}%
\pgfsetstrokecolor{currentstroke}%
\pgfsetdash{}{0pt}%
\pgfpathmoveto{\pgfqpoint{0.437500in}{1.666088in}}%
\pgfpathlineto{\pgfqpoint{0.520833in}{1.666088in}}%
\pgfpathlineto{\pgfqpoint{0.604167in}{1.666088in}}%
\pgfusepath{stroke}%
\end{pgfscope}%
\begin{pgfscope}%
\definecolor{textcolor}{rgb}{0.000000,0.000000,0.000000}%
\pgfsetstrokecolor{textcolor}%
\pgfsetfillcolor{textcolor}%
\pgftext[x=0.715278in,y=1.617477in,left,base]{\color{textcolor}{\sffamily\fontsize{10.000000}{12.000000}\selectfont\catcode`\^=\active\def^{\ifmmode\sp\else\^{}\fi}\catcode`\%=\active\def
\end{pgfscope}%
\begin{pgfscope}%
\pgfsetrectcap%
\pgfsetroundjoin%
\pgfsetlinewidth{0.803000pt}%
\definecolor{currentstroke}{rgb}{0.000000,0.545098,0.545098}%
\pgfsetstrokecolor{currentstroke}%
\pgfsetdash{}{0pt}%
\pgfpathmoveto{\pgfqpoint{0.437500in}{1.456398in}}%
\pgfpathlineto{\pgfqpoint{0.520833in}{1.456398in}}%
\pgfpathlineto{\pgfqpoint{0.604167in}{1.456398in}}%
\pgfusepath{stroke}%
\end{pgfscope}%
\begin{pgfscope}%
\definecolor{textcolor}{rgb}{0.000000,0.000000,0.000000}%
\pgfsetstrokecolor{textcolor}%
\pgfsetfillcolor{textcolor}%
\pgftext[x=0.715278in,y=1.407787in,left,base]{\color{textcolor}{\sffamily\fontsize{10.000000}{12.000000}\selectfont\catcode`\^=\active\def^{\ifmmode\sp\else\^{}\fi}\catcode`\%=\active\def
\end{pgfscope}%
\end{pgfpicture}%
\makeatother%
\endgroup%

%% file: pictures/mhatConstruction.pgf
\begingroup%
\makeatletter%
\begin{pgfpicture}%
\pgfpathrectangle{\pgfpointorigin}{\pgfqpoint{4.000000in}{1.750000in}}%
\pgfusepath{use as bounding box, clip}%
\begin{pgfscope}%
\pgfsetbuttcap%
\pgfsetmiterjoin%
\definecolor{currentfill}{rgb}{1.000000,1.000000,1.000000}%
\pgfsetfillcolor{currentfill}%
\pgfsetlinewidth{0.000000pt}%
\definecolor{currentstroke}{rgb}{1.000000,1.000000,1.000000}%
\pgfsetstrokecolor{currentstroke}%
\pgfsetdash{}{0pt}%
\pgfpathmoveto{\pgfqpoint{0.000000in}{0.000000in}}%
\pgfpathlineto{\pgfqpoint{4.000000in}{0.000000in}}%
\pgfpathlineto{\pgfqpoint{4.000000in}{1.750000in}}%
\pgfpathlineto{\pgfqpoint{0.000000in}{1.750000in}}%
\pgfpathlineto{\pgfqpoint{0.000000in}{0.000000in}}%
\pgfpathclose%
\pgfusepath{fill}%
\end{pgfscope}%
\begin{pgfscope}%
\pgfsetbuttcap%
\pgfsetmiterjoin%
\definecolor{currentfill}{rgb}{1.000000,1.000000,1.000000}%
\pgfsetfillcolor{currentfill}%
\pgfsetlinewidth{0.000000pt}%
\definecolor{currentstroke}{rgb}{0.000000,0.000000,0.000000}%
\pgfsetstrokecolor{currentstroke}%
\pgfsetstrokeopacity{0.000000}%
\pgfsetdash{}{0pt}%
\pgfpathmoveto{\pgfqpoint{0.500000in}{0.192500in}}%
\pgfpathlineto{\pgfqpoint{3.600000in}{0.192500in}}%
\pgfpathlineto{\pgfqpoint{3.600000in}{1.540000in}}%
\pgfpathlineto{\pgfqpoint{0.500000in}{1.540000in}}%
\pgfpathlineto{\pgfqpoint{0.500000in}{0.192500in}}%
\pgfpathclose%
\pgfusepath{fill}%
\end{pgfscope}%
\begin{pgfscope}%
\pgfsetbuttcap%
\pgfsetroundjoin%
\definecolor{currentfill}{rgb}{0.000000,0.000000,0.000000}%
\pgfsetfillcolor{currentfill}%
\pgfsetlinewidth{0.803000pt}%
\definecolor{currentstroke}{rgb}{0.000000,0.000000,0.000000}%
\pgfsetstrokecolor{currentstroke}%
\pgfsetdash{}{0pt}%
\pgfsys@defobject{currentmarker}{\pgfqpoint{0.000000in}{-0.048611in}}{\pgfqpoint{0.000000in}{0.000000in}}{%
\pgfpathmoveto{\pgfqpoint{0.000000in}{0.000000in}}%
\pgfpathlineto{\pgfqpoint{0.000000in}{-0.048611in}}%
\pgfusepath{stroke,fill}%
}%
\begin{pgfscope}%
\pgfsys@transformshift{0.583395in}{0.192500in}%
\pgfsys@useobject{currentmarker}{}%
\end{pgfscope}%
\end{pgfscope}%
\begin{pgfscope}%
\definecolor{textcolor}{rgb}{0.000000,0.000000,0.000000}%
\pgfsetstrokecolor{textcolor}%
\pgfsetfillcolor{textcolor}%
\pgftext[x=0.583395in,y=0.095278in,,top]{\color{textcolor}{\sffamily\fontsize{10.000000}{12.000000}\selectfont\catcode`\^=\active\def^{\ifmmode\sp\else\^{}\fi}\catcode`\%=\active\def
\end{pgfscope}%
\begin{pgfscope}%
\pgfsetbuttcap%
\pgfsetroundjoin%
\definecolor{currentfill}{rgb}{0.000000,0.000000,0.000000}%
\pgfsetfillcolor{currentfill}%
\pgfsetlinewidth{0.803000pt}%
\definecolor{currentstroke}{rgb}{0.000000,0.000000,0.000000}%
\pgfsetstrokecolor{currentstroke}%
\pgfsetdash{}{0pt}%
\pgfsys@defobject{currentmarker}{\pgfqpoint{0.000000in}{-0.048611in}}{\pgfqpoint{0.000000in}{0.000000in}}{%
\pgfpathmoveto{\pgfqpoint{0.000000in}{0.000000in}}%
\pgfpathlineto{\pgfqpoint{0.000000in}{-0.048611in}}%
\pgfusepath{stroke,fill}%
}%
\begin{pgfscope}%
\pgfsys@transformshift{1.158534in}{0.192500in}%
\pgfsys@useobject{currentmarker}{}%
\end{pgfscope}%
\end{pgfscope}%
\begin{pgfscope}%
\definecolor{textcolor}{rgb}{0.000000,0.000000,0.000000}%
\pgfsetstrokecolor{textcolor}%
\pgfsetfillcolor{textcolor}%
\pgftext[x=1.158534in,y=0.095278in,,top]{\color{textcolor}{\sffamily\fontsize{10.000000}{12.000000}\selectfont\catcode`\^=\active\def^{\ifmmode\sp\else\^{}\fi}\catcode`\%=\active\def
\end{pgfscope}%
\begin{pgfscope}%
\pgfsetbuttcap%
\pgfsetroundjoin%
\definecolor{currentfill}{rgb}{0.000000,0.000000,0.000000}%
\pgfsetfillcolor{currentfill}%
\pgfsetlinewidth{0.803000pt}%
\definecolor{currentstroke}{rgb}{0.000000,0.000000,0.000000}%
\pgfsetstrokecolor{currentstroke}%
\pgfsetdash{}{0pt}%
\pgfsys@defobject{currentmarker}{\pgfqpoint{0.000000in}{-0.048611in}}{\pgfqpoint{0.000000in}{0.000000in}}{%
\pgfpathmoveto{\pgfqpoint{0.000000in}{0.000000in}}%
\pgfpathlineto{\pgfqpoint{0.000000in}{-0.048611in}}%
\pgfusepath{stroke,fill}%
}%
\begin{pgfscope}%
\pgfsys@transformshift{1.733673in}{0.192500in}%
\pgfsys@useobject{currentmarker}{}%
\end{pgfscope}%
\end{pgfscope}%
\begin{pgfscope}%
\definecolor{textcolor}{rgb}{0.000000,0.000000,0.000000}%
\pgfsetstrokecolor{textcolor}%
\pgfsetfillcolor{textcolor}%
\pgftext[x=1.733673in,y=0.095278in,,top]{\color{textcolor}{\sffamily\fontsize{10.000000}{12.000000}\selectfont\catcode`\^=\active\def^{\ifmmode\sp\else\^{}\fi}\catcode`\%=\active\def
\end{pgfscope}%
\begin{pgfscope}%
\pgfsetbuttcap%
\pgfsetroundjoin%
\definecolor{currentfill}{rgb}{0.000000,0.000000,0.000000}%
\pgfsetfillcolor{currentfill}%
\pgfsetlinewidth{0.803000pt}%
\definecolor{currentstroke}{rgb}{0.000000,0.000000,0.000000}%
\pgfsetstrokecolor{currentstroke}%
\pgfsetdash{}{0pt}%
\pgfsys@defobject{currentmarker}{\pgfqpoint{0.000000in}{-0.048611in}}{\pgfqpoint{0.000000in}{0.000000in}}{%
\pgfpathmoveto{\pgfqpoint{0.000000in}{0.000000in}}%
\pgfpathlineto{\pgfqpoint{0.000000in}{-0.048611in}}%
\pgfusepath{stroke,fill}%
}%
\begin{pgfscope}%
\pgfsys@transformshift{2.308813in}{0.192500in}%
\pgfsys@useobject{currentmarker}{}%
\end{pgfscope}%
\end{pgfscope}%
\begin{pgfscope}%
\definecolor{textcolor}{rgb}{0.000000,0.000000,0.000000}%
\pgfsetstrokecolor{textcolor}%
\pgfsetfillcolor{textcolor}%
\pgftext[x=2.308813in,y=0.095278in,,top]{\color{textcolor}{\sffamily\fontsize{10.000000}{12.000000}\selectfont\catcode`\^=\active\def^{\ifmmode\sp\else\^{}\fi}\catcode`\%=\active\def
\end{pgfscope}%
\begin{pgfscope}%
\pgfsetbuttcap%
\pgfsetroundjoin%
\definecolor{currentfill}{rgb}{0.000000,0.000000,0.000000}%
\pgfsetfillcolor{currentfill}%
\pgfsetlinewidth{0.803000pt}%
\definecolor{currentstroke}{rgb}{0.000000,0.000000,0.000000}%
\pgfsetstrokecolor{currentstroke}%
\pgfsetdash{}{0pt}%
\pgfsys@defobject{currentmarker}{\pgfqpoint{0.000000in}{-0.048611in}}{\pgfqpoint{0.000000in}{0.000000in}}{%
\pgfpathmoveto{\pgfqpoint{0.000000in}{0.000000in}}%
\pgfpathlineto{\pgfqpoint{0.000000in}{-0.048611in}}%
\pgfusepath{stroke,fill}%
}%
\begin{pgfscope}%
\pgfsys@transformshift{2.883952in}{0.192500in}%
\pgfsys@useobject{currentmarker}{}%
\end{pgfscope}%
\end{pgfscope}%
\begin{pgfscope}%
\definecolor{textcolor}{rgb}{0.000000,0.000000,0.000000}%
\pgfsetstrokecolor{textcolor}%
\pgfsetfillcolor{textcolor}%
\pgftext[x=2.883952in,y=0.095278in,,top]{\color{textcolor}{\sffamily\fontsize{10.000000}{12.000000}\selectfont\catcode`\^=\active\def^{\ifmmode\sp\else\^{}\fi}\catcode`\%=\active\def
\end{pgfscope}%
\begin{pgfscope}%
\pgfsetbuttcap%
\pgfsetroundjoin%
\definecolor{currentfill}{rgb}{0.000000,0.000000,0.000000}%
\pgfsetfillcolor{currentfill}%
\pgfsetlinewidth{0.803000pt}%
\definecolor{currentstroke}{rgb}{0.000000,0.000000,0.000000}%
\pgfsetstrokecolor{currentstroke}%
\pgfsetdash{}{0pt}%
\pgfsys@defobject{currentmarker}{\pgfqpoint{0.000000in}{-0.048611in}}{\pgfqpoint{0.000000in}{0.000000in}}{%
\pgfpathmoveto{\pgfqpoint{0.000000in}{0.000000in}}%
\pgfpathlineto{\pgfqpoint{0.000000in}{-0.048611in}}%
\pgfusepath{stroke,fill}%
}%
\begin{pgfscope}%
\pgfsys@transformshift{3.459091in}{0.192500in}%
\pgfsys@useobject{currentmarker}{}%
\end{pgfscope}%
\end{pgfscope}%
\begin{pgfscope}%
\definecolor{textcolor}{rgb}{0.000000,0.000000,0.000000}%
\pgfsetstrokecolor{textcolor}%
\pgfsetfillcolor{textcolor}%
\pgftext[x=3.459091in,y=0.095278in,,top]{\color{textcolor}{\sffamily\fontsize{10.000000}{12.000000}\selectfont\catcode`\^=\active\def^{\ifmmode\sp\else\^{}\fi}\catcode`\%=\active\def
\end{pgfscope}%
\begin{pgfscope}%
\pgfsetbuttcap%
\pgfsetroundjoin%
\definecolor{currentfill}{rgb}{0.000000,0.000000,0.000000}%
\pgfsetfillcolor{currentfill}%
\pgfsetlinewidth{0.803000pt}%
\definecolor{currentstroke}{rgb}{0.000000,0.000000,0.000000}%
\pgfsetstrokecolor{currentstroke}%
\pgfsetdash{}{0pt}%
\pgfsys@defobject{currentmarker}{\pgfqpoint{-0.048611in}{0.000000in}}{\pgfqpoint{-0.000000in}{0.000000in}}{%
\pgfpathmoveto{\pgfqpoint{-0.000000in}{0.000000in}}%
\pgfpathlineto{\pgfqpoint{-0.048611in}{0.000000in}}%
\pgfusepath{stroke,fill}%
}%
\begin{pgfscope}%
\pgfsys@transformshift{0.500000in}{0.641667in}%
\pgfsys@useobject{currentmarker}{}%
\end{pgfscope}%
\end{pgfscope}%
\begin{pgfscope}%
\definecolor{textcolor}{rgb}{0.000000,0.000000,0.000000}%
\pgfsetstrokecolor{textcolor}%
\pgfsetfillcolor{textcolor}%
\pgftext[x=0.333333in, y=0.588905in, left, base]{\color{textcolor}{\sffamily\fontsize{10.000000}{12.000000}\selectfont\catcode`\^=\active\def^{\ifmmode\sp\else\^{}\fi}\catcode`\%=\active\def
\end{pgfscope}%
\begin{pgfscope}%
\pgfsetbuttcap%
\pgfsetroundjoin%
\definecolor{currentfill}{rgb}{0.000000,0.000000,0.000000}%
\pgfsetfillcolor{currentfill}%
\pgfsetlinewidth{0.803000pt}%
\definecolor{currentstroke}{rgb}{0.000000,0.000000,0.000000}%
\pgfsetstrokecolor{currentstroke}%
\pgfsetdash{}{0pt}%
\pgfsys@defobject{currentmarker}{\pgfqpoint{-0.048611in}{0.000000in}}{\pgfqpoint{-0.000000in}{0.000000in}}{%
\pgfpathmoveto{\pgfqpoint{-0.000000in}{0.000000in}}%
\pgfpathlineto{\pgfqpoint{-0.048611in}{0.000000in}}%
\pgfusepath{stroke,fill}%
}%
\begin{pgfscope}%
\pgfsys@transformshift{0.500000in}{0.406839in}%
\pgfsys@useobject{currentmarker}{}%
\end{pgfscope}%
\end{pgfscope}%
\begin{pgfscope}%
\definecolor{textcolor}{rgb}{0.000000,0.000000,0.000000}%
\pgfsetstrokecolor{textcolor}%
\pgfsetfillcolor{textcolor}%
\pgftext[x=0.139669in, y=0.354077in, left, base]{\color{textcolor}{\sffamily\fontsize{10.000000}{12.000000}\selectfont\catcode`\^=\active\def^{\ifmmode\sp\else\^{}\fi}\catcode`\%=\active\def
\end{pgfscope}%
\begin{pgfscope}%
\pgfpathrectangle{\pgfqpoint{0.500000in}{0.192500in}}{\pgfqpoint{3.100000in}{1.347500in}}%
\pgfusepath{clip}%
\pgfsetrectcap%
\pgfsetroundjoin%
\pgfsetlinewidth{0.803000pt}%
\definecolor{currentstroke}{rgb}{0.721569,0.525490,0.043137}%
\pgfsetstrokecolor{currentstroke}%
\pgfsetdash{}{0pt}%
\pgfpathmoveto{\pgfqpoint{0.717496in}{1.550000in}}%
\pgfpathlineto{\pgfqpoint{0.727180in}{1.487705in}}%
\pgfpathlineto{\pgfqpoint{0.755937in}{1.365721in}}%
\pgfpathlineto{\pgfqpoint{0.784694in}{1.280376in}}%
\pgfpathlineto{\pgfqpoint{0.813451in}{1.218028in}}%
\pgfpathlineto{\pgfqpoint{0.842208in}{1.170950in}}%
\pgfpathlineto{\pgfqpoint{0.870965in}{1.134459in}}%
\pgfpathlineto{\pgfqpoint{0.899722in}{1.105562in}}%
\pgfpathlineto{\pgfqpoint{0.928479in}{1.082265in}}%
\pgfpathlineto{\pgfqpoint{0.957236in}{1.063193in}}%
\pgfpathlineto{\pgfqpoint{0.985993in}{1.047372in}}%
\pgfpathlineto{\pgfqpoint{1.014750in}{1.034094in}}%
\pgfpathlineto{\pgfqpoint{1.043506in}{1.022834in}}%
\pgfpathlineto{\pgfqpoint{1.072263in}{1.013199in}}%
\pgfpathlineto{\pgfqpoint{1.101020in}{1.004884in}}%
\pgfpathlineto{\pgfqpoint{1.129777in}{0.997654in}}%
\pgfpathlineto{\pgfqpoint{1.158534in}{0.991325in}}%
\pgfpathlineto{\pgfqpoint{1.187291in}{0.985751in}}%
\pgfpathlineto{\pgfqpoint{1.216048in}{0.980812in}}%
\pgfpathlineto{\pgfqpoint{1.244805in}{0.976413in}}%
\pgfpathlineto{\pgfqpoint{1.273562in}{0.972477in}}%
\pgfpathlineto{\pgfqpoint{1.302319in}{0.968938in}}%
\pgfpathlineto{\pgfqpoint{1.331076in}{0.965743in}}%
\pgfpathlineto{\pgfqpoint{1.359833in}{0.962849in}}%
\pgfpathlineto{\pgfqpoint{1.388590in}{0.960216in}}%
\pgfpathlineto{\pgfqpoint{1.417347in}{0.957814in}}%
\pgfpathlineto{\pgfqpoint{1.446104in}{0.955615in}}%
\pgfpathlineto{\pgfqpoint{1.474861in}{0.953597in}}%
\pgfpathlineto{\pgfqpoint{1.503618in}{0.951739in}}%
\pgfpathlineto{\pgfqpoint{1.532375in}{0.950024in}}%
\pgfpathlineto{\pgfqpoint{1.561132in}{0.948438in}}%
\pgfpathlineto{\pgfqpoint{1.589889in}{0.946968in}}%
\pgfpathlineto{\pgfqpoint{1.618646in}{0.945601in}}%
\pgfpathlineto{\pgfqpoint{1.647403in}{0.944329in}}%
\pgfpathlineto{\pgfqpoint{1.676160in}{0.943143in}}%
\pgfpathlineto{\pgfqpoint{1.704917in}{0.942034in}}%
\pgfpathlineto{\pgfqpoint{1.733673in}{0.940997in}}%
\pgfpathlineto{\pgfqpoint{1.762430in}{0.940024in}}%
\pgfpathlineto{\pgfqpoint{1.791187in}{0.939111in}}%
\pgfpathlineto{\pgfqpoint{1.819944in}{0.938252in}}%
\pgfpathlineto{\pgfqpoint{1.848701in}{0.937444in}}%
\pgfpathlineto{\pgfqpoint{1.877458in}{0.936682in}}%
\pgfpathlineto{\pgfqpoint{1.906215in}{0.935963in}}%
\pgfpathlineto{\pgfqpoint{1.934972in}{0.935284in}}%
\pgfpathlineto{\pgfqpoint{1.963729in}{0.934641in}}%
\pgfpathlineto{\pgfqpoint{1.992486in}{0.934033in}}%
\pgfpathlineto{\pgfqpoint{2.021243in}{0.933456in}}%
\pgfpathlineto{\pgfqpoint{2.050000in}{0.932909in}}%
\pgfpathlineto{\pgfqpoint{2.078757in}{0.932390in}}%
\pgfpathlineto{\pgfqpoint{2.107514in}{0.931896in}}%
\pgfpathlineto{\pgfqpoint{2.136271in}{0.931427in}}%
\pgfpathlineto{\pgfqpoint{2.165028in}{0.930980in}}%
\pgfpathlineto{\pgfqpoint{2.193785in}{0.930554in}}%
\pgfpathlineto{\pgfqpoint{2.222542in}{0.930148in}}%
\pgfpathlineto{\pgfqpoint{2.251299in}{0.929761in}}%
\pgfpathlineto{\pgfqpoint{2.280056in}{0.929392in}}%
\pgfpathlineto{\pgfqpoint{2.308813in}{0.929039in}}%
\pgfpathlineto{\pgfqpoint{2.337570in}{0.928702in}}%
\pgfpathlineto{\pgfqpoint{2.366327in}{0.928379in}}%
\pgfpathlineto{\pgfqpoint{2.395083in}{0.928071in}}%
\pgfpathlineto{\pgfqpoint{2.423840in}{0.927775in}}%
\pgfpathlineto{\pgfqpoint{2.452597in}{0.927493in}}%
\pgfpathlineto{\pgfqpoint{2.481354in}{0.927222in}}%
\pgfpathlineto{\pgfqpoint{2.510111in}{0.926962in}}%
\pgfpathlineto{\pgfqpoint{2.538868in}{0.926713in}}%
\pgfpathlineto{\pgfqpoint{2.567625in}{0.926474in}}%
\pgfpathlineto{\pgfqpoint{2.596382in}{0.926244in}}%
\pgfpathlineto{\pgfqpoint{2.625139in}{0.926024in}}%
\pgfpathlineto{\pgfqpoint{2.653896in}{0.925813in}}%
\pgfpathlineto{\pgfqpoint{2.682653in}{0.925609in}}%
\pgfpathlineto{\pgfqpoint{2.711410in}{0.925414in}}%
\pgfpathlineto{\pgfqpoint{2.740167in}{0.925226in}}%
\pgfpathlineto{\pgfqpoint{2.768924in}{0.925045in}}%
\pgfpathlineto{\pgfqpoint{2.797681in}{0.924871in}}%
\pgfpathlineto{\pgfqpoint{2.826438in}{0.924704in}}%
\pgfpathlineto{\pgfqpoint{2.855195in}{0.924543in}}%
\pgfpathlineto{\pgfqpoint{2.883952in}{0.924388in}}%
\pgfpathlineto{\pgfqpoint{2.912709in}{0.924238in}}%
\pgfpathlineto{\pgfqpoint{2.941466in}{0.924094in}}%
\pgfpathlineto{\pgfqpoint{2.970223in}{0.923955in}}%
\pgfpathlineto{\pgfqpoint{2.998980in}{0.923822in}}%
\pgfpathlineto{\pgfqpoint{3.027737in}{0.923693in}}%
\pgfpathlineto{\pgfqpoint{3.056494in}{0.923569in}}%
\pgfpathlineto{\pgfqpoint{3.085250in}{0.923449in}}%
\pgfpathlineto{\pgfqpoint{3.114007in}{0.923333in}}%
\pgfpathlineto{\pgfqpoint{3.142764in}{0.923221in}}%
\pgfpathlineto{\pgfqpoint{3.171521in}{0.923114in}}%
\pgfpathlineto{\pgfqpoint{3.200278in}{0.923010in}}%
\pgfpathlineto{\pgfqpoint{3.229035in}{0.922910in}}%
\pgfpathlineto{\pgfqpoint{3.257792in}{0.922813in}}%
\pgfpathlineto{\pgfqpoint{3.286549in}{0.922719in}}%
\pgfpathlineto{\pgfqpoint{3.315306in}{0.922629in}}%
\pgfpathlineto{\pgfqpoint{3.344063in}{0.922542in}}%
\pgfpathlineto{\pgfqpoint{3.372820in}{0.922458in}}%
\pgfpathlineto{\pgfqpoint{3.401577in}{0.922376in}}%
\pgfpathlineto{\pgfqpoint{3.430334in}{0.922298in}}%
\pgfpathlineto{\pgfqpoint{3.459091in}{0.922222in}}%
\pgfusepath{stroke}%
\end{pgfscope}%
\begin{pgfscope}%
\pgfpathrectangle{\pgfqpoint{0.500000in}{0.192500in}}{\pgfqpoint{3.100000in}{1.347500in}}%
\pgfusepath{clip}%
\pgfsetrectcap%
\pgfsetroundjoin%
\pgfsetlinewidth{0.803000pt}%
\definecolor{currentstroke}{rgb}{0.000000,0.545098,0.545098}%
\pgfsetstrokecolor{currentstroke}%
\pgfsetdash{}{0pt}%
\pgfpathmoveto{\pgfqpoint{0.717496in}{1.550000in}}%
\pgfpathlineto{\pgfqpoint{0.727180in}{1.487705in}}%
\pgfpathlineto{\pgfqpoint{0.755937in}{1.365721in}}%
\pgfpathlineto{\pgfqpoint{0.784694in}{1.280376in}}%
\pgfpathlineto{\pgfqpoint{0.813451in}{1.218028in}}%
\pgfpathlineto{\pgfqpoint{0.842208in}{1.170950in}}%
\pgfpathlineto{\pgfqpoint{0.870965in}{1.134459in}}%
\pgfpathlineto{\pgfqpoint{0.899722in}{1.105562in}}%
\pgfpathlineto{\pgfqpoint{0.928479in}{1.082265in}}%
\pgfpathlineto{\pgfqpoint{0.957236in}{1.063193in}}%
\pgfpathlineto{\pgfqpoint{0.985993in}{1.047372in}}%
\pgfpathlineto{\pgfqpoint{1.014750in}{1.034094in}}%
\pgfpathlineto{\pgfqpoint{1.043506in}{0.554876in}}%
\pgfpathlineto{\pgfqpoint{1.072263in}{0.499394in}}%
\pgfpathlineto{\pgfqpoint{1.101020in}{0.468493in}}%
\pgfpathlineto{\pgfqpoint{1.129777in}{0.460206in}}%
\pgfpathlineto{\pgfqpoint{1.158534in}{0.417193in}}%
\pgfpathlineto{\pgfqpoint{1.187291in}{0.666524in}}%
\pgfpathlineto{\pgfqpoint{1.216048in}{0.639973in}}%
\pgfpathlineto{\pgfqpoint{1.244805in}{0.603005in}}%
\pgfpathlineto{\pgfqpoint{1.273562in}{0.431095in}}%
\pgfpathlineto{\pgfqpoint{1.302319in}{0.391874in}}%
\pgfpathlineto{\pgfqpoint{1.331076in}{0.394328in}}%
\pgfpathlineto{\pgfqpoint{1.359833in}{0.365994in}}%
\pgfpathlineto{\pgfqpoint{1.388590in}{0.355945in}}%
\pgfpathlineto{\pgfqpoint{1.417347in}{0.545904in}}%
\pgfpathlineto{\pgfqpoint{1.446104in}{0.562095in}}%
\pgfpathlineto{\pgfqpoint{1.474861in}{0.573316in}}%
\pgfpathlineto{\pgfqpoint{1.503618in}{0.557316in}}%
\pgfpathlineto{\pgfqpoint{1.532375in}{0.558369in}}%
\pgfpathlineto{\pgfqpoint{1.561132in}{0.559941in}}%
\pgfpathlineto{\pgfqpoint{1.589889in}{0.414728in}}%
\pgfpathlineto{\pgfqpoint{1.618646in}{0.333115in}}%
\pgfpathlineto{\pgfqpoint{1.647403in}{0.313274in}}%
\pgfpathlineto{\pgfqpoint{1.676160in}{0.457743in}}%
\pgfpathlineto{\pgfqpoint{1.704917in}{0.530657in}}%
\pgfpathlineto{\pgfqpoint{1.733673in}{0.520731in}}%
\pgfpathlineto{\pgfqpoint{1.762430in}{0.529374in}}%
\pgfpathlineto{\pgfqpoint{1.791187in}{0.517369in}}%
\pgfpathlineto{\pgfqpoint{1.819944in}{0.520913in}}%
\pgfpathlineto{\pgfqpoint{1.848701in}{0.521030in}}%
\pgfpathlineto{\pgfqpoint{1.877458in}{0.513399in}}%
\pgfpathlineto{\pgfqpoint{1.906215in}{0.520767in}}%
\pgfpathlineto{\pgfqpoint{1.934972in}{0.345888in}}%
\pgfpathlineto{\pgfqpoint{1.963729in}{0.472642in}}%
\pgfpathlineto{\pgfqpoint{1.992486in}{0.502792in}}%
\pgfpathlineto{\pgfqpoint{2.021243in}{0.497495in}}%
\pgfpathlineto{\pgfqpoint{2.050000in}{0.504431in}}%
\pgfpathlineto{\pgfqpoint{2.078757in}{0.494422in}}%
\pgfpathlineto{\pgfqpoint{2.107514in}{0.498974in}}%
\pgfpathlineto{\pgfqpoint{2.136271in}{0.498104in}}%
\pgfpathlineto{\pgfqpoint{2.165028in}{0.494009in}}%
\pgfpathlineto{\pgfqpoint{2.193785in}{0.500040in}}%
\pgfpathlineto{\pgfqpoint{2.222542in}{0.490440in}}%
\pgfpathlineto{\pgfqpoint{2.251299in}{0.495010in}}%
\pgfpathlineto{\pgfqpoint{2.280056in}{0.488382in}}%
\pgfpathlineto{\pgfqpoint{2.308813in}{0.482191in}}%
\pgfpathlineto{\pgfqpoint{2.337570in}{0.487950in}}%
\pgfpathlineto{\pgfqpoint{2.366327in}{0.479168in}}%
\pgfpathlineto{\pgfqpoint{2.395083in}{0.484182in}}%
\pgfpathlineto{\pgfqpoint{2.423840in}{0.482653in}}%
\pgfpathlineto{\pgfqpoint{2.452597in}{0.480691in}}%
\pgfpathlineto{\pgfqpoint{2.481354in}{0.485540in}}%
\pgfpathlineto{\pgfqpoint{2.510111in}{0.477442in}}%
\pgfpathlineto{\pgfqpoint{2.538868in}{0.482142in}}%
\pgfpathlineto{\pgfqpoint{2.567625in}{0.480159in}}%
\pgfpathlineto{\pgfqpoint{2.596382in}{0.478762in}}%
\pgfpathlineto{\pgfqpoint{2.625139in}{0.480505in}}%
\pgfpathlineto{\pgfqpoint{2.653896in}{0.468376in}}%
\pgfpathlineto{\pgfqpoint{2.682653in}{0.472953in}}%
\pgfpathlineto{\pgfqpoint{2.711410in}{0.471002in}}%
\pgfpathlineto{\pgfqpoint{2.740167in}{0.470455in}}%
\pgfpathlineto{\pgfqpoint{2.768924in}{0.473811in}}%
\pgfpathlineto{\pgfqpoint{2.797681in}{0.468093in}}%
\pgfpathlineto{\pgfqpoint{2.826438in}{0.472175in}}%
\pgfpathlineto{\pgfqpoint{2.855195in}{0.469862in}}%
\pgfpathlineto{\pgfqpoint{2.883952in}{0.469654in}}%
\pgfpathlineto{\pgfqpoint{2.912709in}{0.472241in}}%
\pgfpathlineto{\pgfqpoint{2.941466in}{0.467265in}}%
\pgfpathlineto{\pgfqpoint{2.970223in}{0.470919in}}%
\pgfpathlineto{\pgfqpoint{2.998980in}{0.463117in}}%
\pgfpathlineto{\pgfqpoint{3.027737in}{0.462231in}}%
\pgfpathlineto{\pgfqpoint{3.056494in}{0.464514in}}%
\pgfpathlineto{\pgfqpoint{3.085250in}{0.460517in}}%
\pgfpathlineto{\pgfqpoint{3.114007in}{0.464131in}}%
\pgfpathlineto{\pgfqpoint{3.142764in}{0.461605in}}%
\pgfpathlineto{\pgfqpoint{3.171521in}{0.462246in}}%
\pgfpathlineto{\pgfqpoint{3.200278in}{0.463923in}}%
\pgfpathlineto{\pgfqpoint{3.229035in}{0.460434in}}%
\pgfpathlineto{\pgfqpoint{3.257792in}{0.463689in}}%
\pgfpathlineto{\pgfqpoint{3.286549in}{0.460922in}}%
\pgfpathlineto{\pgfqpoint{3.315306in}{0.461741in}}%
\pgfpathlineto{\pgfqpoint{3.344063in}{0.461751in}}%
\pgfpathlineto{\pgfqpoint{3.372820in}{0.454170in}}%
\pgfpathlineto{\pgfqpoint{3.401577in}{0.457423in}}%
\pgfpathlineto{\pgfqpoint{3.430334in}{0.454761in}}%
\pgfpathlineto{\pgfqpoint{3.459091in}{0.456028in}}%
\pgfusepath{stroke}%
\end{pgfscope}%
\begin{pgfscope}%
\pgfpathrectangle{\pgfqpoint{0.500000in}{0.192500in}}{\pgfqpoint{3.100000in}{1.347500in}}%
\pgfusepath{clip}%
\pgfsetbuttcap%
\pgfsetroundjoin%
\pgfsetlinewidth{0.602250pt}%
\definecolor{currentstroke}{rgb}{0.000000,0.000000,0.000000}%
\pgfsetstrokecolor{currentstroke}%
\pgfsetdash{{2.220000pt}{0.960000pt}}{0.000000pt}%
\pgfpathmoveto{\pgfqpoint{0.500000in}{0.406839in}}%
\pgfpathlineto{\pgfqpoint{3.600000in}{0.406839in}}%
\pgfusepath{stroke}%
\end{pgfscope}%
\begin{pgfscope}%
\pgfpathrectangle{\pgfqpoint{0.500000in}{0.192500in}}{\pgfqpoint{3.100000in}{1.347500in}}%
\pgfusepath{clip}%
\pgfsetbuttcap%
\pgfsetroundjoin%
\pgfsetlinewidth{0.602250pt}%
\definecolor{currentstroke}{rgb}{0.000000,0.000000,0.000000}%
\pgfsetstrokecolor{currentstroke}%
\pgfsetdash{{2.220000pt}{0.960000pt}}{0.000000pt}%
\pgfpathmoveto{\pgfqpoint{0.500000in}{0.641667in}}%
\pgfpathlineto{\pgfqpoint{3.600000in}{0.641667in}}%
\pgfusepath{stroke}%
\end{pgfscope}%
\begin{pgfscope}%
\pgfsetrectcap%
\pgfsetmiterjoin%
\pgfsetlinewidth{0.803000pt}%
\definecolor{currentstroke}{rgb}{0.000000,0.000000,0.000000}%
\pgfsetstrokecolor{currentstroke}%
\pgfsetdash{}{0pt}%
\pgfpathmoveto{\pgfqpoint{0.500000in}{0.192500in}}%
\pgfpathlineto{\pgfqpoint{0.500000in}{1.540000in}}%
\pgfusepath{stroke}%
\end{pgfscope}%
\begin{pgfscope}%
\pgfsetrectcap%
\pgfsetmiterjoin%
\pgfsetlinewidth{0.803000pt}%
\definecolor{currentstroke}{rgb}{0.000000,0.000000,0.000000}%
\pgfsetstrokecolor{currentstroke}%
\pgfsetdash{}{0pt}%
\pgfpathmoveto{\pgfqpoint{3.600000in}{0.192500in}}%
\pgfpathlineto{\pgfqpoint{3.600000in}{1.540000in}}%
\pgfusepath{stroke}%
\end{pgfscope}%
\begin{pgfscope}%
\pgfsetrectcap%
\pgfsetmiterjoin%
\pgfsetlinewidth{0.803000pt}%
\definecolor{currentstroke}{rgb}{0.000000,0.000000,0.000000}%
\pgfsetstrokecolor{currentstroke}%
\pgfsetdash{}{0pt}%
\pgfpathmoveto{\pgfqpoint{0.500000in}{0.192500in}}%
\pgfpathlineto{\pgfqpoint{3.600000in}{0.192500in}}%
\pgfusepath{stroke}%
\end{pgfscope}%
\begin{pgfscope}%
\pgfsetrectcap%
\pgfsetmiterjoin%
\pgfsetlinewidth{0.803000pt}%
\definecolor{currentstroke}{rgb}{0.000000,0.000000,0.000000}%
\pgfsetstrokecolor{currentstroke}%
\pgfsetdash{}{0pt}%
\pgfpathmoveto{\pgfqpoint{0.500000in}{1.540000in}}%
\pgfpathlineto{\pgfqpoint{3.600000in}{1.540000in}}%
\pgfusepath{stroke}%
\end{pgfscope}%
\begin{pgfscope}%
\pgfsetbuttcap%
\pgfsetmiterjoin%
\definecolor{currentfill}{rgb}{1.000000,1.000000,1.000000}%
\pgfsetfillcolor{currentfill}%
\pgfsetfillopacity{0.800000}%
\pgfsetlinewidth{1.003750pt}%
\definecolor{currentstroke}{rgb}{0.800000,0.800000,0.800000}%
\pgfsetstrokecolor{currentstroke}%
\pgfsetstrokeopacity{0.800000}%
\pgfsetdash{}{0pt}%
\pgfpathmoveto{\pgfqpoint{2.803766in}{1.009509in}}%
\pgfpathlineto{\pgfqpoint{3.502778in}{1.009509in}}%
\pgfpathquadraticcurveto{\pgfqpoint{3.530556in}{1.009509in}}{\pgfqpoint{3.530556in}{1.037287in}}%
\pgfpathlineto{\pgfqpoint{3.530556in}{1.442778in}}%
\pgfpathquadraticcurveto{\pgfqpoint{3.530556in}{1.470556in}}{\pgfqpoint{3.502778in}{1.470556in}}%
\pgfpathlineto{\pgfqpoint{2.803766in}{1.470556in}}%
\pgfpathquadraticcurveto{\pgfqpoint{2.775988in}{1.470556in}}{\pgfqpoint{2.775988in}{1.442778in}}%
\pgfpathlineto{\pgfqpoint{2.775988in}{1.037287in}}%
\pgfpathquadraticcurveto{\pgfqpoint{2.775988in}{1.009509in}}{\pgfqpoint{2.803766in}{1.009509in}}%
\pgfpathlineto{\pgfqpoint{2.803766in}{1.009509in}}%
\pgfpathclose%
\pgfusepath{stroke,fill}%
\end{pgfscope}%
\begin{pgfscope}%
\pgfsetrectcap%
\pgfsetroundjoin%
\pgfsetlinewidth{0.803000pt}%
\definecolor{currentstroke}{rgb}{0.721569,0.525490,0.043137}%
\pgfsetstrokecolor{currentstroke}%
\pgfsetdash{}{0pt}%
\pgfpathmoveto{\pgfqpoint{2.831544in}{1.358088in}}%
\pgfpathlineto{\pgfqpoint{2.914877in}{1.358088in}}%
\pgfpathlineto{\pgfqpoint{2.998211in}{1.358088in}}%
\pgfusepath{stroke}%
\end{pgfscope}%
\begin{pgfscope}%
\definecolor{textcolor}{rgb}{0.000000,0.000000,0.000000}%
\pgfsetstrokecolor{textcolor}%
\pgfsetfillcolor{textcolor}%
\pgftext[x=3.109322in,y=1.309477in,left,base]{\color{textcolor}{\sffamily\fontsize{10.000000}{12.000000}\selectfont\catcode`\^=\active\def^{\ifmmode\sp\else\^{}\fi}\catcode`\%=\active\def
\end{pgfscope}%
\begin{pgfscope}%
\pgfsetrectcap%
\pgfsetroundjoin%
\pgfsetlinewidth{0.803000pt}%
\definecolor{currentstroke}{rgb}{0.000000,0.545098,0.545098}%
\pgfsetstrokecolor{currentstroke}%
\pgfsetdash{}{0pt}%
\pgfpathmoveto{\pgfqpoint{2.831544in}{1.148398in}}%
\pgfpathlineto{\pgfqpoint{2.914877in}{1.148398in}}%
\pgfpathlineto{\pgfqpoint{2.998211in}{1.148398in}}%
\pgfusepath{stroke}%
\end{pgfscope}%
\begin{pgfscope}%
\definecolor{textcolor}{rgb}{0.000000,0.000000,0.000000}%
\pgfsetstrokecolor{textcolor}%
\pgfsetfillcolor{textcolor}%
\pgftext[x=3.109322in,y=1.099787in,left,base]{\color{textcolor}{\sffamily\fontsize{10.000000}{12.000000}\selectfont\catcode`\^=\active\def^{\ifmmode\sp\else\^{}\fi}\catcode`\%=\active\def
\end{pgfscope}%
\end{pgfpicture}%
\makeatother%
\endgroup%

%% file: lemmas/constructionmnBound.tex
Given the assumptions and $\chat$ as in Lemma \ref{constructionIsConcave}. Furthermore, assume
\begin{equation}\label{conditionSmallOnLeft}
  \frac{\cbar_{r_*,s_*+1}}{\cbar_{r_*,s_*}}  \le e^{-\eps} \text{ for all } r_*+s_* \ge M 
\end{equation}
for some $0<\eps\le \delta_1$. Then, we have 
\begin{equation*}
  m_{n+1}(\chat) \coloneqq \frac{\sumrs[n+1][] \chat\rs}{\sumrs[n][] \chat\rs} \le e^{-\eps} \frac{r^m(n,\chat) - r_m(n,\chat)+2}{r^m(n,\chat)-r_m(n,\chat)+1} \text{ for all } n \ge M.
\end{equation*}

%% file: proofs/constructionmnBound.tex
We will denote $r^m(n) = r^m(n,\chat)$ and $s^m(n) = n-r^m(n)$ in this proof and fix $n\ge M$.

We start by showing $\frac{\chat\rsp}{\chat\rs}\le e^{-\eps}$ for all $\max(0,r_*(n+1)-1)\le r \le \max(r^m,r_*(n+1))$ and $r+s =n.$ We use an induction $(r-1,s+1)\mapsto (r,s)$.\\
\underline{Induction start}: Let $r= \max(0,r_*(n+1)-1)$ and $r+s = n.$ Then we have 
$\chat\rsp \le e^{-\delta_1 (n+1)},$ because if $r_*(n+1)>0$, it is true by the definition of $r_*$ and if $r_*(n+1) = 0$, it holds with equality by the way we defined \constructionii{} for $r=0.$ From this we find
\[ \frac{\chat\rsp}{\chat\rs} = 
\begin{cases}
  =\frac{\chat\rsp}{\cbar\rs} \overset{\text{Lemma \ref{constructionIsConcave}.\ref{constructionIsConcave:le}}}\le \frac{\cbar\rsp}{\cbar\rs} \overset{\cbar{\text{ concave}}} \le \frac{\cbar_{r_*(n),s_*(n)+1}}{\cbar_{r_*(n),s_*(n)}}\overset{\text{\eqref{conditionSmallOnLeft}}} \le e^{-\eps} &\text{ if } r<r_*(n)\\
  \le \frac{e^{-\delta_1(n+1)}}{\chat\rs} \overset{\text{Lemma \ref{constructionIsConcave}.\ref{constructionIsConcave:large}}} \le e^{-\delta_1}\le e^{-\eps}& \text{ if }r = r_*(n)
\end{cases}
\]
Due to Lemma \ref{constructionIsConcave}.\ref{constructionIsConcave:rstar} we know $r\le r_*(n)$ and thus, these are all the possible cases.\\
\underline{Induction step $(r-1,s+1) \mapsto (r,s)$}:
For $r_*(n+1) \le \max(0,r_*(n+1)-1)+1 \le r \le \max(r^m,r_*(n+1))$ and $r+s=n$ we find
\begin{equation}
  \frac{\chat\rsp}{\chat\rs} \overset{\text{\constructionii}}= 
  \begin{cases}
    =\frac{e^{-\delta_1(n+1)}}{\chat\rs} \overset{\text{Lemma \ref{constructionIsConcave}.\ref{constructionIsConcave:large}: }} \le e^{-\delta_1}\le e^{-\eps} &\text{ if } \chat\rsp =  e^{-\delta_1(n+1)},\\
    = \frac{\chat\rmspp}{\chat\rmsp} \overset{\text{induction}}\le e^{-\eps} &\text{ else}.
  \end{cases}
\end{equation}
Next, we show $\displaystyle \frac{\max\limits_{r+s = n+1} \chat\rs}{\max\limits_{r+s=n} \chat\rs} \le e^{-\eps}.$ By the concavity, we know $r^m(n+1) \in \{r^m(n),r^m(n+1)\}$. If $r^m(n+1) = r^m(n),$ this is just $\frac{\max\limits_{r+s = n+1} \chat\rs}{\max\limits_{r+s=n} \chat\rs} = \frac{\chat_{r^m(n),s^m(n)+1}}{\chat_{r^m(n),s^m(n)}}$, which we have estimated above, because $\max(0,r_*(n+1)-1)) \overset{\text{Lemma \ref{constructionIsConcave}.\ref{constructionIsConcave:rstar}}} \le \max(0,r_*(n)) \le r^m(n)$. And if $r^m(n+1) = r^m(n)+1$, there are two cases to distinguish. If $r^m(n+1) = r_*(n+1),$ then we estimate
\begin{equation}
  \begin{split}
    &\frac{\max\limits_{r+s = n+1} \chat\rs}{\max\limits_{r+s=n} \chat\rs} = \frac{\chat_{r^m(n)+1,s^m(n)}}{\chat_{r^m(n),s^m(n)}} \overset{\text{\constructionii}}=\\&=
    \begin{cases}
      \frac{e^{-\delta_1(n+1)}}{\chat_{r^m(n),s^m(n)}} \overset{\chat_{r^m(n),s^m(n)} \ge e^{-\delta_1n}} \le e^{-\delta_1} & \text{ if } \chat_{r^m(n)+1,s^m(n)} = e^{-\delta_1 (n+1)},\\
      \frac{\chat_{r^m(n),s^m(n)+1}}{\chat_{r^m(n),s^m(n)}} \frac{\chat_{r^m(n)+1,s^m(n)-1}}{\chat_{r^m(n),s^m(n)}}\overset{\text{def. of }r^m}\le \frac{\chat_{r^m(n),s^m(n)+1}}{\chat_{r^m(n),s^m(n)}}\overset{\text{above}}\le e^{-\eps} &\text{ else}.
    \end{cases}
  \end{split}
\end{equation}
Otherwise we have $r^m(n)+1 = r^m(n+1)>\max(r^m(n),r_*(n+1))$ and hence 
\begin{equation}
  \frac{\max\limits_{r+s = n+1} \chat\rs}{\max\limits_{r+s=n} \chat\rs} = \frac{\chat_{r^m(n)+1,s^m(n)}}{\chat_{r^m(n),s^m(n)}} \overset{\text{\constructioniii}}=\underbrace{\frac{\chat_{r^m(n),s^m(n)+1}}{\chat_{r^m(n),s^m(n)}}}_{\le e^{-\eps}}\min\Big(\frac{\cbar_{r^m(n)+1,s^m(n)}}{\cbar_{r^m(n),s^m(n)}},1\Big)\le e^{-\eps}.
\end{equation}
From here, we calculate
\begin{equation*}
  \begin{split}
    \sumrs[n+1][]\chat\rs
    = \sum\limits_{\substack{r+s = n \\ r<r_m(n)}}\chat\rsp
    +\sum\limits_{\substack{r+s = n \\ r_m(n)\le r \le r^m(n)+1}}\chat\rsp
    +\sum\limits_{\substack{r+s = n \\ r^m(n)<r}}\chat\rps
  \end{split}
\end{equation*}
and estimate the three sums individually. First, we have
\begin{equation*}
  \sum\limits_{\substack{r+s = n \\ r<r_m(n)}}\chat\rsp = 
  \sum\limits_{\substack{r+s = n \\ r<r_m(n)}}\underbrace{\frac{\chat\rsp}{\chat\rs}}_{\overset{\text{concavity}}\le \frac{\chat_{r^m(n),n+1-r^m(n)}}{\chat_{r^m(n),n-r^m(n)}}\le \frac{\chat_{r^m(n+1),n+1-r^m(n+1)}}{\chat_{r^m(n),n-r^m(n)}} \le e^{-\eps}} \chat\rs
  \le e^{-\eps}  \sum\limits_{\substack{r+s = n \\ r<r_m(n)}}\chat\rs .
\end{equation*}
And similarly, we find
\begin{equation*}
  \sum\limits_{\substack{r+s = n \\ r^m(n)<r}}\chat\rps = 
  \sum\limits_{\substack{r+s = n \\ r^m(n)<r}}\underbrace{\frac{\chat\rps}{\chat\rs}}_{\overset{\text{concavity}}\le \frac{\chat_{r^m(n)+1,n-r^m(n)}}{\chat_{r^m(n),n-r^m(n)}}\le \frac{\chat_{r^m(n+1),n+1-r^m(n+1)}}{\chat_{r^m(n),n-r^m(n)}} \le e^{-\eps}} \chat\rs
  \le e^{-\eps}  \sum\limits_{\substack{r+s = n \\ r^m(n)<r}}\chat\rs .
\end{equation*}
Lastly, due to ${\frac{\max\limits_{r+s = n+1} \chat\rs}{\max\limits_{r+s=n} \chat\rs}}\le e^{-\eps},$
\begin{equation*}
  \begin{split}
    \sum\limits_{\substack{r+s = n \\ r_m(n)\le r^m(n)+1}}\chat\rsp 
    &\le (r^m(n)-r_m(n) + 2) \max\limits_{r+s = n+1} \chat\rs \le   (r^m(n)-r_m(n) + 2)e^{-\eps} \max\limits_{r+s=n}\chat\rs\\
    &=  \frac{r^m(n)-r_m(n) + 2}{r^m(n)-r_m(n)+1}e^{-\eps}\sum\limits_{\substack{r+s = n \\ r_m(n)\le r \le r^m(n)}}\chat\rs 
  \end{split}
\end{equation*}
holds true.

%% file: lemmas/constructionMNEstimate.tex
Given the assumptions and $\chat$ as in Lemma \ref{constructionmnBound} and $\monr \le \rho,$ $\mons\le \sigma$. Furthermore, assume that there is a sequence $(g_n)\subset \N$, such that $g_n \to \infty$ for $n\to\infty$ satisfying 
\begin{equation}\label{constructionMNEstimate:ass:width}
  r^m(n) - r_*(n) \ge g_n.
\end{equation}
Then, there is a constant $C_{g_n,\eps,M,\rho,\sigma,Q\rs}$ depending only on $\eps$ from Lemma \ref{constructionmnBound}, $M$ from Lemma \ref{constructionIsConcave}, as well as $g_n,\rho,\sigma,Q\rs$, such that 
\[ \summ[r+s][N] \prod\limits_{n=r+s+1}^m m_n(\chat) \le C_{g_n,\eps,M,\rho,\sigma,Q\rs}\quad \text{ and }\quad \sum\limits_{m=2}^{r+s} \prod\limits_{n=m+1}^{r+s} m_n(\chat) \le C_{g_n,\eps,M,\rho,\sigma,Q\rs}.\]
Note, that \eqref{constructionMNEstimate:ass:width} only needs to hold for $n\ge M$, i.e. when $r_*$ is well defined.

%% file: proofs/constructionMNEstimate.tex
It is enough to show $m_n(\chat)\le e^{-\frac \eps 2}$ for large $n$ and bound $m_n(\chat)$ for small $n$. In view of Lemma \ref{constructionmnBound} we need to estimate $r^m(n)-r_m(n)$. Because $g_n\to \infty$ we can fix a $M_1>M$, such that $\frac{g_n+2}{g_n+1} \le e^{\frac \eps 2}$ for all $n\ge M_1$. Furthermore, we can fix an $M_2>M_1$ such that $\frac{g_{M_2}-M_1 + 2}{g_{M_2}-M_1 + 1} \le e^{\frac \eps 2}.$ Let $M_3 \coloneqq \min\left(M_2,\min\{n\ge M_1\,|\, r_m(n) \le r_*(n)\}\right)$ ($\min(\emptyset)\coloneqq \infty$). \\
\underline{Claim 1}: $\displaystyle \frac{r^m(M_3) - r_m(M_3) + 2}{r^m(M_3)-r_m(M_3) + 1}\le e^{\frac \eps 2}.$\\
If $M_2 \ge \min\{n\ge M_1\,|\, r_m(n) \le r_*(n)\}$, from the choice of $M_1$, we find $\frac{r^m(M_3)-r_m(M_3)+2}{r^m(M_3)-r_m(M_3)+1}\le \frac{g_{M_3}+2}{g_{M_3}+1} \le e^{\frac \eps 2}$.\\
On the other hand, if $M_2 < \min\{n\ge M_1\,|\, r_m(n) \le r_*(n)\}$, we have $r_m(n) > r_*(n)$ for all $M_1 \le n \le M_2$. Putting this in Lemma \ref{constructionLeftMaximum}, we obtain for all $M_1\le n \le M_2$ that $r_m(n) \le r_m(n-1).$ So inductively, we find $r_m(M_2) \le r_m(M_1) \le M_1$. Combining this with $r^m(M_2) \ge g_{M_2}$, we obtain
\[ \frac{r^m(M_3) - r_m(M_3) + 2}{r^m(M_3)-r_m(M_3) + 1} = \frac{r^m(M_2) - r_m(M_2) + 2}{r^m(M_2)-r_m(M_2) + 1} \le \frac{g_{M_2} - M_1 + 2}{g_{M_2}-M_1 + 1} \overset{\text{choice of }M_2}\le e^{\frac \eps 2}.\]
\underline{Claim 2}: For all $n\ge M_3$ we have $\displaystyle r^m(n) - r_m(n) \ge \inf\limits_{k \in \{M_3,\ldots,n\}}\Big(r^m(k) - r_*(k), r^m(M_3) - r_m(M_3)\Big).$\\
This follows from an induction. For $n=M_3$ the claim is true. The induction step $n\mapsto n+1$ is as follows. If $r_m(n+1) \le r_*(n+1)$, the statement is of course true. If $r_m(n+1) > r_*(n+1)$, Lemma \ref{constructionLeftMaximum} shows again $r_m(n+1)\le r_m(n)$. The concavity given by Lemma \ref{constructionIsConcave} implies $r^m(n+1) \ge r^m(n)$. Combining this yields
\begin{equation*}
  \begin{split}
    r^m(n+1) - r_m(n+1) &\ge r^m(n) - r_m(n) \overset{\text{ind.}} \ge \inf\limits_{k \in \{M_3,\ldots,n\}}\Big(r^m(k) - r_*(k), r^m(M_3) - r_m(M_3)\Big)\\
    &\ge \inf\limits_{k \in \{M_3,\ldots,n+1\}}\Big(r^m(k) - r_*(k), r^m(M_3) - r_m(M_3)\Big).
  \end{split}
\end{equation*}
Combining Claim 1, Claim 2 and the choice of $M_1$, we have shown, that for all $n\ge M_3$ we have $\frac{r^m(n) - r_m(n) + 2}{r^m(n)-r_m(n) + 1}\le e^{\frac \eps 2}$. Lemma \ref{constructionmnBound} implies $m_n(\chat) \le e^{-\frac \eps 2}$ for all $n> M_3.$\\
For $M<n\le M_3$, Lemma \ref{constructionmnBound} yields $m_n \le 2 e^{-\eps}$. And for $1<n\le M$, we can use 
\[m_n \overset{\text{def.}} = \frac{\sumrs[n][]\cbar\rs}{\sumrs[n-1][] \cbar\rs} \le \max(\monr,\mons) \frac{\sumrs[n][] Q\rs}{\min(Q_{0,n-1},Q_{n-1,0})}.\]

%% file: lemmas/constructionIsConcaveCirc.tex
Fix $\monr,\mons>0$. Assume that $Q\rs$ satisfies the concavity condition for $r+s \ge M$ and that for $s^* = s^*(n)$ and $r^* = n-s^*$
\begin{equation}\label{conditionGrowthOnLeftCirc}
  \frac{\cbar_{r^*,s^*}}{\cbar_{r^*,s^*-1}} \ge e^{-\delta_1} \text{ holds for all } r^*+s^*>M.
\end{equation}
Then, $\ccirc\rs$ constructed via \eqref{ccircConstruction} starting with $\ccirc\rs = \cbar\rs$ for $r+s \le M$ satisfies \eqref{constructionConcave} for all $r+s \ge M$.
Furthermore, $\ccirc\rs \le \cbar\rs$, $s_m(n,\cbar) = s_m(n,\ccirc)$ and $\ccirc\sn \ge e^{-\delta_1n}$ for $s^*(n) \le s \le s_m(n)$ hold true.

Note, that \eqref{conditionGrowthOnLeftCirc} in particular asks for $s^*(n)$ to be well defined, i.e. $\{s\,|\, \cbar\sn \ge e^{-\delta_1 n}\} \neq \emptyset$ and \eqref{conditionGrowthOnLeftCirc} is an empty statement if $s^*=0$.

%% file: lemmas/constructionLeftMaximumCirc.tex
Given the assumptions and $\ccirc$ as in Lemma \ref{constructionIsConcaveCirc}. Then, we have for $s^m(n) \coloneqq \min\big\{s \,\big|\, \cbar\sn \ge \cbar\kn\big\}$
\[ s^m(n+1,\ccirc) \le \max\big(s^*(n+1), s^m(n,\ccirc)\big) \text{ for all }n\ge M.\]

%% file: lemmas/constructionmnBoundCirc.tex
Given the assumptions and $\ccirc$ as in Lemma \ref{constructionIsConcaveCirc}. Furthermore, assume
\begin{equation}\label{conditionSmallOnRight}
  \frac{\cbar_{r^*+1,s^*}}{\cbar_{r^*,s^*}}  \le e^{-\eps} \text{ for all } r_*+s_* \ge M 
\end{equation}
for some $0<\eps\le \delta_1$. Then, we have 
\begin{equation*}
  m_{n+1}(\ccirc) \coloneqq \frac{\sumrs[n+1][] \ccirc\rs}{\sumrs[n][] \ccirc\rs} \le e^{-\eps} \frac{r^m(n,\ccirc) - r_m(n,\ccirc)+2}{r^m(n,\ccirc)-r_m(n,\ccirc)+1} \text{ for all } n \ge M.
\end{equation*}

%% file: lemmas/constructionMNEstimateCirc.tex
Given the assumptions and $\ccirc$ as in Lemma \ref{constructionmnBoundCirc} and $\monr \le \rho,$ $\mons\le \sigma$. Furthermore, assume that there is a sequence $(g_n)\subset \N$, such that $g_n \to \infty$ for $n\to\infty$ satisfying 
\begin{equation}\label{constructionMNEstimateCirc:ass:width}
  s_m(n) - s^*(n) \ge g_n.
\end{equation}
Then, there is a constant $C_{g_n,\eps,M,\rho,\sigma,Q\rs}$ depending only on $\eps$ from Lemma \ref{constructionmnBoundCirc}, $M$ from Lemma \ref{constructionIsConcaveCirc}, as well as $g_n,\rho,\sigma,Q\rs$, such that 
\[ \summ[r+s][N] \prod\limits_{n=r+s+1}^m m_n(\ccirc) \le C_{g_n,\eps,M,\rho,\sigma,Q\rs}\quad \text{ and }\quad \sum\limits_{m=2}^{r+s} \prod\limits_{n=m+1}^{r+s} m_n(\ccirc) \le C_{g_n,\eps,M,\rho,\sigma,Q\rs}.\]

%% file: lemmas/maxQuotientLimit.tex
Assume $Q\rs$ satisfies \eqref{assumptionQn}. Then, we already have 
\begin{equation}
  \lim\limits_{n \to \infty} q_n = \limsup_{r+s \to \infty} \sqrt[r+s]{\cbar\rs}.
\end{equation}

%% file: proofs/maxQuotientLimi.tex
The proof is the essentially the same as for Lemma \ref{mnLimit}.

%% file: proofs/maxcbarLimit.tex
From Lemma \ref{maxQuotientLimit}, we know that $q_\infty(\monr,\mons)$ is a monotonically increasing function in both $\monr$ and $\mons$. For any $\eps>0$, we fix an $M<\infty$, such that $q_\infty e^{-\eps} \le q_n \le q_\infty e^\eps$ for all $n\ge M$.

\textbf{Step 1:} $\displaystyle \sqrt[n]{\max\limits_{r+s=n} \cbar\rs} \le q_\infty e^{2\eps}$ for $n$ large independent of $\monr,\mons$.

From the monotonicity we find $q_\infty(\monr,\mons) \ge q_\infty(0,\mons) = \mons q_\infty(0,1)$ and $q_\infty(\monr,\mons) \ge \monr q_\infty(1,0)$. Lemma \ref{maxQuotientLimit} together with \eqref{assCoeff} implies, that $q_\infty(1,0)>0$ and $q_\infty(0,1)>0$. For $n\ge M$ we find 
\begin{equation*}
  \begin{split}
    \max\limits_{r+s=n} \cbar\rs = 
    \max\limits_{r+s=M} \cbar\rs \prod\limits_{k=M+1}^n q_k  
    \le q_\infty(\monr,\mons)^{n-M} e^{\eps(n-M)} \max\limits_{r+s=M} \cbar\rs.
  \end{split}
\end{equation*}
So we need to estimate $q_\infty(\monr,\mons)^{-M} e^{-\eps M} \max\limits_{r+s=M} \cbar\rs$ independent of $\monr,\mons$. We can do that via 
\begin{equation*}
  \begin{split}
    \max\limits_{r+s=M} \cbar\rs\le \max(\monr,\mons)^M \max\limits_{r+s=M} Q\rs \text{ and } q_\infty(\monr,\mons) \ge \max(\monr,\mons)\min(q_\infty(1,0),q_\infty(0,1)).
  \end{split}
\end{equation*}
Putting this together we find 
\begin{equation*}
  \sqrt[n]{\max\limits_{r+s=n} \cbar\rs} 
  \le q_\infty e^\eps \underbrace{\sqrt[n]{e^{-\eps M} \max\limits_{r+s=M} Q\rs\big[\min(q_\infty(1,0),q_\infty(0,1))\big]^{-M}}}_{\to 1 \text{ as }n\to\infty}.
\end{equation*}
\textbf{Step 2:} $\displaystyle \sqrt[n]{\max\limits_{r+s=n} \cbar\rs} \ge q_\infty e^{-2\eps}$ for $n$ large independent of $\monr,\mons$.

Once more, we find from the monotonicity 
$$q_\infty(\monr,\mons) \le q_\infty(\max(\monr,\mons),\max(\monr,\mons)) = \max(\monr,\mons) q_\infty(1,1).$$ 
Again Lemma \ref{maxQuotientLimit} together with \eqref{assCoeff} implies $q_\infty(1,1) < \infty$. And again we find for $n\ge M$
\[ \max\limits_{r+s=n} \cbar\rs \ge  q_\infty^{n-M} e^{-\eps(n-M)} \max\limits_{r+s=M}\cbar\rs.\]
Next, we will use $\max\limits_{r+s=M}\cbar\rs \ge \max(\cbar_{M,0},\cbar_{0,M}) \ge \max(\monr,\mons)^M \min(Q_{M,0},Q_{0,M})$ to find 
\begin{equation*}
  \sqrt[n]{\max\limits_{r+s=n} \cbar\rs} 
  \ge q_\infty e^{-\eps} \underbrace{\sqrt[n]{e^{\eps M} \min(Q_{M,0},Q_{0,M}) q_\infty(1,1)^{-M}}}_{\to 1 \text{ as }n\to\infty}.
\end{equation*}

%% file: propositions/supercritIsApplicable.tex
Assume that $Q\rs$ satisfies \eqref{assCoeff}, \eqref{concavityAss:old} for all $n \ge L$ and \eqref{assumptionQn}. Then, there is a constant $C<\infty$, such that $\frac{Q\rs}{Q\rps} \le C$ and $\frac{Q\rs}{Q\rsp}\le C.$\\
If we furthermore assume \eqref{assumption:boundaryComb}, then for fixed $\eta,\delta_1,\rho,\sigma > 0$, there is an $M\in\N$ and a $(g_n)\subset \N$ such that
\begin{enumerate}
  \item\label{supercritIsApplicable:i} all the assumptions of Lemmas \ref{constructionIsConcave}, \ref{constructionmnBound} and \ref{constructionMNEstimate} are true for any $c\in X^+_{\rho,\sigma}$ with $\min(\monr,\mons) \ge \eta$, $q_\infty(\monr,\mons) \ge e^{-\frac{\delta_1}2}$ and $q_\infty(0,\mons) \le e^{-\frac 3 4 {\delta_1}}$
    and 
  \item\label{supercritIsApplicable:ii} all the assumptions of Lemmas \ref{constructionIsConcaveCirc}, \ref{constructionmnBoundCirc} and \ref{constructionMNEstimateCirc} are true for any $c\in X^+_{\rho,\sigma}$ with $\min(\monr,\mons) \ge \eta$, $q_\infty(\monr,\mons) \ge e^{-\frac{\delta_1}2}$ and $q_\infty(\monr,0) \le e^{-\frac 3 4 {\delta_1}}.$
\end{enumerate}

%% file: proofs/supercritIsApplicable.tex
Assume that $Q\rs$ satisfies \eqref{assCoeff}, \eqref{concavityAss:old} for all $n \ge L$ and \eqref{assumptionQn}. Then we have for $r+s \ge L$ already $\displaystyle \frac{Q\rps}{Q\rs} \overset{\text{\eqref{concavityAss:old}}} \ge \frac{Q_{r+s+1,0}}{Q_{r+s,0}} = q_{r+s+1}(1,0)$. Furthermore, due to \eqref{assumptionQn} we find for $r+s$ sufficiently large 
\[q_{r+s+1}(1,0) \overset{\text{\eqref{assumptionQn}}}\ge \frac 1 2 q_\infty(1,0) \overset{\text{Lemma \ref{maxQuotientLimit}}} = \frac 1 2 \limsup_{n\to \infty} \sqrt[n]{Q_{n,0}} \ge \frac 1 2 \liminf_{r+s \to \infty} \sqrt[r+s]{Q\rs}.\]
Now \eqref{assCoeff} yields the desired bound on $\frac{Q\rs}{Q\rps}.$ Similarly, we have for large $r+s$
\[ \frac{Q\rsp}{Q\rs} \overset{\text{\eqref{concavityAss:old}}}\ge \frac{Q_{0,r+s+1}}{Q_{0,r+s}} \ge \frac 1 2 q_\infty(0,1) \ge \frac 1 2 \liminf_{r+s\to \infty}\sqrt[r+s]{Q\rs} .\]
Next, we want to show \ref{supercritIsApplicable:i}, so assume that $Q\rs$ and $c\rs$ are as in \ref{supercritIsApplicable:i}. First we will find an $M$, such that 
\begin{enumerate}[label=\alph*),ref=\alph*)]
  \item\label{supercritIsApplicable:rstar} $r_*(\cbar,n)$ is well defined for $n\ge M$ and
  \item\label{supercritIsApplicable:large} $\frac{\cbar_{r_*,s_*}}{\cbar_{r_*-1,s_*}} \ge e^{-\delta_1}$ for all $r_*+s_* > M$,
  \item\label{supercritIsApplicable:small} $\frac{\cbar_{r_*,s_*+1}}{\cbar_{r_*,s_*}}\le e^{-\frac{\delta_1}{4}}$ for all $r_*+s_* \ge M$.
\end{enumerate}
Point \ref{supercritIsApplicable:rstar} follows from Proposition \ref{maxcbarLimit}, because $q_\infty \ge e^{-\frac{\delta_1}2}.$\\
To show \ref{supercritIsApplicable:large}, let $M\ge L$ be such that $q_n(\monr,\mons) \ge q_\infty e^{-\frac {\delta_1} 2}$ for all $n\ge M.$ Then, we have for any $n>M$ and $r_* = r_*(n) > 0$ and $s_* = n-r_*$
\[ \frac{\cbar_{r_*,s_*}}{\cbar_{r_*-1,s_*}} \overset{\text{\eqref{concavityAss:old}}}\ge \frac{\cbar_{r^m(n),n-r^m(n)}}{\cbar_{r^m(n)-1,n-r^m(n)}} \ge q_{n}\ge e^{-\delta_1}.\]
Next, we prove \ref{supercritIsApplicable:small}. Let $n\ge L$ be large enough, so that $\max\limits_{r+s =n} \sqrt[n]{\cbar\rs} > e^{-\delta_1}$. If $r_*(n) = 0$, then $\frac{\cbar_{r_*,s_*+1}}{\cbar_{r_*,s_*}} = q_n(0,\mons)$ and for $n$ large enough due to \eqref{assumptionQn}, we have $q_n(0,\mons) \le e^{ \frac{\delta_1}2} q_\infty(0,\mons) \le e^{-\frac{\delta_1}4}.$
 Otherwise let $\eps \in (0,1)$ be such that 
 \[\max\limits_{r+s = r_*+s_*}\big\{(\eps \monr)^r \mons^s Q\rs\big\} = e^{-\delta_1 (r_*+s_*)}.\] 
 Because $\eps \le 1$, we get 
 \[\tilde r \coloneqq \max\big\{ 0\le r\le n \, \big| \, (\eps \monr)^r \mons^{n-r} Q\rn = e^{-\delta_1 n}\big\} \ge r_*(n).\]
 From here we find for $n$ large enough, due to \eqref{assumptionQn}, Lemma \ref{maxQuotientLimit} and Proposition \ref{maxcbarLimit}
\begin{equation*}
  \begin{split}
    \frac{\cbar_{r_*,s_*+1}}{\cbar_{r_*,s_*}} 
    \overset{\text{\eqref{concavityAss:old}}}&\le \frac{\cbar_{\tilde r,n+1-\tilde r}}{\cbar_{\tilde r,n-\tilde r}}
    \le q_{n+1}(\eps \monr, \mons)
    \overset{\text{\eqref{assumptionQn}}}\le e^{\frac{\delta_1}{4}} q_\infty(\eps \monr,\mons)\\
    \overset{\text{Lemma \ref{maxQuotientLimit}}}& = e^{\frac{\delta_1}{4}} \limsup_{r+s \to \infty} \sqrt[r+s]{(\eps \monr)^r \mons^s Q\rs}
    \overset{\text{Proposition \ref{maxcbarLimit}}} \le e^{\frac{\delta_1}{2}} \sqrt[n]{\max\limits_{r+s = n}\big\{(\eps \monr)^r \mons^s Q\rs\big\}} = e^{-\frac  {\delta_1}2}.
  \end{split}
\end{equation*}
Finally, we will show that there is a sequence $(g_n)\subset \N$ such that $g_n \to \infty$ as $n \to \infty$ satisfying $r^m(n) - r_*(n) \ge g_n$ (for $n$ large enough so that $r_*$ is well defined).\\
By Proposition \ref{maxcbarLimit}, there is a $M_0$, such that for all $n\ge M_0$, we have $c_{0,n} \le e^{-\big(\frac 3 4 - \frac 1 {16}\big)\delta_1 n}$ and $\max\limits_{r+s = n} \cbar\rs \ge e^{-\big(\frac 1 2 + \frac 1 {16}\big)\delta_1 n}.$ Now, we distinguish the two cases $r_*(n) = 0$ and $r_*(n) >0$. If $r_*(n) = 0$, we calculate for $n\ge M_0$
\begin{equation*}
  \begin{split}
      e^{n \frac{\delta_1} 8 } 
      &= \frac{e^{-\left(\frac 1 2 + \frac 1 {16}\right) \delta_1 n}}{e^{-\left(\frac 3 4 - \frac 1 {16}\right) \delta_1 n}}
      \le \frac{\cbar_{r^m(n),n-r^m(n)}}{\cbar_{0,n}}
      \overset{\text{denote }r^m = r^m(n)}= \left(\frac \monr \mons\right)^{r^m}\frac{Q_{r^m,n-r^m}}{Q_{0,n}}\\
      &= \left(\frac \monr \mons\right)^{r^m}\frac{Q_{1,n-1}}{Q_{0,n}} \cdot \ldots \cdot \frac{Q_{r^m,n-r^m}}{Q_{r^m-1,n-r^m+1}}.
  \end{split}
\end{equation*}
Next, we use \eqref{assumption:boundaryComb} to estimate 
$\displaystyle \frac{Q\rpsm}{Q\rs}  = \underbrace{\frac{Q\rpsm}{Q\rsm}}_{\overset{\text{\eqref{assumption:boundaryComb}}} \le \lambda n} \underbrace{\frac{Q\rsm}{Q\rs}}_{\overset{\text{above}} \le C}$ for $r+s =n$ and $s>0$.
Combining this with $\monr\le \rho$ and $\mons \ge \eta$, we arrive at 
\[e^{n \frac{\delta_1}8} \le \left(\frac{C\lambda \rho n}{\eta}\right)^{r^m}.\]
Taking the logarithm on both sides, we see $r^m(n) \ge \frac{\delta_1} 8 \frac{n}{\ln\left(\frac{C\lambda \rho n}\eta\right)} \to \infty.$\\
If $r_*(n) >0$ we proceed similarly, 
\begin{equation*}
  \begin{split}
  e^{n \frac{7}{16}\delta_1 } 
      &= \frac{e^{-\left(\frac 1 2 + \frac 1 {16}\right) \delta_1 n}}{e^{-\delta_1 n}}
      \le \frac{\cbar_{r^m(n),n-r^m(n)}}{\cbar_{r_*(n)-1,n-r_*(n)+1}}
      \overset{\text{denote }r_* = r_*(n)}= \left(\frac \monr \mons\right)^{r^m-r_*+1}\frac{Q_{r^m,n-r^m}}{Q_{r_*-1,n-r_*+1}}\\
      &\overset{\text{as above}}\le \left(\frac{C\lambda \rho n}{\eta}\right)^{r^m-r_*+1}.
  \end{split}
\end{equation*}
Again, taking the logarithm yields $r^m(n)- r_*(n) \ge \frac{7\delta_1} {16} \frac{n}{\ln\left(\frac{C\lambda \rho n}\eta\right)} - 1 \to \infty.$

The proof of \ref{supercritIsApplicable:ii} is analogous. (Alternatively \ref{supercritIsApplicable:ii} reduces to \ref{supercritIsApplicable:i} with the symmetry $r \leftrightsquigarrow s$.)

%% file: theorems/dissipationSupercritical.tex
Assume $Q\rs$ satisfies \eqref{assCoeff}, \eqref{assumption:boundaryComb}, \eqref{assumptionQn} and \eqref{concavityAss:old} for all $n \ge L$ and fix $\rho,\sigma>0$ as well as $\eta,\delta>0.$ Then, there exist constants $K<\infty$ and $\delta_2,\delta_3>0$ as well as $M<\infty$, such that for all $c\in X^+_{\rho,\sigma}$ and $M\le N\in \N$ with 
\[ q_\infty(\monr,\mons)>e^{-\delta_3},\, \min(\monr,\mons) \ge \eta, \, \sumrs[N+1] (r+s) c\rs \le \delta_2 \text{ and } H[c|\cinfzw] > \delta,\]
where $\cinfzw$ satisfies \eqref{steadyStateLimitEq}, we have $\displaystyle \Dnlin(c) \ge \frac 1 K.$

%% file: proofs/dissipationSupercritical.tex
By Lemma \ref{relativeEntropySemicontinuity}, we can find an $M_1\in\N$ and $\delta_1,\delta_2>0$, such that for all $N>M_1$ and any $c\in X^+_{\rho,\sigma}$ with $\monr,\mons>0$, $H[c|\cinfzw]>\delta$ and $\sumrs[N+1] (r+s) c\rs \le \delta_2$, we have $H^\Gamma[c|\cbar] \ge \frac{\delta} 2,$ where 
\[ \Gamma \coloneqq \left\{ (r,s) \in \Omega \, \middle| \, r+s \le M_1 \text{ or }  r+s \le N \text{ with } \ln\Big(\cbar\rs^{\frac 1 {r+s}}\Big) < -\delta_1\right\}.\]
Now, we distinguish four cases.\\
\textbf{Case 1: $\min(\monr,\mons)\ge \eta$ and $q_\infty(\monr,\mons)> e^{-\frac {\delta_1} 2}$ and $q_\infty(0,\mons) \le e^{-\frac 3 4 {\delta_1}}$ and $q_\infty(\monr,0) > e^{-\frac 3 4 {\delta_1}}.$}

Let $M_2$ be from Proposition \ref{supercritIsApplicable}. Proposition \ref{maxcbarLimit} together with $q_\infty(\monr,0) > e^{-\frac 3 4 {\delta_1}}$ implies for an $M_3$ that $\cbar_{n,0} > e^{-\delta_1 n}$ for all $n\ge M_3.$ Setting $M \coloneqq \max(M_1,M_2,M_3)$ implies $\Gamma \subset \{r+s \le N\,|\, r+s\le M \text{ or }r < r_*\}.$
Let $\chat\rs$ be constructed according to \eqref{chatConstruction} starting at $r+s = M$.
In particular $\cbar\rs = \chat\rs$ on $\Gamma$ and therefore $\displaystyle H^\Gamma[c|\cbar] \le \sumrs[2][N] \chat\rs \Psi(u\rs)$. Now, Lemma \ref{constructionIsConcave} tells us that we can use Theorem \ref{hardyInequality} and Lemma \ref{MNEstimates} for $\chat\rs$. And Lemma \ref{constructionMNEstimate} bounds the constant from Lemma \ref{MNEstimates}. So we obtain for $\er\rs,\es\rs$ defined via $\chat\rs$ and a constant $C$
\begin{equation*}
  \begin{split}
    \sumrs[2][N] \chat\rs \Psi(u\rs) 
    \le C \sumrs[2][N] \chat\rs \left(\er\rs \Bigl(\delr \sqrt{\Psi(u\rs)}\Bigr)^2 +  \es\rs \Bigl(\dels \sqrt{\Psi(u\rs)}\Bigr)^2\right).
  \end{split}
\end{equation*}
Now, for $r+s \ge M$, we bound
\begin{equation*}
  \begin{split}
    \er\rs \chat\rs &\le \chat\rms \left(\er\rs \frac{\chat\rs}{\chat\rms} + \es\rmsp \frac{\chat\rmsp}{\chat\rms}\right) = \chat\rms m_{r+s}(\chat)\\
    \overset{\text{Lemma \ref{constructionmnBound}}}&\le 2  \chat\rms 
    \overset{\text{Lemma \ref{constructionIsConcave}.\ref{constructionIsConcave:le}}}\le  2  \cbar\rms 
    \overset{\monr \ge \eta}\le \frac{2 }{\eta}\monr \cbar\rms .
  \end{split}
\end{equation*}
and similarly $\es\rs \chat\rs \le \frac 2 \eta \mons \cbar\rsm.$ From here, we can proceed as in the proof of Theorem \ref{dissipationSubcritical}, to find 
\[ \frac \delta 2 \le H^\Gamma \le \sumrs[2][N]\chat\rs \Psi(u\rs) \le C_{\delta_1,\eta,Q\rs,\rho,\sigma} \Dnlin.\]
\textbf{Case 2: $\min(\monr,\mons)\ge \eta$ and $q_\infty(\monr,\mons)> e^{-\frac {\delta_1} 2}$ and $q_\infty(0,\mons) > e^{-\frac 3 4{\delta_1}}$ and $q_\infty(\monr,0) \le e^{-\frac 3 4{\delta_1}}.$}

This is the same as Case 1, using $\ccirc\rs$ instead of $\chat\rs.$

\textbf{Case 3: $\min(\monr,\mons)\ge \eta$ and $q_\infty(\monr,\mons)> e^{-\frac {\delta_1} 2}$ and $q_\infty(0,\mons) \le e^{-\frac 3 4{\delta_1}}$ and $q_\infty(\monr,0) \le e^{-\frac 3 4 {\delta_1}}.$}

Let $M_2$ again be from Propositions \ref{supercritIsApplicable} and set $M= \max(M_1,M_2).$ Let $\chat\rs$ be constructed according to \eqref{chatConstruction} and $\ccirc\rs$ according to \eqref{ccircConstruction} both starting at $M$. Then we have $\cbar\rs \le \chat\rs + \ccirc\rs$ on $\Gamma$. And as in Case 3 and Case 4, there are constants $\hat{C}_{\delta_1,\eta,Q\rs,\rho,\sigma}, \mathring{C}_{\delta_1,\eta,Q\rs,\rho,\sigma}<\infty$, such that 
\[ \sumrs[2][N] \chat\rs \Psi(u\rs) \le \hat{C}_{\delta_1,\eta,Q\rs,\rho,\sigma} \Dnlin
  \quad\text{ and } \quad
\sumrs[2][N] \ccirc\rs \Psi(u\rs) \le \mathring{C}_{\delta_1,\eta,Q\rs,\rho,\sigma}\Dnlin.\]
Combining this, we obtain
\[ \frac \delta 2 \le H^\Gamma \le \sumrs[2][N] \chat\rs \Psi(u\rs) + 
\sumrs[2][N] \ccirc\rs \Psi(u\rs)
\le (\hat{C}_{\delta_1,\eta,Q\rs,\rho,\sigma} + \mathring{C}_{\delta_1,\eta,Q\rs,\rho,\sigma}) \Dnlin.\]

\textbf{Case 4: $\min(\monr,\mons)\ge \eta$ and $q_\infty(\monr,\mons)> e^{-\frac {\delta_1} 2}$ and $q_\infty(0,\mons) > e^{-\frac 3 4{\delta_1}}$ and $q_\infty(\monr,0) > e^{-\frac 3 4 {\delta_1}}.$}

Due to Proposition \ref{maxcbarLimit}, we can fix a $M>L$, such that $\cbar_{n,0} > e^{-\delta_1 n}$ and $\cbar_{0,n} > e^{-\delta_1 n}$ for all $n > M$. Since the concavity \eqref{concavityAss:old} implies $\min\limits_{r+s = n}\{\cbar\rs\} = \min(\cbar_{n,0},\cbar_{0,n})$, we obtain $\Gamma \subset\{r+s\le M\}$.
Furthermore, we can crudely estimate for $n<M$
\begin{equation*}
  \sumrs[n+1][] \cbar\rs
  \le \sumrs[n][] \cbar\rps + \cbar \rsp
  = \sumrs[n][] \cbar\rs \underbrace{\Big(\frac{\cbar\rps}{\cbar\rs} + \frac{\cbar\rsp}{\cbar\rs}\Big)}_{\mathclap{\le (\monr + \mons) \max\limits_{r+s <M} \Big\{ \frac{Q\rps}{Q\rs},\frac{Q\rsp}{Q\rs}\Big\}}}.
\end{equation*}
This implies for $2\le n\le M$ the estimate $m_n \le (\rho+\sigma) \max\limits_{r+s <M} \Big\{ \frac{Q\rps}{Q\rs},\frac{Q\rsp}{Q\rs}\Big\}\eqqcolon C_{\rho,\sigma,Q\rs,\delta_1}$. Now, Lemma \ref{MNEstimates} implies
\[\mathbb  E[\Trs \cbar\kl] f\rs \mathbb E\left[\Prs \frac 1 {f\kl \cbar\kl}\right] \le C^2 \left(\sum\limits_{k=0}^{M} C_{\rho,\sigma,Q\rs,\delta_1}^k\right)^2,\]
which is bounded independent of $N$ and $c\rs.$ From here we have another constant $C_{\rho,\sigma,Q\rs,\delta_1},$ such that 
\begin{equation*}
  \begin{split}
    \frac \delta 2 \le H^\Gamma \le \sumrs[2][M]\cbar\rs \Psi(u\rs)
    \le C_{\rho,\sigma,Q\rs,\delta_1} D^{M,\text{lin}}
    \overset{M\le N} &\le C_{\rho,\sigma,Q\rs,\delta_1} \Dnlin.
  \end{split}
\end{equation*}
This finishes the proof if we set $\delta_3 = \frac {\delta_1} 2$ and take $M$ as the maximal value from the cases 1--4.

%% file: MainTheorem.tex
\section{Main Theorem}\label{chapter:mainTheorem}

This chapter is devoted to prove Theorems \ref{mainTheorem} and \ref{dissipationEstimateIntro} from the introduction. First we state and prove the analogous results under the assumptions found in Chapter \ref{chapter:dissipationEstimate}. Then, we derive these assumptions from \eqref{assumption:coagulationGrowth}--\eqref{assumption:thermo:higherOrder} and characterise any escaping mass through the binding energy.

\subsection{Proof Under Direct Assumptions on \texorpdfstring{$Q\rs$}{Q}}\label{section:proofUnderDirectAssumptions}

We start by adding the dissipation estimates from Chapter \ref{chapter:dissipationEstimate}.
    \begin{theorem}\textit{(Dissipation estimate)}\label{dissipationEstimate}\\
        \input{\CommonPath/theorems/dissipationEstimate}
    \end{theorem}
\begin{proof}
    \input{\CommonPath/proofs/dissipationEstimate.tex}
\end{proof}
If we combine this with the timescale from Lemma \ref{timescale}, we find a solution that minimises the entropy.
    \begin{theorem}\textit{(Long time behaviour)}\label{longtimeMain}\\
        \input{\CommonPath/theorems/longtimeMain}
    \end{theorem}
\begin{proof}
    \input{\CommonPath/proofs/longtimeMain.tex}
\end{proof}
Clearly, \eqref{assumption:coagulationGrowth} is not optimal and may be replaced by 
$\inf\limits_{N\in 3\N}\Bigg(\frac{\inf\limits_{r+s<N} \Big\{\frac {\ar\rs}{r+s+1},\frac{\as\rs}{r+s+1}\Big\}}{\sup\limits_{r+s+1\ge \frac N 3} \Big\{\frac {\ar\rs}{r+s},\frac{\as\rs}{r+s}\Big\}}\Bigg) >0.$ However \eqref{assumption:coagulationGrowth} is easier to interpret physically as a straight forward adaptation of \eqref{coagulationPhysical}. 
Let us finish this section with a discussion of some coefficients that are covered by Theorem \ref{longtimeMain}, but do not satisfy \eqref{assumption:thermo}--\eqref{assumption:thermo:higherOrder}.
    \begin{example}\textit{(Multiplicative coefficients)}\label{multiplicativeCoeff}\\
        \input{\CommonPath/examples/multiplicativeCoeff}
    \end{example}

\subsection{Reduction to \texorpdfstring{$\Phi(\xi)$}{Phi} and Concentration on the Critical Line}\label{section:reductionToPhi}

We want to finish our discussion of the long time behaviour of \eqref{mcBD} by making the above results interpretable in a physically relevant context.
As we have seen in Subsection \ref{subsection:physicalConsiderations}, we expect ``$Q\rs \approx e^{(r+s)\Phi(\frac r {r+s})}$'', where $\Phi(\xi)$ is the thermodynamically dominating part of the Gibbs free energy. Hence, a general theory should place conditions on $\Phi$ rather then $Q\rs$. We will show below that Section \ref{section:proofUnderDirectAssumptions} can be put into this framework by deducing the assumptions \eqref{assCoeff}, \eqref{assumption:boundaryComb}, \eqref{assumptionMn}, \eqref{assumptionQn} and \eqref{concavityAss:old} from the assumptions \eqref{assumption:thermo:concave} and \eqref{assumption:thermo:boundary} through the connection \eqref{assumption:thermo} and \eqref{assumption:thermo:higherOrder}. Then we deduce some qualitative results of the long time behaviour with respect to the binding energy of the limit point.

As we have alluded to, when we introduced \eqref{assumptionMn}, the behaviour of $m_n$ should be due to Laplace's method. We start with a precise adaptation to the sums appearing in $m_n$, that does not depend on $\monr,\mons.$ We are essentially proving, that if $\Phi^{\prime\prime} \le -\gamma$, then $\sumrs[n][] \cbar\rs$ becomes a Dirac on the maximum.
    \begin{lemma}\textit{(Laplace's method)}\label{cbarDirac}\\
        \input{\CommonPath/lemmas/cbarDirac}
    \end{lemma} 
Note that if we put $Q\rs = e^{(r+s) \Phi\big(\frac r {r+s}\big)}$ into \eqref{QsecondDeriv}, the second derivative of $\Phi$ appears naturally due to $\left( \frac{Q\rs Q\rs}{Q\rpsm Q\rmsp}\right)^{r+s} = e^{(r+s)^2\left(2\Phi\big(\frac r {r+s}\big)- \Phi\big(\frac {r+1} {r+s}\big) - \Phi\big(\frac {r-1} {r+s}\big) \right)} \approx e^{- \Phi^{\prime\prime}\big(\frac r {r+s}\big)}$. 
\begin{proof}
    \input{\CommonPath/proofs/cbarDirac.tex}
\end{proof}
With this, we are in the position to prove that \eqref{assumption:coagulationGrowth}--\eqref{assumption:thermo:higherOrder} imply the assumptions from Theorem \ref{longtimeMain}.
Note, that if the limits in \eqref{assumption:thermo} exist, then $\Phi \in C^0([0,1])$ implies $\lim\limits_{\xi \to 1} (1-\xi)\Phi^\prime(\xi) = 0 = \lim\limits_{\xi \to 0} \xi \Phi^\prime(\xi).$
Also, given \eqref{assumption:thermo}, we see that \eqref{assumption:thermo:boundary} is equivalent to $\xi \varphi(\xi) \in C^0([0,1])$ and $(1-\xi)\psi(\xi) \in C^0([0,1]).$
    \begin{proposition}\textit{}\label{mainAssumptions}\\
        \input{\CommonPath/propositions/mainAssumptions}
\end{proposition}
\begin{proof}
    \input{\CommonPath/proofs/mainAssumptions.tex}
\end{proof}
Finally, we want to characterise the long time behaviour through the binding energy 
\begin{equation}\label{definition:bindingEnergy}
  G_{z,w}(\xi) \coloneqq \xi \ln z + (1-\xi) \ln w + \Phi(\xi) \text{ for } \xi\in[0,1],
\end{equation}
of the limit point $(z,w) \leftarrow (\monr(t),\mons(t))$. To do so, we first describe the region of existence through $G_{z,w}$ and $\varphi,\psi.$
    \begin{corollary}\textit{($\Ex$ through $\Phi$)}\label{regionOfExistencePhi}\\
        \input{\CommonPath/corollaries/regionOfExistencePhi}
    \end{corollary}
\begin{proof}
    \input{\CommonPath/proofs/regionOfExistencePhi.tex}
\end{proof}
\begin{example}\hphantom{0pt}\\
If we apply Corollary \ref{regionOfExistencePhi} to Example \ref{standardCoeffApplicable} and use $z \varphi(\xi_\crit) = w \psi(\xi_\crit) \iff \xi_\crit =\frac{(\lambda z)^{\frac 1 \beta}}{(\lambda z)^{\frac 1 \beta} + (\mu w)^{\frac 1 \beta}}$ and hence 
\[ z \xi_\crit \varphi(\xi_\crit) + w (1-\xi_\crit) \psi(\xi_\crit) = \left((\lambda z)^{\frac 1 \beta} + (\mu w)^{\frac 1 \beta}\right)^\beta,\]
we find $\interior{\Ex} = \left\{\left((\lambda z^{\frac 1 \beta} + (\mu w)^{\frac 1 \beta}\right)^\beta < 1 \right\}$.
\end{example}

Next, we can also describe $(z,w)\in \Ex$  satisfying \eqref{steadyStateLimitEq} in terms of the binding energy, which tells us that mass can only be lost (in $\rho,\sigma$ space) in a single direction, which is given by the maximum point $\xi_\crit$ of $G_{z,w}$. To be precise, we have the following.
    \begin{proposition}\textit{(Characterisation of the limit points)}\label{criticalDirection}\\
        \input{\CommonPath/propositions/criticalDirection}
\end{proposition}
\begin{proof}
    \input{\CommonPath/proofs/criticalDirection.tex}
\end{proof}
Lastly, we can also show that any mass escaping to infinity has to concentrate in $\Omega$ around a line given by $\frac r {r+s} = \xi_\crit,$ where $\xi_\crit(z,w)$ is given as in Corollary \ref{regionOfExistencePhi} with $(z,w) \in \Ex$ satisfying \eqref{steadyStateLimitEq}.
Note that the concentration in phase space seems to be a common feature for multidimensional coagulation equations, as it was also observed for example in \cite{ferreiramcCoagulation} for mass conserving multicomponent (pure) coagulation equations and in \cite{cristianCoagulationNonSphere} for a (pure) coagulation equation that incorporated the geometry of the particles.
    \begin{proposition}\textit{(Concentration on the critical line)}\label{massConcentrationOnCritDirection}\\
        \input{\CommonPath/propositions/massConcentrationOnCritDirection}
\end{proposition}
\begin{proof}
    \input{\CommonPath/proofs/massConcentrationOnCritDirection.tex}
\end{proof}

\subsection{Remark on the One-Component System}\label{section:improvedOC}

Our proof strategy may also be used to improve the assumptions on the coefficients in the classical one-component system. To our knowledge, the best result so far is from \cite{slemrod:longTime}. After fixing a typo in the statement, it reads:
    \begin{theorem}\textit{(One-component long time behaviour, \cite{slemrod:longTime})}\label{longtimeSlemrod}\\
        \input{\CommonPath/theorems/longtimeSlemrod}
    \end{theorem}
If we follow along the arguments of chapters 2 and 3, we can replace \eqref{assumption:ocBD} with 
\begin{equation}\label{assumption:ocBD:new}
  \lim\limits_{r \to \infty} \frac{Q_r}{Q_{r+1}} = z_s \quad\text{ and }\quad
  \inf\limits_{N\in 3\N}\Bigg\{\,\frac{\inf\limits_{r<N} \frac {a_r}{r+1}}{\sup\limits_{r+1\ge \frac N 3} \frac {a_r}{r}}\,\Bigg\} >0.
\end{equation}
This is nice, because it separates the assumption on the energy $Q$ from those on the dynamics $a$. In particular, if we assume $\lim\limits_{r \to \infty} \frac{Q_r}{Q_r+1} = z_s$, as is often the case (e.g. \cite{canizo:SpectralGab,canizo:EDEstimate}), then $a_r = (1+\indicator{r \in 2\N}) r^\alpha$ violates \eqref{assumption:ocBD}, but not \eqref{assumption:ocBD:new}. To prove \eqref{assumption:ocBD:new}, one can redo the calculations above in the one-component setting. This is pretty straight forward. Section \ref{section:logSobolevInequality} reduces to the proof of Theorem \ref{hardyInequality}, where all the steps no longer need further explanations. Section \ref{section:supercriticality} becomes obsolete, because we can just cut at a fixed value $M$, i.e. consider $H^M[c|\cbar]$ if $\frac {c_1} {z_s} \ge e^{-\delta_1}$ and $H^N$ else. Alternatively, we can reduce the one-component system given by $A_r,Q_r$ to the two-component system, as was already pointed out in \cite{dunwell:phd}, via 
\[ \ar\rs \coloneqq \as\rs \coloneqq A_{r+s} \text{ and }Q\rs = \binom{r+s}{r}Q_{r+s}.\]
Then, if $c\rs$ solves the two-component system it follows that $x_m = \sumrs[m][]\cbar\rs$ solves the one-component system. Finally, the assumptions of Proposition \ref{mainAssumptions} and Theorem \ref{longtimeMain} follow from \eqref{assumption:ocBD:new}.

%% file: theorems/dissipationEstimate.tex
Assume $Q\rs$ satisfies \eqref{assCoeff}, \eqref{assumption:boundaryComb}, \eqref{assumptionMn}, \eqref{assumptionQn} and \eqref{concavityAss:old} for all $n \ge L$ and fix $\rho,\sigma>0$ and $\delta>0.$
Then, there exist constants $K<\infty$ and $\delta_2>0$ as well as $M<\infty$, such that for all $c\in X^+_{\rho,\sigma}$ and $M\le N\in \N$ with 
\[  \sumrs[N+1] (r+s) c\rs \le \delta_2 \text{ and } H[c|\cinfzw] > \delta,\]
where $\cinfzw$ satisfies \eqref{steadyStateLimitEq}, we have $\displaystyle \Dnlin(c) \ge \frac 1 K.$

%% file: proofs/dissipationEstimate.tex
By Lemma \ref{mnLimit} and Proposition \ref{maxcbarLimit} we have $m_\infty(\monr,\mons) = q_\infty(\monr,\mons).$ Hence, Theorem \ref{dissipationEstimate} follows immediately from Theorems \ref{dissipationSmallMon}, \ref{dissipationSupercritical} and \ref{dissipationSubcritical}.

%% file: theorems/longtimeMain.tex
Assume $\ar\rs,\as\rs$ satisfies \eqref{assumption:coagulationGrowth} and $\br\rps,\bs\rsp$ is given by \eqref{assumption:detailedBalance}, where $Q\rs$ satisfies \eqref{assCoeff}, \eqref{assumption:boundaryComb}, \eqref{assumptionMn}, \eqref{assumptionQn} and \eqref{concavityAss:old} for all $n\ge L$.  Then, for any initial data with masses $\rho,\sigma>0,$ there is a solution $c$ to \eqref{mcBD} satisfying 
  \[ H[c(t)|\cinfzw] \to 0 \text{ as }t\to\infty,\]
  where $\cinfzw$ is the steady state satisfying \eqref{steadyStateLimitEq}.

%% file: proofs/longtimeMain.tex
Let $c(t)$ be the solution found in Theorem \ref{entropyInequality}. Then, $H[c(t)|\cinfzw]\ge 0$ is monotonically decreasing.
We will show via contradiction, that $H[c|\cinfzw]\to 0$. So, let us assume that there is a $\delta>0$, such that $H[c|\cinfzw] > \delta$ for all times.

According to Theorem \ref{dissipationEstimate}, there are constants $K<\infty,$ $M\in\N$ and $\delta_2>0$ such that for all $N>M$ with $\sumrs[N+1](r+s)c\rs \le \delta_2$, we have $\Dnlin \ge \frac 1 K.$
Now, let us fix an $\eps>0$ satisfying for $c_\alpha,C_\alpha$ from \eqref{assumption:coagulationGrowth}
\[\displaystyle 0 < \eps < \left(\frac {\delta_2}{2}\right)^2 \frac{c_\alpha}{8\cdot 3^{1-\alpha}\cdot C_\alpha(\rho+\sigma)^2 H[c(0)|\cinfzw]}\frac 1 K.\]
Since $H[c(t)|\cinfzw]$ is a monotonically decreasing function bounded from below, we can find a $T<\infty$, such that 
\[ H[c(T)|\cinfzw] - \lim\limits_{t \to \infty} H[c(t)|\cinfzw] < \eps.\]
Now, let $N \in 3\N$ be so large, that $\frac N 3 > M$ and $\sumrs[\frac N 3] (r+s) c\rs(T) < \frac {\delta_2} 2.$ Then by Lemma \ref{timescale}, for all $\displaystyle t\in [T,T+ N^{1-\alpha} \left(\frac {\delta_2}{2}\right)^2 \frac{1}{4\cdot 3^{1-\alpha}\cdot C_\alpha(\rho+\sigma)^2 H[c(0)|\cinfzw]}]$, we have 
\begin{equation*}
  \begin{split}
    \sumrs[N] (r+s)c\rs(t)
    &\le \sumrs[\frac N 3] (r+s)c\rs(T) 
    + 2(\rho + \sigma)\sqrt{\int_{T}^{t} D(c(\tau))\dd \tau} \sqrt{(t-T)\sup\limits_{r+s+1\ge \frac N 3}\left\{\frac {\ar\rs}{r+s},\frac{\as\rs}{r+s}\right\}}\\
    &\le \frac{\delta_2} 2 + 2(\rho + \sigma) \underbrace{\sqrt{\int_{T}^{t} D(c(\tau))\dd \tau}}_{\le \sqrt{H[c(0)|\cinfzw]}}
    \Bigg((t-T)\underbrace{\sup\limits_{r+s+1\ge \frac N 3}\Big\{\frac {\ar\rs}{r+s},\frac{\as\rs}{r+s}\Big\}}_{\overset{\eqref{assumption:coagulationGrowth}}\le C_\alpha \frac{3^{1-\alpha}}{N^{1-\alpha}}}\Bigg)^{\frac 1 2}\\
  \overset{t-T\le N^{1-\alpha} \left(\frac {\delta_2}{2}\right)^2 \frac{1}{4\cdot 3^{1-\alpha}\cdot C_\alpha(\rho+\sigma)^2 H[c(0)|\cinfzw]}}&\le \frac{\delta_2}2 + \frac{\delta_2}2.
  \end{split}
\end{equation*}
In particular, $\Dnlin \ge \frac 1 K$ holds true on that time interval.\\
Next, from \eqref{assumption:coagulationGrowth} we find $\displaystyle \inf\limits_{r+s<N} \Big\{\frac{\ar\rs}{r+s+1},\frac{\as\rs}{r+s+1}\Big\} \ge \frac{c_\alpha }2 \frac{1}{N^{1-\alpha}}$ and hence $\displaystyle D\ge \frac{c_\alpha }2 \frac{1}{N^{1-\alpha}}\Dnlin$.
Finally, we can put this together, to find the contradiction
\begin{equation*}
  \begin{split}
    \eps \overset{\text{def. }T}&> \int_T^\infty D(\tau)\dd\tau
    \ge \frac{c_\alpha }2 \frac{1}{N^{1-\alpha}}\int_T^{T+ N^{1-\alpha} \left(\frac {\delta_2}{2}\right)^2 \frac{1}{4\cdot 3^{1-\alpha}\cdot C_\alpha(\rho+\sigma)^2 H[c(0)|\cinfzw]}} \Dnlin(\tau)\dd\tau\\
    &\ge \frac{c_\alpha }2 \frac{1}{N^{1-\alpha}}\Bigg(N^{1-\alpha} \left(\frac {\delta_2}{2}\right)^2 \frac{1}{4\cdot 3^{1-\alpha}\cdot C_\alpha(\rho+\sigma)^2 H[c(0)|\cinfzw]}\Bigg) \frac 1 K 
    \overset{\text{def. }\eps}> \eps.
  \end{split}
\end{equation*}

%% file: examples/multiplicativeCoeff.tex
Consider the coefficients from Example \ref{standardCoeffApplicable}, when we set $\beta = 0,$ i.e. $Q\rs = \lambda^r\mu^s e^{-(v_1r+v_2s)^\frac 2 3}$, where $\lambda,\mu,v_1,v_2 >0.$ These do not satisfy the strong concavity condition \eqref{assumption:thermo:concave}. In fact for arbitrary $v_1,v_2$, they also violate the milder concavity assumption \eqref{concavityAss:old}. However, if $v_1=v_2,$ one can check by hand that the assumptions of Theorem \ref{longtimeMain} are still satisfied, with $m_\infty(\monr,\mons) = \max(\lambda\monr,\mu\mons)$. Even though, we are not aware of a physically relevant system, that these coefficients describe, they are mathematically pleasing due to their multiplicative structure. In fact, in an early numerical discussion of the two-component Becker--Döring system \cite{soheiliNumericalmcBD}, they consider exactly these coefficients. Now, Theorem \ref{longtimeMain} tell us, that there is a solution minimising $H[c|\cinfzw]$, where $\cinfzw$ is determined by \eqref{steadyStateLimitEq}. Just as in Theorem \ref{mainTheorem}, we can characterise the lost Type I and Type II mass through this limit point.
To do so, we first note that $\Ex = [0,\frac 1 \lambda] \times [0,\frac 1 \mu]$. Now it suffices to observe the following two implications, where $\rho,\sigma>0$ are the initial masses.
\begin{equation}
  \begin{split}
    &\text{ If }\rho(\cinfzw)<\rho, \text{ then } z = \frac 1 \lambda \text{, because otherwise }(u,w) \text{ with }u\in{(z,\frac 1 \lambda)} \text{ is a competitor violating \eqref{steadyStateLimitEq}}.\\
    &\text{ If }\sigma(\cinfzw)<\sigma, \text{ then } w = \frac 1 \mu \text{, because otherwise }(z,v) \text{ with }v\in{(w,\frac 1 \mu)} \text{ is a competitor violating \eqref{steadyStateLimitEq}}.
  \end{split}
\end{equation}
This allows us to draw the analogous picture to Figure \ref{longtimePicture}.
\input{\CommonPath/pictures/longTimeMultiplicative.tex}

%% file: pictures/longTimeMultiplicative.tex
\newcommand{\lam}{2} 
\newcommand{\muVal}{3} 
\newcommand{\zs}{1/\lam}
\newcommand{\ws}{1/\muVal}
\newcommand{\purplez}{0.4}
\newcommand{\purplew}{0.85}
\newcommand{\bluez}{0.7} 
\newcommand{\redw}{0.8} 
\newcommand{\scale}{0.2}
\begin{figure}[H]
  \centering 
  \begin{tikzpicture}[
          scale=1,
          declare function={
            xln(\x) = ifthenelse(\x == 0, 0, \x*ln(\x));
            f(\x,\k) = 
              \x^1*1*exp(-(1+\k)^(2/3))+
              \x^2*2*exp(-(2+\k)^(2/3))+
              \x^3*3*exp(-(3+\k)^(2/3))+
              \x^4*4*exp(-(4+\k)^(2/3))+
              \x^5*5*exp(-(5+\k)^(2/3))+
              \x^6*6*exp(-(6+\k)^(2/3))+
              \x^7*7*exp(-(7+\k)^(2/3))+
              \x^8*8*exp(-(8+\k)^(2/3))+
              \x^9*9*exp(-(9+\k)^(2/3))+
              \x^10*10*exp(-(10+\k)^(2/3))+
              \x^11*11*exp(-(11+\k)^(2/3))+
              \x^12*12*exp(-(12+\k)^(2/3))+
              \x^13*13*exp(-(13+\k)^(2/3))+
              \x^14*14*exp(-(14+\k)^(2/3))+
              \x^15*15*exp(-(15+\k)^(2/3))+
              \x^16*16*exp(-(16+\k)^(2/3))+
              \x^17*17*exp(-(17+\k)^(2/3))+
              \x^18*18*exp(-(18+\k)^(2/3))+
              \x^19*19*exp(-(19+\k)^(2/3))+
              \x^20*20*exp(-(20+\k)^(2/3))+
              \x^21*22*exp(-(21+\k)^(2/3))+
              \x^22*22*exp(-(22+\k)^(2/3));
            rho(\z,\w) = 
              \w^0 * f(\z,0)+
              \w^1 * f(\z,1)+
              \w^2 * f(\z,2)+
              \w^3 * f(\z,3)+
              \w^4 * f(\z,4)+
              \w^5 * f(\z,5)+
              \w^6 * f(\z,6)+
              \w^7 * f(\z,7)+
              \w^8 * f(\z,8)+
              \w^9 * f(\z,9)+
              \w^10 * f(\z,10)+
              \w^11 * f(\z,11)+
              \w^12 * f(\z,12)+
              \w^13 * f(\z,13)+
              \w^14 * f(\z,14)+
              \w^15 * f(\z,15)+
              \w^16 * f(\z,16)+
              \w^17 * f(\z,17)+
              \w^18 * f(\z,18)+
              \w^19 * f(\z,19)+
              \w^20 * f(\z,20)+
              \w^21 * f(\z,21)+
              \w^22 * f(\z,22);
            sigma(\z,\w) = rho(\w,\z);
          }
      ]
      \begin{scope}[shift={(6,0)}]
        \draw[->] (0, 0) -- (3.5, 0) node[below] {$\rho$};
        \draw[->] (0, 0) -- (0, 3.5) node[left] {$\sigma$};
        \draw[scale={\scale}, domain=0:1, smooth, variable=\w, LightCoral] plot ( {rho(1,\w)}, {sigma(1,\w)});
        \draw[scale={\scale}, domain=0:1, smooth, variable=\z, LightSlateBlue] plot ( {rho(\z,1)}, {sigma(\z,1)});
        \foreach \xi in {{\bluez},{\redw},0.5,0.88,0.94,0.98}{
        \draw[scale={\scale}, densely dashed,  LightSlateBlue, very thin] ({rho(\xi,1)},{sigma(\xi,1}) -- ({rho(\xi,1)},3.5/\scale);
        \draw[scale={\scale}, densely dashed,  LightCoral, very thin] ({rho(1,\xi)},{sigma(1,\xi}) -- (3.5/\scale,{sigma(1,\xi)});
        }
        \foreach \xi in {0,0.1,0.2,...,1}{
        \draw[scale={\scale}, densely dashed,  LightSeaGreen, very thin] ({rho(1,1)},{sigma(1,1}) -- ({(1-\xi)*3.5/\scale+ \xi*rho(1,1)},{\xi*3.5/\scale+ (1-\xi)*sigma(1,1)});
        }
        \draw[-{Latex}] ({\scale*rho(1,\redw)},{\scale*sigma(1,\redw)}) -- ({\scale*rho(1,\redw)+0.4},{\scale*sigma(1,\redw)}) node[above right = -2.5pt] {$v_{\text{\color{LightCoral}crit}}$};
        \draw[-{Latex}] ({\scale*rho(\bluez,1)},{\scale*sigma(\bluez,1)}) -- ({\scale*rho(\bluez,1)},{\scale*sigma(\bluez,1)+0.4}) node[above right = -3pt] {$v_{\text{\color{LightSlateBlue}crit}}$};
        \filldraw[LightCoral] ({\scale*rho(1,\redw)},{\scale*sigma(1,\redw)}) circle (1pt);
        \filldraw[LightSlateBlue] ({\scale*rho(\bluez,1)},{\scale*sigma(\bluez,1)}) circle (1pt);
        \filldraw[LightSeaGreen] ({\scale*rho(1,1)},{\scale*sigma(1,1)}) circle (1pt);
        \filldraw[DarkKhaki] ({\scale*rho(\purplez,\purplew)},{\scale*sigma(\purplez,\purplew)}) circle (1pt);
      \end{scope}
      \draw[<->] (3.5,2) to[bend left=30] (5,2);
      \node at (4.25,2.5){$\rho(\cinfzw),\sigma(\cinfzw)$};
      \begin{scope}[shift={(0,0)},scale=3.5/0.6]
          \node at (0.3,0.15) (a) {$\Ex$}; 
          \draw[->] (0, 0) -- (0.6, 0) node[below] {$z$};
          \draw[->] (0, 0) -- (0, 0.6) node[left] {$w$};
          \draw[scale=1, domain=0:\zs, smooth, variable=\z, LightSlateBlue] plot ({\z}, {\ws});
          \draw[scale=1, domain=0:\ws, smooth, variable=\w, LightCoral] plot ({\zs}, {\w});
          \filldraw[LightSeaGreen] (\zs,\ws) circle (0.6/3.5pt);
          \filldraw[LightSlateBlue] ({\bluez*\zs},\ws) circle (0.6/3.5pt);
          \filldraw[LightCoral] (\zs,{\ws*\redw}) circle (0.6/3.5pt);
          \filldraw[DarkKhaki] ({\zs*\purplez},{\ws*\purplew}) circle (0.6/3.5pt);
      \end{scope}
      \draw[<->] (9.5,2) to[bend left=30] (11,2);
      \node at (10.25,2.75){\footnotesize$v_\crit \coloneqq \begin{pmatrix}
        \xi_\crit \\ 1-\xi_\crit
      \end{pmatrix}$};
      \begin{scope}[shift={(12,3)}]
          \node at (-0.2,-0.3) (a) {$0$}; 
          \node at (-0.2,-0.65) (a) {\,\rotatebox{90}{$=$}};
          \node at (-0.2,-0.9) (a) {$\xi_\text{\color{LightSlateBlue}crit}$};
          \draw[->] (-0.2, 0) -- (3.5, 0) node[below] {$\xi$};
          \draw[->] (0, -3) -- (0, 0.5) node[left] {$G_{z,w}(\xi)$};
          \draw[dotted] (3, -3) -- (3, 0.5);
          \draw[scale=3, domain=0:1, smooth, variable=\x, LightCoral] plot ({\x}, {(1-\x)*ln(\redw)});
          \draw[scale=3, domain=0:1, smooth, variable=\x, LightSlateBlue] plot ({\x}, {\x * ln(\bluez)});
          \draw[scale=3, domain=0:1, smooth, variable=\x, LightSeaGreen] plot ({\x}, {0});
          \draw[scale=3, domain=0:1, smooth, variable=\x, DarkKhaki] plot ({\x}, {\x * ln(\purplez) + (1-\x)*ln(\purplew)});
          \draw[-] (3,0) -- (3,-0.1) node[below] {$\xi_\text{\color{LightCoral}crit}$};
      \end{scope}
  \end{tikzpicture}
  \caption{All initial conditions with masses on the dashed lines will converge to the anchor of their line. The slopes of the dashed lines are determined by $G_{z,w}^{-1}(\{0\}),$ where the binding energy $G_{z,w}$ is given as $G_{z,w}(\xi) = \xi \ln (\lambda z) + (1-\xi) \ln(\mu w).$}
  \label{longtimePictureMultiplicative}
\end{figure}

%% file: lemmas/cbarDirac.tex
Assume that there is a $c>0$, such that for all $0 <  r< n$ and $s=n-r$ we have 
\begin{equation}\label{QsecondDeriv}
\left( \frac{Q\rs Q\rs}{Q\rpsm Q\rmsp}\right)^{r+s} \ge (1+c).
\end{equation}
Then, given $\delta>0$, there is a constant $C_\delta$, such that 
\begin{equation*}
  \frac{\sum\limits_{\substack{r+s = n \\ r< r_m(n) - \delta n}} \cbar\rs}{
    \sum\limits_{\substack{r+s = n \\  r_m(n) - \delta n \le r \le r_m}} \cbar\rs} \le  \frac{C_\delta}{n} \quad\text{ and }\quad
\frac{\sum\limits_{\substack{r+s = n \\ r> r^m(n) + \delta n}} \cbar\rs}{
    \sum\limits_{\substack{r+s = n \\  r_m \le r \le r_m(n) + \delta n }} \cbar\rs} \le  \frac{C_\delta}{n},
\end{equation*}
where $\displaystyle r_m(n) = \min\Big\{r \,\Big|\, \cbar\rn \ge \cbar\kn \Big\}$ and $\displaystyle r^m(n) = \max\Big\{r \,\Big|\, \cbar\rn \ge \cbar\kn \Big\}$ and $\monr,\mons$ are arbitrary with $\max(\monr,\mons)>0$.

%% file: proofs/cbarDirac.tex
In this proof, for a given $0\le k,r\le n$ we always denote $s= n-r$ and $l=n-k$.

First, we will show that for $r<r_m - \delta n - 1$, we have a constant $ 0< q_\delta < 1$ such that  $\frac{\cbar\rs}{\cbar\rpsm} \le q_\delta.$ This is seen through a telescope product
\begin{equation*}
  \begin{split}
    \frac{\cbar\rs}{\cbar\rpsm}
  \overset{s_m \coloneqq n-r_m}&= \frac{\cbar_{r_m-1,s_m+1}}{\cbar_{r_m,s_m}}\frac{Q_{r_m,s_m} Q\rs}{Q_{r_m-1,s_m+1}Q\rpsm} \le \frac{Q_{r_m,s_m} Q\rs}{Q_{r_m-1,s_m+1}Q\rpsm}\\
  &= \prod\limits_{k=r+1}^{r_m-1} \frac{Q\kmlp Q\kplm}{Q\kl Q\kl}
  = \Bigg(\prod\limits_{k=r+1}^{r_m-1} \left(\frac{Q\kmlp Q\kplm}{Q\kl Q\kl}\right)^n\Bigg)^{\frac 1 n}\\
  \overset{\text{\eqref{QsecondDeriv}}} &\le  \Bigg(\prod\limits_{k=r+1}^{r_m-1} \frac 1 {1+c}\Bigg)^{\frac 1 n}
   = \left(\frac 1 {1+c}\right)^{\frac{r_m - 1 -r}{n}}\\
   \overset{r \le r_m - \delta n -1}&\le \left(\frac 1 {1+c} \right)^\delta \eqqcolon q_\delta.
  \end{split}
\end{equation*}
Now, let $k = \lceil r_m - \delta n -1 \rceil $. If $k < 0$, there is nothing to do. Otherwise, we estimate
\begin{equation*}
  \begin{split}
    \sum\limits_{\substack{r+s = n \\ r< r_m(n) - \delta n}} \cbar\rs
    &= \sum\limits_{\substack{r+s = n \\ r \le k}} \cbar\rs
    = \cbar\kl \sum\limits_{\substack{r+s = n \\ r \le  k}} \frac{\cbar\rs}{\cbar\kl}\\
    &= \cbar\kl \sum\limits_{\substack{r+s = n \\ r \le k}} \prod\limits_{m=r}^{k-1} \frac{\cbar_{m,n-m}}{\cbar_{m+1,n-m-1}}\\
    \overset{k-1 < r_m-\delta n -1}&\le \cbar\kl \sum\limits_{\substack{r+s = n \\ r \le k}} q_\delta^{k-r} 
    \overset{\text{geo. series}} \le \cbar\kl \frac{1}{1-q_\delta}.
  \end{split}
\end{equation*}
Next, we write 
\begin{equation*}
  \begin{split}
    \cbar\kl &= \frac{1}{r_m-k} \sum\limits_{r=k+1}^{r_m} \cbar\kl
    = \frac{1}{r_m-k} \sum\limits_{r=k+1}^{r_m} \cbar\rs \frac{\cbar\kl}{\cbar\rs}
    = \frac{1}{r_m-k} \sum\limits_{r=k+1}^{r_m} \cbar\rs \prod\limits_{m=k}^{r-1} \frac{\cbar_{m,n-m}}{\cbar_{m+1,n-m-1}}.
  \end{split}
\end{equation*}
And once more, we can estimate for $m<r_m$ as above 
\[\frac{\cbar_{m,n-m}}{\cbar_{m+1,n-m-1}} = \frac{\cbar_{r_m-1,s_m+1}}{\cbar_{r_m,s_m}} \prod\limits_{k=m+1}^{r_m-1} \frac{Q\kmlp Q\kplm}{Q\kl Q\kl}\le 1.\]
Putting everything together with $r_m - k  \ge \delta n,$ we obtain
\begin{equation*}
  \begin{split}
    \sum\limits_{\substack{r+s = n \\ r< r_m(n) - \delta n}} \cbar\rs
    \le \frac {1} {\delta (1-q_\delta) }\frac 1 n
    \sum\limits_{\substack{r+s = n \\  r_m(n) - \delta n \le r \le r_m}} \cbar\rs.
  \end{split}
\end{equation*}
The second inequality is completely analogous. For $r> r^m + \delta n+1$ we write 
\[\frac{\cbar\rs}{\cbar\rmsp} = \frac{\cbar_{r^m+1,s^m-1}}{\cbar_{r^m,s^m}} \prod\limits_{k=r^m+1}^{r-1} \frac{Q\kmlp Q\kplm}{Q\kl Q\kl} \le q_\delta.\]
And for $k = \lfloor r^m + \delta n + 1\rfloor $, we find $\sum\limits_{\substack{r+s = n \\ r> r_m(n) - \delta n}} \cbar\rs \le \cbar\kl \frac{1}{1-q_\delta}.$ Lastly, the result follows from $\displaystyle \cbar\kl = \frac{1}{k-r^m} \sum\limits_{r=r^m}^{k-1} \cbar\kl \le \frac{1}{k-r^m} \sum\limits_{r=r^m}^{k-1} \cbar\rs$.

%% file: propositions/mainAssumptions.tex
Assume $Q\rs$ satisfies the assumptions \eqref{assumption:thermo}--\eqref{assumption:thermo:higherOrder}. Then, $Q\rs$ already satisfies \eqref{assCoeff}, \eqref{assumption:boundaryComb}, \eqref{assumptionMn}, \eqref{assumptionQn} and \eqref{concavityAss:old} for all $n\ge L$. Furthermore, we have 
\[ m_\infty(\monr,\mons) = \exp\left({\max\limits_{\xi \in [0,1]}\Big( \xi \ln \monr + (1-\xi) \ln \mons + \Phi(\xi)}\Big)\right).\]

%% file: proofs/mainAssumptions.tex
\underline{Claim 1}: {\eqref{assCoeff} is true, i.e. $ \lim\limits_{N \to \infty} \inf\limits_{r+s\ge N} Q\rs^{\frac 1 {r+s}} > 0 \quad\text{and}\quad \lim\limits_{N \to \infty} \sup\limits_{r+s \ge N} Q\rs^{\frac 1 {r+s}} < \infty.$}

Let $C<\infty$ be such that $f\rs\le C$ and $g\rs \le C.$ We calculate inductively for any $1<r,s$
\begin{equation*}
  \begin{split}
    Q\rs 
    &= \min \left(\frac {Q\rs}{Q\rms} Q\rms, \frac{Q\rs}{Q\rsm} Q\rsm \right)\\
    &\le \max\limits_{r+s<\infty}\{f\rs,g\rs\} \min \left(\varphi\left(\frac{r}{r+s}\right)Q\rms,  \psi\left(\frac r {r+s}\right)Q\rsm \right)\\
    &\le C \max\limits_{k+l = r+s -1}\{Q\kl\}  \min \left(\varphi\left(\frac{r}{r+s}\right),  \psi\left(\frac r {r+s}\right)\right).
  \end{split}
\end{equation*}
Because of the opposite sign in front of the derivative in $\varphi(\xi) = e^{\Phi(\xi) + (1-\xi)\Phi^\prime(\xi)}$ and $\psi(\xi) = e^{\Phi(\xi) - \xi \Phi^\prime(\xi)}$, we have $\min(\varphi,\psi) \le e^{||\Phi||_\infty}.$ Furthermore, we have $\frac {Q_{r,0}}{Q_{r-1,0}} = f_{r-1,0} \varphi(1) = f_{r-1,0}e^{\Phi(1)} \le C e^{||\Phi||_\infty}$ and $\frac {Q_{0,s}}{Q_{0,s-1}} = g_{0,s-1} \psi(0) = g_{0,s-1}e^{\Phi(0)} \le C e^{||\Phi||_\infty}$. Putting this together, we have shown for any $N$ that 
\[ \max\limits_{r+s = N} Q\rs \le Ce^{||\Phi||_\infty} \max\limits_{r+s = N-1} Q\rs \overset{\text{ind.}}\le \left(Ce^{||\Phi||_\infty}\right)^{N-1} \max(Q_{1,0},Q_{0,1}).\]
In particular, we have $\lim\limits_{N \to \infty} \sup\limits_{r+s \ge N} Q\rs^{\frac 1 {r+s}}\le C e^{||\Phi||_\infty}.$ Similarly, let $c>0$, be such that $f\rs \ge c$ and $g\rs \ge c.$ Then, we have 
  \[ Q\rs = \max \left(\frac {Q\rs}{Q\rms} Q\rms, \frac{Q\rs}{Q\rsm} Q\rsm \right)
  \ge c \min\limits_{k+l = r+s -1}\{Q\kl\}  \max \left(\varphi\left(\frac{r}{r+s}\right),  \psi\left(\frac r {r+s}\right)\right).\]
And therefore, $\lim\limits_{N \to \infty} \inf\limits_{r+s\ge N} Q\rs^{\frac 1 {r+s}} \ge c e^{-||\Phi||_\infty}.$

\underline{Claim 2}: {\eqref{assumption:boundaryComb} is true, i.e. $\frac{Q\rs}{Q\rps} \ge \frac 1 \lambda \frac {r+1}{r+s+1}$ and $\frac{Q\rs}{Q\rsp} \ge \frac 1 \mu \frac {s+1}{r+s+1}$ for all $r+s\ge 1$ and some $\lambda,\mu < \infty.$}

We have 
\begin{equation*}
  \begin{split}
    \frac{Q\rs}{Q\rps}&= \frac 1 {f\rs \varphi\left(\frac{r+1}{r+s+1}\right)}
    = \frac{r+1}{r+s+1} \frac 1 {f\rs \frac{r+1}{r+s+1} \varphi\left(\frac{r+1}{r+s+1}\right)}
    \overset{f\rs \le C} \ge  \frac{r+1}{r+s+1} \frac 1 {C \frac{r+1}{r+s+1} \varphi\left(\frac{r+1}{r+s+1}\right)}\\
    \overset{\text{\eqref{assumption:thermo:boundary}}}&\ge  \frac{r+1}{r+s+1} \frac 1 {C ||\xi \varphi(\xi)||_\infty}
  \end{split}
\end{equation*}
and
\begin{equation*}
  \begin{split}
    \frac{Q\rs}{Q\rsp}&= \frac 1 {g\rs \psi\left(\frac{r}{r+s+1}\right)}
    = \frac{s+1}{r+s+1} \frac 1 {g\rs \frac{s+1}{r+s+1} \psi\left(\frac{r}{r+s+1}\right)}
    \overset{g\rs \le C} \ge  \frac{s+1}{r+s+1} \frac 1 {C \frac{s+1}{r+s+1} \psi\left(\frac{r}{r+s+1}\right)}\\
    \overset{\text{\eqref{assumption:thermo:boundary}}}&\ge  \frac{s+1}{r+s+1} \frac 1 {C ||(1-\xi) \psi(\xi)||_\infty}.
  \end{split}
\end{equation*}

\underline{Claim 3}: {There is an $L\in\N$, such that \eqref{concavityAss:old} is true for all $n\ge L$ (which is equivalent to \eqref{concavityAss}), i.e. $\frac{Q\rpsm}{Q\rs} \le \frac{Q\rps}{Q\rsp} \le \frac{Q\rs}{Q\rmsp}$ for all $r+s = n$.}

Let $F(\xi) = \Phi(\xi) + (1-\xi) \Phi^\prime(\xi).$ Then we have $F^\prime(\xi) = (1-\xi) \Phi^{\prime\prime}(\xi)\overset{\text{\eqref{assumption:thermo:concave}}} \le -\gamma.$ And therefore, for any $r>0$ and $s$, we find 
\begin{equation*}
  \begin{split}
    \frac{Q\rps Q\rmsp}{Q\rs Q \rsp}
    &= \frac{\varphi\left(\frac{r+1}{r+s+1}\right) f\rs}{\varphi\left(\frac{r}{r+s+1}\right) f\rmsp}
    = e^{F\left(\frac{r+1}{r+s+1}\right)-F\left(\frac{r}{r+s+1}\right)} \frac{f\rs}{f\rmsp}\\
    \overset{\text{mean value theorem}}&\le  e^{-\gamma \frac{1}{r+s+1}}  \frac{f\rs}{f\rmsp}
    = \left( e^{-\gamma}  \left(\frac{f\rs}{f\rmsp}\right)^{r+s+1}\right)^{\frac 1 {r+s+1}}.
  \end{split}
\end{equation*}
By \eqref{assumption:thermo:higherOrder}, we can fix an $L$, such that for $r+s \ge L$, we have $\left(\frac{f\rs}{f\rmsp}\right)^{r+s+1}\le e^\gamma$ and therefore $\frac{Q\rps Q\rmsp}{Q\rs Q \rsp}\le 1$. Analogously, let $P(\xi) = \Phi(\xi) - \xi \Phi^\prime(\xi)$ and hence $P^\prime(\xi) = - \xi \Phi^{\prime\prime}(\xi) \ge \gamma.$ Then for any $s>0$ and $r$, we obtain
\begin{equation*}
  \begin{split}
    \frac{Q\rps Q\rs}{Q\rpsm Q \rsp}
    &= \frac{\psi\left(\frac{r+1}{r+s+1}\right) g\rpsm}{\psi\left(\frac{r}{r+s+1}\right) g\rs}
    = e^{P\left(\frac{r+1}{r+s+1}\right)-P\left(\frac{r}{r+s+1}\right)} \frac{g\rpsm}{g\rs }\\
    \overset{\text{mean value theorem}}&\ge  e^{\gamma \frac{1}{r+s+1}}  \frac{g\rpsm}{g\rs}
    \overset{\text{ for }r+s\text{ large via \eqref{assumption:thermo:higherOrder}}}\ge 1.
  \end{split}
\end{equation*}

Now, we turn our attention to $m_n.$ By the assumption \eqref{assumption:thermo:concave}, the functions $\varphi$ and $\psi$ are strictly decreasing/increasing. We will distinguish the three cases, where $\monr \varphi \ge \mons \psi$, $\monr \varphi \le \mons \psi$ or $\monr \varphi(\xi_\crit) = \mons \psi(\xi_\crit)$, for a unique $\xi_\crit \in (0,1).$ If $\monr \varphi \ge \mons \psi$, we set $\xi_\crit \coloneqq 1$ and if $\monr \varphi \le \mons \psi$, we set $\xi_\crit = 0.$

\underline{Claim 4}: Given $0\le \monr \le \rho,$ and $0\le \mons\le \sigma$ with $\max(\monr,\mons) > 0$, we have $\lim\limits_{n \to \infty} m_n(\monr,\mons) = \monr \xi_\crit \varphi(\xi_\crit) + \mons (1-\xi_\crit) \psi(\xi_\crit)$ and the speed of convergence is independent of $\monr,\mons.$

First, we write $m_{n+1}$ as 
\begin{equation*}
  \begin{split}
    m_{n+1} = \frac{\sumrs[n+1][] \cbar\rs}{\sumrs[n][] \cbar\rs}
    =  \frac{\sumrs[n][]\left(\monr \frac{r+1}{r+s+1}\frac{Q\rps}{Q\rs} + \mons \frac{s+1}{r+s+1} \frac{Q\rsp}{Q\rs}\right) \cbar\rs}{\sumrs[n][] \cbar\rs}.
  \end{split}
\end{equation*}
So, it suffices to show that $\frac{\sumrs[n][]\frac{r+1}{r+s+1}\frac{Q\rps}{Q\rs}  \cbar\rs}{\sumrs[n][] \cbar\rs} \to \xi_\crit \varphi(\xi_\crit)$ and $\frac{\sumrs[n][] \frac{s+1}{r+s+1} \frac{Q\rsp}{Q\rs} \cbar\rs}{\sumrs[n][] \cbar\rs} \to (1-\xi_\crit) \psi(\xi_\crit)$ and the speed of convergence is independent of $\monr,\mons.$ To do so, we want to apply Lemma \ref{cbarDirac}, which we can, because for $0<r<n$ and $r+s = n$
\begin{equation*}
  \begin{split}
    \Bigg(\frac{Q\rs Q\rs}{Q\rpsm Q\rmsp}\Bigg)^{r+s}&= 
    \Bigg(\frac{Q\rs}{Q\rps}\frac{ Q\rs}{Q\rsp} \frac{Q\rps}{Q\rpsm}\frac{Q\rsp}{ Q\rmsp}\Bigg)^{r+s} \\
    \overset{\text{\eqref{assumption:thermo}}}& = \underbrace{\Bigg(\frac{\varphi\left(\frac{r}{r+s+1}\right)\psi\left(\frac{r+1}{r+s+1}\right)}{\varphi\left(\frac{r+1}{r+s+1}\right)\psi\left(\frac{r}{r+s+1}\right)}\Bigg)^{r+s}}_{\overset{\text{def. }\varphi,\psi} = e^{(r+s)\left(\Phi^\prime\left(\frac r {r+s+1}\right) - \Phi^\prime\left(\frac{r+1}{r+s+1}\right)\right)}}  \underbrace{\Bigg(\frac{f\rmsp g\rpsm}{f\rs g\rs}\Bigg)^{r+s}}_{\overset{\text{ for }r+s\text{ large via \eqref{assumption:thermo:higherOrder}}}\ge e^{-\gamma}}\\
      \overset{\text{\eqref{assumption:thermo:concave}}}&\ge e^{\frac{r+s}{r+s+1}2\gamma} e^{-\gamma}
      \overset{r+s \text{ large}}\ge e^{\frac \gamma 2}.
  \end{split}
\end{equation*}
We only show the first limit, because the second is analogous. Let $\eps>0$ be arbitrary and $\delta>0$, be such that $|\xi \varphi(\xi) - \xi_\crit \varphi(\xi_\crit)|< \eps$ for all $|\xi-\xi_\crit| \le 3 \delta$, which exists independent of $\xi_\crit$, because of \eqref{assumption:thermo:boundary}. Next, we decompose the sum 
\begin{equation*}
  \begin{split}
    \sumrs[n][]&\frac{r+1}{r+s+1} \frac{Q\rps}{Q\rs} \cbar\rs 
    = \sumrs[n][] \frac{r+1}{r+s+1}\varphi\left(\frac{r+1}{r+s+1}\right) f\rs \cbar\rs \\
    &= \sumrs[n][] \frac{r+1}{r+s+1}\varphi\left(\frac{r+1}{r+s+1}\right) \cbar\rs 
    + \sumrs[n][] \underbrace{\frac{r+1}{r+s+1}\varphi\left(\frac{r+1}{r+s+1}\right)}_{|\cdot|\overset{\text{ by \eqref{assumption:thermo:boundary}}}\le C } \underbrace{(f\rs-1)}_{|\cdot| \le \eps \text{ for }r+s\text{ large}}\cbar\rs 
  \end{split}
\end{equation*}
and then further split the first sum into 
\[\sumrs[n][] = \sum\limits_{\substack{r+s = n \\ r< r_m(n) - \delta n}} 
    + \sum\limits_{\substack{r+s = n \\  r_m(n) - \delta n \le r \le r^m(n) + \delta n}}
    +\sum\limits_{\substack{r+s = n \\ r> r^m(n) + \delta n}}.\]
Since $\xi \varphi(\xi)$ is bounded by \eqref{assumption:thermo:boundary}, Lemma \ref{cbarDirac} yields a constant $C_\delta$, such that 
\[
  \left|\left(\sum\limits_{\substack{r+s = n \\ r< r_m(n) - \delta n}} 
  +\sum\limits_{\substack{r+s = n \\ r> r^m(n) + \delta n}}\right) \frac{r+1}{r+s+1}\varphi\left(\frac{r+1}{r+s+1}\right)\cbar\rs\right| \le \underbrace{\frac{C_\delta} n}_{\le \eps \text{ for }n\text{ large}}  \sumrs[n][]\cbar\rs.
\]
Next, we show for large $n$, that  we have $|\frac{r+1}{r+s+1}\varphi(\frac{r+1}{r+s+1}) - \xi_\crit \varphi(\xi_\crit)| \le \eps$ for $r_m(n) - \delta n \le r \le r^m(n) + \delta n$. By the definition of $\delta$, it suffices to show, that for such $r$, we have $|\frac{r+1}{r+s+1} - \xi_\crit| \le 3 \delta.$ To do so, fix $r\coloneq \lfloor (\xi_\crit + \delta)(n+1)\rfloor$. We claim, that $r^m(n) \le r.$ If $r\ge n$, there is nothing to prove. Otherwise, we know that $\xi_\crit <1$ and $s = n-r >0$ and therefore 
\begin{equation*}
  \begin{split}
    \frac{\cbar\rpsm}{\cbar\rs}
    &= \frac \monr \mons \frac{Q\rpsm Q\rps}{Q\rps Q\rs}
    = \frac \monr \mons \frac{\varphi\left(\frac {r+1}{r+s+1}\right)}{\psi\left(\frac{r+1}{r+s+1}\right)} \frac{f\rs}{g\rpsm}
    = \underbrace{\frac{\monr \varphi(\xi_\crit)}{\mons \psi(\xi_\crit)}}_{\overset{\xi_\crit < 1} \le 1}\frac{\varphi\left(\frac {r+1}{r+s+1}\right)\psi(\xi_\crit)}{\psi\left(\frac{r+1}{r+s+1}\right)\varphi(\xi_\crit)} \frac{f\rs}{g\rpsm}\\
    \overset{\frac{\varphi(\xi)}{\psi(\xi)} = e^{\Phi^\prime(\xi)}}&\le 
    e^{\Phi^\prime\left(\frac {r+1}{r+s+1}\right) - \Phi^\prime(\xi_\crit)} \frac{f\rs}{g\rpsm}
    \overset{\text{\eqref{assumption:thermo:concave}}\implies \Phi^{\prime\prime} \le -2\gamma}\le e^{-2\gamma \left(\frac{r+1}{n+1} - \xi_\crit\right)}  \frac{f\rs}{g\rpsm}\\
    \overset{\frac{r+1}{n+1} - \xi_\crit \ge \delta \text{ by choice of }r}&{\le} e^{-2\gamma \delta}  \frac{f\rs}{g\rpsm}.
  \end{split}
\end{equation*}
Since $f\rs \to 1$ and $g\rs \to 1$, we have for sufficiently large $n$, $\frac{\cbar\rpsm}{\cbar\rs} <1$ and so $r_m \le r,$ i.e. $\frac{r^m+1}{n+1} - \xi_\crit \le \frac{r}{n+1} -\xi_\crit + \frac{1}{n+1}\le 2 \delta$, if $\frac{1}{n+1}\le \delta.$ In particular, for any $r\le r^m+\delta n$, we have $\frac{r+1}{n+1} - \xi_\crit \le \frac{r^m + 1}{n+1} - \xi_\crit + \delta \frac{n}{n+1}\le 3 \delta.$ In analogous fashion, we can set $r\coloneqq \lfloor (\xi_\crit - \delta)(n+1)\rfloor$ and show, that $r\le r_m$. Again, if $r\le 0$, there is nothing to do, and otherwise we have $\xi_\crit >0$ and 
\[ \frac{\cbar\rmsp}{\cbar\rs} \le e^{-2\gamma\delta}\frac{g\rs}{f\rmsp},\]
which is smaller then one, for $n$ large. Hence for $r\ge r_m - \delta n $, we obtain 
$\xi_\crit - \frac{r+1}{n+1} \le \xi_\crit - \frac{r_m + 1}{n+1} + \delta \frac{n}{n+1}\le 3\delta.$ Because $\xi \varphi(\xi)$ is bounded, it remains to show
\[\frac{\sum\limits_{\substack{r+s = n \\  r_m(n) - \delta n \le r \le r^m(n) + \delta n}}\cbar\rs}{\sumrs[n][] \cbar\rs} \to 1,\]
which follows from Lemma \ref{cbarDirac}.

\underline{Claim 5}: {$\monr \xi_\crit \varphi(\xi_\crit) + \mons (1-\xi_\crit)\psi(\xi_\crit) = \exp\left(\max\limits_{\xi\in[0,1]} \Big(\xi \ln\monr + (1-\xi)\ln\mons + \Phi(\xi)\Big)\right)$ and \eqref{assumptionMn} holds true.}

For the first part, we have to distinguish the three cases for $\xi_\crit.$ If $\monr \varphi \ge \mons \psi$, then $0\le (1-\xi) \mons \psi \le (1-\xi) \monr \varphi(\xi) \to 0$ as $\xi \to 1$. So $\monr \xi_\crit \varphi(\xi_\crit) + \mons (1-\xi_\crit) \psi(\xi_\crit) = \monr \varphi(1) = e^{\ln \monr + \Phi(1)}.$ On the other hand $\odv{}{\xi} [\xi \ln \monr + (1-\xi) \ln \mons + \Phi(\xi)] = \ln\left(\frac{\monr \varphi(\xi)}{\mons \psi(\xi)}\right) \ge 0$ implies $\max\limits_{\xi \in [0,1]} \xi \ln \monr + (1-\xi)\ln\mons +\Phi(\xi) = \ln \monr + \Phi(1).$
Similarly $\monr \varphi\le \mons \psi$ yields $\monr \xi_\crit\varphi(\xi_\crit) + \mons(1-\xi_\crit)\psi(\xi_\crit)= e^{\ln \mons + \Phi(0)} = e^{\max \xi \ln \mons + (1-\xi)\monr +\Phi(\xi)}.$
Now, if $\xi_\crit \in (0,1)$, with $1 = \frac{\monr \varphi(\xi_\crit)}{\mons \psi(\xi_\crit)} = e^{\ln\frac \monr \mons + \Phi^\prime(\xi_\crit)}$, then 
\[ \odv{}{\xi}\left[\xi \ln \monr + (1-\xi)\ln\mons + \Phi(\xi)\right]\Big|_{\xi = \xi_\crit} = 0.\]
Since, $\xi \ln \monr + (1-\xi) \ln \mons + \Phi(\xi)$ is strict concave, we have a unique maximum at $\xi_\crit.$ Furthermore, 
\[ \monr \varphi(\xi_\crit) = \mons \psi(\xi_\crit) = e^{\ln \mons + \Phi(\xi_\crit)  - \xi_\crit \Phi^\prime(\xi_\crit)}\overset{\Phi^\prime(\xi_\crit) = \ln \frac \mons \monr} = e^{\xi_\crit \ln \monr + (1-\xi_\crit)\ln\mons + \Phi(\xi_\crit)}.\]
Finally, \eqref{assumptionMn} follows from $m_n(\monr,\mons) = m_\infty \left( 1 + \frac{m_n - m_\infty}{m_\infty}\right),$ because for fixed $\eta,\rho,\sigma >0$ and any $\eta \le \max(\monr,\mons)\le \rho+\sigma$, the maximum $\max\limits_{\xi\in[0,1]}\Big(\xi \ln \monr + (1-\xi) \ln \mons + \Phi(\xi)\Big)$ is bounded from below and above, since $\Phi \in C^0([0,1]).$ 

\underline{Claim 6}: {\eqref{assumptionQn} holds true.}

Fix $0< \eta,\rho,\sigma$. We will show that there is a constant $C_{\eta,\rho,\sigma,Q\rs},$ such that for any $\eps >0$ and $n$ large enough, we have 
\begin{equation*}
  \begin{split}
    &\frac{\max\limits_{k+l = n+1} \cbar\kl}{\max\limits_{k+l = n}\cbar\kl}\ge
    \left(\xi_\crit \varphi(\xi_\crit) \monr + (1-\xi_\crit) \psi(\xi_\crit) \mons\right)\frac{n+1}{n+2}(1- C\eps)\text{ and}\\
    &\frac{\max\limits_{k+l = n+1} \cbar\kl}{\max\limits_{k+l = n}\cbar\kl}
    \le \left(\xi_\crit \varphi(\xi_\crit) \monr + (1-\xi_\crit) \psi(\xi_\crit) \mons\right)(1+ C\eps),
  \end{split}
\end{equation*}
for all $\eta \le \max(\monr,\mons) \le \rho+\sigma.$
So let $\eps>0$ be arbitrary and fix again $\delta>0$, such that $|\xi \varphi(\xi) - \xi_\crit \varphi(\xi_\crit)| < \eps$ and $|(1-\xi) \psi(\xi) - (1-\xi_\crit)\psi(\xi_\crit)| < \eps$ for all $|\xi-\xi_\crit| \le 2 \delta.$
As seen in Claim 4, we have for $n$ large $\frac{r^m(n)}{n+1} \le \xi_\crit + \delta$ and $\frac{r_m(n)+1}{n+1} \ge \xi_\crit - \delta.$ In particular, we find $|\frac{r^m(n) + 1}{n+1} -\xi_\crit |\le 2 \delta$ and $|\frac{r^m(n)}{n+1} - \xi_\crit| \le 2 \delta$ for large $n.$ Now we fix $r\coloneqq r^m(n)$ and $s= n-r$. By the concavity (true by Claim 3), we know that $r^m(n+1) \in \{r,r+1\}.$ We show the case $r^m(n+1) = r+1,$ as the case $r^m(n+1) = r$ is completely analogous. Then, we have 
\begin{equation*}
  \begin{split}
    \frac{\max\limits_{k+l = n+1} \cbar\kl}{\max\limits_{k+l = n}\cbar\kl}
    \overset{\text{def. r}}&= \frac{\cbar\rps}{\cbar\rs}
    = \frac{r+s+1}{r+s+2}\left(\frac{r+1}{r+s+1} \frac{\cbar\rps}{\cbar\rs} + \frac{s+1}{r+s+1} \frac{\cbar\rps}{\cbar\rs}\right)\\
    \overset{\cbar\rps \ge \cbar\rsp}& \ge \frac{r+s+1}{r+s+2}\left(\frac{r+1}{r+s+1} \frac{\cbar\rps}{\cbar\rs} + \frac{s+1}{r+s+1} \frac{\cbar\rsp}{\cbar\rs}\right)\\
    &= \frac{r+s+1}{r+s+2}\left(\frac{r+1}{r+s+1} \varphi\left(\frac{r+1}{r+s+1}\right) \monr f\rs + \frac{s+1}{r+s+1} \psi\left(\frac{r}{r+s+1}\right) \mons g\rs\right).
  \end{split}
\end{equation*}
Because of \eqref{assumption:thermo:boundary}, there is a $C<\infty$, such that $|\xi\varphi(\xi)|\le C $ and $|(1-\xi)\psi(\xi)|\le C$. So for $n$ so large, that $|f\rs-1|<\eps$ and $|g\rs-1|<\eps$, we obtain
\begin{equation*}
  \begin{split}
    &\frac{r+1}{r+s+1} \varphi\left(\frac{r+1}{r+s+1}\right) \monr f\rs + \frac{s+1}{r+s+1} \psi\left(\frac{r}{r+s+1}\right) \mons g\rs\\
    &{\quad\ge} -C(\monr+\mons) \eps + \frac{r+1}{r+s+1} \varphi\left(\frac{r+1}{r+s+1}\right) \monr + \frac{s+1}{r+s+1} \psi\left(\frac{r}{r+s+1}\right) \mons \\
    \overset{|\frac{r}{r+s+1}-\xi_\crit|\le 2\delta\text{ and }|\frac{r+1}{r+s+1}-\xi_\crit|\le 2\delta}&{\quad\ge} -(C+1)(\monr+\mons) \eps +\xi_\crit \varphi(\xi_\crit) \monr + (1-\xi_\crit) \psi(\xi_\crit) \mons.
  \end{split}
\end{equation*}
By Claim 5, $\xi_\crit \varphi(\xi_\crit) \monr + (1-\xi_\crit) \psi(\xi_\crit) \mons$ is bounded from below for all $\monr,\mons$ satisfying $\max(\monr,\mons)\ge \eta$, so we have shown for all $n$ large enough 
\[
    \frac{\max\limits_{k+l = n+1} \cbar\kl}{\max\limits_{k+l = n}\cbar\kl}
  \ge \left(\xi_\crit \varphi(\xi_\crit) \monr + (1-\xi_\crit) \psi(\xi_\crit) \mons\right)\frac{n+1}{n+2}(1- C\eps),
\]
for some constant $C<\infty$ independent of $\monr,\mons.$ Now, we show the reverse inequality. If $r = n$, we have  $\frac{\max\limits_{k+l = n+1} \cbar\kl}{\max\limits_{k+l = n}\cbar\kl} = \frac{\cbar_{n+1,0}}{\cbar_{n,0}} = f_{n,0} \monr e^{\Phi(1)} \le f_{n,0} e^{\max\limits_{\xi\in[0,1]} \xi \ln\monr + (1-\xi)\ln \mons + \Phi(\xi)}.$ If $r<n$, we estimate 
\begin{equation*}
  \begin{split}
    \frac{\max\limits_{k+l = n+1} \cbar\kl}{\max\limits_{k+l = n}\cbar\kl}
    &= \frac{\cbar\rps}{\cbar\rs} = \frac{r+1}{r+s+1} \frac{\cbar\rps}{\cbar\rs} + \frac {s}{r+s+1} \frac{\cbar\rps}{\cbar\rs}\\
    &\le \frac{r+1}{r+s+1} \frac{\cbar\rps}{\cbar\rs} + \frac {s}{r+s+1} \frac{\cbar\rps}{\cbar\rpsm}.
  \end{split}
\end{equation*}
Again, taking $r+s$ so large that $|f\rs-1|<\eps$ and $|g\rpsm -1 |<\eps$, we find a constant $C>0$ independent of $\monr,\mons$, such that 
\begin{equation*}
  \begin{split}
    &\frac{r+1}{r+s+1} \frac{\cbar\rps}{\cbar\rs} + \frac {s}{r+s+1} \frac{\cbar\rps}{\cbar\rpsm} \\
    &\quad\le C\eps + \frac{r+1}{r+s+1} \varphi\left(\frac{r+1}{r+s+1}\right)\monr + \frac {s}{r+s+1}\psi\left(\frac{r+1}{r+s+1}\right)\mons\\
    \overset{|\frac{r+1}{r+s+1}-\xi_\crit|\le 2\delta}&{\quad\le} C\eps +\xi_\crit \varphi(\xi_\crit) \monr + (1-\xi_\crit) \psi(\xi_\crit) \mons + (\monr+\mons)\eps.
  \end{split}
\end{equation*}
Once more, $\xi_\crit \varphi(\xi_\crit) \monr + (1-\xi_\crit) \psi(\xi_\crit) \mons$ is bounded from below for $\max(\monr,\mons)\ge \eta$, so we have found a constant $C<\infty$ such that for $n$ large
\[
    \frac{\max\limits_{k+l = n+1} \cbar\kl}{\max\limits_{k+l = n}\cbar\kl}
  \le \left(\xi_\crit \varphi(\xi_\crit) \monr + (1-\xi_\crit) \psi(\xi_\crit) \mons\right)(1+ C\eps).
\]

%% file: corollaries/regionOfExistencePhi.tex
Assume that $Q\rs$ satisfies \eqref{assumption:thermo}--\eqref{assumption:thermo:higherOrder}. Then we have 
\[ \interior{\Ex} = \left\{(z,w) \in \R^2_{\ge0}\,\middle|\, \max\limits_{\xi\in[0,1]}G_{z,w}(\xi) < 0\right\} = \Big\{z \xi_\crit \varphi(\xi_\crit) + w (1-\xi_\crit) \psi(\xi_\crit) < 1 \Big\},\]
where $\xi_\crit$ is defined through the three cases $\xi_\crit = 1$ if $z \varphi \ge w \psi$, $\xi_\crit =0$ if $z\varphi \le w \psi$ and $z \varphi(\xi_\crit) = w \varphi(\xi_\crit)$ for a unique $\xi_\crit\in(0,1).$

%% file: proofs/regionOfExistencePhi.tex
The first equality follows from Proposition \ref{mainAssumptions} and Lemma \ref{mnLimit}, whereas the second equality is Claim 5 from the proof of Proposition \ref{mainAssumptions}.

%% file: propositions/criticalDirection.tex
Assume $Q\rs$ satisfies \eqref{assumption:thermo}--\eqref{assumption:thermo:higherOrder} and fix $\rho,\sigma>0.$ Let $(z,w)\in \Ex$ satisfy \eqref{steadyStateLimitEq}. Then, we have
\[\text{either  }\rho(\cinfzw) = \rho \text{ and }\sigma(\cinfzw) = \sigma 
\text{ or } G_{z,w}\left(\frac{\rho-\rho(\cinfzw)}{\rho+\sigma - \rho(\cinfzw) - \sigma(\cinfzw)}\right) = 0.\]

%% file: proofs/criticalDirection.tex
Let $(z,w) \in \Ex$ satisfies $\eqref{steadyStateLimitEq}$. Now, there exists a sequence $c^n \in X^+_{\rho,\sigma},$ such that $c^n \weakstar \cinfzw$ for $n\to \infty,$ e.g. by Theorem \ref{longtimeMain}. Hence $\rho(\cinfzw)\le \rho$ and $\sigma(\cinfzw) \le \sigma$ due to Fatou's Lemma.\\
Now we will argue by contradiction. So assume that $\rho + \sigma - \rho(\cinfzw) - \sigma(\cinfzw) > 0$ and $G_{z,w}(\xi_\crit) \neq 0$, where 
\[\xi_\crit \coloneqq \frac{\rho-\rho(\cinfzw)}{\rho+\sigma - \rho(\cinfzw) - \sigma(\cinfzw)}.\]
Since $(z,w)\in \Ex$, Corollary \ref{regionOfExistencePhi} implies $G_{z,w} (\xi) \le 0$ for all $\xi \in [0,1]$ and in particular $G_{z,w}(\xi_\crit) < 0.$\\
\underline{Claim}:  There are $u,v>0$, such that $G_{u,v}(\xi_\crit) > G_{z,w}(\xi_\crit)$ and $\max\limits_{\xi\in[0,1]}G_{u,v}(\xi) < 0.$\\
We distinguish the three cases $\xi_\crit \in (0,1)$, $\xi_\crit = 1$ and $\xi_\crit = 0.$
If $\xi_\crit \in (0,1)$, then we fix $\frac u v$ via $\ln \frac u v = \Phi^\prime(\xi_\crit).$ Hence $G_{u,v}^\prime(\xi_\crit) = 0$ and by the concavity $G_{u,v}$ has a unique maximum at $\xi_\crit.$ So it suffices to choose $v$, such that $G_{u,v}(\xi_\crit) \in (G_{z,w}(\xi_\crit),0).$ If $\xi_\crit=1$, we choose $u>0,$ such that $\ln(u) + \Phi(1) \in (G_{z,w}(1),0)$ and then $0<v$ so small that $G_{u,v} < 0.$ Similarly, if $\xi_\crit = 0$, we choose $v,$ such that $\ln(v) + \Phi(0) \in (G_{z,w}(0),0)$ and then $u>0$ so small that $G_{u,v} < 0.$ This finishes the proof of the claim.

Since $\max\limits_{\xi\in[0,1]} G_{u,v} < 0,$ by Corollary \ref{regionOfExistencePhi}, we have $(u,v) \in \Ex.$ Now, we can calculate 
\begin{equation}
  \begin{split}
    \ln\left(\frac z u\right)\left(\rho - \rho(\cinfzw)\right) + \ln\left(\frac w v\right) \left(\sigma - \sigma(\cinfzw)\right)
    = \frac{G_{z,w}(\xi_\crit) - G_{u,v}(\xi_\crit)}{\rho+\sigma - \rho(\cinfzw) - \sigma(\cinfzw)} < 0,
  \end{split}
\end{equation}
in contradiction with \eqref{steadyStateLimitEq}.

%% file: propositions/massConcentrationOnCritDirection.tex
Assume $Q\rs$ satisfies \eqref{assumption:thermo}--\eqref{assumption:thermo:higherOrder} and fix $\rho,\sigma>0.$ Let $(z,w)\in\Ex$ satisfy \eqref{steadyStateLimitEq} and $(c^n) \subset X^+_{\rho,\sigma}$ be a minimising sequence of $H[c^n|\cinfzw]$. Then, for any $M\in \N$ and $\delta>0$, we have as $n\to\infty$
\begin{equation}
  \sumgamma[\Gamma_{M,\delta}] (r+s) |c^n\rs - \cinfzw\rs| \to 0, \text{ where }
  \Gamma_{M,\delta} \coloneqq \left\{(r,s)\in \Omega\,\middle|\, r+s\le M \text{ or }\frac r {r+s} \not \in B_\delta (\xi_\crit)\right\}
\end{equation}
and $\xi_\crit$ is given by $(z,w).$

%% file: proofs/massConcentrationOnCritDirection.tex
In this proof, for given $r,k,n$ we always denote $s=n-r$ and $l=n-k.$

By Theorem \ref{steadyStateMinimising} it suffices to show that there is a fixed $\eps>0$, such that $\frac{\ln (\cinfzw\rs)}{r+s} \le -\eps$ for all $\frac r {r+s} \not \in B_\delta(\xi_\crit)$ with $r+s$ sufficiently large. To do so, first recall from Claim 4 of the proof of Proposition \ref{mainAssumptions}, that for any $\delta_1>0$ and all $r_m(n,\cinfzw) - \delta_1 n \le r \le r^m(n,\cinfzw)+\delta_1 n$ we have
\begin{equation}\label{equation:massConcentrationOnCritDirection:1}
  \left | \frac{r+1}{n+1} - \xi_\crit\right| \le 3 \delta_1 \text{ if } n\text{ is sufficiently large.}
\end{equation}
Next, we have seen in the proof of Lemma \ref{cbarDirac} that for any $\delta_2>0$, there is a $q_{\delta_2}\in(0,1)$, such that 
\[ \frac{\cinfzw\rs}{\cinfzw\rpsm} \le q_{\delta_2} \text{ for all } r<r_m - \delta_2 n -1 
\text{ and }\frac{\cinfzw\rs}{\cinfzw\rmsp} \le q_{\delta_2} \text{ for all } r>r^m + \delta_2 n +1 .\]
In particular, we find for $k_1\coloneqq \lceil r_m-\delta_2n-1 \rceil$ and $k_2 \coloneqq \lfloor r^m+\delta_2 n +1 \rfloor$ the estimates
\[ \cinfzw\rs \le q_{\delta_2}^{k_1-r} \underbrace{\cinfzw_{k_1,l_1}}_{\mathclap{\le 1 \text{ for }n \text{ large enough}}} \text{ for all }r\le k_1 \text{ and }
\cinfzw\rs \le q_{\delta_2}^{r-k_2} \underbrace{\cinfzw_{k_2,l_2}}_{\mathclap{\le 1 \text{ for }n \text{ large enough}}} \text{ for all }r\ge k_2.\]
Hence, if $n$ is large enough, we obtain for any $\delta_3>0$ 
\[\frac{\ln( \cinfzw\rs)}{r+s} \le \frac{k_1-r}{r+s} \ln( q_{\delta_2}) \le \delta_3 \ln (q_{\delta_2})\text{ for all }r\le k_1-\delta_3 n
\text{ and }\frac{\ln (\cinfzw\rs)}{r+s} \le \delta_3 \ln (q_{\delta_2})\text{ for all }r\ge k_2+\delta_3 n.\]
Now, it remains to chose $\delta_1,\delta_2,\delta_3$, such that for large $n$, we have $r\le k_1-\delta_3 n$, whenever $\frac{r}{r+s} \le \xi_\crit - \delta$ and $r\ge k_2 + \delta_3n$, whenever $\frac{r}{r+s} \ge \xi_\crit + \delta.$ To prove this, we will show $\frac {k_1}n - \delta_3 \ge \xi_\crit - \delta$ and $\frac{k_2}{n}+\delta_3 \le \xi_\crit + \delta$ as follows
\[\frac{k_1}{n} -\delta_3 \overset{\text{def. }k_1}\ge \frac{r_m - \delta_2 n -1} n - \delta_3 
\ge \frac{r_m+1}{n+1} -\delta_2 - \frac 2 n -\delta_3
\overset{r=r_m\text{ in \eqref{equation:massConcentrationOnCritDirection:1}}} \ge \xi_\crit - 3 \delta_1 -\delta_2 - \frac 2 n - \delta_3\]
and
\[\frac{k_2}{n} +\delta_3 \overset{\text{def. }k_2}\le \frac{r^m + \delta_2 n +1} n + \delta_3 
  \overset{\frac r n \le \frac {r+1}{n+1}}\le \frac{r^m+1}{n+1} +\delta_2 + \frac 1 n +\delta_3
\overset{r=r^m\text{ in \eqref{equation:massConcentrationOnCritDirection:1}}} \le \xi_\crit + 3 \delta_1 +\delta_2 + \frac 1 n + \delta_3.\]
So we can finish the proof by choosing $\delta_1,\delta_2,\delta_3>0$ small enough  and $n$ large enough to ensure $3 \delta_1 +\delta_2 + \frac 2 n + \delta_3 \le \delta$.

%% file: theorems/longtimeSlemrod.tex
Assume that $a_r,b_r>0$ and $a_r,b_r \in \bigO(r)$ as well as $\lim\limits_{r \to \infty} \sqrt[r]{Q_r} = \frac 1 {z_s}\in(0,\infty).$ Furthermore, assume that 
\begin{equation}\label{assumption:ocBD}
  \text{ if }0\le z < z_s \text{ then } a_r z \le b_r \text{ for }r \text{ sufficiently large.}
\end{equation}
Suppose $c^0 \in X^+$ and that $c$ is the only solution \eqref{ocBD}--\eqref{ocBDMass} with initial data $c^0,$ then \eqref{ocLongTime} holds true.

%% file: appendix.tex
\section*{Appendix}
\addcontentsline{toc}{section}{Appendix}
\setcounter{section}{4}
In this appendix, we want to discuss the astonishing fact, that even though Remark \ref{hardy} together with \eqref{QThermoDom} suggest, that we can prove Theorem \ref{hardyInequality} by integrating along lines with fixed values $\frac x {x+y},$ the proof demands to integrate along curved lines, where the curvature is depending on the binding energy of the quasi steady state $G(\xi) \coloneqq \xi\ln \monr + (1-\xi)\ln\mons + \Phi(\xi).$

As discussed in the beginning of Subsection \ref{subsection:application} we need to find $\er\rs,\es\rs$ satisfying an estimate of the form 
\begin{equation}\label{appendix:detailedBound}
  \frac{\er\rps}{\er\rps+\es\rps} \frac{\cbar\rps}{\cbar\rs}  + \frac{\es\rsp}{\er\rsp+\es\rsp} \frac{\cbar\rsp}{\cbar\rs} \overset{!}<  1, \text{ whenever } \monr,\mons \in \interior{\Ex}.
\end{equation}
If $Q\rs$ is given by \eqref{QThermoDom}, we find for large $r+s$ and $\xi \coloneqq \frac {r} {r+s}$ 
\begin{equation}
  \begin{split}
    \frac{\cbar\rps}{\cbar\rs} &\approx e^{\delr (r+s+1) G\left(\frac{r+1}{r+s+1}\right)}
    \approx e^{ G\left(\frac{r+1}{r+s+1}\right) + (1-\frac{r+1}{r+s+1}) G^\prime\left(\frac{r+1}{r+s+1}\right)}
    \approx e^{ G\left(\xi\right) + (1 - \xi) G^\prime\left(\xi\right)}
    \text{ and }\\
    \frac{\cbar\rsp}{\cbar\rs} &\approx e^{\dels (r+s+1) G\left(\frac{r}{r+s+1}\right)}
    \approx e^{ G\left(\frac{r}{r+s+1}\right) -\frac{r}{r+s+1} G^\prime\left(\frac{r}{r+s+1}\right)} 
    \approx e^{ G\left(\xi\right) - \xi G^\prime\left(\xi\right)}.
  \end{split}
\end{equation}
Next, we can assume without loss of generality $\er\rs + \es\rs =1$ (by setting $\tilde \er\rs = \frac{\er\rps}{\er\rps+\es\rps}$ and $\tilde \es\rsp = \frac{\es\rsp}{\er\rsp+\es\rsp}$). If we furthermore assume some continuity on $\er$, such that $\es\rsp \approx \es\rps$, we have reduced \eqref{appendix:detailedBound} to finding an $e(\xi),$ such that 
\begin{equation}\label{appendix:equationE}
  e(\xi)e^{ G\left(\xi\right) + (1 - \xi) G^\prime\left(\xi\right)} + (1-e(\xi)) e^{ G\left(\xi\right) - \xi G^\prime\left(\xi\right)} < 1, \text{ whenever } \max\limits_{\xi \in [0,1]}G(\xi) < 0.
\end{equation}
    \begin{remark}\textit{({$e(\xi) = \xi$} does not work)}\label{straightLineIsNotGood}\\
        \input{\CommonPath/remarks/straightLineIsNotGood}
    \end{remark}
Next, we will show how $\mathbb E[\Prs g\kl]$ corresponds to a line integral. (Certainly, $\mathbb E[\Trs g\kl]$ can be understood in a similar fashion.)
    \begin{remark}\textit{(Coordinate transform due to $\er\rs,\es\rs$)}\label{eresContinuous}\\
        \input{\CommonPath/remarks/eresContinuous}

    \end{remark}
Remarks \ref{straightLineIsNotGood} and \ref{eresContinuous} show that Remark \ref{hardy} does not tell the full story. Hence, it should be acceptable that we have introduced the concavity assumption \eqref{concavityAss:old} even though from Remark \ref{hardy} it may appear unnecessary. Of course, the mismatch between Remarks \ref{straightLineIsNotGood}, \ref{eresContinuous} and \ref{hardy} is a result of the mismatched scaling behaviour of $g\left(\frac k N,\frac l N\right)$ from \ref{eresContinuous} and the exponential $\cbar\rs \approx e^{(k+l) G(\frac k {k+l})}.$

Let us finally comment on the form of $\er\rs,\es\rs$ found in Lemma \ref{eres}. Since $m_n \to e^{\max G}$, we are essentially solving \eqref{appendix:equationE} with right hand side equal to $e^{\max G},$ which we can rearrange to find 
\[ e(\xi) = \frac{ e^{\max G} - e^{G(\xi)-\xi G^\prime(\xi)}}{e^{G(\xi)+(1 - \xi) G^\prime(\xi)}-e^{G(\xi)-\xi G^\prime(\xi)}}.\]
In particular, we need $G$ to be concave to obtain $e(\xi) \in [0,1]$ for arbitrary values of $\monr,\mons.$

%% file: remarks/straightLineIsNotGood.tex
With this, we can see that \eqref{exponentialBehaviourOfPrsTrs} may fail for $\er\rs = \frac r {r+s},$ even for concave $\Phi(\xi).$ If we set 
\[\xi_\crit \coloneqq \argmax\limits_{\xi\in[0,1]} G(\xi) \text{ and } H(\xi)\coloneqq \xi e^{ G\left(\xi\right) + \hat \xi G^\prime\left(\xi\right)} + (1-\xi) e^{ G\left(\xi\right) - \xi G^\prime\left(\xi\right)},\]
we have $H(\xi_\crit) = e^{G(\xi_\crit)} < 1.$ But at the same time 
\[ \odv{}{\xi} H(\xi) = e^{G(\xi)} (e^{\hat \xi G^\prime(\xi)} - e^{-\xi G^\prime(\xi)})\underbrace{(1+\xi\hat\xi \Phi^{\prime\prime}(\xi))}_{<0 \text{ if }\Phi^{\prime\prime} \text{ is sufficiently small}}\]
has a sign change at $\xi_\crit$ from negative to positive. Hence, if $\Phi^{\prime\prime}$ is sufficiently small, we may find an interval $[\xi_1,\xi_2] \subset (0,1)$, such that $H(\xi) \ge q>1$ on $[\xi_1,\xi_2].$
\begin{figure}[!htb]
  \centering
      \begin{tikzpicture}
        \begin{scope}
          \draw[->] (0, 0) -- (3.5, 0) node[below] {$r$};
          \draw[->] (0, 0) -- (0, 3.5) node[left] {$s$};
          \draw[scale=1, domain=0:3, smooth, variable=\x, black] plot ({\x}, {3-\x});
          \draw (3,0.1) -- (3,-0.1) node[below] {$N$};
          \draw (0.1,3) -- (-0.1,3) node[left] {$N$};
          \fill[Crimson] (1.5,1.5) -- (2,1) -- (1.5,1) -- cycle;
          \node[rotate=45] at (1.85,1.35) {\Big \}};
          \node at (2.5,1.7) {\footnotesize $H(\frac r {r+s}) \ge q$};
        \end{scope}
      \end{tikzpicture}
  \caption{We define $\omega$ as the red triangle.}
  \label{figure:straightIsLineNotGood}
\end{figure}

If we define $\omega\subset \{r+s\le N\}$ as the red triangle in Figure \ref{figure:straightIsLineNotGood}, we have due to \eqref{formulaTrs} for any $(r,s)\in\omega$
\[ \mathbb E[\Trs \cbar\kl] \ge \cbar\rs \sum\limits_{m=0}^{N-r-s} q^{m}.\]
Since the height of the red triangle goes to infinity as $N\to\infty$, we find for some $r,s$ depending on $N$
\[ \frac{\mathbb E[\Trs \cbar\kl]}{\cbar\rs} \to \infty \text{ as }N\to\infty.\]
In particular, allowing general $\er\rs,\es\rs$ in Section \ref{section:logSobolevInequality} was necessary.

%% file: remarks/eresContinuous.tex
\newcommand{\qqrs}{q^{r,s}}
As discussed in \eqref{appendix:equationE}, it (roughly) suffices to consider $\er\rs = \er(\frac r {r+s})$ with $\es\rs = 1 -\er\rs.$ So we will assume that in the following.

Now, we can understand the behaviour of $\mathbb E[\Prs g\kl]$, if we rescale $r,s$ by $\frac 1 N$ and consider the limit $N\to\infty$. To be precise, let us define the measure $q^N$ as follows. For any $g(x,y) \in C^1(\{(x,y) \mid 0 \le x,y \le 1 \text{ and } x+y \le 1\}$ we set 
\begin{equation}\label{eresContinuous:defqN}
  \iint\limits_{0\le x+y\le 1} g(x,y) \dd q^N(x,y) \coloneqq \frac 1 N \mathbb E\left[\Prs g\left(\frac k N, \frac l N \right)\right], \text{ where } r \approx x_0 N, s\approx y_0 N.
\end{equation}
Next, we realise that for $\qqrs_{u,v} \coloneqq \mathbb E[\Prs \delta_{k,l = u,v}]$ we get 
\begin{equation}\label{eresContinuous:defqrs}
  \mathbb E[\Prs g\kl] = \sumkl[2] \qqrs\kl g\kl.
\end{equation}
Clearly $\qqrs\kl = 0$ if either $k>r$ or $l>s$  and $\qqrs\rs =1$. Otherwise, by Lemma \ref{treeExpressions}.\ref{treeExpressions:derivativePrs}, we obtain that $\qqrs\kl = \er\rs q^{r-1,s}\kl + \es\rs q^{r,s-1}\kl.$ Subsequently, an induction over the distance from $(k,l)$ to $(r,s)$ yields 
\begin{equation*}
  \qqrs\kl = \er\kpl \qqrs\kpl + \es\klp \qqrs\klp \text{ for all }(k,l) \neq (r,s) \text{ and }k+l>1.
\end{equation*}
With this we can obtain an equation for the (weak* in the sense of measures) limit $q\coloneqq \lim\limits_{N \to \infty} q^N$. For any $g\kl = g(\frac k N,\frac l N)$ we have
\begin{equation*}
  \begin{split}
    0 &= \sumkl[2][r+s-1] \bigl(\qqrs\kl - \er\kpl \qqrs\kpl - \es\klp \qqrs\klp) g\kl\\
      &= -\qqrs\rs + \sumkl[2][] \qqrs\kl\big(\er\kl g\kml + \es\kl g\klm\big) + \sumkl[2][r+s] \qqrs\kl\bigl(\er\kl \delk g\kl + \es\kl \dell g\kl\bigl)\\
        \overset{\delk g\kl \approx \frac 1 N \partial_x g(\frac k N,\frac l N)} &\approx -\qqrs\rs + \sumkl[2][] \qqrs\kl\big(\er\kl g\kml + \es\kl g\klm\big)\\
        &\qquad+ \frac 1 N \sumkl[2] \qqrs\kl\left(\er\left(\frac k {k+l}\right) \partial_x g\left(\frac k N,\frac l N\right) + \es\left(\frac k {k+l}\right) \partial_y g\left(\frac k N, \frac l N\right)\right)\\
        \overset{\text{\eqref{eresContinuous:defqN} and \eqref{eresContinuous:defqrs}}} & =  -\qqrs\rs + \sumkl[2][] \qqrs\kl\big(\er\kl g\kml + \es\kl g\klm\big) \\
        &\qquad+ \iint\limits_{0\le x+y\le 1} \er(x,y) \partial_x g(x,y) + \es(x,y) \partial_y g(x,y) \dd q^N(x,y)\\ 
        \overset{\sumkl[2] \qqrs\kl = 1}&\to g(0,0) - g(x_0,y_0) + \iint\limits_{0\le x+y\le 1} \er(x,y) \partial_x g(x,y) + \es(x,y) \partial_y g(x,y) \dd q(x,y)\\
                                \overset{(\frac{x}{x+y},x+y)=(\xi,\tau)}& = g(0,0) - g(x_0,y_0) + \int_0^1\int_0^1 \tau\left( \er(\xi) \partial_x g(\xi,\tau) + \es(\xi) \partial_y g(\xi,\tau)\right)\dd q(\xi,\tau)\\
                                                                          &= g(0,0) - g(x_0,y_0) + \int_0^1\int_0^1 \biggl[\tau \partial_\tau g + (\er(\xi)-\xi)\partial_\xi g\biggr] \dd q(\xi,\tau).
  \end{split}
\end{equation*}
In other words, $q(\xi,\tau)$ is a distributional solution of 
\begin{equation}\label{weakTransport}
  \begin{cases}
    \tau \partial_\tau q + (\er(\xi)-\xi)\partial_\xi q + \er^\prime q = 0 \text{ for } (\xi,\tau) \in [0,1] \times [0,x_0+y_0]\\
    (x_0+y_0)q(\dd \xi,x_0+y_0) = \delta_{\xi = \frac{x_0}{x_0+y_0}}.
  \end{cases}
\end{equation}
But this solution just transports and scales the Dirac along the characteristics, i.e. 
\begin{equation*}
  q(\xi,\tau) = f(\tau) \delta_{\xi = h(\tau)},
\end{equation*}
where $h(\tau)$ is a solution to 
\[ \tau h^\prime = \er(h)-h \text{ with } h(x_0+y_0) = \frac{x_0}{x_0+y_0}.\]
To summarise, $\mathbb E[\Prs g\kl]$ is roughly an integral along the lines given by $h$.

Note, that we can solve \eqref{weakTransport} for $e(\xi) = \xi$ explicitly by 
$\displaystyle q(\dd\xi,\dd\tau) = \delta_{\xi = \frac{x_0}{x_0+y_0}} \frac {\dd\tau} \tau,$ for $\tau \le x_0+y_0.$
Hence, we have 
\begin{equation}
  \begin{split}
    \frac 1 N \mathbb E[\Prs g\kl] &\approx \iint\limits_{0\le x+y\le 1} g(x,y) \dd q(x,y)
    \overset{(\frac{x}{x+y},x+y)=(\xi,\tau)}= \int_0^1 \int_0^1 g(\xi,\tau) \tau \dd q(\xi,\tau)\\
    &= \int_0^{x_0+y_0} g\left(\frac{x_0}{x_0+y_0},\tau\right)\dd\tau.
  \end{split}
\end{equation}

%% file: acknowledgements.tex
\section*{Acknowledgements}
\addcontentsline{toc}{section}{Acknowledgements}
The author gratefully acknowledges the financial support of the Deutsche Forschungsgemeinschaft (DFG, German Research Foundation) through the collaborative research centre ``Analysis of criticality: from complex phenomena to models and estimates'' (CRC 1720, Project-ID 539309657) and the Bonn International Graduate School of Mathematics at the Hausdorff Center for Mathematics (EXC 2047/2, Project-ID 390685813).\\
I would like to thank Barbara Niethammer for numerous helpful discussions and guidance throughout this project, as well as Juan Velázquez for his intuitive insights.